\DeclareMathOperator{\Conf}{Conf}
\DeclareMathOperator{\loc}{loc}
\DeclareMathOperator{\per}{per}
\DeclareMathOperator{\Pro}{Pr}
\DeclareMathOperator{\Ve}{Ve}
\DeclareMathOperator{\Ed}{Ed} 
\DeclareMathOperator{\Fl}{Fl}
\DeclareMathOperator{\Int}{Int}
\DeclareMathOperator{\WI}{WI}
\newcommand{\A}{{\mathscr A}}
\newcommand{\B}{{\mathscr B}}
\newcommand{\CP}{{\mathbb{CP}}}
\newcommand{\C}{{\mathbb{C}}}
\newcommand{\E}{{\mathscr E}}
\newcommand{\G}{{\mathscr G}}
\newcommand{\I}{{\mathbf I}}
\newcommand{\J}{{\mathscr J}}
\newcommand{\LL}{{\mathscr L}} 
\newcommand{\MM}{\overline{\M}} 
\newcommand{\M}{{\mathscr M}}
\renewcommand{\Pro}{{Pr}}
\newcommand{\R}{{\mathbb{R}}}
\newcommand{\W}{{\mathscr W}}
\newcommand{\Z}{{\mathbb{Z}}}
\newcommand{\aut}{\operatorname{Aut}}
\newcommand{\cC}{\mathscr C} 
\newcommand{\cO}{\O}
\newcommand{\cR}{\mathscr R}
\newcommand{\cZ}{{\mathscr{Z}}}
\newcommand{\gt}{{\tilde{g}}}
\newcommand{\cCh}{\widehat{\cC}}
\newcommand{\ib}{{\bar{i}}}
\newcommand{\im}{\operatorname{Im}}
\newcommand{\ind}{\operatorname{index}}
\newcommand{\jb}{{\bar{j}}}
\newcommand{\lb}{{\bar{l}}}
\newcommand{\om}{{\omega}}
\newcommand{\re}{\operatorname{Re}}
\newcommand{\tM}{\widetilde{\W}}
\newcommand{\ub}{{\bar{u}}}
\newcommand{\ve}{{\varepsilon}}
\newcommand{\z}{{\mathbf z}}
\newcommand{\zb}{{\bar{ z}}}
\renewcommand{\O}{{\mathscr{O}}}
\renewcommand{\hom}{\operatorname{Hom}}
\newcommand{\Hb}{\mathbb{H}}
\newcommand{\pat}{{\partial}}
\newcommand{\bpat}{{\bar{\partial}}}
\newtheorem{result}{Main Result}
\newtheorem{thm}{Theorem}[section]
\newtheorem{lm}[thm]{Lemma}
\newtheorem{prop}[thm]{Proposition}
\newtheorem{crl}[thm]{Corollary}
\theoremstyle{definition}
\newtheorem{rem}[thm]{Remark}
\newtheorem{df}[thm]{Definition}
\newtheorem{ex}[thm]{Example}
\theoremstyle{remark}
\title[Fukaya category of LG model]
{Fukaya category of Landau-Ginzburg model}
\author{Huijun Fan}
\thanks{}
\address{School of Mathematical Sciences, Peking University, Beijing 100871, China.}
\thanks{The first author is supported by NSFC (11271028), NSFC (11325101), NSFC (11831017), and NSFC (11890661) }
\email{fanhj@math.pku.edu.cn}
\author{Wenfeng Jiang}
\thanks{The second author is partially supported by Chinese Universities Scientific Fund (74120-31610002)}
\address{School of Mathematics (Zhuhai), Sun Yat-Sen University, Zhuhai, 519082, China.}
\email{wen\_feng1912@outlook.com}
\author{Dingyu Yang}
\thanks{The third author is partially supported by ERC via the Starting Grant ERC StG-259118 by CNRS at Jussieu, National Science Foundation under agreement No. DMS-1128155 and Grant No. DMS-1638352 at IAS, and by ERC grant TRANSHOLOMORPHIC at Humboldt-Universit\"at zu Berlin.}
\address{Humboldt-Universit\"at zu Berlin, Rudower Chaussee 25, Room 1.303, 12489 Berlin, Germany.}
\email{yangding@math.hu-berlin.de, dingyu\_yang@cantab.net}
\date{\today}
\begin{document}
\maketitle

\begin{abstract}

This article introduces and provides the mathematical foundation of the open string Floer theory of Landau-Ginzburg model via Witten equation. We introduce the concept of regular tame exact Landau-Ginzburg system on a noncompact K\"ahler manifold, and define the notion of Landau-Ginzburg branes, as the objects of the Fukaya category. The study on Witten equation in our context provides the construction of the Fukaya category of Landau-Ginzburg model which was conjectured to be existed in Gaiotto-Moore-Witten's  work \cite{GMW} and Kapranov-Kontsevich-Soibelman's work \cite{KKS}. This is the first paper in the series of studies on Landau-Ginzburg models in the contexts of the mirror symmetry and other topics.

\end{abstract}
\maketitle
\tableofcontents

\section{Introduction}\
Fukaya category of Landau-Ginzburg model is an indispensable part of the general framework of homological mirror symmetry. Homological mirror symmetry (HMS) predicts the categorical quasi-equivalence between (split-completed derived) Fukaya category (A-model) of one K\"ahler manifold and bounded derived category of coherent sheaves (B-model) of its mirror partner, and vice versa for the same pair. This open string version of mirror symmetry was first proposed by Kontsevich \cite{k1} for Calabi-Yau pairs and has been established for many cases, for example, elliptic curve \cite{PZ}, Abelian varieties \cite{Fu2}, SYZ fibrations\footnote{In the presence of (singular) SYZ, Kontsevich's homological mirror symmetry from A-side to B-side can be viewed as a construction of the mirror as a non-commutative space via (instanton-corrected) family Floer theory of moduli of Lagrangian torus fibers equipped with local systems.} \cite{KS}, \cite{Fu1}, \cite{Ab4}, \cite{Tu}, Quartic surfaces \cite{Si11}, Products \cite{AI}, and Calabi-Yau projective hypersurfaces  \cite{Si-ICM,Sh1}.

Mirror symmetry conjecture can be extended beyond Calabi-Yau. In these cases, the Landau-Ginzburg (LG) models usually arise. For example, Kontsevich proposed to extend HMS to Fano-LG pairs \cite{k2}.  The framework of homological mirror symmetry was realized to further include general type-LG pairs, and (toric) LG-LG pairs \cite{GKR}, \cite{AAK}.

There are not only mirror symmetries between A-models and B-models, but also Landau-Ginzburg/Calabi-Yau (LG/CY) correspondence between a Calabi-Yau hypersurface and a LG model (with group action) whose description is essentially the defining equation for the Calabi-Yau. Open string LG B-model theory was constructed by Orlov (\cite{or1,or2}), and he also established the open string B-model LG/CY correspondence \cite{or3}. One also expects LG/CY correspondence on the A-side and so far efforts have been only concentrated on the closed string version, e.g. \cite{CIR}.

The geometric input of a LG model theory is a holomorphic Morse function (or holomorphic Morse fibration) $W$ on a noncompact K\"ahler manifold $(M, h)$.  Its open string A-model theory is expected to capture the information of the singularity of $W$ in the nonlinear background $M$ via Fukaya category constructed from the geometric input. It originates from the physics theory of (2,2) supersymmetry, e.g. theories of Kapustin-Li (\cite{ay}), Hori-Iqbal-Vafa (\cite{HV}).

In this paper, given a (regular) holomorphic Morse function $W$ on $M$, we construct a Lefschetz thimble from each critical point of $W$ as the stable manifold of the gradient flow of $\re W$. Each Lefschetz thimble is a smooth Lagrangian submanifold of $(M, \omega)$, where $\omega$ is the symplectic form associated to the K\"ahler metric $h$. We lift these Lefschetz thimbles $L$ to the universal Abelian cover $\widetilde{\mathcal{G}}M$ of the Lagrangian Grassmannian bundle $\mathcal{G}M$ using lifting maps $i^{\#}$ and with a pin structure $P^\#$ we get a set $\{L^{\#}=(L,i^{\#},P^{\#})\}$ called Landau-Ginzburg Lagrangian branes. For each pair $(L_0,L_1)$ of Lefschetz thimbles, we define a set of Lagrangian chords using (rescaled) Hamiltonian flow of $\re W$, denoted by $S_W(L_0,L_1)$. We can perturb the K\"ahler potential such that the set  $S_W(L_0,L_1)$ is finite (ref. Theorem \ref{transversal}).

For a pointed-boundary disc $\cC$ with $d+1$ punctures, we introduce the notion of directed Lagrangian labels, that is, any component $C\subset \pat \cC$ is labeled by a Lefschetz thimble $L_C$ and satisfies some order criterion. For each puncture $\zeta$ at the boundary of the disc, there are two adjacent components $C_{\zeta,0}$ and $C_{\zeta,1}$ with the corresponding Lagrangian labels $L_{\zeta,0}>L_{\zeta,1}$ where $L_{\zeta, i}:=L_{C_{\zeta, i}}$, and choose a Hamiltonian chord in $l_{\zeta}\in S_W(L_{\zeta,0},L_{\zeta,1})$ for each $\zeta$. We denote this choice $(\{L_C\}, \{l_\zeta\})$ of labels and Hamiltonian chords by $(L,l)$ for short.

We consider the (perturbed) Witten equation
$$
\pat_\zb u^j= \sum_{\ib}h^{\ib j}\pat_\ib \overline{W}\sigma+\mu^j.
$$
for maps $u=u^j$ from a pointed-boundary disc $\mathscr{C}$ to $M$. Here, $\sigma$ is the  the auxiliary section $\sigma$ of the $\log$-canonical bundle, and $\mu^j$ is some perturbation. The domain punctured disc is allowed to vary in the open Deligne-Mumford-Stasheff moduli, with a consistent choice of strip-like ends near punctures; and this universal curve is still denoted by $\mathscr{C}$. We require that each boundary component $C$ of the disc away from the punctures are mapped to Lefschetz thimbles $L_C$; and at each boundary puncture $\zeta$, the limit of such a solution (in the local coordinate of the strip-like end) is required to be $l_{\zeta}$. We denote by $\W^{d+1}(\cC, L, l;K)$ the space of such solutions (allowing domain variations), where $K$ denotes a consistent choice of perturbation, which is defined in Subsection \ref{subsection8.4}.

We assume that $M$ is exact and has bounded geometry, and $W$ satisfies a set of mild conditions called ``tame condition'', see Definition \ref{sec:df-tame-1}. Under these conditions, we study the compactness property of moduli spaces $\W^{d+1}(\cC, L, l;K)$. For $\phi\in \W^{d+1}(\cC, L, l;K)$, we provide a $C^0 -C^1$ interlocking compactness argument to get both the $C^0$ and $C^1$ bounds for $\phi$.

\begin{lm}[Lemma \ref{control-first-direction}]\
Let $\nu>0$. For any $(\phi: \mathscr{C}_b\to M)\in \mathscr{W}^{d+1}(\mathscr{C}, L, l; K)$ where $\mathscr{C}_b$ is a fiber in the universal curve $\mathscr{C}:=\mathscr{C}^{d+1}$, we have
$$
\sup_{z\in \mathscr{C}_b }|\pat_M W(\phi(z))|\leq C(\max_{z\in\mathscr{C}_b}|d\phi(z)|+1)^{1/\nu},
 $$
 where the constant $C$ depends only on $q$, $W$ and bounds in the bounded geometry of $M$.
\end{lm}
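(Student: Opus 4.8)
The plan is to read the size of $\pat_M W$ along $\phi$ off the Witten equation itself, and then to sharpen the resulting bound to the power $1/\nu$ using the tame condition. Composing $W$ with $\phi$ and substituting $\pat_\zb\phi^j=\sum_{\ib}h^{\ib j}\pat_\ib\overline W\,\sigma+\mu^j$, the chain rule gives, pointwise on $\mathscr{C}_b$,
$$\pat_\zb\bigl(W\circ\phi\bigr)=|\pat_M W(\phi)|_h^{\,2}\,\sigma+\sum_j\pat_j W(\phi)\,\mu^j ,$$
and, taking $h$-norms in the vector equation directly, $|\pat_M W(\phi(z))|\,|\sigma(z)|=|\pat_\zb\phi(z)-\mu(z)|_h\le|d\phi(z)|+|\mu(z)|_h$. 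So the first task is to bound $|\sigma|$ below and $|\mu|_h$ above, uniformly in $\phi$ and in the fibre $\mathscr{C}_b$. The auxiliary section $\sigma$ of the (anti-)$\log$-canonical bundle is the fixed nonvanishing section entering the definition of $\W^{d+1}$; because the strip-like ends are chosen consistently and $\mathscr{C}$ is otherwise carried by a compact family, $|\sigma|\ge c_0>0$ with $c_0$ depending only on that choice. The perturbation $\mu=\mu(K)$ is part of the consistent datum $K$ of Subsection~\ref{subsection8.4}, hence of uniformly bounded $h$-norm $|\mu|_h\le C_\mu$ by its construction together with the bounds in the bounded geometry of $M$. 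This already yields $|\pat_M W(\phi(z))|\le c_0^{-1}(|d\phi(z)|+C_\mu)$, i.e.\ the assertion with exponent $1$.

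To upgrade to an arbitrary $\nu>0$ I would invoke the tame condition of Definition~\ref{sec:df-tame-1} through a maximum-principle argument built on the identity above, which expresses that $W\circ\phi$ is holomorphic up to an error of size $|\pat_M W(\phi)|^2$. Differentiating once more, taking real parts, and using the bounded geometry of $h$ together with the tame comparisons between $\nabla_M W$, $\nabla_M^2 W$, $W$ and the Euler-type vector field gives a differential inequality on $\mathscr{C}_b$ whose leading lower-order term is a fixed power of $|\pat_M W(\phi)|$ (the power being governed by the tame exponent) minus $C(|d\phi|^2+1)$. Since the limit chords $l_\zeta$ lie in a fixed compact subset of $M$, and since the boundary components of $\mathscr{C}_b$ are mapped into Lefschetz thimbles — on which $\re W$ is controlled, being stable manifolds of the gradient flow of $\re W$ — the function $\re W(\phi)$ is a priori bounded near the punctures and on $\pat\mathscr{C}_b$, so it attains a supremum at some interior point $z_\ast$, where its Laplacian is $\le 0$. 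Evaluating the differential inequality at $z_\ast$ bounds $|\pat_M W(\phi(z_\ast))|$ by a suitable fractional power of $\max|d\phi|^2+1$, and feeding this back through the $C^0$-comparison between $\sup_{\mathscr{C}_b}|\pat_M W(\phi)|$ and its value near $z_\ast$ (again via the displayed first-order relation) produces $\sup_{z}|\pat_M W(\phi(z))|\le C(\max_z|d\phi(z)|+1)^{1/\nu}$, with $C$ depending only on $q$, on $W$ and on the bounded-geometry bounds.

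I expect the main difficulty to be exactly this last step: the first-order read-off only gives a linear bound, and squeezing it down to $1/\nu$ for every $\nu$ must genuinely use the structure of the tame condition, while the maximum principle has to be run on the noncompact fibre $\mathscr{C}_b$, so the boundary behaviour at the strip-like ends and along the thimbles has to be pinned down first. A secondary, more routine obstacle — already needed for the exponent-$1$ bound — is to make the lower bound $|\sigma|\ge c_0$ and the upper bound $|\mu|_h\le C_\mu$ genuinely uniform over the whole universal curve, including in the ends, which relies on the consistency of the chosen strip-like ends and of the perturbation datum $K$.
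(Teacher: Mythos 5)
Your proposal has two genuine gaps. First, the pointwise read-off $|\pat_M W(\phi(z))|\,|\sigma(z)|\le |d\phi(z)|+|\mu(z)|$ only controls $|\pat_M W(\phi)|$ where $|\sigma|$ is bounded below, and the claimed uniform bound $|\sigma|\ge c_0>0$ is false for $d\ge 2$: a holomorphic section of $K_{\cC,\log}$ with simple poles at the $d+1$ punctures has, by the degree count on the double, exactly $d-1$ zeros, so $\sigma$ necessarily vanishes somewhere on the thick part of $\cC_b$ and the equation degenerates to $\bpat_J\phi=\mu$ there, giving no pointwise information on $\pat_M W(\phi)$. Second, and more seriously, the upgrade from exponent $1$ to exponent $1/\nu$ is where all the content of the lemma lies (the linear bound is useless for the later interlocking with Lemma \ref{anti-control}, which needs $1/\nu$ small), and your sketch of that step does not work: the general tame condition of Definition \ref{sec:df-tame-1} has no ``tame exponent'' or Euler vector field producing an arbitrary power $\nu+1$, and a maximum principle bounding $\re W(\phi)$ cannot conclude, because the tame condition only gives $|W|\le C(|\pat_M W|^2+1)$ and provides no bound on $|\pat_M W|$ in terms of $|W|$.

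The paper's actual mechanism is different and worth internalizing. One sets $F^\nu=|\pat_M W(\phi)|^\nu$ and uses the chain rule together with tame conditions (i)--(ii) (which give $|\nabla^2_M W|\le C(|\pat_M W|^2+1)$) to get the pointwise bound $|dF^\nu|\le C\nu\, m\,(1+|\pat_M W(\phi)|^{\nu+1})$ with $m=\max|d\phi|$. The arbitrary power $\nu+1$ on the right is harmless because Corollary \ref{rem-1-energybound} — derived from the energy identity, the John--Nirenberg inequality and tame condition (iii) — gives $\int_{\Omega}|\pat_M W(\phi)|^p\le C$ for \emph{every} $p>1$ on each unit-size chart $\Omega=D'_{b,a}$, uniformly in $\phi$ and $b$. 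Hence $\|F^\nu\|_{W^{1,p}(\Omega)}\le C(m+1)$ for a fixed $p>2$, and Sobolev embedding yields $\sup_\Omega F^\nu\le C(m+1)$, i.e. $\sup|\pat_M W(\phi)|\le C(m+1)^{1/\nu}$. The key input you are missing is precisely this family of a priori $L^p$ bounds on $|\pat_M W(\phi)|$ over unit regions; without it (or a substitute), neither the linear bound away from the zeros of $\sigma$ nor any maximum-principle variant closes the argument.
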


\begin{lm}[Lemma \ref{anti-control}]\label{L1.2}
There exists some $k>0$ such that
$$
\max_{z\in\mathscr{C}_b}|d\phi(z)|\le C(\sup_{z\in \mathscr{C}_b }|\pat_M W(\phi(z))|^k+1),
$$
for any $(\phi:\mathscr{C}_b\to M) \in \mathscr{W}^{d+1}(\mathscr{C}, L, l; K)$, where the constant $C$ depends only on $W$ and bounds in the bounded geometry of $M$.
\end{lm}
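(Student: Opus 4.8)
The plan is to establish a gradient bound for solutions $\phi$ of the perturbed Witten equation in terms of the size of $\partial_M W(\phi)$, complementing the reverse estimate of Lemma \ref{control-first-direction}. The overall strategy is a standard elliptic bootstrap / $\epsilon$-regularity argument, but one carried out with care so that the final constant depends only on $W$ and the bounded-geometry bounds of $M$, and so that the nonlinear inhomogeneous term $\sum_{\ib}h^{\ib j}\partial_\ib\overline W\,\sigma+\mu^j$ is absorbed correctly. First I would argue by contradiction and rescaling: suppose no such $k$ and $C$ work, so there is a sequence $\phi_n\in\mathscr{W}^{d+1}$ and points $z_n$ with $|d\phi_n(z_n)|\to\infty$ while $\sup|\partial_M W(\phi_n)|^k+1$ grows strictly slower for every fixed $k$. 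Choosing $z_n$ to (almost) maximize $|d\phi_n|$ on the fiber and rescaling the domain by $R_n:=|d\phi_n(z_n)|$, one obtains maps $\psi_n$ on larger and larger domains with $|d\psi_n|\le 1+o(1)$ and $|d\psi_n(0)|\to 1$. Because the rescaled inhomogeneous term contains the factor $R_n^{-1}|\partial_M W(\phi_n)|$, which by hypothesis tends to $0$, the limiting equation is $\partial_{\zb}\psi_\infty=0$, i.e. $\psi_\infty$ is a nonconstant holomorphic map (or half-plane/disc solution with totally real boundary on some $L_C$, after passing to a boundary or interior subsequence). Exactness of $M$ and the Lagrangian (thimble) boundary conditions then force $\psi_\infty$ to be constant — the usual energy/Stokes argument, using that the thimbles are exact Lagrangians and $\omega$ is exact — contradicting $|d\psi_\infty(0)|=1$.

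To make the rescaling rigorous I would use the bounded geometry of $(M,h)$: fix a uniform geodesic-ball radius $\rho_0$ and harmonic (or geodesic normal) coordinates on $M$ with uniform $C^{k}$ control of the metric coefficients, so that the Witten equation becomes, in those local charts, a Cauchy-Riemann-type system $\partial_{\zb}\psi=Q(\psi)(\partial_z\psi,\partial_{\zb}\psi)+F(\psi)$ with $Q$ quadratic and uniformly bounded (depending only on the geometry bounds) and $F$ the inhomogeneous term, whose sup-norm over the chart is $\le C(\sup|\partial_M W(\phi)|+1)$ by the tameness conditions on $W$ and the properties of $\sigma$ and the perturbation $\mu$ (here I invoke that $\mu^j$ is controlled in $C^0$ by the consistent-perturbation data $K$, as set up in Subsection \ref{subsection8.4}). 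Interior and boundary elliptic estimates for this system on a fixed-size disc/half-disc give $\|d\psi\|_{C^0(B_{1/2})}\le C(\|d\psi\|_{L^2(B_1)}+\|F\|_{C^0(B_1)})$ once $\|d\psi\|_{L^2(B_1)}$ is below an $\epsilon$-regularity threshold; the $\epsilon$-regularity itself (small energy $\Rightarrow$ interior gradient bound) is where the exactness and the a priori $C^0$ bound from the interlocking argument enter to rule out bubbling.

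The main obstacle I anticipate is handling the inhomogeneous term uniformly at the boundary and across the moduli. Unlike closed pseudoholomorphic curves, the $F(\psi)$ term does not vanish, so the naive $\epsilon$-regularity must be replaced by an estimate of the form: small energy on $B_1$ plus a bound on $\|F\|_{C^0(B_1)}$ yields $\sup_{B_{1/2}}|d\psi|\le C(\epsilon^{1/2}+\|F\|_{C^0})$, and one must verify the exponent $k$ on $|\partial_M W|$ is produced correctly when unwinding the rescaling (it is the power by which $\|F\|$ feeds back into the energy through the equation — roughly $k=2$ suffices, but I would not fix it prematurely). A secondary technical point is the strip-like ends and the prescribed Hamiltonian-chord asymptotics at the punctures $l_\zeta$: there the domain has infinite extent, and I would handle the neck regions separately using the exponential convergence to $l_\zeta$ (nondegeneracy of the chords, which holds after the perturbation of Theorem \ref{transversal}) together with the $C^0$ bound, so that large gradients cannot escape into the ends. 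Stitching the interior, boundary-arc, and strip-like-end estimates into a single fiberwise bound $\max_{z\in\mathscr{C}_b}|d\phi(z)|\le C(\sup_{z}|\partial_M W(\phi(z))|^k+1)$, with the compactness/covering argument reducing the whole fiber to finitely many uniformly-sized charts, is the final step; here the only subtlety is that the number of charts needed is uniform in $b$, which follows from the bounded geometry of $\mathscr{C}_b$ away from the ends and the uniform neck length on the ends.
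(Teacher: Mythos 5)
Your overall scaffolding (rescale the domain by $m=\max|d\phi|$ at the point where the maximum is attained, observe that the rescaled inhomogeneous term carries a factor $(N+1)/m$ with $N=\sup|\pat_M W(\phi)|$, distinguish interior from boundary points, and run elliptic estimates in uniformly controlled charts supplied by bounded geometry) matches the second half of the paper's proof of Lemma \ref{anti-control}. But the engine you propose for the key step is different from the paper's, and it has a genuine gap. You set the argument up as a bubbling-off contradiction, and you write that ``the a priori $C^0$ bound from the interlocking argument'' enters the $\epsilon$-regularity step to rule out bubbling. No such bound is available here: the $C^0$ estimate (Theorem \ref{c_0_polytope}) is the \emph{conclusion} obtained by combining Lemma \ref{control-first-direction} with the present lemma, so invoking it is circular. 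Only the boundary bound of Lemma \ref{poly-boundary-c_0} is in hand at this stage. Without a $C^0$ bound the points $\phi_n(z_n)$ may escape to infinity in $M$; the rescaled maps then only converge in a pointed Cheeger--Gromov sense, exactness of $\omega$ (whose primitive $\lambda$ need not be bounded) is not known to survive to the limit, and the ``exact manifold kills the bubble'' step collapses. The same circularity affects your treatment of the strip-like ends: the uniform exponential decay of Theorem \ref{sec5:exponential} is itself derived from the $C^0$ bound.

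What is actually needed --- and what your sketch assumes rather than proves --- is that the energy of $\phi$ on the disc of radius $m^{-1}$ about the maximum point is small, quantitatively in $m$ and $N$. The paper obtains this in Lemma \ref{control-m-cases} by a direct, local argument that deliberately avoids bubbling (as flagged in the introduction): the monotonicity inequality $f'(r)\ge l(r)^2/\pi r$ on annuli between radii $m^{-1}$ and $m^{-1/2}$, combined with the uniform energy bound of Corollary \ref{rem-1-energybound}, produces a circle (or boundary arc) of length $l_{\min}^2\le C/\ln m$; capping it off via the local isoperimetric inequalities (Lemma \ref{sec5:c-0-sublemma-1} in the interior, Lemma \ref{sec5:c-0-sublemma-0} in a Weinstein neighborhood of the thimble at the boundary, where exactness of $\omega$ and simple connectedness of the thimbles are used) bounds the symplectic area and hence, via Lemma \ref{lg-poly-identity} and the tame condition, the energy on the inner disc by $C(1/\ln m+(N^2+1)m^{-\gamma})$. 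Feeding this into the rescaled elliptic estimate yields the explicit inequality $1\le C\bigl(N/m+((N^2+1)/m+1)(1/\ln m+(N^2+1)/m^{\gamma})^{1/p}\bigr)$, from which $m\le C(N^k+1)$ is read off directly --- no compactness, no limit object, and no $C^0$ bound required. To repair your proposal you would need to replace the bubbling contradiction by such a quantitative small-energy input at scale $m^{-1}$; as written, the step that rules out energy concentration is exactly the step that is missing.
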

To get Lemma \ref{L1.2}, we use an approach based on isoperimetric inequality instead of bubbling-off analysis that is often used in studying the $J$-holomorphic curves. The tame condition for $W$ is crucial for this mechanism to work.

Combining these two lemmata, together with the tame condition, we get
\begin{result}[Theorem \ref{c_0_polytope}]
$$
d(\phi,q_0)<C,\; |d\phi|<C,
$$
where $q_0\in M$ is the chosen point in the tame condition.
\end{result}

This result, together with the standard Deligne-Mumford-Stasheff compactifications of pointed-boundary discs, leads to the compactness of the moduli spaces. And it pave the way towards defining an $A_{\infty}$ category, called the Fukaya category of the LG model.

The Fukaya category $\text{Fuk}(M,h,W)$ of a Landau-Ginzburg model $(M,h,W)$ consists of the following data:

\begin{itemize}
\item An object set $\text{Ob}(\text{Fuk}(M,h,W))$, consisting of all Landau-Ginzburg branes,
\item For each pair $(L_0^{\#},L_1^{\#})$ of objects, a morphism $\text{Hom}(L_0^{\#},L_1^{\#})=CF(L_0^{\#},L_1^{\#})$ as a module generated by the set $S_W(L_0,L_1)$ of Hamiltonian chords, and
\item Composition (linear) maps
$$
\mu^d:\text{Hom}(L_0^{\#},L_1^{\#})\otimes\cdots\otimes \text{Hom}(L_{d-1}^{\#},L_d^{\#}) \mapsto \text{Hom}(L_0^{\#},L_d^{\#}),
$$
defined using $\mathscr W^{d+1}(\mathscr C, L, l;K)$ and whose details are stated in Subsection \ref{subsection9.2}, which satisfy the $A_\infty$ relation.
\end{itemize}
\begin{result}[(\ref{a-infty-relation})]\

For each $(a_1, \cdots, a_{i+j+k})\in \text{Hom}(L_0^{\#},L_1^{\#})\otimes\cdots\otimes \text{Hom}(L_{i+j+k-1}^{\#},L_{i+j+k}^{\#})$, we have
$$
\sum_{i+j+k=d}(-1)^{\dag}\mu^{i+k+1}(a_{1}\cdots a_{i},\mu^j(a_{i+1},\cdots,a_{i+j}),a_{i+j+1},\cdots,a_{i+j+k})=0.
$$
\end{result}

This result together with the grading result obtained in Subsection \ref{subsection9.2} ensures that $\mu^d$ satisfies the $A_{\infty}$ relation. In particular, we get an $A_\infty$ category from a regular tame LG model on an exact K\"ahler manifold with bounded geometry.

There are already rich studies on LG A-models. One of them is Fukaya-Seidel category (\cite{Si1,Si2,Si3}) for holomorphic Morse fibration $\pi:E \mapsto D$. Here a regular fiber $F=\pi^{-1}(z_0)$ was chosen together with paths from $z_0$ to critical points of $\pi$ called vanishing paths. Using parallel transport along the vanishing paths, a set of Lefschetz thimbles was obtained, and they intersect $F$ and give a set of Lagrangian spheres called vanishing cycles.  By studying the Floer equation in $F$, a Fukaya category of vanishing cycles was defined under some conditions of the geometry of the fibration, and a directed subcategory was extracted. In \cite{Ab1}, Abouzaid studied the Fukaya category of compact Lagrangians possibly with boundaries in a regular reference fiber $\pi^{-1}(z_0)$ of $\pi:E\mapsto \C$ which project and intersect in a specific way near $z_0$ in $\C$. Seidel also studied the Fukaya category of Lefschetz fibrations $E\mapsto \C$, in his series of papers (\cite{Si4,Si5,Si6,Si7,Si8,Si9,Si10}), where he used Lefschetz thimbles instead of vanishing cycles. There are many other models, e.g., Haydys' theory for LG equations on the complex plane (\cite{h1}) and Sylvan's model using ``stops'' in Liouville domains (\cite{Sy}).

The differences and contributions of our study are as follows. First, unlike theories of Seidel, Abouzaid and Sylvan, our model is exactly the same as in the physics literatures of LG models, which explicitly involve $W$ in the equation. This LG theory is also expected to be equivalent to Gaiotto-Moore-Witten's web-based theory on the algebra of the infrared (\cite{GMW}).  Second,  the noncompactness of the manifold in LG model gives rise to the problem of compactness, especially, the $C^0$ estimate. Usually, technical convenience of Liouville domains has been used, and $C^0$ compactness is achieved vertically using holomorphic convexity and horizontally using holomorphic projection $W$ and maximum principle (\cite{Si1,Si2, {Si3}, Si4,Si5,Si6,Si7,Si8,Si9,Si10,Ab1,Sy}). In \cite{h1} the function $W$ was assumed to be proper. But in applications, it is natural to have $M$ admitting compactification with a normal crossing divisor or worse asymptotic geometry, and non-proper $W$. We formulate a general class of such $(M,h,W)$ satisfying so-called tame condition. Under the tame condition together with bounded geometry and exactness\footnote{We choose to focus on our new idea in the exact setting first instead of full generality, and we expect to be able to remove exactness in the future.} of $M$, we provide an entirely different compactness argument via Lemmata 1.1 and 1.2 above. This new framework can be expected to be applicable and generalizable to other contexts. Third, we do not need to convert $W$ into a Lefschetz fibration with nice boundary behavior and our Lefschetz thimbles are canonically chosen, rather than just well-defined up to exact deformations, both of which are convenient when considering group action $\text{Aut}(W)$; and intersections of thimbles are naturally realized via Witten equation without specific recipe.

There are many motivations for this study, as well as  expected relations to other existing theories. We will elaborate below in three aspects.

(1) The quantum singularity theory via Witten equation developed in \cite{FJR1,FJR2,FJR3} (commonly referred to as FJRW theory), is a closed string invariant about singularity of a (non-degenerate) quasi-homogeneous polynomial $W$ on $\C^N$ with action of a choice of an appropriate subgroup of the symmetry group $\text{Aut}(W) \subset (\C^*)^N$ of $W$. The setting in \cite{FJR3}, where FJRW invariant was constructed, is an example that satisfies the tame condition. This $W$ gives  rise to the moduli spaces of $W$-structures covering Deligne-Mumfold spaces of pointed orbicurves, which are spaces of choices to identify two sides of Witten equation on sections of the associated orbifold line bundles. In FJRW theory, a $W$-curve (a $W$-structure with a solution of the associated Witten equation) converges at any interior puncture $x$ to critical points of $W$ restricted to the fixed locus of the stabilizer of $x$. We can now consider $W$-curves with pointed boundaries (where boundary punctures necessarily have trivial stabilizers and hence are broad), then the collection of line bundles over the W-curve trivialize into $\C^N$ and we can impose thimble boundary conditions as in this paper. Note that the asymptotic convergence to LG chords at boundary punctures is consistent to the aforementioned asymptotics in FJRW. Wall-crossing behaviors (under different Morsifications) in both open and closed settings also perfectly match, as we will follow up in the future. This gives an enriched version of Fukaya category defined in this paper and open-closed/closed-open maps with FJRW invariants, which under the conjectural LG/CY correspondence will be intertwined with the counterparts between Gromov-Witten invariants and the usual Fukaya category.

(2) The LG-LG Mirror symmetry:

In principle, from a LG model, one can construct another LG as its mirror partner, so that the LG A-model of one should be mirror symmetric to LG B-model of the other, vice versa. There are already some examples involving LG models in homological mirror symmetry.

(i) (Fano-LG) The A side of Fano projective hypersurface homologically mirror symmetric to LG B-model was proved in \cite{Sh2}; and A side of toric LG A-model to B-side of toric Fano was established in \cite{Ab1,Ab2,Ab3}. Please see \cite{Au} for a comprehensive survey and many other results cited therein.   Of interest to our approach, in the paragraph after \cite[Definition 5.3]{Au}, it was said that `one should place the reference fiber ``at infinity'', i.e. either consider a limit of this construction as $t\to +\infty$ ...'; during this process the compactness control not obvious, whilst in our approach, the ``reference fiber'' is at infinity (using Witten equation instead of small perturbation of holomorphic curve equation).

(ii) (general type-LG) In \cite{ AAK}, from a hypersurface $H$ of general type in an affine toric variety, a Fano $X$ was constructed and equipped with a Morse-Bott holomorphic function with critical manifold $H$. A (twisted) A-model of the resulting LG was shown to be homologically mirror symmetric to a LG B-model (first from Fano $X$ to LG as in (i) which is in turn partially compactified to get a LG on $X$).

(iii) LG-LG homological mirror symmetry:

(a) Seidel in a series of papers \cite{Si4,Si5,Si6,Si7,Si8,Si9,Si10} among other interesting key observations and constructions, constructed from an anticanonical pencil of hypersurfaces in a suitable ambient space, a non-commutative anticanonical pencil, which manifests an enchanced version of LG-LG mirror symmetry.

(b) Item (ii)  above is an instance of a LG-LG mirror symmetry, even though LG A-side is of a simple kind, a Morse-Bott holomorphic function with a single critical manifold. In general, if open-string A-model LG/CY correspondence alluded to in item (1) can be established and generalized to LG/beyond-CY (as the CY condition plays a less crucial role here unlike in mirror symmetry), this together with items (i) and (ii) above should give LG-LG mirror symmetry.  As an evidence of LG/beyond-CY, \cite[Conjecture 1.1]{Au2} should follow from item (ii) by applying a version of LG/beyond-CY on the A-side and Orlov's equivalence \cite{or1} on the B-side.

(c) Generalization of HMS beyond Calabi-Yau in essence is to first generalize to open Calabi-Yau, then consider (possibly normal crossing) divisor partial (de)compactification or restrict to its hypersurface, and iterate and analyze the relations. So far the literature involving LG model is mostly based on toric duality. For general $W$ with bounded critical values, one can require Lagrangians whose images under $W$ in $\mathbb{C}$, if noncompact, are mapped horizontally to the left near infinity (Kontsevich often uses this version, for example \cite{KKP}), and near infinity the situation is same as in this paper. We expect our theory to be generalized to weakly unobstructed Lagrangians of this form. In the presence of SYZ (singular) fibration in the complement of normal crossing anticanonical divisor, this version should provide a Family Floer tool for constructing a mirror; and for the holomorphic function on the mirror side, one has to virtually count (Maslov 2) disc configurations with boundaries on SYZ fibers with extra data. Many steps are not fully worked out and we expect our theory to be fruitful towards establishing general LG-LG Mirror symmetry.

(3)
In Gaiotto-Moore-Witten's web-based formalism (\cite{GMW}) of LG theory for holomorphic Morse $W$, with the underlying algebraic structure mathematically recasted by Kapranov-Kontsevich-Soibelman (\cite{KKS}), one considers polytopes generated by singular values of $W$ in $\mathbb{C}$ and secondary fans of all its possible regular polyhedral subdivisions (the dual of which is the space of webs), one can cook up a $L_\infty$-algebra $\mathcal{R}_\infty$ from this. From a regular polyhedral subdivision, one can consider a soliton connecting critical points lying over each boundary of this subdivision, and consider moduli spaces of solutions of Witten equations on polygons asymptotic to these boundary conditions and stitched up. Degenerations of moduli spaces into moduli spaces stitched over finer subdivisions give rise to the algebra structure, and rigid counts are conjectured to provide a Maurer-Cartan element for $\mathcal{R}_\infty$ and the associated deformation is expected to recover LG category (\cite[Conjecture 14.10]{KKS}). In this paper, we use the same Witten equation as in GMW and KKS, and techniques and viewpoints, especially compactness etc, should pave the way to rigorously construct this algebra of infrared and approach this conjecture.

This article is organized as follows. In Section \ref{sec-2} we introduce the regular tame exact Landau-Ginzburg system, which is the background for our theory.  In particular, we define the tame condition for a Landau-Ginzburg system $(M,h,W)$.

In Section \ref{sec-3} we investigate the Lefschetz thimbles of a Landau-Ginzburg system $(M,h,W)$, which are stable manifolds of equation (\ref{sec2:equa-4}). We use Sard-Smale method to show that for generic $h$ and time $T$, the Lefschetz thimbles intersect each other transversally (at each integer speed), in the sense of Definition \ref{trans-dfn}. Once these are established, we can define the Witten equation for maps from the strip $\R\times[0,1]$ to $M$.

Section \ref{sec-5} gives a brief review of Deligne-Mumford-Stasheff compactifications for pointed discs. We also introduce the auxiliary section of the $\log$-canonical bundle. All these are necessary in defining the Witten equation for maps from pointed discs to $M$, in Section \ref{sec-6}.

In Section \ref{sec-7} we study the compactness of the moduli spaces consisting of solutions of the Witten equation.  The tame condition, together with the bounded geometry and exactness of $M$, will lead to a $C^0$ bound for solutions, which is crucial in the compactness, as the manifold $M$ is noncompact. 

Section \ref{sec-8} studies Fredholm theory of the equation.  In particular, we introduce Landau-Ginzburg branes in Subsection \ref{index-orientation section}, which are objects of the Fukaya category of Landau-Ginzburg model.  We also introduce the grading datum, which gives  gradings for Hamiltonian chords. Note that our datum is slightly different from the one in Sheridan's paper \cite{Sh1}, as we encode the speed of the chords into the datum, in addition.

The last section is devoted to the definition of the Fukaya category of the LG model. We adopt the algebraic framework of Sheridan \cite{Sh1} and give a brief review of it in Subsection \ref{subsec-9.1}. This together with the compactness result in Section \ref{sec-7} will make up an $A_{\infty}$ category of the Landau-Ginzburg model.

\subsection*{Acknowledgement} The idea of this paper can be traced to some small notes written by Y. Ruan between 2006-2007 when he discussed this problem with the first author. All of three authors thanks Y. Ruan for his very helpful suggestions and encouragement. The first author also wants to thank K. Fukaya for his many help and support.

The second author thanks G. Tian and Y.-G. Oh for their discussion and support. Part of this work was done when he was a postdoc in School of Mathematics Science, Peking University.

The third author thanks H. Hofer, A. Oancea and C. Wendl for their support, encouragement and helpful discussions and appreciates the excellent research atmosphere and interactions at Jussieu, IAS and HU-Berlin where part of the work was done. D.Y. is grateful to have learned much from articles, lectures and talks of and discussions with K. Fukaya, P. Seidel, D. Auroux and M. Abouzaid at IAS.

\section{Regular tame exact Landau-Ginzburg system}\label{sec-2}\

In this paper, we always assume that $(M,h)$ is an $n$-complex dimensional complete noncompact K\"ahler manifold with bounded geometry and $W$ is a nontrivial holomorphic function on $M$, which is called a \textit{potential function}. We call the tuple $(M,h,W)$ as a \textit{Landau-Ginzburg (LG) system}. If the K\"ahler form $\om$ of the K\"ahler metric $h$ is exact, then we call $(M,h,W)$ an \textit{exact LG system}.

Let $z^j=x^j+iy^j, j=1,\cdots,n,$ be the local complex coordinates of $M$. Then locally the K\"ahler metric $h$ has the associated K\"ahler form $\om=\frac{i}{2}\sum_{ij}h_{i\jb}dz^i\wedge d z^\jb$. Let $J$ be the (almost) complex structure defined on $TM$, then the complexified tangent bundle $T_\C M$ has the orthogonal decomposition $T_\C M=T^{1,0} M\oplus T^{0,1}M$ with respect to $h$. The K\"ahler metric $h$ when restricted to $T^{1,0}M$ has the form $h(\;,\;)=\;\frac{1}{2}(g(\;,\;)-i\om(\;,\;))$. Here $g$ is the Riemannian metric on the underlying Riemannian manifold $(M,g)$ which is compatible with the complex structure $J$. Let $\nabla$ be the Chern connection of the holomorphic tangent bundle $T^{1,0} M$, then it is torsion free and the corresponding connection $D$ on $TM$ is just the Levi-Civita connection of $(M,g)$ such that $DJ=0$.   We write the Cauchy Riemann operator on $M$ as $\bpat_M$, whose complex conjugate operator is denoted as $\pat_M$. $\bpat_M$ is the $(0,1)$-part of the Chern connection $\nabla$.

We denote by $\Lambda^{p,q}(M)$ the bundle of $(p,q)$ forms on $M$.

\begin{df}\label{sec:df-tame-1} Given a LG system $(M,h,W)$, we say that it satisfies the \textit{tame condition} if there exist a base point $q_0\in M$ and constants $C_1,C_2,\delta>0$ such that for any point $\phi=(u, \bar u)=(u_1,\cdots,u_n,\ub_1,\cdots\ub_n)\in M$
\begin{equation}\label{sec3:tame-cond-1}
\begin{cases}
(i)\quad |W(u)|+|\nabla_M^2 W(u)|\le C_1 d(\phi,q_0)|\pat_M W(u)|+C_2,\\
(ii)\quad d(\phi,q_0)\le C_1|\pat_M W(u)|+C_2,\\
(iii)\quad |\pat_M W(u)|\le C_1e^{\delta d(\phi,q_0)}+C_2.
\end{cases}
\end{equation}
\end{df}

Note that $W(\phi)=W(u, \bar u)=W(u)$ here, because $W$ is holomorphic.

From the definition, we immediately get the estimate
\begin{equation}\label{sec2:iden-tame-1}
|W(u)|+|\nabla_M^2 W(u)|\le C'_1|\pat_M W(u)|^2+C'_2
\end{equation}
for some constants $C'_1,C'_2$.

By (ii) of Definition \ref{sec:df-tame-1}, we know that there are no critical points of $W$ outside a compact set $K$ and there are finitely many isolated critical points of $W$ inside $K$. Denote by $C_W$ the set of critical points of $W$ on $M$. For each critical point $p$ of $W$, we can define the Milnor number $\mu_W(p)$ as the dimension of the $\C$-algebra $\O_p/J_p$, where $J_p$ is the Jacobi ring of $W$. We can define the multiplicity of $W$ on $M$ as the sum of Milnor numbers:

\begin{df}[Multiplicity]If $(M,h,W)$ is a tame LG system, then the \textit{multiplicity} of $W$ on $M$ is defined by
\begin{equation}
\mu_W(M)=\sum_{p\in C_W}\mu_W(p).
\end{equation}
\end{df}

\begin{df}A LG system $(M,h,W)$ is called a \textit{Morse} LG system, if the potential function $W$ is a Morse function. $W$ is called \textit{regular Morse}, if it is Morse and for any pair $p,q\in C_W, p\neq q,$ we have $\im W(p)\neq \im W(q)$. If $W$ is regular Morse, then the LG system $(M,h,W)$ is called a \textit{regular Morse LG system}. If $(M, h, W)$ is a tame LG system, and $W$ is regular Morse function, then $(M,h,W)$ is called a \textit{regular tame LG system}. A \textit{regular tame exact LG system} $(M,h,W)$ is a regular tame LG system where the K\"ahler form for $h$ is exact.
\end{df}

There are two important examples of tame LG systems.

Let $W \in \mathbb{C}[x_1, \dots, x_N]$. If for any $\lambda\in \C^*$, there exist positive fractional numbers $q_1,\cdots, q_n, \gamma$ such that the following equality holds:
$$
W(\lambda^{q_1}x_1,\cdots, \lambda^{q_n}x_n)=\lambda^\gamma W(x_1,\cdots,x_n),
$$
then $W$ is called a \textit{quasi-homogeneous polynomial} and each $q_i$ is called the \textit{weight} of the variable $x_i$ and $\gamma$ is called the \textit{total weight} of $W$.  We say $W$ is \textit{non-degenerate} if (1) the choices of weights $q_i$ with the same $\gamma$ (which can be assumed to be 1 without loss of generality) are unique, and (2) $W$ has a singularity only at zero.

\begin{lm}\cite[Theorem 5.8]{FJR1}\label{sec3:lm-1} Let $W \in \mathbb{C}[x_1, \dots, x_N]$ be a non-degenerate,
quasi-homogeneous polynomial with weights $q_i<1$ for each
variable $x_i,i=1, \dots,N$, and with total weight $1$. Then for any tuple $(u_1, \dots,
u_N) \in \mathbb{C}^N$ we have

\[ |u_i| \leq C \left(\sum^N_{i=1}\left|\frac{\partial W}{\partial x_i}(u_1, \dots,
u_N)\right|+1 \right)^{\delta_i},\] where
$\delta_i=\frac{q_i}{\min_j(1-q_j)}$ and the constant $C$ depends
only on $W$. If $q_i\leq 1/2$ for all $ i \in \{1, \dots, N\}$, then
$\delta_i\le 1$ for all $ i \in \{1, \dots, N\}$. If $q_i<1/2$ for
all $ i \in \{1, \dots, N\}$, then $\delta_i<1$ for all $ i \in \{1,
\dots, N\}$.
\end{lm}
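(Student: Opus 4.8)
The plan is to exploit the quasi-homogeneity of $W$ to reduce the global estimate to a compactness statement on a weighted ``unit sphere'', on which non-degeneracy forces the gradient of $W$ not to vanish, and then to propagate the resulting bound over all of $\C^N$ by rescaling.

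First I would differentiate the defining relation $W(\lambda^{q_1}x_1,\dots,\lambda^{q_N}x_N)=\lambda\,W(x_1,\dots,x_N)$ with respect to $x_i$ to get $(\partial W/\partial x_i)(\lambda^{q_1}x_1,\dots,\lambda^{q_N}x_N)=\lambda^{\,1-q_i}\,(\partial W/\partial x_i)(x_1,\dots,x_N)$, so each $\partial W/\partial x_i$ is quasi-homogeneous of degree $1-q_i>0$. Put $\mu:=\min_j(1-q_j)=1-\max_j q_j>0$ and $f(u):=\sum_{j=1}^N |(\partial W/\partial x_j)(u)|$; this $f$ is continuous and, by non-degeneracy (the only singular point of $W$ is $0$), $f(u)=0$ if and only if $u=0$. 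Next I introduce the $\R_{>0}$-action $\lambda\cdot u:=(\lambda^{q_1}u_1,\dots,\lambda^{q_N}u_N)$ and the gauge $\rho(u):=\max_i |u_i|^{1/q_i}$, which satisfies $\rho(\lambda\cdot u)=\lambda\,\rho(u)$. The level set $S:=\{\,u : \rho(u)=1\,\}$ is compact — it is closed and contained in the closed polydisc $\{|u_i|\le 1\ \forall i\}$ — and $0\notin S$, so the positive continuous function $f$ attains a minimum $m>0$ on $S$; moreover every $u\neq 0$ has the unique form $u=\lambda\cdot v$ with $\lambda=\rho(u)>0$ and $v\in S$.

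Then, for $u=\lambda\cdot v$ with $v\in S$, homogeneity gives $f(u)=\sum_j \lambda^{\,1-q_j}|(\partial W/\partial x_j)(v)|$ and $|u_i|=\lambda^{q_i}|v_i|\le\lambda^{q_i}$, the last inequality because $\rho(v)=1$ forces $|v_i|\le 1$. If $\lambda\ge 1$, then $\lambda^{\,1-q_j}\ge\lambda^{\mu}$ for all $j$, hence $f(u)\ge\lambda^{\mu}f(v)\ge m\,\lambda^{\mu}$, so $\lambda\le (f(u)/m)^{1/\mu}$ and therefore $|u_i|\le (f(u)/m)^{q_i/\mu}$. If $\lambda<1$, then trivially $|u_i|<1$. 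Either way $|u_i|\le C\,(f(u)+1)^{q_i/\mu}$ with $C=\max(1,\max_i m^{-q_i/\mu})$ depending only on $W$; since $q_i/\mu=q_i/\min_j(1-q_j)=\delta_i$ and the origin is covered trivially, this is the claim. For the last two assertions, $\delta_i=q_i/(1-\max_j q_j)\le 1$ is equivalent to $q_i+\max_j q_j\le 1$, which holds whenever all $q_j\le 1/2$; replacing $\le$ by $<$ throughout gives $\delta_i<1$.

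The one real obstacle is the mismatch among the homogeneity degrees $1-q_i$: the terms of $f(u)$ cannot be pulled out at a single power of $\lambda$, which forces us to estimate with the worst exponent $\mu=\min_j(1-q_j)$ and makes the comparison $\lambda^{\,1-q_j}\ge\lambda^{\mu}$ available only for $\lambda\ge 1$ — hence both the case split and the $\min_j$ in the exponent $\delta_i$. The rest is soft, the essential input being compactness of $S$ together with the non-vanishing of $f$ off the origin.
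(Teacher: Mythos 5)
Your proof is correct, and the strategy — exploit quasi-homogeneity of each $\partial W/\partial x_i$ (of degree $1-q_i$), restrict to the weighted unit sphere $\{\rho=1\}$ where non-degeneracy forces $\sum_j|\partial_j W|$ to have a positive minimum by compactness, then propagate by rescaling with the worst-case exponent $\mu=\min_j(1-q_j)$ — is precisely the scaling/compactness argument underlying \cite[Theorem 5.8]{FJR1}, which the paper simply cites without reproving. The case split $\lambda\ge1$ versus $\lambda<1$ is exactly what accounts for the ``$+1$'' inside the parentheses, and the final reduction $\delta_i\le1\iff q_i+\max_j q_j\le1$ is handled cleanly.
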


\begin{prop}Let $W(u_1,\cdots,u_n)$ be a non-degenerate quasi-homogeneous polynomial on $\C^n$ with each weight $q_i\le 1/2$ and the total weight $1$. Let $W_0(u_1,\cdots,u_n)$ be the sum of any monomials with weight (which is the sum of the weights of all factors) no more than $1$. Assume that $h$ is a K\"ahler metric on $\C^n$ which is a compact perturbation of the Euclidean metric.  Then $(\C^n, h, W+W_0)$ is a tame LG system.
\end{prop}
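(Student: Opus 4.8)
The plan is to verify the three tame conditions of Definition \ref{sec:df-tame-1} for the pair $(\C^n, h, W+W_0)$ with base point $q_0 = 0$, exploiting that the leading term $W$ is quasi-homogeneous and that $W_0$ is a lower-order perturbation in the sense of weights. Write $P = W + W_0$. Since $h$ is a compact perturbation of the Euclidean metric, the distance function $d(\phi, 0)$ is comparable to the Euclidean norm $|u|$ up to an additive and multiplicative constant, and the same for $|\nabla_M^2 P|$ versus the Euclidean Hessian $|\partial^2 P / \partial u_i \partial u_j|$ and for $|\partial_M P|$ versus $\bigl(\sum_i |\partial P/\partial u_i|^2\bigr)^{1/2}$; so throughout I would reduce to the flat statements and only keep track of how the metric comparison affects the constants.

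The core analytic input is Lemma \ref{sec3:lm-1} applied to $W$: because each weight $q_i \le 1/2$, it gives $|u_i| \le C(\|\partial W(u)\| + 1)^{\delta_i}$ with $\delta_i \le 1$, hence $|u| \le C(\|\partial W(u)\| + 1)$. The issue is that we need this for $\partial P$, not $\partial W$. So the first real step is to control $\|\partial W(u)\| - \|\partial P(u)\|$: since $W_0$ is a sum of monomials of weight $\le 1$, each $\partial W_0/\partial u_i$ is a sum of monomials of weight $\le 1 - q_i$, and using the weighted-homogeneity scaling of $W$ one shows a pointwise bound $|\partial W_0/\partial u_i(u)| \le \varepsilon \|\partial W(u)\| + C_\varepsilon$ on all of $\C^n$ — essentially because monomials of strictly smaller weight are dominated near infinity and bounded on compacta. (One has to be slightly careful with monomials of weight exactly $1$ in $W_0$; these contribute to $\partial W_0$ terms of weight exactly $1-q_i$, the same weight as $\partial W/\partial u_i$, but they can be absorbed because $W$ is \emph{non-degenerate}, so the $1$-eigenspace of the Euler vector field in the Jacobian ring is finite and $\|\partial W\|$ dominates any fixed such monomial up to the homogeneity degree.) Once this comparison is in hand, $\|\partial P(u)\| \ge \tfrac12 \|\partial W(u)\| - C$, which upgrades Lemma \ref{sec3:lm-1} to $|u| \le C(\|\partial P(u)\| + 1)$ — this is exactly condition (ii).

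Given (ii), conditions (i) and (iii) follow from polynomial growth. For (iii): $\partial_M P$ is a polynomial, hence $|\partial_M P(u)| \le C(1+|u|)^k$ for some $k$, which is bounded by $C_1 e^{\delta d(\phi,0)} + C_2$ for any $\delta > 0$ (polynomial growth is dominated by exponential growth). For (i): $|P(u)|$ and $|\nabla_M^2 P(u)|$ are both polynomially bounded in $|u|$, say by $C(1+|u|)^m$; combining with (ii), $|u| \le C(\|\partial_M P(u)\| + 1)$, and since $W$ (hence $P$) has positive total weight, a reverse estimate $\|\partial_M P(u)\| \ge c|u|^{s} - C$ for some $s>0$ holds near infinity by quasi-homogeneity of the leading term (the gradient of a non-degenerate quasi-homogeneous polynomial has no zeros except the origin, and grows like a positive power of $|u|$); feeding this in shows $(1+|u|)^m \le C\, d(\phi,0)\,|\partial_M P(u)| + C$ after absorbing powers, which is (i). Finally, the system is Morse? — no, $P$ need not be Morse, so we only claim it is a tame LG system, as stated.

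The main obstacle I anticipate is the weight-comparison step $|\partial W_0/\partial u_i| \le \varepsilon\|\partial W\| + C_\varepsilon$, specifically handling the weight-exactly-$1$ monomials in $W_0$ cleanly: one must use non-degeneracy of $W$ to ensure the top-weight part of $\partial P$ stays bounded below by $\|\partial W\|$ rather than potentially cancelling it. Everything else is routine polynomial growth bookkeeping plus the Euclidean-to-$h$ metric comparison, both of which only affect constants.
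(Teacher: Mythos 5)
Your overall strategy --- base point $q_0=0$, reduction to the Euclidean metric, Lemma \ref{sec3:lm-1} as the key analytic input, a comparison of $\partial W_0$ with $\partial W$ to obtain condition (ii), and polynomial-growth bookkeeping for (i) and (iii) --- is the same as the paper's. The gap sits exactly where you flag it: the bound $|\partial W_0/\partial u_i|\le \varepsilon\|\partial W\|+C_\varepsilon$ is false in general for the monomials of $W_0$ of weight exactly $1$, and the Jacobian-ring argument you sketch cannot repair it, since membership in the Jacobian ideal is algebraic and yields no pointwise linear domination by $\|\partial W\|$. Quantitatively, Lemma \ref{sec3:lm-1} bounds a monomial $m=\prod_j u_j^{a_j}$ by $C(\|\partial W\|+1)^{e}$ with $e=\sum_j a_j\delta_j=\mathrm{wt}(m)/(1-\max_j q_j)$; for $m$ a $u_i$-derivative of a weight-$1$ monomial this gives $e=(1-q_i)/(1-\max_j q_j)>1$ whenever $q_i<\max_j q_j$. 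A concrete counterexample: take $W=u_1^4+u_2^2$ (so $q_1=1/4$, $q_2=1/2$) and $W_0=u_1^2u_2$ (weight $1$); along the curve $u_2=u_1^3$ one has $|\partial W_0/\partial u_1|=2|u_1|^4$ while $\|\partial W\|=\sqrt{20}\,|u_1|^3$, so the ratio is unbounded and your intermediate claim $\|\partial P\|\ge\tfrac12\|\partial W\|-C$ does not follow from this comparison.

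The paper sidesteps this with a case split you should adopt: the weight-exactly-$1$ part of $W_0$ is folded into the quasi-homogeneous polynomial, so that $W+W_0$ is again quasi-homogeneous of total weight $1$; the Euler identity $W+W_0=\sum_i q_iu_i\,\partial_{u_i}(W+W_0)$ then gives condition (i) in one line, and Lemma \ref{sec3:lm-1} is applied directly to $W+W_0$ (rather than to $W$) to get (ii). Only the strictly-lower-weight part of $W_0$ is treated perturbatively, via $|\partial W_0|\le\tfrac12|\partial W|+C_0$. Relatedly, your route to (i) through a lower bound $\|\partial_M P\|\ge c|u|^s-C$ followed by ``absorbing powers'' is not forced to close by degree counting alone (nothing guarantees that the degree of $P$ or of its Hessian is at most $1+s$); the Euler identity again gives the exact inequality $|W+W_0|\le(\max_iq_i)\,|u|\,\|\partial(W+W_0)\|$ that (i) requires, with a similar weight count for the Hessian term. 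Your treatment of (iii) and of the Euclidean-to-$h$ comparison is fine.
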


\begin{proof}Take the base point $q_0=0$ in $\C^n$. Without loss of generality, we assume that $h$ is the Euclidean metric and let $W_0$ be a monomial with weight no more than $1$. If $W_0$ is of weight $1$, then $W+W_0$ is of weight $1$ and by quasi-homogeneity, we have
$$
(W+W_0)(u_1,\cdots,u_n)=\sum q_i u_i \pat_{u_i}(W+W_0)(u_1,\cdots,u_n).
$$
If the weight of $W_0$ is less than $1$, then Lemma \ref{sec3:lm-1} implies that
$$
|\pat W_0|\le \frac{1}{2}|\pat W|+C_0
$$
and
$$
|W_0|\le C (|u||\pat W|+1).
$$
Then there is
$$
|W+W_0|\le C (|u||\pat W|+1)\le C(|u||\pat (W+W_0)|+1).
$$
So in either cases, we get the control of $|W|$ in (i) of (\ref{sec3:tame-cond-1}). Similarly, we can get the control of $|\nabla^2_M W|$.

By Lemma \ref{sec3:lm-1}, it is easy to see that
$$
|u|\le C (|\pat (W+W_0)|+1).
$$
This proves (ii) of (\ref{sec3:tame-cond-1}). Since $|\pat_M W|$ is polynomial growth, (iii) of (\ref{sec3:tame-cond-1}) naturally holds.
\end{proof}

\begin{ex} $(\C^5, \sum_i \frac{\sqrt{-1}}{2}dz^i\wedge dz^\ib, W(z_1,\cdots,z_5)=z_1^5+\cdots z_5^5+a_1z_1+\cdots+a_5 z_5 )$ is a tame exact LG system. Here $a_1,\cdots, a_5\in \C$ are perturbation parameters. For generic parameters $a_i's$, the system is a Morse tame exact LG system. If $(a_1,\dots, a_5)$ lies in some chambers in $\C^5$ separated by ``walls'' consisting of real hypersurfaces, then the system is a regular tame exact LG system (ref. \cite{FJR3}).
\end{ex}

Another important example is the system $((\C^*)^n,\frac{i}{2}\sum_l \frac{dz^l\wedge dz^\lb}{|z|^2}, f)$, where $f$ is a Laurent polynomial of the form
$$
f(z_1,\cdots,z_n)=\sum_{\alpha=(\alpha_1,\cdots,\alpha_n)\in \Z^n}a_\alpha z^\alpha.
$$
The Newton polyhedron $\Delta=\Delta(f)$ of $f$ is the convex hull of the integral points $\alpha=(\alpha_1,\cdots,\alpha_n)\in \Z^n$ with nonzero $a_\alpha$'s. $f$ is said to be \textit{convenient} if $0$ is in the interior of the Newton polyhedron. Let $\Delta'\subset \Delta$ be an $l$-dimensional face of $\Delta$. Define the Laurent polynomial with the Newton polyhedron $\Delta'$
$$
f^{\Delta'}(z)=\sum_{\alpha'\in \Delta'}a_{\alpha'}z^{\alpha'}.
$$
For any Laurent polynomial $g$, denote by $g_i,1\le i\le n$ the logarithmic derivatives of $g$:
$$
g_i(z)=z_i\frac{\pat}{\pat z_i}g(z).
$$
\begin{df}
A Laurent polynomial $f$ is called \textit{non-degenerate} if for every $l$-dimensional face $\Delta'\subset \Delta(l>0)$ the polynomial equations
$$
f^{\Delta'}(z)=f^{\Delta'}_1(z)=\cdots=f^{\Delta'}_n(z)=0.
$$
has no common solutions in $T$.
\end{df}

\begin{prop}If $f$ is a convenient and non-degenerate Laurent polynomial on $((\C^*)^n,\frac{i}{2}\sum_l \frac{dz^l\wedge dz^\lb}{|z|^2})$, then $((\C^*)^n,\frac{i}{2}\sum_l \frac{dz^l\wedge dz^\lb}{|z|^2}, f)$ is a tame exact LG system.
\end{prop}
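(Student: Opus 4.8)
The plan is to verify the three tame conditions (i)–(iii) of Definition \ref{sec:df-tame-1} with base point $q_0$ chosen to be the point $\mathbf{1}=(1,\dots,1)\in(\C^*)^n$, using the metric $h=\frac{i}{2}\sum_l \frac{dz^l\wedge dz^{\lb}}{|z|^2}$, whose associated distance function is comparable (up to multiplicative and additive constants) to $\sum_l |\log|z_l||$ plus an angular term; in particular $d(\phi,q_0)$ is, for $|z|$ large or small, of the order of $\max_l |\log|z_l||$. The gradient $\pat_M W$ with respect to this metric has components $|z_l|^2 \pat_{z_l} f = \overline{z_l}\, f_l(z)$ up to the metric normalization, so $|\pat_M W|$ is comparable to $\big(\sum_l |f_l(z)|^2\big)^{1/2}$ where $f_l = z_l \pat_{z_l} f$ are the logarithmic derivatives. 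Thus everything reduces to controlling $|f|$ and the invariantly-defined Hessian norm $|\nabla_M^2 W|$ by $\sum_l|f_l|$ together with $d(\phi,q_0)$, and to bounding $d(\phi,q_0)$ itself by $\sum_l |f_l|$.

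The heart of the argument is the non-degeneracy and convenience of $f$, which via the Kouchnirenko–Bernstein-type analysis of Newton polyhedra gives a lower bound: there exist constants $c,\rho>0$ and an exhaustion of $(\C^*)^n$ by the regions $R_\Delta$ ``dominated'' by each face $\Delta'$ of $\Delta(f)$ such that on each such region $\sum_l |f_l(z)| \geq c\, \max_{\alpha\in\Delta'}|z^\alpha|$ once $\|z\|$ or $\|z\|^{-1}$ is large. This is exactly the mechanism behind the properness/tameness statements for convenient non-degenerate Laurent polynomials (analogous to Lemma \ref{sec3:lm-1} in the quasi-homogeneous case); I would either cite the standard Newton-polyhedron estimate or sketch it: decompose $(\C^*)^n$ according to which monomial of $f$ dominates, use that on a cone dual to a face $\Delta'$ the sum $\sum_l |f_l|$ cannot be small relative to that face's monomials unless $f^{\Delta'}$ and all $f^{\Delta'}_l$ have a common near-zero, contradicting non-degeneracy by a compactness argument on the torus $T$ in the remaining directions. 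Convenience ensures $0$ is interior to $\Delta$, so every monomial of $f$ is controlled by the ``extreme'' monomials and, crucially, $|f(z)|$ and $|f_l(z)|$ all grow (no direction of the torus escapes to infinity without some $f_l$ growing), which is what forces (ii): $d(\phi,q_0)\lesssim \max_l|\log|z_l|| \lesssim \log\big(\max_\alpha |z^\alpha|+2\big) \lesssim \log\big(|\pat_M W|+2\big) \lesssim |\pat_M W| + C$, since the relevant monomials appear with the full polynomial weight in some $f_l$. For (i), write $f = \sum_\alpha a_\alpha z^\alpha$ and observe each monomial $z^\alpha$ with $a_\alpha\neq 0$ satisfies: if $\alpha\neq 0$ then $z^\alpha$ is (up to a bounded factor) one of the $z_l \pat_{z_l}(a_\alpha z^\alpha)$, hence controlled in the dominated region by $\sum_l|f_l|$ after subtracting lower terms, an induction on the Newton filtration; the constant monomial $a_0$ is bounded; so $|f|\leq C_1' |\pat_M W| + C_2'$ (even without the $d(\phi,q_0)$ factor), and a similar but slightly more careful bookkeeping — the Hessian in the metric connection of $h$ differs from the naive $z_iz_j\pat_i\pat_j f$ by Christoffel terms of the Poincaré-type metric, which are bounded — gives $|\nabla_M^2 W| \leq C_1' |\pat_M W| + C_2'$. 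Finally (iii) is the easy polynomial-growth bound: $|\pat_M W|$ is comparable to $\sum_l|f_l|$, a Laurent polynomial, which on $|z_l|\sim e^{\pm d}$ grows at most like $e^{C d}$ for $C = \max$ of the $\ell^1$-norms of the exponents of $f$, so $|\pat_M W| \leq C_1 e^{\delta d(\phi,q_0)} + C_2$.

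The main obstacle is the face-by-face lower bound $\sum_l |f_l(z)| \geq c\max_{\alpha\in\Delta'}|z^\alpha|$ on the region dominated by $\Delta'$: one must set up the correct stratification of $(\C^*)^n$ near infinity (a fan refining the normal fan of $\Delta$), show that in the interior of each maximal cone a single vertex monomial dominates while on lower cones the restricted polynomial $f^{\Delta'}$ governs, and then invoke non-degeneracy as a Nullstellensatz-type statement — the system $f^{\Delta'}=f^{\Delta'}_1=\cdots=f^{\Delta'}_n=0$ having no solution in $T$ yields, by the effective Nullstellensatz or simply by compactness of the unit torus in the transverse angular variables, a uniform lower bound $\sum_l |f^{\Delta'}_l(z)| \geq c'\max_{\alpha\in\Delta'}|z^\alpha|$, which then dominates the error from the subleading monomials of $f$ in that region. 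Once this estimate is in hand, conditions (i)–(iii) follow by the bookkeeping sketched above; the exactness of the LG system is immediate since $\om = \frac{i}{2}\sum_l \frac{dz^l\wedge dz^{\lb}}{|z_l|^2} = d\big(-\frac{1}{2}\sum_l (\log|z_l|^2)\, d(\arg z_l)\big)$ is exact on $(\C^*)^n$, so $((\C^*)^n, h, f)$ is a tame \emph{exact} LG system.
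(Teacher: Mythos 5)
Your proposal is correct and follows essentially the same route as the paper: pass to logarithmic coordinates where the metric is flat, identify the dominant face of the Newton polyhedron in each asymptotic direction, use non-degeneracy (no common zeros of $f^{\Delta'}$ and its logarithmic derivatives on the torus) to get the lower bound $\sum_l|f_l|\gtrsim\max_{\alpha\in\Delta'}|z^\alpha|$, and use convenience for growth in every direction. The paper organizes this by rays $t=\beta|t|$ and an explicit contradiction argument rather than your normal-fan stratification and Nullstellensatz/compactness phrasing, but the mechanism is identical.
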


\begin{proof}The proof of the tameness is much like the proof in \cite[Proposition 2. 48]{Fa}. Let $z_i=e^{t_i}$,  then in the new coordinates the metric $h=\frac{i}{2}\sum_i dt^i\wedge dt^\ib$ and there is
$$
f(z_1,\cdots,z_n)=f(t_1,\cdots,t_n)=\sum_{\alpha\in \Delta}a_\alpha e^{\langle \alpha,t\rangle}.
$$
Write $t=(t_1,\cdots,t_n)=\beta|t|=(\beta_1,\cdots,\beta_n)|t|$, where $\beta_i=t_i/|t|,|\beta|\le 1$. The real part $R(\beta)=(\re(\beta_1),\cdots,\re(\beta_n))$ defines a line in $\R^n$ through the origin. Since $f$ is convenient, there exist at least two directions $\alpha_\pm$ such that
$\langle R(\beta),\alpha_-\rangle<0$ and $\langle R(\beta),\alpha_+\rangle>0$. We arrange all the vertices $\alpha$ such that
\begin{equation}
\langle R(\beta),\alpha_0\rangle\le \langle
R(\beta),\alpha_1\rangle\cdots<\cdots\le 0\le \cdots<\cdots \langle
R(\beta),\alpha_s\rangle,
\end{equation}
where $s+1$ is the number of the vertices of the Newton polyhedron $\Delta$. Denote by $M(\beta)=\langle R(\beta),\alpha_s\rangle$, $\M$ the set
of all of $\alpha$ such that $\langle R(\beta),\alpha\rangle=M(\beta)$ and $\M^+$ the set of all $\alpha$ such that $0<\langle R(\beta),\alpha\rangle<M(\beta)$. Hence we have along the line $t$ that
\begin{equation}\label{sec2:ineq-laurent-1}
|f(t)|\le \sum_\alpha |a_\alpha| e^{\langle R(\beta),\alpha\rangle|t| }\le C_1 e^{M(\beta)|t|}+C_2.
\end{equation}
Choose $\delta=\max_{\beta}M(\beta)$, then we get (iii) of Definition \ref{sec:df-tame-1}.

On the other hand, along the complex line $t$ there is
\begin{align}\label{sec2:inequ:laurent-2}
&|\pat_t f|^2=\sum_i\left|\sum_\alpha a_\alpha \alpha_i e^{\langle
\alpha,t\rangle}\right|^2\\
&\ge \sum_i \left|\sum_{\alpha\in \M}
a_\alpha \alpha_i e^{\langle \alpha,\beta\rangle|t|}\right|^2-
C\sum_{\alpha\in \M^+}|e^{\langle \alpha,\beta\rangle|t|}|^2-C\\
&=\left(\sum_i \left|\sum_{\alpha\in \M} a_\alpha \alpha_i e^{i Im(\langle
\alpha,\beta\rangle)|t|}\right|\right) e^{2 M(\beta)|t|}-
C\sum_{\alpha\in \M^+}|e^{\langle \alpha,\beta\rangle|t|}|^2-C
\end{align}

 Claim: for any $\theta \in \R$, there holds

\begin{equation}\label{inequ-laurent-2.5}
\sum_i\left|\sum_{\alpha\in \M} a_\alpha \alpha_i e^{i Im(\langle
\alpha,\beta\rangle)\theta}\right|>0.
\end{equation}
If the claim is true, then there exists constants $C_1,C_2$ depending only on the parameters of $f$ such that
$$
|\pat f|\ge C_1e^{M(\beta)|t|}-C_2.
$$
Combining (\ref{sec2:ineq-laurent-1}), we obtain the control of $|f|$ in (i) of Definition \ref{sec:df-tame-1}. Similarly, we can get the control of $|\nabla^2_M f|$.  The inequality (ii) of Definition \ref{sec:df-tame-1} is obvious. Therefore, we have proved that $((\C^*)^n,\frac{i}{2}\sum_l \frac{dz^l\wedge dz^\lb}{|z|^2}, f)$ is a tame system

To prove the Claim, we prove by contradiction. Suppose that
(\ref{inequ-laurent-2.5}) is not true, then there exists a
$\theta_0$ such that for any $i=1,\cdots,n$, there is
\begin{equation}
\sum_{\alpha\in \M} a_\alpha \alpha_{i} e^{i Im(\langle
\alpha,\beta\rangle)\theta_0}=0,i=1,\cdots,n,
\end{equation}
Multiplying the above equality by $R(\beta_i)$ and taking the sum,
noticing that for any $\alpha\in \M$, $\sum_i \alpha_i
Re(\beta_i)=M(\beta)$, we obtain
\begin{equation}
\sum_{\alpha\in \M} a_\alpha  e^{i Im(\langle
\alpha,\beta\rangle)\theta_0}=0.
\end{equation}
This contradicts with the fact that $f$ is non-degenerate.
Therefore, we proved the Claim.
\end{proof}

\section{Lefschetz thimble and transverse intersection}\label{sec-3}

\subsubsection*{Hamiltonian vector field}\

The K\"ahler manifold $(M,h)=(M, \omega, g)$ is a (real) symplectic manifold $(M, \omega)$ equipped with the compatible complex structure $J$. If $H$ is a (real) smooth function on $M$, then the Hamiltonian vector field $X_H$ is defined by $\iota_{X_H}\om=dH$, i.e, $X_H$ is induced by the isomorphism
$$
\tilde{\om}: TM\to T^*M.
$$
On the other hand, if we denote by $\nabla_g H$ the gradient vector field of $H$ with respect to the metric $g$, then we can show
$$
X_H=-J\cdot \nabla_g H,
$$
since we have the identities that $(\iota_{X_H}\om)(Y)=\om(X_H, Y)=dH(Y)=g(\nabla_g H,Y)$ and $\om=g\circ (J\times Id)$. The Hamiltonian equation is defined as
\begin{equation}
\dot{\phi}(t)=X_H(\phi(t))=-J\cdot \nabla_g H(\phi(t)).
\end{equation}

However, since $M$ is K\"ahler, we have the more refined isomorphisms as below
\begin{equation}
\tilde{\om}:T^{1,0}M\to \Lambda^{0,1}(M),\quad T^{0,1}M\to \Lambda^{1,0}(M).
\end{equation}
Let $W$ be a holomorphic function on $M$, by these two isomorphisms, we have
\begin{align}
\tilde{\om}^{-1}(\overline{\pat_M W}/2)&=\sum_l(-i\sum_{\ib} h^{\ib l}\pat_{\ib}\overline{W})\frac{\pat}{\pat z^l}=:X_W\nonumber\\
\tilde{\om}^{-1}({\pat_M W}/2)&=\sum_{\lb}(i\sum_{i} h^{\lb i}\pat_{i}{W})\frac{\pat}{\pat z^\lb}=\overline{X_W}.
\end{align}
Hence the vector field $X_{\re(W)}$ has the decomposition:
\begin{equation}
X_{\re(W)}=X_W+\overline{X_W},\;X_{\im(W)}=i(X_W-\overline{X_W}).
\end{equation}
Let $\phi:\R\mapsto M$ be given by $t\mapsto (\z(t),\bar{\z}(t))$, then the Hamiltonian equation for $\re(W)$ is given by
\begin{equation}\label{sec2:equa-0}
J\cdot \frac{\pat \phi(t)}{\pat t}=\nabla_g (\re(W))(\phi(t)),
\end{equation}
where $\phi(t)=(x_1(t),\cdots,x_n(t),y_1(t),\cdots,y_n(t))^T$.
This equation can be decoupled to the equation
\begin{equation}\label{sec2:equa-1}
\frac{\pat \z(t)}{\pat t}=X_W
\end{equation}
and its complex conjugate equation. Here we have used the facts that $z^i(t)=\phi^i(t)+i\phi^{i+n}(t)$ and
$$
\frac{1}{2}(\frac{\pat\phi}{\pat t}-iJ\cdot \frac{\pat \phi}{\pat t})=\sum_i\frac{\pat z^i}{\pat t}\frac{\pat}{\pat z^i},\;\frac{1}{2}(\frac{\pat\phi}{\pat t}+iJ\cdot \frac{\pat \phi}{\pat t})=\sum_{\ib}\frac{\pat z^\ib}{\pat t}\frac{\pat}{\pat z^\ib}.
$$

In local coordinates,  the Equation (\ref{sec2:equa-1}) has the form
\begin{equation}\label{hal-complex}
i\cdot\frac{\pat z^l(t)}{\pat t}=\sum_{\ib} h^{\ib l}\pat_{\ib}\overline{W}.
\end{equation}

We call the (half) Hamiltonian system (\ref{hal-complex}) as the  Witten equation of the (holomorphic) potential function $W$. Hence $u(t)$ is the solution of the Witten equation if and only if $\phi(t)=(u(t),\overline{u(t)})$ is the solution of the Equation (\ref{sec2:equa-0}).

\begin{lm}\label{sec2:lm-1}
We have
\begin{equation}
X_{\re(W)}=-J\cdot \nabla_g (\re(W))=-\nabla_g(\im(W))=-J\cdot X_{\im(W)}.
\end{equation}
\end{lm}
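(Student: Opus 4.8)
The plan is to reduce the entire statement to the identity $X_{H}=-J\cdot\nabla_{g}H$ (valid for any real smooth function $H$, and established just above), together with the observation that the holomorphy of $W$ is already packaged into the decomposition $X_{\re(W)}=X_{W}+\overline{X_{W}}$, $X_{\im(W)}=i(X_{W}-\overline{X_{W}})$ recorded before the statement.

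\emph{First I would dispose of the two outer equalities}, which use no holomorphy at all. Applying $X_{H}=-J\cdot\nabla_{g}H$ with $H=\re(W)$ gives immediately $X_{\re(W)}=-J\cdot\nabla_{g}(\re(W))$, which is the first equality. Applying the same identity with $H=\im(W)$ gives $X_{\im(W)}=-J\cdot\nabla_{g}(\im(W))$; composing on the left with $-J$ and using $J^{2}=-\mathrm{Id}$ then yields
\[
-J\cdot X_{\im(W)}=J^{2}\,\nabla_{g}(\im(W))=-\nabla_{g}(\im(W)),
\]
which is the last equality of the lemma.

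\emph{The remaining content is the middle equality} $-J\cdot\nabla_{g}(\re(W))=-\nabla_{g}(\im(W))$, equivalently $X_{\re(W)}=-J\cdot X_{\im(W)}$. Here I would use that $X_{W}$ is, by its very definition, a section of $T^{1,0}M$, so that $JX_{W}=iX_{W}$ and $J\overline{X_{W}}=-i\overline{X_{W}}$; then, from the decomposition quoted above,
\[
-J\cdot X_{\im(W)}=-J\bigl(i(X_{W}-\overline{X_{W}})\bigr)=-i\bigl(iX_{W}+i\overline{X_{W}}\bigr)=X_{W}+\overline{X_{W}}=X_{\re(W)}.
\]
Alternatively, and perhaps more in keeping with the surrounding K\"ahler computation, one can argue directly from the Cauchy--Riemann equations: holomorphy of $W$ means $dW$ is of type $(1,0)$, hence $d(\im W)=-d(\re W)\circ J$; pairing against $g$ and using the $J$-invariance $g(J\,\cdot\,,J\,\cdot\,)=g(\,\cdot\,,\,\cdot\,)$ gives $g(\nabla_{g}(\im W),X)=-d(\re W)(JX)=g(J\cdot\nabla_{g}(\re W),X)$ for every vector field $X$, whence $\nabla_{g}(\im W)=J\cdot\nabla_{g}(\re W)$, i.e.\ the middle equality after multiplying by $-1$.

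The only point requiring care is the bookkeeping of signs and of the factors $\tfrac12$ entering $h=\tfrac12(g-i\omega)$ and the definitions of $\tilde{\omega}$ and $X_{W}$; since these are already incorporated consistently into the decomposition $X_{\re(W)}=X_{W}+\overline{X_{W}}$ quoted above, the argument needs nothing beyond tracking how $J$ acts on $(1,0)$- and $(0,1)$-vectors. There is no analytic difficulty here: the lemma is essentially a restatement of the K\"ahler compatibility of $g$ and $J$ together with the Cauchy--Riemann equations for $W$, so I do not anticipate a genuine obstacle --- the ``hard part'', such as it is, is merely fixing conventions so that every sign lines up.
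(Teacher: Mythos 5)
Your proposal is correct and follows essentially the same route as the paper: both proofs rest on the decomposition $X_{\re(W)}=X_W+\overline{X_W}$, $X_{\im(W)}=i(X_W-\overline{X_W})$, the fact that $J$ acts as $\pm i$ on $T^{1,0}M$ and $T^{0,1}M$, and the general identity $X_H=-J\cdot\nabla_g H$. The sign bookkeeping in your computation of $-J\cdot X_{\im(W)}=X_{\re(W)}$ checks out, and the alternative Cauchy--Riemann argument you sketch is a valid (if unnecessary) second confirmation.
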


\begin{proof} By definition, we have
$$
X_{\re(W)}=X_W+\overline{X_W},\quad X_{\im(W)}=i(X_W-\overline{X_W}),
$$
then
$$
J\cdot X_{\im(W)}=-i(J\cdot\overline{X_W}-J\cdot X_W)=-X_{\re(W)}.
$$
Using the identity
$$
X_{\im(W)}=-J\cdot \nabla_g \im(W),
$$
we get the conclusion.
\end{proof}
By Lemma \ref{sec2:lm-1}, we have
$$
X_{\re(W)}=-\nabla_g (\im(W)).
$$
Hence $\phi(t)$ satisfies the Equation (\ref{sec2:equa-0}) if and only if it satisfies the equation
\begin{equation}\label{hal}
\frac{\pat \phi(t)}{\pat t}=-\nabla_g (\im(W)).
\end{equation}

\subsubsection*{Lefschetz thimble and its basic properties}\

Now we study the gradient flow of $\re(W)$:
\begin{equation}\label{sec2:equa-4}
\frac{\pat \psi(s)}{\pat s}=\nabla_g (\re(W))(\psi(s)).
\end{equation}

The flow $\psi(s)$ defines a diffeomorphism of $M$: $\psi_s: p\mapsto \psi_s(p)=:p\cdot s$. We define the stable manifold $L^+_p$ and the unstable manifold $L^-_p$ of this flow at the critical point $p\in C_W$:
\begin{align*}
&L^-_p:=\{q\in M|q\cdot s\to p,\;\text{as}\;s\to -\infty\}\\
&L^+_p:=\{q\in M|q\cdot s\to p,\;\text{as}\;s\to +\infty\}
\end{align*}

\begin{df}
$L^-_p,L^+_p$ are called the \textit{Lefschetz thimbles} attached to the critical point $p$ with respect to $W$. Similarly for $\theta\in [0,2\pi)$, we can define the Lefschetz thimbles $L^\pm_{p}(\theta)$ attached to the critical point $p$ with respect to $e^{i\theta}W$.
\end{df}

\begin{lm}We have
\begin{itemize}
\item[(i)] $L^\pm_p(0)=L^{\mp}_p(\pi),\forall p\in C_W=C_{-W}$.
\item[(ii)] $L^\pm_p(\frac{-\pi}{2})$ are the Lefschetz thimbles with respect to the following gradient flow equation:
\begin{equation}
 \frac{\pat \psi(s)}{\pat s}=\nabla_g (\im(W)).
\end{equation}
\end{itemize}
\end{lm}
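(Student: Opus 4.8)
The plan is to deduce both items from the single elementary identity
\[
\re(e^{i\theta}W)=\cos\theta\,\re(W)-\sin\theta\,\im(W),
\]
valid for any holomorphic $W$ and any $\theta\in\R$, together with the trivial observation that changing the sign of a gradient vector field reverses the associated flow (the substitution $s\mapsto-s$), and hence interchanges the stable and unstable manifolds at each rest point.

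\emph{Proof of (i).} First note that $p$ is critical for $W$ exactly when $dW_p=0$, which holds iff $d(-W)_p=0$; hence $C_W=C_{-W}$ and both sides of the asserted equality are well defined. Taking $\theta=\pi$ in the identity above and using $e^{i\pi}W=-W$ gives $\re(e^{i\pi}W)=-\re(W)$, so the gradient flow equation (\ref{sec2:equa-4}) for the potential $e^{i\pi}W$ reads $\pat_s\psi=-\nabla_g(\re W)(\psi)$. If $\psi_s$ denotes the flow of $\re(W)$, then $\psi_{-s}$ solves this equation with the same initial value, so the flow attached to $e^{i\pi}W$ is $s\mapsto\psi_{-s}$. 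Consequently $\psi_{-s}(q)\to p$ as $s\to+\infty$ iff $\psi_{s}(q)\to p$ as $s\to-\infty$, and symmetrically for the opposite limit; by the very definition of $L^{\pm}_p(\theta)$ this says $L^{\pm}_p(\pi)=L^{\mp}_p(0)$, which is (i).

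\emph{Proof of (ii).} Now take $\theta=-\pi/2$, i.e. $e^{i\theta}=-i$ (in the convention $\theta\in[0,2\pi)$ this is the value $\theta=3\pi/2$). The identity gives $\re(e^{-i\pi/2}W)=-\sin(-\pi/2)\,\im(W)=\im(W)$. Therefore the gradient flow equation (\ref{sec2:equa-4}) attached to the potential $e^{-i\pi/2}W$ is exactly $\pat_s\psi=\nabla_g(\im W)(\psi)$, and by definition $L^{\pm}_p(-\pi/2)$ are the stable and unstable manifolds at $p$ of this flow, that is, the Lefschetz thimbles with respect to $\pat_s\psi=\nabla_g(\im W)$. (One may record that by the Cauchy--Riemann equations $d(\re W)=0$ forces $d(\im W)=0$, so the critical set is the same for $W$, $\re(e^{i\theta}W)$ and $\im W$, and all the objects above are consistently defined.)

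There is essentially no hard step here. The only places requiring a little care are the bookkeeping of stable versus unstable manifolds under the time reversal $s\mapsto-s$ in part (i), and correctly evaluating $\re(e^{i\theta}W)$ at $\theta=\pi$ and at $\theta=-\pi/2$ in the stated convention $\theta\in[0,2\pi)$, so that $e^{-i\pi/2}=-i$ and $\re(-iW)=\im W$.
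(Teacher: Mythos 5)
Your proof is correct. The paper states this lemma without proof (it follows immediately from the definitions), and your argument via $\re(e^{i\theta}W)=\cos\theta\,\re(W)-\sin\theta\,\im(W)$ together with the time-reversal $s\mapsto -s$ is exactly the intended, routine verification.
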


\begin{lm}
Let $p\in C_W$ be a critical point. If $q\in L^-_p(q\in L^+_p)$,  then $\im(W(q))\equiv \im(W(p))$ and $\re(W(q))\ge \re(W(p))$ (or $\re(W(q))\le \re(W(p))$).
\end{lm}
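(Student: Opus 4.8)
The plan is to use the fact that, for a holomorphic function, the gradients of the real and imaginary parts are $g$-orthogonal and of equal length — the Cauchy--Riemann relations phrased on the K\"ahler manifold — which is already contained in Lemma \ref{sec2:lm-1}. Indeed, that lemma gives $X_{\re(W)}=-\nabla_g(\im(W))$ and $X_{\re(W)}=-J\cdot\nabla_g(\re(W))$, so $\nabla_g(\im(W))=J\cdot\nabla_g(\re(W))$. Combining this with $\om=g\circ(J\times\mathrm{Id})$, i.e.\ $\om(X,Y)=g(JX,Y)$, I would record the pointwise identity
\[
g\big(\nabla_g(\re(W)),\nabla_g(\im(W))\big)=g\big(J\nabla_g(\re(W)),\nabla_g(\re(W))\big)=\om\big(\nabla_g(\re(W)),\nabla_g(\re(W))\big)=0 .
\]

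Next I would differentiate $W$ along a trajectory $\psi(s)=q\cdot s$ of the gradient flow (\ref{sec2:equa-4}). From the identity above,
\[
\frac{d}{ds}\,\im(W)(\psi(s))=g\big(\nabla_g(\im(W)),\nabla_g(\re(W))\big)=0,
\]
so $\im(W)$ is constant on each flow line; and
\[
\frac{d}{ds}\,\re(W)(\psi(s))=g\big(\nabla_g(\re(W)),\nabla_g(\re(W))\big)=|\nabla_g(\re(W))(\psi(s))|^2\ge 0,
\]
so $\re(W)$ is non-decreasing along flow lines.

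Finally I would pass to the limit. The flow $\psi_s$ is a (globally defined) diffeomorphism of $M$, so for $q\in L^-_p$ the trajectory $\psi(s)$ is defined for all $s\le 0$ and $\psi(s)\to p$ as $s\to-\infty$; by continuity of $W$, constancy of $\im(W)$ gives $\im(W(q))=\lim_{s\to-\infty}\im(W(\psi(s)))=\im(W(p))$, and monotonicity gives $\re(W(q))=\re(W(\psi(0)))\ge\re(W(\psi(s)))$ for all $s\le 0$, hence $\re(W(q))\ge\lim_{s\to-\infty}\re(W(\psi(s)))=\re(W(p))$. The case $q\in L^+_p$ is identical with $s\to+\infty$, the monotonicity now yielding $\re(W(q))\le\re(W(p))$.

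The argument is short and I do not anticipate a genuine obstacle; the only points needing a word of care are that the gradient trajectory through $q$ is defined on the relevant half-line and that $W$ commutes with the limit $\psi(s)\to p$, both of which are immediate from the completeness of the flow and the very definition of $L^\pm_p$. If one prefers not to invoke completeness, the same conclusion follows because $\re(W)$ is monotone and, along a convergent trajectory, bounded, forcing the trajectory to exist for all the relevant time by the standard escape-to-infinity criterion.
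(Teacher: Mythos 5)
Your proof is correct and follows essentially the same route as the paper: both rest on the observation that along the gradient flow of $\re(W)$ the derivative of $W$ is real and non-negative (the paper writes this in complex form as $\partial_s\overline{W}=|\partial_s u|^2$ via the Witten-equation formulation, while you split it into $\tfrac{d}{ds}\im(W)=0$ and $\tfrac{d}{ds}\re(W)=|\nabla_g\re(W)|^2$ using the Cauchy--Riemann orthogonality), and then one passes to the limit $s\to\mp\infty$ using the definition of $L^\mp_p$.
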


\begin{proof}By Lemma \ref{sec2:lm-1}, $\psi(s)$ satisfies (\ref{sec2:equa-4}) if and only if $\psi(s)$ satisfies
\begin{equation}
\frac{\pat \psi(s)}{\pat s}=-X_{\re(iW)},
\end{equation}
which is equivalent to the Witten equation (if we write $\psi(s)=(u(s),\bar{u}(s))$) :
\begin{equation}
\frac{\pat u^l(s)}{\pat s}=\sum_{\ib} h^{\ib l}\pat_{\ib}\overline{W}.
\end{equation}
Take the hermitian innder product, then we obtain
$$
\frac{\pat \overline{W}}{\pat s}=\left|\frac{\pat u}{\pat s}\right|^2.
$$
Assume that $q\in L^-_p$, then we can integrate $t$ over $(-\infty,0]$ to get
\begin{equation}
\overline{W}(q)-\overline{W}(p)=\int^0_{-\infty}\left|\frac{\pat u}{\pat s}\right|^2(\psi_q(s)) ds
\end{equation}
which is equivalent to
\begin{align}
&\re(W)(q)-\re(W)(p)=\int^0_{-\infty} \left|\frac{\pat u}{\pat s}\right|^2 ds,\\
& \im(W(q))\equiv \im(W(p)).
\end{align}
Similarly, one can get the conclusion for $q\in L^+_p$.
\end{proof}

\begin{thm}\cite[Chapter 4]{BH} For generic metric $g$ or generic potential function $W$ in $C^2$-topology, the Lefschetz thimbles $L^\pm_p(\theta)$ are $n$-dimensional smooth submanifolds.
\end{thm}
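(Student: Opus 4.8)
The plan is to invoke the Sard-Smale transversality machinery for the gradient flow of a Morse(-type) function, exactly as developed in \cite[Chapter 4]{BH} for the classical compact setting, and adapt it to our noncompact but tame situation. The key point is that the stable/unstable manifolds $L^{\pm}_p(\theta)$ are smooth submanifolds as soon as the flow of $\re(e^{i\theta}W)$ is Morse-Smale in a neighborhood of the critical set, or, more precisely, as soon as the linearization of the flow at each critical point is hyperbolic; smoothness of the individual thimbles (as opposed to transversality of pairs) only requires hyperbolicity at the critical points together with the stable/unstable manifold theorem. So the proof reduces to showing: (a) the set of critical points $C_W$ is finite and each critical point is nondegenerate for generic $g$ (resp.\ generic $W$); (b) at each such critical point the vector field $\nabla_g(\re(e^{i\theta}W))$ has a hyperbolic linearization; (c) one may apply the (un)stable manifold theorem to conclude each $L^{\pm}_p(\theta)$ is an injectively immersed smooth $n$-dimensional submanifold.

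First I would record that by (ii) of Definition \ref{sec:df-tame-1} all critical points of $W$ lie in a fixed compact set $K$, and for generic perturbation (of $g$ or of $W$ in $C^2$) they are isolated and nondegenerate; the count $\dim_{\C}\O_p/J_p$ being finite at each, and finitely many of them, gives that $C_W$ is finite. Next, at a nondegenerate critical point $p$ the function $\re(e^{i\theta}W)$ has a nondegenerate real Hessian: writing $W$ in Morse coordinates $W = W(p) + \sum_j z_j^2$ (holomorphic Morse lemma, valid since $p$ is a nondegenerate critical point of the holomorphic $W$), one computes that $\re(e^{i\theta}W) - \re(e^{i\theta}W(p))$ is a nondegenerate real quadratic form of signature $(n,n)$, so $p$ is a Morse critical point of $\re(e^{i\theta}W)$ of index $n$. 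Consequently the gradient vector field $\nabla_g(\re(e^{i\theta}W))$ has a hyperbolic zero at $p$ with $n$-dimensional stable and $n$-dimensional unstable subspaces of $T_pM$. The stable/unstable manifold theorem for the (smooth, complete) flow $\psi_s$ on $M$ then produces $L^{\pm}_p(\theta)$ as smooth injectively immersed submanifolds of dimension $n$, tangent at $p$ to the corresponding eigenspaces.

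The genericity statement itself — that for $g$ generic in $C^2$-topology, or $W$ generic in $C^2$-topology, the potential $W$ is Morse — is where one uses Sard-Smale: one sets up the universal moduli space of critical points over the parameter space of metrics (or of holomorphic perturbations), shows the relevant section is transverse to the zero section, and deduces that the regular values form a residual set. For the metric this is classical since changing $g$ does not change $C_W$ (the zero set of $dW$), so one is really only perturbing to make the thimbles \emph{transverse}; for genuine nondegeneracy of the critical points one perturbs $W$ (e.g.\ by the linear terms as in the quintic example). In all cases the tame condition guarantees the perturbed $W$ still has its critical set in a compact region, so no critical points escape to infinity and the local picture above applies uniformly.

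The main obstacle, and the only place that genuinely differs from \cite{BH}, is the noncompactness of $M$: a priori a trajectory of $\psi_s$ starting in what should be $L^+_p$ could fail to converge, or could leave every compact set. Here the two preceding lemmata save us. By the previous lemma, along any trajectory $\psi(s)$ of \eqref{sec2:equa-4} one has $\frac{\pat}{\pat s}\re(W)(\psi(s)) = |\pat u/\pat s|^2 \ge 0$ and $\im(W(\psi(s)))$ constant; combined with (i)--(iii) of the tame condition, a trajectory whose $\re(W)$-value is bounded stays in a bounded region of $M$ (the distance to $q_0$ is controlled by $|\pat_M W|$, which is in turn controlled by $|W| + |\nabla^2_M W|$ near the bounded regime), so the flow is well-behaved and the stable/unstable sets have the expected local structure near $p$. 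Thus the hard part is not the dynamics near the critical point (that is entirely standard once hyperbolicity is established) but verifying that the tame condition rules out pathological escape of trajectories — and that verification is precisely the content already built into the tame condition and the two lemmata, so the argument closes.
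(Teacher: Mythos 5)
The paper gives no proof of this statement—it is delegated entirely to the citation \cite[Chapter 4]{BH}—and your argument is precisely the standard one contained there: once $W$ is Morse, each critical point of $\re(e^{i\theta}W)$ is a hyperbolic zero of the gradient field of real index $n$ (signature $(n,n)$ via the holomorphic Morse lemma), and the stable/unstable manifold theorem yields the thimbles as smooth $n$-dimensional submanifolds diffeomorphic to $\R^n$. Your observations that the genericity of the metric is irrelevant for smoothness of individual thimbles (it only enters for transversality, handled later in Theorem \ref{transversal}) and that the tame condition prevents escape of trajectories on the noncompact $M$ are both correct and consistent with how the paper uses this result.
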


\begin{lm}The Lefschetz thimbles $L^\pm(\theta)$ with respect to $e^{i\theta}W$ are Lagrangian submanifolds in the symplectic manifold $(M,\omega)$ underlying the K\"ahler manifold $(M,h)$.
\end{lm}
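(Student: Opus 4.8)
The plan is to show that each Lefschetz thimble $L^\pm(\theta)$ is an isotropic submanifold of $(M,\omega)$ of half dimension, i.e. $\omega|_{L^\pm(\theta)}\equiv 0$; since $L^\pm(\theta)$ is $n$-dimensional by the previous theorem and $\dim_{\R}M=2n$, isotropy forces Lagrangian. By replacing $W$ with $e^{i\theta}W$ it suffices to treat $\theta=0$, so I only argue for $L=L^+_p$ (the case $L^-_p$ is identical, reversing the flow direction). First I would recall from the preceding discussion that the gradient flow equation \eqref{sec2:equa-4} is $\partial_s\psi=\nabla_g(\re W)=-X_{\im W}$, i.e. the flow is, up to sign, the Hamiltonian flow of the function $H:=\im W$. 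The key point is that the flow $\psi_s$ is then a family of symplectomorphisms of $(M,\omega)$: $\psi_s^*\omega=\omega$ for all $s$, because the Lie derivative $\mathcal L_{X_H}\omega=d\iota_{X_H}\omega+\iota_{X_H}d\omega=d(dH)+0=0$.

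The main step is then a standard stable-manifold argument. Fix $q\in L=L^+_p$ and let $v,w\in T_qL$. Since $L$ is the stable manifold, $\psi_s(q)\to p$ as $s\to+\infty$, and moreover $d\psi_s(v), d\psi_s(w)\in T_{\psi_s(q)}L$ with $|d\psi_s(v)|,|d\psi_s(w)|\to 0$ exponentially as $s\to+\infty$ (tangent vectors to the stable manifold are precisely those whose forward flow decays; this is where hyperbolicity of the critical point $p$ — guaranteed after the generic perturbation so that $\re W$ is Morse along $L$, or at worst the standard Łojasiewicz/center-stable estimates — enters). Using that $\psi_s$ is a symplectomorphism,
\[
\omega_q(v,w)=\omega_q(\psi_s^*\,\cdot\,,\psi_s^*\,\cdot\,)(v,w)=\omega_{\psi_s(q)}\bigl(d\psi_s(v),d\psi_s(w)\bigr).
\]
Letting $s\to+\infty$: the right-hand side is bounded by $\|\omega\|_{L^\infty}\,|d\psi_s(v)|\,|d\psi_s(w)|$ using bounded geometry of $(M,h)$, and this tends to $0$. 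Hence $\omega_q(v,w)=0$, so $T_qL$ is isotropic; as $q$ was arbitrary, $L$ is an isotropic $n$-dimensional submanifold of the $2n$-dimensional symplectic manifold, hence Lagrangian.

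The main obstacle I anticipate is justifying the exponential decay $|d\psi_s(v)|\to 0$ on the stable manifold, rather than the symplectic bookkeeping, which is routine. If $p$ is a hyperbolic (e.g. nondegenerate) critical point of the Morse function $\re W$ — which holds after the generic perturbation invoked in the cited theorem \cite[Chapter 4]{BH}, and in any case in the regular Morse LG systems that are the focus of the paper — then the linearization of \eqref{sec2:equa-4} at $p$ has no eigenvalues on the imaginary axis, $T_qL$ is genuinely the sum of the contracting directions, and the Hartman--Grobman/stable-manifold theorem gives uniform exponential contraction of $d\psi_s$ on $T_qL$; combined with the bounded-geometry bound on $\|\omega\|$ this closes the estimate. (Alternatively, even without nondegeneracy one can use the energy identity $\re W(\psi_s(q))-\re W(p)=\int_s^\infty|\partial_s u|^2$ established in the previous lemma together with a Łojasiewicz gradient inequality to get the needed decay, but the hyperbolic case suffices for the paper's purposes.) A small secondary point to check is that the flow $\psi_s$ is defined for all $s\ge 0$ starting from points of $L$ and stays in a region where the bounds apply — this follows from the $C^0$-type control and the tame condition already in force, since points of $L$ flow toward the compact set containing $C_W$.
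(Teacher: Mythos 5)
Your proposal is correct and follows essentially the same route as the paper: the gradient flow of $\re W$ is the Hamiltonian flow of $\pm\im W$, hence preserves $\omega$, and pushing tangent vectors of the thimble toward the critical point forces $\omega|_{T_qL}=0$, so the $n$-dimensional thimble is Lagrangian. The only difference is that the paper simply asserts $(\psi_s)_*X\to 0$ at the critical point without justification, whereas you correctly flag and supply the needed decay argument (hyperbolicity of the nondegenerate critical point), which is a welcome refinement rather than a divergence.
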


\begin{proof}It suffices to prove $\theta=0$ case for $L^-$. The Lefschetz thimbles $L^-$ are the integral submanifold of the flow $\psi(s)$ generated by:
$$
\frac{\pat \psi(s)}{\pat s}=\nabla_g \re(W)(\psi(s)).
$$
which is equivalent to the Hamiltonian system
\begin{equation}
\frac{\pat \psi(s)}{\pat s}=X_{\im(W)}
\end{equation}
Hence $\psi$ is a Hamiltonian diffeomorphism satisfies $\psi^*\om=\om$.

Let $q\in L^-_p $, $X,Y\in T_q L_p^-$. Since $p$ is the zero point of $X_{\im(W)}$, we have $((\psi_s)_*X)(p)=0$ as $s\to -\infty$. Furthermore, we have
$$
\om(X,Y)(q)=\psi_s^*\om(X,Y)(q)=\om((\psi_{s})_*(X(q)),(\psi_{s})_*(Y(q)))=0,
$$
as $s\to -\infty$.
\end{proof}

\begin{lm}\label{w:simply connected}
$L^{\pm}_p$ are orientable simply connected smooth manifolds.
\end{lm}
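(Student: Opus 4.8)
The plan is to exploit the fact that each Lefschetz thimble $L^{\pm}_p$ is the stable (resp. unstable) manifold of a gradient flow, hence is a smoothly embedded copy of a Euclidean space. Concretely, for a critical point $p$ of the (real-valued) Morse function $\re(W)$, the stable manifold theorem applied to the gradient flow \eqref{sec2:equa-4} identifies $L^+_p$ diffeomorphically with the stable subspace $E^s_p \subset T_pM$ of the linearized flow, and likewise $L^-_p \cong E^u_p$. Since $E^s_p$ and $E^u_p$ are linear subspaces of a vector space, they are contractible; in particular each $L^{\pm}_p$ is diffeomorphic to $\R^k$ for the appropriate $k$ (and we already know from the earlier theorem that $k = n$, so in fact $L^{\pm}_p \cong \R^n$). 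A space diffeomorphic to $\R^n$ is connected, simply connected, and orientable, which gives all the assertions at once.

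The key steps, in order, are: first, recall that $\re(W)$ restricted near $p$ is a Morse function (this uses that $W$ is regular Morse, part of the standing hypotheses, together with the fact that a holomorphic Morse singularity has a nondegenerate real Hessian for $\re(W)$); second, invoke the (un)stable manifold theorem for the gradient flow of $\re(W)$ with respect to $g$ — the same flow \eqref{sec2:equa-4} whose stable/unstable sets define $L^{\pm}_p$ — to get that $L^+_p$ (resp. $L^-_p$) is an injectively immersed submanifold diffeomorphic to $E^s_p$ (resp. $E^u_p$) via the ``time-$(-\infty)$ limit'' retraction $q \mapsto \lim_{s\to+\infty}\psi_s(q)$ composed with the graph description near $p$; third, combine with the theorem cited from \cite{BH} that these thimbles are genuine embedded $n$-dimensional submanifolds, so the diffeomorphism type is exactly $\R^n$; fourth, conclude that $\R^n$ is orientable and simply connected, hence so is each $L^{\pm}_p$.

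One subtlety worth spelling out: the genericity hypotheses in this section are there to ensure the thimbles are smooth submanifolds of $M$ (avoiding pathologies where the stable manifold fails to be embedded, or has the wrong dimension), but for the present statement one only needs the local model near $p$ plus the flow, so the argument is insensitive to the global transversality issues addressed later. Also, strictly speaking one should note that the diffeomorphism $L^+_p \cong E^s_p$ is the content of the Hadamard–Perron / stable manifold theorem for the time-one map of the gradient flow, applied at the hyperbolic fixed point $p$ (hyperbolicity of $p$ for $\nabla_g\re(W)$ is exactly nondegeneracy of the Hessian).

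The main obstacle — really the only point requiring care — is justifying that the stable manifold of the gradient flow is \emph{globally} diffeomorphic to the linear stable subspace $E^s_p$, not merely locally near $p$. This is standard: the flow gives a diffeomorphism from $L^+_p$ onto a small disc in $E^s_p$ by flowing forward (the disc is backward-invariant and exhausted by pushing forward), or equivalently $L^+_p$ is the increasing union $\bigcup_{s\ge 0}\psi_{-s}(D^s)$ of forward-flowed copies of a local stable disc $D^s$, each an embedded ball, with nested inclusions, so the union is an open ball. Once this is in hand, orientability and simple connectedness of $L^{\pm}_p$ follow immediately from those properties of $\R^n$, completing the proof.
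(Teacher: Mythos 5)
Your argument is correct and is essentially the paper's own proof: the paper likewise cites the stable/unstable manifold theory (Banyaga--Hurtubise, Chapter 4) to identify $L^{\pm}_p$ with the stable or unstable subspace of $T_pM\cong\R^n$, and concludes orientability and simple connectedness from contractibility of $\R^n$. Your additional remarks on hyperbolicity of $p$ and the exhaustion $\bigcup_{s\ge 0}\psi_{-s}(D^s)$ merely flesh out the standard facts the paper takes as known.
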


\begin{proof}
From the theory in dynamical systems (ref. \cite[Chapter 4]{BH}),   It is know that the smooth manifolds $L^{\pm}_p$ are diffeomorphic to the stable  or unstable subspaces of the tangent space $T_p M$ which can be identified with $\R^n$. Hence $L^{\pm}_p$ are orientable and simply connected.
\end{proof}

The following fact of $L^{\pm}_p$ is frequently used later:

\begin{lm}\label{w: lef-control-1}
Assume that $W$  is a regular Morse function. Let $q\in L^{\mp}_p$, then $\re(W(q))\to \pm\infty$ as $d(p,q) \to +\infty$, and  $d(p,q) \to 0$ as $\re(W(q))\to \re(W(p))$.
\end{lm}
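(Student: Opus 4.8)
The plan is to reduce to a single thimble and then treat the two asymptotic statements separately. Replacing $W$ by $e^{i\theta}W$ reduces to $\theta=0$, and replacing $W$ by $-W$ (which swaps $L^+_p\leftrightarrow L^-_p$ and flips the sign of $\re(W)$) reduces everything to the case $q\in L^-_p$: I must show $\re(W(q))\to+\infty$ as $d(p,q)\to\infty$, and $d(p,q)\to0$ as $\re(W(q))\to\re(W(p))$. I will use throughout that along the gradient flow $s\mapsto\psi_q(s)$ of $\re(W)$ with $\psi_q(0)=q$ and $\psi_q(s)\to p$ as $s\to-\infty$ one has $\frac{d}{ds}\im(W)\equiv0$ and $\frac{d}{ds}\re(W)=|\nabla_g\re(W)|^2=|\pat_M W|^2\ge0$; hence $\re(W(q))-\re(W(p))=\int_{-\infty}^0|\nabla_g\re(W)(\psi_q(s))|^2\,ds$ and the whole flow line lies in the level set $\{\im(W)=\im(W(p))\}$ (all from the lemmata just proved).

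For the statement $d(p,q)\to0$: since $T_pL^-_p$ is the unstable subspace of the linearized gradient flow at $p$, the restriction $\re(W)|_{L^-_p}$ has a nondegenerate local minimum at $p$, so the Morse lemma provides $r_0>0$ and $0<c_1\le c_2$ with $c_1d(x,p)^2\le\re(W(x))-\re(W(p))\le c_2d(x,p)^2$ for $x\in L^-_p$, $d(x,p)<r_0$. Given $q\in L^-_p$ with $d(p,q)\ge r_0$, the flow line $\{\psi_q(s):s\le0\}\cup\{p\}\subset L^-_p$ is connected and contains $p$ and $q$, hence a point $y\in L^-_p$ with $d(y,p)=r_0/2$; monotonicity of $\re(W)$ along the flow gives $\re(W(q))\ge\re(W(y))\ge\re(W(p))+c_1r_0^2/4$. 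So $\re(W(q))-\re(W(p))<c_1r_0^2/4$ forces $d(p,q)<r_0$, and then $d(p,q)\le\big((\re(W(q))-\re(W(p)))/c_1\big)^{1/2}$, which tends to $0$ as $\re(W(q))\to\re(W(p))$.

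For the statement $\re(W(q))\to+\infty$, I would bound $d(p,q)$ by the length of the flow line from $p$ to $q$ and control that length. Fix $\rho>0$ small enough that on $B_\rho(p)$ the flow on $L^-_p$ behaves like its linearization at $p$ (so $s\mapsto d(\psi_q(s),p)$ is increasing while $\psi_q(s)\in B_\rho(p)$ and the orbit tends to $p$ exponentially), and also $\rho<\frac12\min_{p'\in C_W\setminus\{p\}}d(p',\{\im(W)=\im(W(p))\})$, which is positive since the level set is closed and — as $W$ is regular Morse — contains no critical point besides $p$. Then
\[
d(p,q)\ \le\ \int_{-\infty}^{0}\big|\nabla_g\re(W)(\psi_q(s))\big|\,ds\ =\ \big(\text{part in }B_\rho(p)\big)+\big(\text{part outside }B_\rho(p)\big).
\]
The part in $B_\rho(p)$ is a single initial arc of the orbit whose length is $\le C_\rho$, a constant depending only on $\rho$, $W$ and the linearization at $p$ (exponential approach to $p$). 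For the part outside $B_\rho(p)$: by the choice of $\rho$ and flow-invariance of $\{\im(W)=\im(W(p))\}$, the flow line there stays at distance $\ge\rho$ from every critical point, so $|\nabla_g\re(W)|=|\pat_M W|\ge\ve_0>0$ with $\ve_0:=\inf\{\,|\pat_M W(x)|:x\in M\setminus\bigcup_{p'\in C_W}B_\rho(p')\,\}$; this $\ve_0$ is positive — otherwise a sequence $x_k\in M\setminus\bigcup_{p'}B_\rho(p')$ with $|\pat_M W(x_k)|\to0$ would, by (ii) of Definition \ref{sec:df-tame-1}, eventually satisfy $d(x_k,q_0)\le C_2+1$, hence (completeness of $M$) subconverge to a point $x_\infty$ with $\pat_M W(x_\infty)=0$ and $d(x_\infty,p')\ge\rho$ for all $p'\in C_W$, impossible as the first condition forces $x_\infty\in C_W$. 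Using $|\nabla_g\re(W)|\le|\nabla_g\re(W)|^2/\ve_0$ and $\frac{d}{ds}\re(W)=|\nabla_g\re(W)|^2$ on this part, its length is $\le\ve_0^{-1}\int_{-\infty}^0|\nabla_g\re(W)|^2\,ds=\ve_0^{-1}(\re(W(q))-\re(W(p)))$. Combining, $d(p,q)\le C_\rho+\ve_0^{-1}(\re(W(q))-\re(W(p)))$, so $\re(W(q))\ge\re(W(p))+\ve_0(d(p,q)-C_\rho)\to+\infty$ as $d(p,q)\to\infty$.

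I expect the third paragraph to be the crux. The inequality $d(p,q)\le(\text{length of the flow line})$ is worthless on its own, because a gradient line can be long while carrying little energy; the real work is (i) confining the region where $|\nabla_g\re(W)|$ is small to an arbitrarily thin ball around $p$ — here regular Morseness and the flow-invariance of $\{\im(W)=\im(W(p))\}$ are essential — and (ii) extracting a uniform positive lower bound for $|\nabla_g\re(W)|$ away from that ball on the noncompact manifold $M$, which is precisely where the tame condition (ii) of Definition \ref{sec:df-tame-1} enters. The remaining ingredients (the Morse lemma at $p$, connectedness of flow lines, monotonicity of $\re(W)$ and constancy of $\im(W)$ along the flow) are routine.
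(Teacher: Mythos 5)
Your argument is correct and follows the same essential route as the paper's: bound $d(p,q)$ by the length of the gradient flow line from $p$ to $q$, localize the region of small gradient to a small neighborhood of $p$, and use a positive lower bound for $|\nabla_g \re(W)|$ outside that neighborhood together with the energy identity $\int_{-\infty}^0|\nabla_g\re(W)|^2\,ds=\re(W(q))-\re(W(p))$ to control the length. The differences are in implementation. The paper deduces both assertions from a single Cauchy--Schwarz estimate on the time $s_0$ the flow spends outside a neighborhood $U$ (bounding $s_0\le a^{-1}(\re(W(q))-\re(W(p)))$ and then shrinking $U$ for the second claim), whereas you treat the second assertion separately via the Morse lemma for $\re(W)|_{L^-_p}$ at $p$, and obtain the first via the pointwise inequality $|\nabla_g\re(W)|\le|\nabla_g\re(W)|^2/\varepsilon_0$ rather than Cauchy--Schwarz; both yield the same linear dependence of $d(p,q)$ on $\re(W(q))-\re(W(p))$. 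Where you are genuinely more careful is in the lower bound itself: the paper asserts the existence of $a>0$ with $|\nabla_g\re(W)|^2>a$ on the noncompact set $L^-_p\setminus U$ without justification, while you derive the analogous constant $\varepsilon_0>0$ explicitly from the tame condition (ii), completeness of $M$, and the flow-invariance of the level set $\{\im W=\im W(p)\}$ (which isolates $p$ as the only critical point the flow can approach); this is precisely the point where noncompactness must be addressed, and your argument correctly identifies and closes it.
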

\begin{proof} It suffices to treat the $L^-_p$ case. According to the assumption, $p$ is bounded away from other critical points. For each small neiborhood  $U$ of  $p$ in $L^-_p$, we have a positive number $a$, such that
$$|\nabla \re(W)|^2(q) >a$$
for any $q\in L^-_p\setminus U$.
As
$$
\frac{d \re (W(\psi_s(q)))}{ds}=g( \dot{\psi}_s(q),\nabla_g \re(W))=|\nabla_g \re(W)|^2,
$$
if  $\psi_s(q)\in L^-_p\setminus U $, then
$$
\frac{d\re(W(\psi_s(q)))}{dt}>a.
$$
Let $s_0=\sup\{s\in \R^+|\psi_{-t}(q) \text{ lies outside }U \text { for } 0\leq t\leq s\}$. We have $\psi_{-s_0}(q)\in \partial U$, and
$$
s_0\le \frac{1}{a}(\re(W(q))-\re(W(\psi_{-s_0}(q)))< \frac{1}{a} (\re(W(q))-\re(W(p)).
$$
Thus
\begin{align*}
&d(\psi_{-s_0}(q),q)\le \int^0_{-s_0} |\dot{\psi}_s(q)|ds=\int^0_{-s_0}|\nabla_g \re(W)|ds \le  \sqrt{s_0}(\int^0_{-s_0}|\nabla_g \re(W)|^2ds)^{1/2}\\
&=\sqrt{s_0}(\int^0_{-s_0}g(\nabla_g \re(W),\dot{\psi_s}(q))ds))^{1/2}\le \sqrt{\frac{1}{a}}(\re(W(q))-\re(W(p))^{\frac{3}{2}}.
\end{align*}
This gives a bound for $q$ in terms of $\re(W)$. Thus the first assertion is proved. As we can choose $U$ to have arbitrarily small radius, the second assertion is proved.
\end{proof}

If $W$ is a regular Morse function, we can assign an order to the set $C_W$ of critical points in the following way. If $\im(W)(p)<\im(W)(p')$, then we say $p>p'$. According to this order, we can number the critical points by $p_i$ such that $p_1>\cdots >p_m$, where $m:=\#C_W$. Hence we can number the critical values of $W$ by defining $\alpha_i=W(p_i)$, and the positive Lefschetz thimbles of $W$ by $L^+_{i}=L^+_{p_i}$.  Since the negative Lefschetz thimbles of $W$ are the positive Lefschetz thimbles of $e^{i\pi}W$, this induces the numbering of the negative Lefschetz thimbles of $W$ by identifying $L^-_i$ with the Lefschetz thimble $L^+_{m+1-i}$ of $-W$.

\subsubsection*{Transverse intersection}\

Now we consider the following equations

\begin{equation}\label{sec2:equa-nega-flow-1}
 \frac{\pat \phi(t)}{\pat t}=-\mathsf{k}\nabla_g \im(W)(\phi)=\mathsf{k} X_{\re(W)}(\phi),
\end{equation}
for $\mathsf{k}=1,2,3\cdots$.  We can regard $\mathsf{k}$ as some kind of ``speed'' here, and we call it the speed-$k$ equation to indicate such an index.

For each $\mathsf{k}$, we define the time-T diffeomorphism  $\phi_{T,k}: M\to M$ given by $q\mapsto \phi_{T,k}(q)$, the flow of equation \ref{sec2:equa-nega-flow-1}. For two critical points $p_0,p_1$ with $\im(W(p_0))>\im(W(p_1))$, denote by $L_0=L^+_{p_0}, L_1=L^+_{p_1}$ and $\Delta_{01}:=\im(W(p_0))-\im(W(p_1))$.

\begin{df}\label{trans-dfn}We say that $L_0$ \textit{intersects transversely with $L_1$ at time $T$ with speed $\mathsf{k}$} if and only if the Lagrangian submanifold $\phi_{T,k}(L_0)$ intersects $L_1$ transversely, which is written as  $L_0 \pitchfork^{T,\mathsf{k}} L_1$. If $L_0$ intersects transversely with $L_1$ at time 1 with speed $\mathsf{k}$, then we say that \textit{$L_0$ intersects transversely with $L_1$ with speed $\mathsf{k}$}.
\end{df}

By reparameterization, it is clear that $L_0 \pitchfork^{T,k} L_1$ is equivalent to $L_0 \pitchfork^{kT,1} L_1$.

To state and prove the transverse intersection theorem, we need some preparation.

Let $\varphi\in C^\infty(M)$, then the $C^0$-norm of $\varphi$ is defined as:
$$
||\varphi||_{C^0}:=\sup_{q\in M}|\varphi(q)|.
$$
Take a sequence of positive numbers $\epsilon^i$ which is rapidly decreasing to $0$ as $i\to \infty$ (for example, we can choose $\epsilon_k=(\frac{1}{2})^{k^2}$). We can define a norm of $\varphi\in C^\infty(M)$ as
\begin{equation}\label{metric-norm}
||\varphi||_\epsilon:=\sum^\infty_{k=0}\epsilon^k ||\nabla_g^k \varphi||_{C^0}.
\end{equation}
Define the Floer-$\epsilon$ space
$$
C^\infty_\epsilon(M)=\{\varphi\in C^\infty(M)|\; ||\varphi||_\epsilon<\infty\},
$$
which is a Banach space.

Let $C^\infty_0(M)$ be the space of smooth functions on $M$ with compact support, and define a closed subspace of $C^\infty_\epsilon(M)$ as below
$$
C^\infty_{c,\epsilon}(M)=\overline{C^\infty_0(M)\cap C^\infty_\epsilon(M)}.
$$
Define
\begin{equation}
\mathcal{H}=\{\varphi\in C^{\infty}_{c,\epsilon}(M):\omega_{\varphi}=\omega+\sqrt{-1}\pat_M\bpat_M\varphi>0\text{ on }M \}.
\end{equation}
This is an open convex subset in the Banach space $C^\infty_{c,\epsilon}(M)$, which can be considered as a Banach manifold, whose tangent space $T_\varphi\mathcal{H}$ at $\varphi$ is $C^\infty_{c,\epsilon}(M)$.

Let $L_0$ and $L_1$ be two Lefschetz thimbles. We define a closed subset $\G^L\subset \mathcal{H}$ with respect to $L_0, L_1$ as follows:
\begin{equation}\label{twothimbles}
\G^L:=\{\varphi \in \mathcal{H}\;|\;|\pat_M\bpat_M \varphi|_{L_0\cup L_1}=0 \}
\end{equation}

For $\mu>0$, define
$$
\G^L_{\mu}=\{\varphi\in\G^L\;|\; ||\pat_M\bpat_M \varphi||_{C^0(M)}<\mu\}.
$$
To simplify the notations, we identify the set $\G^L_\mu$ and the set of K\"ahler forms $\om+\sqrt{-1}\pat_M\bpat_M \G^L_\mu$.

\begin{lm}\label{fini-rad}
There exists a $\mu_0>0$ such that for any $\mu\in (0,\mu_0)$, any $\gt\in \G^L_{\mu}$ and any fixed point $p_0\in M$, there exists a $R>0$ such that for any $T\in (1/2,2),\mathsf{k}\in\mathbb{N}$, any solution of the following speed-$\mathsf{k}$ equation
\begin{equation}
\begin{cases}
 \frac{\pat \phi(t)}{\pat t}=-\mathsf{k}\nabla_{\gt} \im(W)(\phi(t))\\
 \phi(0)\in L_0,\phi(T)\in L_1,
 \end{cases}\label{w:equa-nega-flow-2}
 \end{equation}
lies entirely in $B_{R}(p_0)$.
\end{lm}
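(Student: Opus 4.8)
The plan is to exploit the rigid structure of the speed-$\mathsf{k}$ flow and reduce the statement to a uniform bound on the length of a solution; equivalently, it suffices to exhibit one bounded subset of $M$ (allowed to depend on $\gt$) containing the image of every solution, since the radius $R$ around a given base point is then immediate.

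First I would rescale time: given a solution $\phi$, set $\psi(s):=\phi(s/\mathsf{k})$, so that $\psi$ solves $\pat_s\psi=-\nabla_{\gt}\im(W)(\psi)$ on $[0,\mathsf{k}T]$ with $\psi(0)\in L_0$, $\psi(\mathsf{k}T)\in L_1$. Since $(\om_{\varphi},\gt,J)$ is again a K\"ahler structure and $W$ is holomorphic, the computation in Lemma \ref{sec2:lm-1} applies and identifies $-\nabla_{\gt}\im(W)$ with the Hamiltonian vector field of $\re(W)$ for $\om_{\varphi}$; hence $\re(W)$ is constant along $\psi$, while $\tfrac{d}{ds}\im(W)(\psi(s))=-|\nabla_{\gt}\im(W)(\psi(s))|^2\le0$, strictly off $C_W$. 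Since $\im(W)$ takes the constant value $\im(W(p))$ on $L^{\pm}_p$, along $\psi$ the function $\im(W)$ decreases strictly from $\im(W(p_0))$ to $\im(W(p_1))$; in particular $\psi(s)\notin C_W$ for all $s$ (a rest point would be reached only in infinite time), so $|\nabla_{\gt}\im(W)(\psi)|>0$ throughout, $\int_0^{\mathsf{k}T}|\nabla_{\gt}\im(W)(\psi)|^2\,ds=\Delta_{01}$, and $\mathsf{k}T$ is exactly the time needed for the $-\nabla_{\gt}\im(W)$--flow issued from $\phi(0)$ to lower $\im(W)$ by $\Delta_{01}$.

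Second I would confine the initial point: there is a constant $D_0$, depending only on $C_1,C_2,\Delta_{01},\mu_0$, with $d(\phi(0),q_0)\le D_0$, where $q_0$ is the base point of Definition \ref{sec:df-tame-1}. Choosing $\mu_0$ small enough that $\tfrac12 g\le\gt\le2g$, part (ii) of Definition \ref{sec:df-tame-1} gives $|\nabla_{\gt}\im(W)|\ge c_0\,(d(\cdot,q_0)-C_2)$ for a fixed $c_0>0$. Put $D:=d(\phi(0),q_0)$ and suppose $D$ is large. For $s\in[0,\tfrac12]\subseteq[0,\mathsf{k}T]$ the energy identity above and Cauchy--Schwarz give $d(\psi(s),\phi(0))\le (s\Delta_{01})^{1/2}\le(\Delta_{01}/2)^{1/2}$, so $\psi(s)$ stays at distance $\ge D/2$ from $q_0$ and $|\nabla_{\gt}\im(W)(\psi(s))|^2\ge c_0^2(D/2-C_2)^2$ there; integrating $\tfrac{d}{ds}\im(W)(\psi)$ over $[0,\tfrac12]$ and using $\mathsf{k}T>\tfrac12$ forces $\tfrac12 c_0^2(D/2-C_2)^2\le\Delta_{01}$, a contradiction once $D$ exceeds an explicit $D_0$. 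This is the step where the lower bound $T>\tfrac12$ is used in an essential way.

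Third I would bound $\operatorname{length}(\phi)=\operatorname{length}(\psi)$ uniformly; combined with the previous step this puts every solution in $\{\,d(\cdot,q_0)\le D_0+\operatorname{length}(\psi)\,\}$. A naive Cauchy--Schwarz estimate only gives a bound growing like $\mathsf{k}^{1/2}$, so instead I would reparametrise by $\sigma:=\im(W(p_0))-\im(W(\psi))\in[0,\Delta_{01}]$ (legitimate by strict monotonicity), whence $\operatorname{length}(\psi)=\int_0^{\Delta_{01}}|\nabla_{\gt}\im(W)(\cdot)|^{-1}\,d\sigma$, an integral independent of $\mathsf{k}$. Fix $\rho$ with $\operatorname{supp}\varphi\cup C_W\subset B_\rho(q_0)$ and a small $\epsilon>0$. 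Off $\bigcup_{p\in C_W}B_\epsilon(p)$ one has $|\nabla_{\gt}\im(W)|\ge\delta_\epsilon>0$: on $M\setminus B_\rho(q_0)$ because there $\gt=g$ and $|\nabla_g\im(W)|$ is comparable to $|\pat_M W|\ge c_0(d(\cdot,q_0)-C_2)$ by (ii); and on the compact remainder because the critical set of $\im(W)$ equals $C_W$ (holomorphicity of $W$). Hence the contribution to the integral from $\sigma$ with $\psi\notin\bigcup_p B_\epsilon(p)$ is at most $\Delta_{01}/\delta_\epsilon$. Near each $p\in C_W$ the function $\im(W)$ is a nondegenerate, saddle-type Morse function, and by strict monotonicity of $\im(W)$ along $\psi$ the portion of $\psi$ lying in $B_\epsilon(p)$ is a single excursion, whose length is $O(\epsilon)$ by the standard linearisation estimate at a hyperbolic rest point of the gradient flow (cf. \cite[Chapter 4]{BH}); summing over $C_W$ yields an $O(\epsilon)$ contribution. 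Thus $\operatorname{length}(\psi)\le\Lambda$ for a constant $\Lambda$ independent of $T$ and $\mathsf{k}$, and the argument is complete, with $\mu_0$ chosen so that in addition $\om_\varphi>0$. The delicate point is this last bound: the $\mathsf{k}$-uniformity forces the change of variable to $\sigma$, and the residual difficulty is the behaviour near $C_W$, where Cauchy--Schwarz alone fails and one must invoke the hyperbolic-rest-point estimate.
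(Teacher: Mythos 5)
Your proposal is correct in substance but follows a genuinely different route from the paper. The paper's proof is much shorter: it first derives a diameter bound $d(\phi(t),\phi(t_0))\le 2\sqrt{\Delta_{01}}$ for all $t,t_0\in[0,T]$ from Cauchy--Schwarz together with the identity $\mathsf{k}\int_0^T|\nabla_{\gt}\im(W)|^2\,dt=\Delta_{01}$, and then runs a dichotomy: choosing $R_0$ so that $|\nabla_{\gt}\im(W)|_{\gt}>2\sqrt{\Delta_{01}}$ outside $B_{R_0}(p_0)$ (via tame condition (ii) and $\mu_0$ small) and setting $R=R_0+2\sqrt{\Delta_{01}}$, a solution that ever leaves $B_{R}(p_0)$ would have to stay entirely outside $B_{R_0}(p_0)$, whence $\Delta_{01}=\mathsf{k}\int_0^T|\nabla_{\gt}\im(W)|^2\,dt\ge 4\mathsf{k}T\Delta_{01}>\Delta_{01}$, a contradiction. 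No arc-length bound and no analysis near $C_W$ appear. Your observation that a naive Cauchy--Schwarz length estimate scales like $\mathsf{k}^{1/2}$ is well taken --- the paper's displayed computation silently drops the factor $\mathsf{k}$ in $|\dot\phi|=\mathsf{k}|\nabla_{\gt}\im(W)|$, so its diameter bound as written is not actually $\mathsf{k}$-uniform --- and your reparametrization by $\sigma=\im(W(p_0))-\im(W(\psi))$ is exactly the right repair: it shows that the length of the portion of the trajectory lying in any region where $|\nabla_{\gt}\im(W)|\ge c$ is at most $\Delta_{01}/c$, independently of $\mathsf{k}$ and $T$.

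Two remarks on your third step. First, the claim that the portion of $\psi$ in $B_\epsilon(p)$ is a single excursion does not follow from monotonicity of $\im(W)$ alone, since $B_\epsilon(p)$ meets an interval of levels of width of order $\epsilon^2$ and the trajectory may re-enter at a lower level; this is harmless, because each return forces a crossing of the annulus $B_{2\epsilon}(p)\setminus B_\epsilon(p)$, which consumes a definite amount of the finite length budget $\Delta_{01}/\delta_\epsilon$ available in $\{|\nabla_{\gt}\im(W)|\ge\delta_\epsilon\}$, so the number of excursions and hence the total near-critical length remain uniformly bounded. Second, the entire analysis near $C_W$ is avoidable: the lemma only asks that the trajectory be confined, not that its total length be bounded. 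After your second step the trajectory starts in $B_{D_0}(q_0)$; enlarging $D_0$ so that $C_W\subset B_{D_0}(q_0)$ and setting $\delta_{D_0}:=\inf_{M\setminus B_{D_0}(q_0)}|\nabla_{\gt}\im(W)|_{\gt}>0$ (tame condition (ii) plus uniform comparability of $\gt$ and $g$), your reparametrized integral shows that the total length of the trajectory outside $B_{D_0}(q_0)$ is at most $\Delta_{01}/\delta_{D_0}$; hence the trajectory never leaves $B_{D_0+\Delta_{01}/\delta_{D_0}}(q_0)$. This removes the only delicate ingredient (the hyperbolic rest-point estimate) and brings your argument close to the paper's in length, while keeping the $\mathsf{k}$-uniformity explicit.
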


\begin{proof} Let $t_0\in [0,T]$. For any $t\in [t_0, T]$, there is
\begin{align*}
d(\phi(t),\phi(t_0))&\le\int_{t_0}^t |\frac{d\phi(s)}{ds}|ds=\int_{t_0}^t |\nabla_{\gt} \im(W)|ds\le \left(\int_{t_0}^t |\nabla_{\gt} \im(W)|^2ds \right)^{1/2}\sqrt{t-t_0}\\
=&\left(-\int_{t_0}^t \frac{1}{\mathsf{k}}\frac{d\im(W(\phi(t)))}{dt}\right)^{1/2}\sqrt{t-t_0}\\
=&\sqrt{\frac{t-t_0}{\mathsf{k}} \big(\im(W(\phi(t_0)))-\im(W(\phi(t)))\big)}\\
\le & \sqrt{\frac{1}{\mathsf{k}}T\Delta_{01}}.
\end{align*}
The second equality is by (\ref{w:equa-nega-flow-2}).
This implies that for any $t,t_0\in [0,T]$, there is
\begin{equation}
d(\phi(t),\phi(t_0))\le \sqrt{\frac{1}{k}T\Delta_{01}}\le \sqrt{T\Delta_{01}}\le 2\sqrt{\Delta_{01}}\label{w:equa-nega-flow-3}.
\end{equation}
By (ii) of Definition \ref{sec:df-tame-1}, we know that $|\pat_M W|_g\to \infty$ as $d(\phi,q_0)\to \infty$. Hence For fixed point $p_0\in M$, we can choose $R_0$ such that $|\nabla_{g} \im(W(\phi))|_g=|\pat_M W(\phi)|_g>4\sqrt{\Delta_{01}}$ for any point $\phi\not\in B_{R_0}(p_0)$. Furthermore, we can choose small $\mu_0>0$ such that for any $\mu\in (0,\mu_0)$, any metric $\gt\in \G^L_{\mu}$, there is
\begin{equation}
|\nabla_{\gt} \im(W)|_{\gt}>2\sqrt{\Delta_{01}}, \forall \phi\not\in B_{R_0}(p_0).
\end{equation}

Let $R=R_0+2\sqrt{\Delta_{01}}$. we claim that the solution of (\ref{w:equa-nega-flow-2}) lies entirely in $B_R(p_0)$.

If it is not true, then there would exist a $t_0\in [0,T]$ such that $\phi(t_0)\not\in B_R(p_0)$. By (\ref{w:equa-nega-flow-3}), we know that $\phi(t)\not\in B_{R_0}(p_0)$ for any $t\in [0,T]$, then by our choice of $R_0$ and the fact that $T>1/2$, we have
\begin{align*}
\Delta_{01}=&\im(W(\phi(0)))-\im(W(\phi(T)))=-\int_0^T \gt(\nabla_{\gt} \im(W),\frac{d\phi}{dt})dt\\
=&\mathsf{k}\int_0^T|\nabla_{\gt} \im(W)|_{\gt}^2dt\\
\ge&2\mathsf{k}\cdot T\Delta_{01}>k\Delta_{01}\geq \Delta_{01},
\end{align*}
which is a contradiction.
\end{proof}
\begin{df} 
Denote $\widetilde{L}_1:= L_1 \cap B_{R}(p_0)$ and $\widetilde{L}_0:= L_0 \cap B_{R}(p_0)$
\end{df}

We write $\phi_{T,\mathsf{k}}^{\gt}$ instead of $\phi_{T,\mathsf{\mathsf{k}}}$ to display the dependence on the metric $\gt$. Fix the $\mu=\mu_0$ now.

\begin{thm}\label{transversal} Let $(M, h, W)$ be a tame LG system. For generic metric $\gt\in \G^L_{\mu}$ and generic $T\in (1/2,2)$, we have $L_0$ and $L_1$ intersect transversally at time $T$ with speed $\mathsf{k}$ for each $\mathsf{k}$.
\end{thm}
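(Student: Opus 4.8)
The plan is to run a Sard--Smale argument on a universal moduli space, with the K\"ahler potential entering the gradient flow playing the role usually played by an almost complex structure. Having fixed $\mu=\mu_0$, Lemma \ref{fini-rad} provides an $R>0$ such that for every $\gt\in\G^L_{\mu_0}$, every $T\in(1/2,2)$ and every $\mathsf{k}\in\NN$ any solution of the two--point problem \eqref{w:equa-nega-flow-2} lies in $B_R(p_0)$; hence it suffices to work with $\widetilde L_0$ and $\widetilde L_1$. For a fixed $\mathsf{k}$ I would introduce the map $F(q,\gt,T):=\phi^{\gt}_{T,\mathsf{k}}(q)$, defined and smooth on the open subset of $\widetilde L_0\times\G^L_{\mu_0}\times(1/2,2)$ where the flow exists up to time $T$ (smoothness being the standard smooth dependence of ODE flows on initial data and parameters, together with the fact that $\varphi\mapsto -\mathsf{k}\nabla_{\gt_\varphi}\im W$ is smooth into $C^k$ vector fields for each $k$ in the Floer $\epsilon$--topology), and set $\Sigma^{\mathsf{k}}:=F^{-1}(L_1)$.

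The heart of the proof is to show $F\pitchfork L_1$. Fix $(q,\gt,T)\in\Sigma^{\mathsf{k}}$ and write $x:=\phi^{\gt}_{T,\mathsf{k}}(q)\in L_1$, $\phi(t):=\phi^{\gt}_{t,\mathsf{k}}(q)$. The key observation is that $\im W$ is strictly decreasing along $\phi$ on $[0,T]$ --- it is not stationary, since a trajectory reaching $L_1$ in finite time meets no critical point of $W$ --- while $L_0$ and $L_1$ lie in the level sets $\{\im W=\im W(p_0)\}$ and $\{\im W=\im W(p_1)\}$; hence $\phi(t)\notin L_0\cup L_1$ for every $t\in(0,T)$. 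Consequently, near any interior point $\phi(t^*)$ one may perturb $\varphi$ by an arbitrary compactly supported $\psi$ of sufficiently small support --- the constraint $\pat_M\bpat_M\psi=0$ on $L_0\cup L_1$ being vacuous there --- and, since $\pat_i\pat_{\jb}\psi$ may be made an arbitrary Hermitian matrix at $\phi(t^*)$, the induced variation $\delta X_\psi$ of $-\mathsf{k}\nabla_{\gt}\im W$ at $\phi(t^*)$ fills the hyperplane $V_{t^*}:=\bigl(J\nabla_{\gt}\im W(\phi(t^*))\bigr)^{\perp_{\gt}}$. By Duhamel, the directional derivative of $F$ along such a $\psi$ is $\int_0^T\Phi_{t,T}\bigl(\delta X_\psi(\phi(t))\bigr)\,dt$ with $\Phi_{t,T}$ the linearized flow, so a standard concentration argument gives that $\Phi_{t^*,T}(V_{t^*})$ lies in the closure of the image of these derivatives, for each $t^*\in(0,T)$. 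Thus if $\eta\in T_x^*M$ annihilates $T_xL_1$ and the image of $dF$, then $\Phi^*_{t^*,T}\eta$ annihilates $V_{t^*}$, i.e.\ is proportional to $\gt\bigl(J\nabla_{\gt}\im W(\phi(t^*)),\cdot\bigr)$; letting $t^*\to T$ yields $\eta=c\,\gt\bigl(J\nabla_{\gt}\im W(x),\cdot\bigr)$. But $\nabla_{\gt}\im W(x)\perp_{\gt}T_xL_1$ since $L_1$ sits in a level set of $\im W$, so $\nabla_{\gt}\im W(x)\in(T_xL_1)^{\perp_{\gt}}=J\,T_xL_1$ because $L_1$ is Lagrangian (for $\omega_\varphi$, using $\pat_M\bpat_M\varphi=0$ on $L_1$); hence $J\nabla_{\gt}\im W(x)$ is a nonzero vector of $T_xL_1$, so $\gt\bigl(J\nabla_{\gt}\im W(x),\cdot\bigr)$ does not vanish on $T_xL_1$ while $\eta$ does. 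Therefore $\eta=0$, proving $F\pitchfork L_1$; in particular $\Sigma^{\mathsf{k}}$ is a separable Banach manifold.

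It then remains to apply Sard--Smale to the projection $\pi\colon\Sigma^{\mathsf{k}}\to\G^L_{\mu_0}\times(1/2,2)$. At $(q,\gt,T)$ the tangent space to $\Sigma^{\mathsf{k}}$ is the kernel of the surjection $(v,\mu,\tau)\mapsto dF(v,\mu,\tau)\bmod T_xL_1$, and a short computation shows that $\ker d\pi$ and $\operatorname{coker} d\pi$ both have dimension equal to the corank of $(d\phi^{\gt}_{T,\mathsf{k}})_q\colon T_qL_0\to T_xM/T_xL_1$, so $\pi$ is $C^\infty$ Fredholm of index $0$; as $\G^L_{\mu_0}\times(1/2,2)$ is a separable Banach manifold, hence Baire, its regular values form a residual set. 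A pair $(\gt,T)$ is a regular value precisely when $(d\phi^{\gt}_{T,\mathsf{k}})_q(T_qL_0)+T_xL_1=T_xM$ at every preimage, i.e.\ exactly when $\phi^{\gt}_{T,\mathsf{k}}(L_0)\pitchfork L_1$ in the sense of Definition \ref{trans-dfn} --- here Lemma \ref{fini-rad} guarantees that every point of $\phi^{\gt}_{T,\mathsf{k}}(L_0)\cap L_1$ arises from a point of $\Sigma^{\mathsf{k}}$. Intersecting these residual sets over the countably many $\mathsf{k}\in\NN$ produces a residual, hence dense, set of $(\gt,T)$ for which $L_0\pitchfork^{T,\mathsf{k}}L_1$ holds for all $\mathsf{k}$ simultaneously, which is the assertion.

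I expect the main obstacle to be the transversality $F\pitchfork L_1$: the danger is exactly the constraint $\pat_M\bpat_M\varphi=0$ on $L_0\cup L_1$, and the argument survives it only because the monotonicity of $\im W$ keeps the interior of each flow line off the thimbles --- leaving room to perturb the potential there --- and because the Lagrangian condition, together with $L_1$ lying in a level set of $\im W$, converts the unavoidable one--dimensional deficiency of the admissible metric variations into a vector \emph{tangent}, rather than normal, to $L_1$. The remaining ingredients --- smooth dependence of the flow in the Floer topology, separability and the Baire property of $\G^L_{\mu_0}$, the Fredholm index count, and the uniformity of Lemma \ref{fini-rad} in $\mathsf{k}$ and $T$ --- are routine.
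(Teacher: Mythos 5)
Your proposal follows the same overall strategy as the paper's proof: both set up the evaluation map $F_{\mathsf{k}}(x,\gt,T)=\phi^{\gt}_{T,\mathsf{k}}(x)$ on $\widetilde{L}_0\times\G^L_{\mu}\times(1/2,2)$ (using Lemma \ref{fini-rad} to confine everything to $B_R(p_0)$), establish $F_{\mathsf{k}}\pitchfork \widetilde{L}_1$ by perturbing the K\"ahler potential at an interior point of the trajectory --- where, exactly as you say, strict monotonicity of $\im W$ along the flow keeps the trajectory off $L_0\cup L_1$ so the constraint $\pat_M\bpat_M\varphi|_{L_0\cup L_1}=0$ costs nothing --- and then apply Sard--Smale to the projection and intersect the residual sets over $\mathsf{k}$. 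The difference is in how the surjectivity is closed. The paper perturbs only at the midpoint $\widetilde{y}=\phi^{\gt}_{T/2}(x)$ with an explicit potential $f^{\epsilon_1}=y_1x_2\rho_1(x_1/\epsilon_1)\cdots$, evaluates the Duhamel integral to leading order in $\epsilon_1$, and then asserts linear independence of the resulting $2n$ vectors ``for smaller $\epsilon_1$ and $T$''; you instead run the dual annihilator argument, let the perturbation point $t^*\to T$, and identify the single unreachable direction of the Hermitian metric variations as $J\nabla_{\gt}\im W(x)=-\nabla_{\gt}\re W(x)\in T_xL_1$. Your ending is cleaner: it explains transparently why the one-codimensional deficiency of the admissible variations is absorbed by $T_xL_1$ and avoids the paper's under-justified final independence claim. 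Conversely, your ``standard concentration argument'' hides the one point where the paper's explicit construction earns its keep: for a generic bump function times a quadratic, the cutoff-derivative terms $\pat q\otimes\pat\chi$ and $q\,\pat_M\bpat_M\chi$ are $O(1)$ on the support and do not disappear in the limit, so the test potential must be chosen to vanish to the right order along the trajectory (this is precisely what the factor $y_1$ and the relation $y_1(0)=\dot y_1(0)=0$ accomplish in the paper). That detail should be supplied, but it is a matter of execution rather than a gap in the idea.
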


\begin{proof} For each $\mathsf{k}$, define $F_{\mathsf{k}}:\widetilde{L}_0\times \G^L_{\mu}\times  (1/2,2)\mapsto M$ by
$$
F_{\mathsf{k}}(x,\gt,T)=\phi_{T,\mathsf{k}}^{\gt}(x).
$$
The space $\widetilde{L}_0\times \G^L_{\mu}\times  (1/2,2)$ is a $C^l$-Banach manifold for any $l$.  However, $F_{\mathsf{k}}$ is not defined for all points in this space as the flow $\phi_{T,\mathsf{k}}^{\gt}$ can go to infinity at some place. Take $A_{F_{\mathsf{k}}}$ to be the open subset in $\widetilde{L}_0\times \G^L_{\mu}\times  (1/2,2)$ so that $F_{\mathsf{k}}$ is well defined.
It is easy to see that for any  $(\gt,T)\in \G^L_{\mu}\times  (1/2,2)$, $\phi_T^{\gt}(x)=F(x,\gt,T)$ is a Fredholm map with finite index, near any point $x_0$ so that $(x,\gt,T)\in A_{F_{\mathsf{k}}}$. Intersection happens only at $F_{\mathsf{k}}^{-1}(\widetilde{L}_1)$.  If we can prove that $F_{\mathsf{k}}(A_{F_{\mathsf{k}}})$ intersects $\widetilde{L}_1$ transversally, then by Sard-Smale theorem (ref. \cite[Theorem 1.3.18]{Ck}), there are residual sets $R_{\mathsf{k}}\subset \G^L_{\mu}\times  (1/2,2)$ such that for any $(\gt,T)\in R_{\mathsf{k}}$, the map  $\phi_{T,\mathsf{k}}^{\gt}$ intersects $\widetilde{L}_1$ transversely. The intersection $\cap R_{\mathsf{k}}$ is also a residual set.

Therefore, we need only to prove that $F_{\mathsf{k}}(\widetilde{L}_0\times \G^L_{\mu}\times  (1/2,2)$ intersects $\widetilde{L}_1$ transversally.
With out loss of generality we let $k=1$ and write $F_1=F$ for simplicity.
Let $(x,\gt,T)\in F^{-1}(\widetilde{L}_1)$ and denote by $y=F(x,\gt,T)$. For any $2n-2$ vectors $\kappa_1,\cdots,\kappa_{2n-2}\in T_{\gt}\G^L_{\mu}$, let $ \eta_i=DF(x,\gt,T)\cdot \kappa_i, i=1,\cdots,2n-2$. Note that $\nabla_{\gt} \im(W(y))=-DF(x,\gt,T)\cdot \frac{\pat}{\pat t}$ and $\nabla_{\gt} \re (W(y))\in T_y\widetilde{L}_1$. If we can find $\kappa_i$ such that the tuple $\{\kappa_1,\cdots,\kappa_{2n-2}, \nabla_{\gt} \im(W(y)), \nabla_{\gt} \re (W(y))\}$ forms a basis of $T_yM$, then we are done.

For any $\kappa\in T_{\gt}\G^L_{\mu}$, write $\gt_{\lambda}$=$\gt+
\lambda \kappa$. Since there exists some $\delta_0>0$ such that $\phi_t^{\gt}(x)$ lies in a coordinate neighborhood of $y$ for any $t\in[T-\delta_0,T]$, we need only consider the flow equation and the choice of perturbation $\kappa$ in this coordinate neighborhood. The strategy of the proof is to perturb the K\"ahler potential in this coordinate neighborhood such that the arriving flow lines are transversal to $L_1$.

So without loss of generality, we consider the following flow equation in a small time interval $[0, T]$ and in the Euclidean space:
\begin{equation}\label{w:integration-1}
 \partial_t \Gamma(\lambda,t)=-\nabla_{\gt_{\lambda}}\im(W(\Gamma(\lambda,t))),
 \end{equation}
where $\Gamma(\lambda,t)=\phi_t^{h_{\lambda}}({x})\in \R^{2n}$.

For simplicity, we denote by $G=(\gt_{i\jb}), K=(\kappa_{i\jb})$ and $\nabla_0$ the gradient in Euclidean space. We have
\begin{equation}
\nabla_{\gt_{\lambda}}\im(W(\Gamma))=(G+\lambda K)^{-1}\nabla_0 \im(W(\Gamma))=(1-\lambda G^{-1}K)\nabla_{\gt}\im(W(\Gamma))+o(\lambda)
\end{equation}

Let $\nu(t)=\partial_{\lambda}\Gamma(0,t)$. Then by (\ref{w:integration-1}) it satisfies the following equation:
\begin{equation}\label{w:integration-2}
\frac{d\nu(t)}{dt}=-D\nabla_{\gt}\im(W(\phi_t^h({x})))  \cdot \nu(t)+G^{-1}(\phi_t^{\gt}({x}))K(\phi_t^{\gt}({x}))\nabla_{\gt}\im(W(\phi_t^{\gt}({x}))) .
\end{equation}

Since $\Gamma(\lambda,0)\equiv x$, $\nu(0)=0$, the equation (\ref{w:integration-2}) has the solution
\begin{equation}\label{w:integration-3}
\nu(t)=X(t)\int_0^t X(s)^{-1} G^{-1}(\phi_s^{\gt}({x}))K(\phi_s^{\gt}({x}))\nabla_{\gt}\im(W(\phi_s^{\gt}({x}))) ds.
\end{equation}
where $X(t)$ is the fundamental solution matrix of the corresponding homogeneous equation
$$
\begin{cases}
\frac{dX(t)}{dt}=-D\nabla_{\gt}\im(W(\phi_t^{\gt}(x)))  \cdot X(t)\\
X(T)=I
\end{cases}.
$$

Introduce an auxiliary smooth functions $\rho_1$ on $\R$ such that
$$
\begin{cases}
\rho_1(s)=0&  s<-1\\
0\leq \rho_1(s) \leq 1 & -1 \leq s \leq -\frac{1}{2}\\
\rho_1(s)=1 &-\frac{1}{2}\leq s \leq 0 \\
\rho_1(s)=\rho_1(-s) & s>0,
\end{cases}
$$

Define $f^{\epsilon_1}(x_1,y_1,\cdots,x_n,y_n)=y_1x_2\rho_1( x_1/\epsilon_1)\rho_1( y_1/\epsilon_1)\dots\rho_1(x_n/\epsilon_1)\rho_1(y_n/\epsilon_1)$ where $\epsilon_1$ is a positive number. Define the perturbation metrc $\kappa=\sum_{ij}f^{\epsilon_1}_{i\jb}dz_i\otimes d\bar{z_j}$, which vanishes outside $[-\epsilon_1,\epsilon_1]^{2n}$.

Let $\widetilde{y}=\phi_{T/2}^{\gt}({x})$. For any vector $\xi\in T_{\widetilde{y}}\R^{2n}$, which is perpendicular to $\nabla_{\gt}\im(W(\widetilde{y}))$ \and $J \nabla_{\gt}\im(W(\widetilde{y}))=-\nabla_{\gt}\re(W(\widetilde{y}))$, we can use a complex affine transform to make $\widetilde{y}=0,G(\widetilde{y})=I,\nabla_{\gt} \im(W(\widetilde{y}))=(2,0,\cdots,0)^T$ and $\xi=(0,0,1,0,\cdots,0)^T$.

Consider the component expression $\phi^{\gt}_t(\widetilde{y})=(x_1(t),y_1(t),\cdots,x_n(t),y_n(t))^T$, we have $\dot{x}_1(0)=-2$, and $\dot{x}_i(0)=0$, $\dot{y}_j(0)=0$ for $i=2,\cdots,n; j=1,\cdots,n$.

Choose an interval $[-a,a]$ such that $\dot{x}_1(t)<-3/2$ and $|\dot{x}_i(t)|<1/2$, $|\dot{y}_j(t)|<1/2$ for $i=2,\cdots,n; j=1,\cdots,n$.  As the  curve $\phi^{\gt}_t(\widetilde{y})$ has no-self intersection, we can choose $\epsilon_1$ small enough, such that $\phi^\gt_t(\widetilde{y})$ lies in the box $[-\epsilon_1,\epsilon_1]^{2n}$ only when $t\in [-a,a]$.  Note that $\phi^{\gt}_t(\widetilde{y})$ lies in the box only when $|t|<2\epsilon_1/3$ and at these points we have $f^{\epsilon_1}=\rho_1(x_1/\epsilon_1)y_1x_2$. So we have
$$
\sum_{ij}f^{\epsilon_1}_{i\jb}dz_i\otimes d\bar{z_j}=\frac{1}{4}(\dot{\rho_1}y_1/\epsilon_1-\sqrt{-1}\rho_1)dz_1\otimes d\bar{z_2}+\frac{1}{4}(\dot{\rho_1}y_1/\epsilon_1+\sqrt{-1}\rho_1)dz_2\otimes d\bar{z_1},
$$
and the corresponding Riemannian metric matrix
$$
K(\phi^\gt_t(\widetilde{y}))=\frac{1}{4}
\left[\begin{array}{cc}
\left[\begin{array}{cccc}
   0 &0  & \dot{\rho}_1y_1(t)/\epsilon_1& -\rho_1\\
   0 & 0 &\rho_1&\dot{\rho}_1y_1(t)/\epsilon_1 \\
 \dot{\rho_1}y_1(t)/\epsilon_1&-\rho_1&0 &0\\
  \rho_1&\dot{\rho_1}y_1(t)/\epsilon_1&0&0
  \end{array}\right]& 0\\
0&0
 \end{array}\right]
$$
By (\ref{w:integration-3}) and the fact $\phi^\gt_{t+T/2}(x)=\phi^\gt_t(\widetilde{y})$, $G(\widetilde{y})=I$, $\nabla_{\gt}\im(W(\widetilde{y}))=(2,0,\cdots,0)^T$, we have
\begin{equation}
\nu(T)=X(T)\int_{-\frac{2\epsilon_1}{3}}^{\frac{2\epsilon_1}{3}}X^{-1}(t+\frac{T}{2})G^{-1}(\phi^\gt_t(\widetilde{y}))K(\phi^\gt_t(\widetilde{y}))\nabla_{\gt}\im(W(\phi^\gt_t(\widetilde{y})))dt.\\
\end{equation}

Since $\dot{y}_1(0)=y_1(0)=0$, the intergration on $[-\frac{2\epsilon_1}{3},0]$ equals
\begin{align*}
&\frac{1}{2}\int_{-\frac{2\epsilon_1}{3}}^0(X^{-1}(T/2)+o(\epsilon_1))\left((0,0,\dot{\rho_1}/\epsilon_1\times O(\epsilon_1^2),\rho_1,0,\cdots,0)^T+o(\epsilon_1)\right)dt\\
=&\frac{1}{2}(X^{-1}(T/2)+o(\epsilon_1)) (0,0,a\epsilon_1^2,b\epsilon_1,0,\cdots,0)^T,\;\text{as}\;\epsilon_1\to 0,
\end{align*}
where $b>0$.

We have the similar estimate for the integral over $[0,\frac{2\epsilon_1}{3}]$. Hence we have the estimate as $\epsilon_1\to 0$:
\begin{equation}\label{sec5:inte-5}
 \nu(T)=C(\epsilon_1+o(\epsilon_1))X(T)X^{-1}(T/2)J\xi
 \end{equation}
 for some constant $C>0$. Note that we take $\xi=(0,0,1,0,\cdots,0)^T$.

Choose vectors  $\xi_1,\cdots,\xi_{2n-2}$ such that
\begin{equation} \label{w:basis}
\{ \nabla_{\gt}\im(W(\widetilde{y})), \nabla_{\gt}\re(W(\widetilde{y}),\xi_1,\cdots,\xi_{2n-2}) \}
\end{equation}
is an orthonomal basis of $T_{\widetilde{y}}\R^{2n}$. Replace $\xi$ in (\ref{sec5:inte-5}) by $\xi_i, i=1,\cdots, 2n-2$, we get $2n-2$ perturbed matrices $
\kappa_1,\cdots,\kappa_{2n-2}$ in $T_\gt\G^L_{\mu}$. The derivative of $\phi_T^{\gt}(x)$ along $\kappa_i$ are
$$
C(\epsilon_1+o(\epsilon_1))X(T)X^{-1}(T/2)J\xi_1,\cdots,C(\epsilon_1+o(\epsilon_1))X(T)X^{-1}(T/2)J\xi_{i}.
$$.
Notice that $J\xi_{2k-1}=\xi_{2k}$, $k=1,\cdots,n-1$, so these $\kappa_i$ will satisfy our requirement if
\begin{align*}
&\nabla_{\gt}\im(W(y)), \nabla_{\gt}\re(W(y)), C(\epsilon_1+o(\epsilon_1))X(T)X^{-1}(T/2)J\xi_1,\\
&\cdots,C(\epsilon+o(\epsilon))X(T)X^{-1}(T/2)J\xi_{2n-2}
\end{align*}
are linearly independent. This can be done if we take smaller $\epsilon_1$ and $T$.

Now we finished the whole proof of this theorem.
\end{proof}

From now on, we always make the assumption that, for each $\mathsf{k}\in\mathbb{N}$, each pair $(L_0,L_1)$ of Lefschetz thimbles intersects transversally at time 1. This means that the equation
\begin{equation}\label{soliton}
\begin{cases}
\frac{\pat \phi(t)}{\pat t}=-\mathsf{k}\nabla_g (\im(W)).\\
\phi(0)\in L_0,\;\phi(1)\in L_1.
\end{cases}
\end{equation}
or equivalently the Witten equation:
\begin{equation}\label{solition-c}
\begin{cases}
i\cdot\frac{\pat u^l(t)}{\pat t}=\mathsf{k}\sum_{\ib} h^{\ib l}\pat_{\ib}\overline{W}.\\
(u(0),\overline{u(0)})\in L_0,\;(u(1),\overline{u(1)})\in L_1
\end{cases}
\end{equation}
have finitely many solutions. Denote the solution set by $S_{\mathsf{k},W}(L_0,L_1)$.
\begin{df}\label{chord-space-df}
Define $S_W(L_0,L_1)$ to be $\bigcup_{\mathsf{k}} S_{\mathsf{k},W}(L_0,L_1)$. And define $S_W$ to be the union of all these $S_W(L_0,L_1)$, as  $(L_0,L_1)$ runs over all possible pairs of Lefschetz thimbles.
\end{df}

\section{Witten equation and action functional: \uppercase\expandafter{\romannumeral1}}\label{section-weq-act}

\subsubsection*{Perturbed Witten equation}\

In this section we fix two Lefschetz thimbles $L_0,L_1$ as well as the speed $\mathsf{k}$. We define the space
\begin{equation}\label{sec-4-pert-defi-1}
\mathcal{V}_{per}=\{\varpi\in C^{\infty}([0,1]\times M)\;|\;\nabla_g\varpi=0 \text{ on } \bigcup_{l\in S_W(L_0,L_1)}[0,1]\times \text{Im}(l)\},
\end{equation}
with the norms
$$
\|\varpi\|_{2}:=\sup_{x,t}|\nabla_g \varpi(t,x)|+\sup_{x,t}|\nabla_g^2 \varpi(t,x)|,\;||\varpi||_{\epsilon}=\sum_{k=0}^{\infty} \epsilon_k \sup_{(x,t)\in M\times [0,1]} |\nabla^k \varpi(t,x)|,
$$
where $\{\epsilon_k\}$ is a sequence of positive real numbers converging rapidly to zero so that $\|\;\|_{\epsilon}$ defines a norm on the complete Floer-$\epsilon$ space. For example, we can choose $\epsilon_k=(\frac{1}{2})^{k^2}$.

For any $\varpi\in \mathcal{V}_{per}$, we can consider the perturbed Hamiltonian equation

\begin{equation}\label{perturb-hal}
J\pat_t \phi-\nabla_g(\mathsf{k}\re W(\phi)+\varpi(t,\phi))=0,
\end{equation}

In local coordinates, if we set $u^j=\phi^j+i\phi^{j+n}$ for $j=1,\cdots,n$, then it is easy to prove that
$\bpat_z u=(I+iJ)\bpat_J\phi$. Let
$$
\nabla_g \varpi=\frac{1}{2}(I-iJ)\nabla_g \varpi+\frac{1}{2}(I+iJ)\nabla_g \varpi=\sum_j (\mu^j \frac{\pat}{\pat z^j}+\mu^\jb\frac{\pat}{\pat z^\jb}).
$$

Denote by $S_{\mathsf{k},W,\varpi}(L_0,L_1)$ the set of solutions $\phi$ satisfying (\ref{perturb-hal})  and the boundary condition $\phi(0)\in L_0, \phi(1)\in L_1$.

For simplicity, we write $\widetilde{H}_{\varpi}=\mathsf{k}\re{W}+\varpi$.

Similar to the proof of Lemma \ref{fini-rad}, we have

\begin{lm}\label{perturb-fini-rad}
There exists a constant $\delta_0>0$ such that if $\|\varpi\|_{2}<\delta_0$, then any solution in $S_{\mathsf{k},W,\varpi}$ lies entirely in $B_{R}(p_0)$ for any fixed $p_0$ and some $R>0$ (depending on $p_0$).
\end{lm}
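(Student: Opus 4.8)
The plan is to mimic the proof of Lemma~\ref{fini-rad} with only cosmetic modifications, since the perturbed equation \eqref{perturb-hal} differs from \eqref{w:equa-nega-flow-2} only by the lower-order term $\nabla_g\varpi$, which we control using the smallness of $\|\varpi\|_2$. First I would rewrite a solution $\phi\in S_{\mathsf{k},W,\varpi}(L_0,L_1)$ as a flow line of $-\nabla_g(\mathsf{k}\im W+ J\text{-rotation of }\varpi)$; more precisely, by Lemma~\ref{sec2:lm-1} the equation \eqref{perturb-hal} is equivalent to $\pat_t\phi = -\nabla_g(\mathsf{k}\im W(\phi)) - (J\text{-part involving }\nabla_g\varpi)$, so that along any solution
\[
\frac{d}{dt}\big(\mathsf{k}\im W(\phi(t))\big) = -\mathsf{k}\,g\big(\nabla_g\im W,\pat_t\phi\big) = \mathsf{k}|\nabla_g\im W|^2 + (\text{error bounded by }C\|\varpi\|_2).
\]
The key point is that the action-type quantity $\mathsf{k}\im W$ is still monotone up to an error of size $O(\|\varpi\|_2)$, so the total variation of $\im W(\phi(t))$ over $[0,1]$ is at most $\Delta_{01}/\mathsf{k} + C\|\varpi\|_2/\mathsf{k}$, and hence (by Cauchy--Schwarz as in the proof of Lemma~\ref{fini-rad}) the $g$-length of $\phi$ is at most $\sqrt{\Delta_{01}+C\|\varpi\|_2}$, which is bounded by, say, $2\sqrt{\Delta_{01}+1}$ once $\delta_0\le 1$.

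Next I would invoke the tame condition, exactly as in Lemma~\ref{fini-rad}: by (ii) of Definition~\ref{sec:df-tame-1}, $|\pat_M W|_g\to\infty$ as $d(\phi,q_0)\to\infty$, so for a given $p_0$ there is $R_0$ with $|\nabla_g\im W(\phi)|_g = |\pat_M W(\phi)|_g > 4\sqrt{\Delta_{01}+1}$ outside $B_{R_0}(p_0)$; shrinking $\delta_0$ further we also get $|\nabla_g\im W(\phi)|_g > 2\sqrt{\Delta_{01}+1} + C\|\varpi\|_2$ outside $B_{R_0}(p_0)$ for all $\|\varpi\|_2<\delta_0$ (here we use that $\nabla_g\varpi$ vanishes on the chords but this genuinely needs a uniform bound away from them --- see the obstacle below). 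Set $R = R_0 + 2\sqrt{\Delta_{01}+1}$. If the solution left $B_R(p_0)$ at some $t_0$, then by the length estimate it would stay outside $B_{R_0}(p_0)$ for all $t\in[0,1]$, and then
\[
\Delta_{01}/\mathsf{k} \ \ge\ \int_0^1 \frac{d}{dt}\big(\im W(\phi(t))\big)\,dt - C\|\varpi\|_2 \ \ge\ \int_0^1 |\nabla_g\im W|_{\gt}^2\,dt - C\|\varpi\|_2 \ >\ \Delta_{01},
\]
a contradiction once $\mathsf{k}\ge 1$ and $\delta_0$ is small; hence $\phi$ lies entirely in $B_R(p_0)$.

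The main obstacle is making the error term from $\nabla_g\varpi$ genuinely uniform. In Lemma~\ref{fini-rad} the perturbation is built into the metric and one only needs the inequality $|\nabla_{\gt}\im W|_{\gt} > 2\sqrt{\Delta_{01}}$ outside a compact ball, which follows from the tame condition once $\mu_0$ is small because the metric perturbation is $C^0$-small \emph{everywhere}. Here $\varpi\in\mathcal{V}_{per}$ is only required to have $\|\varpi\|_2$ small, i.e. $\nabla_g\varpi$ and $\nabla_g^2\varpi$ are uniformly bounded by $\delta_0$ on all of $[0,1]\times M$; this is exactly the $\|\cdot\|_2$ norm introduced for $\mathcal{V}_{per}$, so the error $|\nabla_g\varpi(t,\phi(t))|\le\|\varpi\|_2<\delta_0$ is in fact uniform with no reference to the chords, and the estimate above goes through verbatim. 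I would therefore carry out the argument entirely with the $\|\cdot\|_2$ bound, choose $\delta_0 = \min\{1,\ \epsilon\}$ where $\epsilon$ is small enough that $C\delta_0 < \Delta_{01}\min_{(L_0,L_1)}$ over the finitely many pairs of thimbles (using that $S_W(L_0,L_1)$ is finite and there are finitely many thimbles, so $\Delta_{01}$ is bounded below), and conclude as stated. The only subtlety worth a line in the write-up is that $R$ depends on $p_0$ (through $R_0$), which is already allowed by the statement.
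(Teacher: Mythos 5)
Your proposal is correct and is exactly the paper's intended argument: the paper gives no independent proof of this lemma, stating only that it is ``similar to the proof of Lemma \ref{fini-rad}'', which is precisely the adaptation you carry out (length bound via Cauchy--Schwarz and near-monotonicity of $\im W$ along the perturbed flow, then the tame condition forcing $|\nabla_g \im W|$ to be large outside a compact ball, and a contradiction with the integrated identity). Two bookkeeping points for the write-up: the error term in $\tfrac{d}{dt}\im W(\phi)$ is $g(\nabla_g\im W, J\nabla_g\varpi)$, which is bounded by $|\nabla_g\im W|\,\|\varpi\|_{2}$ rather than by $C\|\varpi\|_{2}$ outright, so it should be absorbed into the $\mathsf{k}|\nabla_g\im W|^2$ term by Young's inequality (this only changes constants); and the signs in your two displayed chains of inequalities are garbled as written (e.g.\ $\int_0^1\tfrac{d}{dt}\im W\,dt=-\Delta_{01}$, not $+\,\mathsf{k}\int|\nabla_g\im W|^2$), though the intended correct inequalities are clear and the contradiction goes through.
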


\begin{thm}\label{perturb-hal-equi}
There exists a constant $\delta_0>0$ such that for any $\varpi\in \mathcal{V}_{per}$ with $\|\varpi\|_{2}<\delta_0$, $S_{\mathsf{k},W,\varpi}(L_0,L_1)=S_{\mathsf{k},W}(L_0,L_1)$.
\end{thm}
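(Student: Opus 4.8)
The plan is to show the two solution sets coincide by arguing that no genuinely new solutions can appear and no old solutions can disappear when a small perturbation $\varpi$ is switched on. The key point is that $\varpi$ is required to lie in $\mathcal{V}_{per}$, i.e.\ $\nabla_g \varpi$ vanishes along $[0,1]\times \mathrm{Im}(l)$ for every $l \in S_W(L_0,L_1)$; hence every element of $S_{\mathsf{k},W}(L_0,L_1)$ automatically still solves the perturbed equation~\eqref{perturb-hal}, giving the inclusion $S_{\mathsf{k},W}(L_0,L_1) \subseteq S_{\mathsf{k},W,\varpi}(L_0,L_1)$ for \emph{any} $\varpi \in \mathcal{V}_{per}$ (no smallness needed). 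The real content is the reverse inclusion. For this I would first invoke Lemma~\ref{perturb-fini-rad}: for $\|\varpi\|_2 < \delta_0$, every $\phi \in S_{\mathsf{k},W,\varpi}(L_0,L_1)$ lies in a fixed ball $B_R(p_0)$, so the problem is reduced to a compact region on which all geometric quantities ($W$, its derivatives, the metric $g$) are uniformly controlled.

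The core of the argument is a uniform a priori estimate. Writing $\phi(t) = (u(t),\bar u(t))$ and pairing the perturbed equation with $\partial_t \phi$, compute $\frac{d}{dt}\,\mathrm{Im}(W(\phi(t)))$; when $\varpi = 0$ this is $-\mathsf{k}|\nabla_g \mathrm{Im}(W)|^2 \le 0$, and the chord "descends" along $\mathrm{Im} W$ with total drop $\Delta_{01}$, exactly as in the proof of Lemma~\ref{fini-rad}. The perturbation term contributes $\pm |\nabla_g \varpi|\cdot|\partial_t\phi|$, which by Cauchy--Schwarz and $\|\varpi\|_2 < \delta_0$ can be absorbed to keep $d\big(\phi(t),\phi(t_0)\big)$ uniformly bounded (independently of $\mathsf{k}$, since the speed-$\mathsf{k}$ factor scales both sides compatibly, as in~\eqref{w:equa-nega-flow-3}). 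This confines all perturbed solutions to a fixed compact set and, combined with the uniform bound on $|\nabla_g\varpi|$ and elliptic/ODE estimates, gives a uniform $C^1$ (hence, by bootstrapping, $C^\infty_{loc}$) bound on the family $\{S_{\mathsf{k},W,\varpi}(L_0,L_1)\}_{\|\varpi\|_2<\delta_0}$. One then argues by contradiction: if the reverse inclusion failed for arbitrarily small $\varpi$, pick $\varpi_n \to 0$ in $\|\cdot\|_2$ and $\phi_n \in S_{\mathsf{k},W,\varpi_n}(L_0,L_1) \setminus S_{\mathsf{k},W}(L_0,L_1)$; by the uniform estimates $\phi_n$ subconverges in $C^\infty_{loc}$ to some $\phi_\infty$, which solves the \emph{unperturbed} equation~\eqref{soliton} with the same boundary conditions, so $\phi_\infty \in S_{\mathsf{k},W}(L_0,L_1)$. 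But $S_{\mathsf{k},W}(L_0,L_1)$ is finite and (under the standing transversality assumption) the corresponding solutions are nondegenerate, so they are isolated and \emph{rigid}; hence for $n$ large $\phi_n$ must coincide with one of them — contradicting $\phi_n \notin S_{\mathsf{k},W}(L_0,L_1)$.

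To make the last step precise I would set up the usual implicit-function-theorem / inverse-function argument: near a nondegenerate unperturbed chord $\phi_\infty$, the linearization of the flow-endpoint map (whose kernel being trivial is exactly the transversality $L_0 \pitchfork^{1,\mathsf{k}} L_1$ from Definition~\ref{trans-dfn} and Theorem~\ref{transversal}) is an isomorphism, so the perturbed equation has a unique solution close to $\phi_\infty$ for each small $\varpi$; since $\varpi \in \mathcal{V}_{per}$ forces $\phi_\infty$ itself to be that solution, any perturbed solution $C^1$-close to $\phi_\infty$ equals $\phi_\infty$. The main obstacle I anticipate is the uniform a priori estimate with constants independent of the speed $\mathsf{k}$: one must check that the perturbation term really can be absorbed uniformly in $\mathsf{k}$ (the Hamiltonian-flow computation in Lemma~\ref{fini-rad} suggests it does, because both the drop in $\mathrm{Im}\,W$ and the arc-length scale with $\mathsf{k}$ in a matched way, but the $\varpi$-contribution must be tracked carefully), and that the confinement ball $B_R(p_0)$ and the $C^1$ bound can be chosen uniformly over the $\delta_0$-ball of perturbations — this is where the tame condition and bounded geometry of $M$ are used, exactly as in the unperturbed Lemmata~\ref{fini-rad} and~\ref{perturb-fini-rad}.
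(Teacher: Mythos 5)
Your proof is correct, but it follows a genuinely different route from the paper's. The paper argues directly and quantitatively: it differentiates the perturbed flow with respect to the initial condition (the variation field equation), obtains continuity of $d\phi_{1,\varpi}$ in $\|\varpi\|_2$, and fixes $\delta_0$ so that $\|d\phi_{1,\varpi}(x) - d\phi_1(0)\| < 1/3$ on a small neighborhood $U'_0(l)$ of each chord's starting point; this makes $\phi_{1,\varpi}|_{U'_0(l)}$ an embedding meeting $L_1$ only at $l(1)$, and on the compact remainder $L_0 \cap (B_R(p_0)\setminus\bigcup_l\overline{U'_0(l)})$, where the unperturbed image misses $L_1$ by a definite margin, continuity keeps the perturbed image away from $L_1$. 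Your proof is instead a soft compactness-and-contradiction argument: hypothetical counterexamples $\phi_n$ with $\|\varpi_n\|_2 \to 0$ subconverge, using the confinement from Lemma~\ref{perturb-fini-rad}, to some $\phi_\infty \in S_{\mathsf{k},W}(L_0,L_1)$, and the implicit function theorem at the nondegenerate $\phi_\infty$ — nondegenerate precisely because of the transversality in Definition~\ref{trans-dfn} — forces $\phi_n = \phi_\infty$ for $n$ large. Both proofs rest on the same two pillars (confinement and transversality); yours packages the ``away from the chords'' estimate into the compactness extraction and the ``near the chords'' uniqueness into the IFT, while the paper carries out both steps explicitly, so the paper's version produces $\delta_0$ more constructively while yours is the more compact textbook form.

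Two minor remarks. The $\mathsf{k}$-uniformity you flag is not needed for this theorem: the speed $\mathsf{k}$ is fixed throughout Section~\ref{section-weq-act} (see its opening sentence), so $\delta_0$ is allowed to depend on $\mathsf{k}$. And once Lemma~\ref{perturb-fini-rad} confines every perturbed solution to a fixed compact set, the uniform $C^1$ and $C^\infty$ bounds are automatic: the equation is an ODE whose right-hand side is smooth and uniformly bounded on that compact set, so the absorption computation in your second paragraph (re-deriving the drop in $\mathrm{Im}\,W$) is redundant once you invoke Lemma~\ref{perturb-fini-rad}. The only step that genuinely needs care — which you correctly single out — is the final IFT step, and your treatment of it (the linearization of the flow-endpoint map is an isomorphism by transversality, and $\varpi\in\mathcal{V}_{per}$ makes the unperturbed chord itself a perturbed solution, hence the unique one nearby) matches the paper's embedding argument in substance.
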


\begin{proof} By the definition of $\mathcal{V}_{per}$, we have $S_{\mathsf{k},W,\varpi}(L_0,L_1)\supset S_{\mathsf{k},W}(L_0,L_1)$.

The  equation (\ref{perturb-hal}) gives a time-1 diffeomorphism $\phi_{1,\varpi}$ and the diffeomorphism $\phi_{1,0}$ is just $\phi_1$, which is given by equation (\ref{hal}).  For a given path $l\in S_{\mathsf{k},W}(L_0,L_1)$, we can choose coordinate neighborhoods $U_0$ of $l(0)$ in $L_0$ and $U_1$ of $l(1)$ in $M$, such that $\phi_1(U_0)\subset U_1$. Furthermore, by coordinate transformation, we can require that $l(0)$ is the origin of $\R^n$, $l(1)$ being the origin of $\R^{2n}$, $L_1\cap U_1\subset \R^n\times \{0\}$ and $d\phi_1(0)=\begin{pmatrix}0 \\I_n\end{pmatrix}$.

Note that the variation field equation of (\ref{perturb-hal}) is given as follows
\begin{equation}
\begin{cases}
\dot{v}(t,x)=DX_{\widetilde{H}_{\varpi}}(\phi(t,x))\cdot v(t,x)\\
v(0,x)\in T_x L_0,
\end{cases}
\end{equation}
where $v(t,x)\in T_{\phi(t,x)}M$.

Hence $d\phi_{1,\varpi}(x) v(0)=v(1,x)$. This shows that the solution $v(1,x)$ depends continuously on the norm $||\varpi||_2$ and the variable $x\in L_0$. We can take a small $\delta_0>0$ and a smaller neighborhood $U'_0$ such that for any $\varpi$ satisfying $||\varphi||_2<\delta_0$, and $x\in U'_0$, the following holds
$$
\|d\phi_{1,\varpi}(x)-d\phi_1(0)\|<1/3.
$$
Here we take the matrix norm.

This implies that $\phi_{1,\varpi}: U'_0\to U_1$ is an embedding and  intersects $L_1$ only at $0$.

Now for any $l\in S_{\mathsf{k},W}(L_0,L_1)$, we can obtain a neighborhood $U'_0(l)$ such that $\phi_{1,\varpi}(U'_0(l))$ intersects $L_1$ transversely if $||\varpi||_2$ is small enough. Since
$$
\phi_1(L_0\cap (B_{R}(p_0)\setminus \bigcup_{l\in S_{W}}\overline{U'_0(l)}))\cap L_1=\emptyset,
$$
for $\varphi\in \mathcal{V}_{per}$ with sufficently small $\|\nabla_g\varpi\|_{C^0}$, we still have
$$
\phi_{1,\varpi}(L_0\cap (B_{R}(p_0)\setminus \bigcup_{l\in S_{W}}\overline{U'_0(l)}))\cap L_1=\emptyset.
$$
Hence, we proved $S_{\mathsf{k},W,\varpi}(L_0,L_1)=S_{\mathsf{k},W}(L_0,L_1)$ if $||\varpi||_2$ is sufficiently small.
\end{proof}

\begin{df}\label{perturb-space-stripe}
We define
$$
\|\varpi\|_{ratio}=\sup_{t,\;x}\frac{|\nabla_g \varpi(t,x)|}{|\nabla_g \re (W)|},
$$

and define

$$
\mathcal{V}^{\epsilon,\delta_0}_{per}=\{\varpi\in \mathcal{V}^{\epsilon}\;|\;||\varpi||_{2}<\delta_0, ||\varpi||_{ratio}<\delta_0\}.
$$

From now on, we will assume $\delta_0$ to be small enough, so that the results in Lemma \ref{perturb-fini-rad} and Theorem \ref{perturb-hal-equi} hold for $\varpi\in \mathcal{V}^{\epsilon,\delta_0}_{per}$.
\end{df}
\subsubsection*{Path space and actional functional}\

Let $p_{0},p_{1}\in C_W$. We have the order $p_{0}>p_1$ which means that $\im W(p_0)<\im W(p_1)$. We have the corresponding Lefschetz thimbles $L_i:=L^+_{p_i},i=0,1$.

Define the path space
$$
\Omega(L_0,L_1):=\{\phi\in C^\infty([0,1], M)\;|\;\phi(0)\in L_0, \phi(1)\in L_1\}.
$$
We can endow $\Omega(L_0,L_1)$ a (real) $L^2$ metric
$$
\langle \xi,\eta\rangle_g=\int_0^1 g(\xi(t),\eta(t))dt,
$$
for any $l\in \Omega (L_0,L_1)$ and $\xi,\eta \in T_l\Omega(L_0,L_1)$. Similarly, we can define the Hermitian metric $\langle\;,\;\rangle_h$ by using the K\"ahler metric on $M$.

There is a natural one form (called action one form)  $\Psi_{\widetilde{H}_{\varpi}}: T\Omega(L_0,L_1)\to \R$ defined by
$$
\Psi_{\widetilde{H}_{\varpi}}(l,\xi)=\int^1_0 \om(\dot{l}(t)-X_{\widetilde{H}_{\varpi}}(l(t)),\xi(t))dt.
$$

The zero points of $\Psi_{\widetilde{H}_{\varpi}}$ are the solutions of the equation (\ref{perturb-hal}), simply written as

\begin{equation}\label{perturb-hal-r}
J\pat_t \phi-\nabla_g\widetilde{H}_{\varpi}=0.
\end{equation}

The space $\Omega(L_0,L_1)$ is not connected but may have infinitely many connected components. We can fix one connected component by choosing a base path $l_0\in \Omega(L_0,L_1)$, i.e., we choose $\Omega(L_0,L_1;l_0)$ which consists of all paths $l$ that can be homotopically connected to $l_0$. We consider the universal covering of $\Omega(L_0,L_1;l_0)$. Consider the set of pairs $(l,\Phi)$ where the map $\Phi:[0,1]\times [0,1]\to M$ satisfies the boundary conditions:
\begin{align}
&\Phi(0,t)=l_0(t),\quad \Phi(1,t)=l(t),\forall t\in [0,1]\\
&\Phi(s,0)\in L_0,\quad \Phi(s,1)\in L_1,\forall s\in [0,1].
\end{align}
So $\Phi:s\mapsto \Phi(s,\cdot)\in \Omega(L_0,L_1;l_0)$ provides a path from $l_0$ to $l$. We denote the homotopy class of $\Phi$ by $[l,\Phi]\in \pi_1(\Omega(L_0,L_1;l_0))$. Those $[l,\Phi]$'s form the universal covering of $\Omega(L_0,L_1;l_0)$. However, we want to consider a smaller covering. Let $(l,\Phi),(l,\Phi')$ be two paths connecting $l$ and $l_0$, then the contenation:
$$
\overline{\Phi}\#\Phi':S^1\to \Omega(L_0,L_1;l_0)
$$
defines a based loop $\gamma:S^1\to \Omega(L_0,L_1;l_0)$ which can be regarded as the following map satisfying the boundary condition:
\begin{align}
&C:S^1\times [0,1]\to M\nonumber\\
&C(s,0)\in L_0,\;C(s,1)\in L_1\label{sec2:cond-1}.
\end{align}
It is easy to see that the symplectic area of $C$, denoted by
\begin{equation}
I_\om(C)=\int_C\om
\end{equation}
depends only on the homotopy class of $C$ satisfying the condition (\ref{sec2:cond-1}). Hence we can define the homomorphism:
\begin{equation}
I_\om:\pi_1(\Omega(L_0,L_1;l_0))\to \R.
\end{equation}

Note that for the map $C:S^1\times [0,1]\to M$ satisfying (\ref{sec2:cond-1}), we can associate to it a symplectic bundle pair
$$
E_C=C^*TM,\;F_C=C(\cdot,0)^*TL_0\sqcup C(\cdot,1)^*TL_1.
$$
Hence we can define another homomorphism:
\begin{equation}
I_\mu:\pi_1(\Omega(L_0,L_1;l_0))\to \Z,\;I_\mu(C)=\mu(E_C,F_C),
\end{equation}
where $\mu(E_C,F_C)$ is the Maslov index for the bundle pair $(E_C,F_C)$.

\begin{df}We say that $(l,\Phi)$ is \textit{$\Gamma$-equivalent} to $(l,\Phi')$ if they satisfy
\begin{equation}
I_\om(\overline{\Phi}\#\Phi')=0=I_\mu(\overline{\Phi}\#\Phi').
\end{equation}
We denote the equivalence relation by the $(l,\Phi)\sim (l, \Phi')$ and the set of all equivalence classes by $\tilde{\Omega}(L_0,L_1;l_0)$. We call this the \textit{$\Gamma$-covering space} of $\Omega(L_0,L_1;l_0)$.
\end{df}

We denote by $G:=G(L_0,L_1;l_0)$ the deck transformation group, then it can be shown (see \cite{Oh2}, Proposition 13.4.2) that $G(L_0,L_1;l_0)$ is an abelian group and the homomorphisms $I_\om$ and $I_\mu$ can be factorized through $G(L_0,L_1;l_0)$. Take any commutative ring $R$, we have the group ring $R[G]$.

Consider the formal sum
$$
\sigma=\sum_{g\in G}a_g \cdot g
$$
and set
$$
supp(\sigma)=\{g\in G\;|\;a_g\neq 0\}.
$$
\begin{df}We define the \textit{Novikov ring} associated to the group $G$ by
$$
\Lambda(L_0,L_1;l_0)=\{\sigma\;|\;\#supp(\sigma)\cap I_\om^{-1}((-\infty,a))<\infty\;\text{for all }a\}
$$
with the obvious ring structure.
\end{df}
It is easy to see that $\Lambda(L_0,L_1;l_0)$ has the graded $R$-module structure
$$
\Lambda(L_0,L_1;l_0)=\oplus_{k\in \Z}\Lambda^k(L_0,L_1;l_0),
$$
where $\Lambda^k(L_0,L_1;l_0)$ is the $R$-submodule generated by $g\in G, \mu(g)=k$.

For any pair $(l,\Phi)$, we define the action functional
\begin{equation}
A_{\widetilde{H}_{\varpi}}(l,\Phi)=-\int_{[0,1]\times [0,1]}\Phi^*\om-\int^1_0 \widetilde{H}_{\varpi}(l(t))dt.
\end{equation}
This functional only depends on the homotopy class $[l,\Phi]$ and is defined on $\tilde{\Omega}(L_0,L_1;l_0)$.

\begin{lm}Let $\pi:\tilde{\Omega}(L_0,L_1;l_0)\to \Omega(L_0,L_1;l_0)$ be the $\Gamma$-covering space and $\Psi_{\widetilde{H}_{\varpi}}$ be the action one form on $\Omega(L_0,L_1;l_0)$. The following conclusions hold.
\begin{itemize}
\item[(i)]
$$
dA_{\widetilde{H}_{\varpi}}=\pi^*\Psi_{\widetilde{H}_{\varpi}}.
$$
\item[(ii)] The gradient of $A_{\widetilde{H}_{\varpi}}$ with respect to the metric $\langle\;,\;\rangle_g$ is
$$
grad A_{\widetilde{H}_{\varpi}}(l)(t)=J(l(t))\cdot \frac{\pat l(t)}{\pat t}-\nabla_g (\widetilde{H}_{\varpi})(l(t)).
$$
\end{itemize}
\end{lm}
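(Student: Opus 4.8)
The plan is to treat this as a standard first‑variation computation: part (i) by differentiating $A_{\widetilde{H}_\varpi}$ along a path in $\tilde{\Omega}(L_0,L_1;l_0)$, and part (ii) by converting the resulting symplectic pairing into a metric pairing through the K\"ahler identities already recorded in Section~\ref{sec-3}. Before anything else I would confirm that $A_{\widetilde{H}_\varpi}$ genuinely descends to the $\Gamma$‑covering: its value depends on $\Phi$ only through $\int_{[0,1]^2}\Phi^*\omega$, which is a homotopy invariant since $\omega$ is closed (Stokes, rel.\ the prescribed boundary behaviour), and $\Gamma$‑equivalence forces $I_\omega(\overline{\Phi}\#\Phi')=0$, i.e.\ equal symplectic areas, so the functional is unchanged on each $\Gamma$‑class.

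For (i) I would fix a point $[l,\Phi]$ and use that $d\pi$ identifies $T_{[l,\Phi]}\tilde{\Omega}(L_0,L_1;l_0)$ with $T_l\Omega(L_0,L_1)$, so a tangent vector is a field $\xi$ along $l$ with $\xi(0)\in T_{l(0)}L_0$ and $\xi(1)\in T_{l(1)}L_1$; the goal is $dA_{\widetilde{H}_\varpi}(\xi)=\Psi_{\widetilde{H}_\varpi}(l,\xi)$. Choose a smooth one‑parameter family $(l_r,\Phi_r)$, $r\in(-\varepsilon,\varepsilon)$, agreeing with $(l,\Phi)$ at $r=0$, with $r$‑derivative of the path‑component equal to $\xi$ at $r=0$, and with each $\Phi_r$ still satisfying $\Phi_r(0,\cdot)=l_0$, $\Phi_r(1,\cdot)=l_r$, $\Phi_r(s,0)\in L_0$, $\Phi_r(s,1)\in L_1$; such a family exists because $L_0,L_1$ are submanifolds (extend $\xi$ over the square $[0,1]_s\times[0,1]_t$ to a variation field $V$ that is tangent to $L_0$ along $\{t=0\}$, tangent to $L_1$ along $\{t=1\}$, and vanishes along $\{s=0\}$, then flow). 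Differentiating at $r=0$, the potential term gives $-\int_0^1 d\widetilde{H}_\varpi(\xi)\,dt=-\int_0^1\omega\big(X_{\widetilde{H}_\varpi}(l(t)),\xi(t)\big)\,dt$ by $\iota_{X_{\widetilde{H}_\varpi}}\omega=d\widetilde{H}_\varpi$; and the area term, via the first variation of the integral of the closed form $\omega$ and Stokes on the square, collapses to $\int_{\partial([0,1]^2)}\Phi^*(\iota_V\omega)$. The $\{s=0\}$ edge drops because $V=0$ there; the $\{t=0\}$ and $\{t=1\}$ edges drop because there both $V$ and $d\Phi(\partial_s)$ are tangent to the Lagrangians $L_0$ resp.\ $L_1$; only the $\{s=1\}$ edge, where $V=\xi$ and $d\Phi(\partial_t)=\dot l$, survives, and an orientation check makes its contribution to $\frac{d}{dr}\big|_{r=0}\big(-\int\Phi_r^*\omega\big)$ equal to $+\int_0^1\omega(\dot l,\xi)\,dt$. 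Adding the two pieces yields
\begin{equation*}
dA_{\widetilde{H}_\varpi}(\xi)=\int_0^1\omega\big(\dot l(t)-X_{\widetilde{H}_\varpi}(l(t)),\xi(t)\big)\,dt=\Psi_{\widetilde{H}_\varpi}(l,\xi),
\end{equation*}
i.e.\ $dA_{\widetilde{H}_\varpi}=\pi^*\Psi_{\widetilde{H}_\varpi}$, which is (i).

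For (ii) I would feed (i) into the K\"ahler identities $\omega(X,Y)=g(JX,Y)$ and $X_{\widetilde{H}_\varpi}=-J\nabla_g\widetilde{H}_\varpi$ (hence $JX_{\widetilde{H}_\varpi}=\nabla_g\widetilde{H}_\varpi$), rewriting, for every $\xi$,
\begin{equation*}
\omega\big(\dot l-X_{\widetilde{H}_\varpi},\xi\big)=g\big(J\dot l-JX_{\widetilde{H}_\varpi},\xi\big)=g\big(J\dot l-\nabla_g\widetilde{H}_\varpi,\xi\big).
\end{equation*}
Then $dA_{\widetilde{H}_\varpi}(\xi)=\big\langle J\dot l-\nabla_g\widetilde{H}_\varpi,\xi\big\rangle_g$ for all $\xi\in T_l\Omega(L_0,L_1)$, and since the $L^2$ pairing is nondegenerate on this tangent space, the gradient of $A_{\widetilde{H}_\varpi}$ with respect to $\langle\cdot,\cdot\rangle_g$ must be $\operatorname{grad}A_{\widetilde{H}_\varpi}(l)(t)=J(l(t))\,\partial_t l(t)-\nabla_g(\widetilde{H}_\varpi)(l(t))$, as claimed.

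The only step needing real care is the first variation of $\int\Phi_r^*\omega$: one forms the homotopy $\hat\Phi:(-\varepsilon,\varepsilon)_r\times[0,1]_s\times[0,1]_t\to M$, applies Stokes on it using $d\omega=0$ to reduce $\frac{d}{dr}\big|_{r=0}\int_{[0,1]^2}\Phi_r^*\omega$ to $\int_{\partial([0,1]^2)}\Phi^*(\iota_V\omega)$, and then must bookkeep the induced boundary orientations so that the surviving edge contributes with exactly the sign that matches the definitions of $\Psi_{\widetilde{H}_\varpi}$ and of $A_{\widetilde{H}_\varpi}$ (note the $-\int\Phi^*\omega$ convention); the Lagrangian hypothesis on $L_0,L_1$ is precisely what annihilates the two $\{t=0\},\{t=1\}$ edges. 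All remaining manipulations are routine.
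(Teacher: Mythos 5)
Your argument is correct and reaches the same formula, but the way you handle the first variation of the area term in (i) is genuinely different from the paper's. The paper does not extend $\xi$ to a variation field over the whole square and invoke Stokes with $d\omega=0$; instead it builds, for each $\lambda$, an explicit concatenated homotopy $\Phi_\lambda$ that compresses the original $\Phi$ into $[0,\lambda]\times[0,1]$, inserts the variation path $l_{s-\lambda}$ as a thin strip on $[\lambda,2\lambda]\times[0,1]$, and is constant in $s$ on the rest. The area of the compressed piece is independent of $\lambda$ and the constant piece contributes nothing, so $\frac{d}{d\lambda}\big|_{\lambda=0}$ of $-\int\Phi_\lambda^*\omega$ is computed directly from the inserted strip and equals $\int_0^1\omega(\pat_t l,\eta)\,dt$ with no boundary-term bookkeeping and no need to construct an extension $V$ compatible with the Lagrangian conditions. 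Your route is the standard Cartan/Stokes first-variation computation: it is more general and more transparent conceptually (it makes explicit that the Lagrangian condition is exactly what kills the $\{t=0\},\{t=1\}$ edges), but it puts all the burden on the orientation and sign conventions on $\pat([0,1]^2)$, which you correctly flag as the delicate step, and it requires the (routine but not free) existence of the extension $V$. The paper's trick trades that for a slightly less symmetric-looking construction that localizes the entire derivative in one strip. Your preliminary check that $A_{\widetilde{H}_\varpi}$ descends to the $\Gamma$-cover, and your derivation of (ii) from (i) via $\omega(X,Y)=g(JX,Y)$ and $JX_{\widetilde{H}_\varpi}=\nabla_g\widetilde{H}_\varpi$, coincide with what the paper does (the descent claim is stated just before the lemma rather than inside its proof).
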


\begin{proof}In order to prove (i), we can regard $A_{\widetilde{H}_{\varpi}}$ as a circle valued functional defined on $\Omega(L_0,L_1;l_0)$ and then prove that $dA_{\widetilde{H}_{\varpi}}=\Psi_{\widetilde{H}_{\varpi}}$. Let $\eta\in T_l(\Omega(L_0,L_1;l_0))$ be the variational vector field on $l(t)$, i.e., there is a variation $l_\lambda(t)$ of $l(t)$ for $\lambda \in (-\epsilon,\epsilon)$, where $\epsilon > 0$ is sufficiently small such that $l_{\lambda=0}=l$ and $\pat l_\lambda(t)/\pat \lambda|_{\lambda=0}=\eta(t).$ Given the pair $(l,\Phi)$, we can construct the pair $(l_\lambda, \Phi_\lambda)$ where $\Phi_\lambda (\lambda\in [0,\epsilon))$ is defined as follows:
\begin{equation}
\Phi_\lambda(s,t)=
\begin{cases}
\Phi(\frac{s}{\lambda},t),&0\le s\le \lambda\\
l_{s-\lambda}(t),&\lambda\le s\le 2\lambda\\
l_\lambda(t),&2\lambda\le s\le 1.
\end{cases}
\end{equation}
Now we have
$$
A_{\widetilde{H}_{\varpi}}([l_\lambda,\Phi_\lambda])=-\int_{[0,1]\times [0,1]}\Phi_\lambda^*\om-\int^1_0 (\widetilde{H}_{\varpi})(l_\lambda(t))dt.
$$
Take the derivative of $\lambda$ at $\lambda=0$, we have
\begin{align*}
\delta A_{\widetilde{H}_{\varpi}}(l)\cdot \eta&=-\int^1_0 \frac{d}{d\lambda}|_{\lambda=0}\int^{2\lambda}_{\lambda} \om(\frac{\pat \Phi_\lambda}{\pat s},\frac{\pat \Phi_\lambda}{\pat t})dsdt-\int^1_0 d(\widetilde{H}_{\varpi})(l(t))(\eta)dt\\
&=\int^1_0 \om(\frac{\pat l}{\pat t},\eta(t))dt-\int^1_0 \om(X_{\widetilde{H}_{\varpi}},\eta)dt\\
&=\Psi_{\widetilde{H}_{\varpi}}(l,\eta).
\end{align*}

The above identity shows that
$$
\langle grad A_{\widetilde{H}_{\varpi}},\eta\rangle_g=\langle J\cdot \frac{\pat l}{\pat t}-\nabla(\widetilde{H}_{\varpi}),\eta\rangle_g.
$$
This gives (ii).
\end{proof}

The $L^2$ negative gradient flow equation of $A_{\widetilde{H}_{\varpi}}$ on $\Omega(L_0,L_1;l_0)$ has the following form:
\begin{equation}\label{equa-flow-real-1}
\frac{\pat \phi}{\pat s}+J(\phi)\frac{\pat \phi}{\pat t}-\nabla_g \widetilde{H}_{\varpi}=0,
\end{equation}
where $\phi(s,t):\Sigma:=\R\times [0,1]\to M$ satisfies the boundary conditions:
\begin{equation}\label{sec2:equa-flow-bdry-1}
\phi(s,0)\in L_0,\;\phi(s,1)\in L_1.
\end{equation}

The equation (\ref{equa-flow-real-1}) is Floer's equation, which is just the perturbed Witten equation
\begin{equation}\label{perturb-lg}
\pat_s \phi+J\pat_t \phi-\nabla_g(\mathsf{k}\re W(\phi)+\varpi(t,\phi)) =0,
\end{equation}
or simply written as
\begin{equation}\label{perturb-lg-r}
\pat_s \phi+J\pat_t \phi-\nabla_g \widetilde{H}_{\varpi}=0.
\end{equation}
In local complex coordinates, it is
\begin{equation}\label{perturb-lg-complex}
\pat_\zb u^j= \mathsf{k}\sum_{\ib}h^{\ib j}\pat_\ib \overline{W}+\mu^j,
\end{equation}
where $\mu^j$ comes from the perturbation.
The energy functional of (\ref{equa-flow-real-1}) is given by
\begin{equation}\label{sec4:ener-func}
E_{\widetilde{H}_{\varpi}}(\phi)=\frac{1}{2}\int^\infty_{-\infty}\int^1_0 \left(\left|\frac{\pat \phi}{\pat s}\right|^2+\left|\frac{\pat \phi}{\pat t}-X_{\widetilde{H}_{\varpi}}(\phi) \right|^2 \right).
\end{equation}

\begin{prop} Let $\phi$ be a smooth solution of (\ref{equa-flow-real-1}) flowing from $l^-\in S_W(L_0,L_1)$ to $l^+\in S_W(L_0,L_1)$. Then we have
\begin{equation}
E_{\widetilde{H}_{\varpi}}(\phi)=A_{\widetilde{H}_{\varpi}}(l^-,\Phi^-)-A_{\widetilde{H}_{\varpi}}(l^+,\Phi^+).
\end{equation}
where $\Phi^+=\Phi^-\#\phi$.
\end{prop}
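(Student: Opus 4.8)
The plan is to treat this as the familiar energy identity for Floer trajectories: reduce it to a Stokes-type computation on the strip $\Sigma=\R\times[0,1]$, the only genuine analytic input being the control of $\phi$ as $s\to\pm\infty$. First I would extract the two pointwise consequences of $\phi$ solving (\ref{equa-flow-real-1}). Rewriting the equation as $J\pat_t\phi=\nabla_g\widetilde{H}_{\varpi}(\phi)-\pat_s\phi$ and applying $-J=J^{-1}$, together with $X_{\widetilde{H}_{\varpi}}=-J\nabla_g\widetilde{H}_{\varpi}$, gives
\[
\pat_t\phi-X_{\widetilde{H}_{\varpi}}(\phi)=J\,\pat_s\phi .
\]
Since $J$ is a $g$-isometry, $\bigl|\pat_t\phi-X_{\widetilde{H}_{\varpi}}(\phi)\bigr|^2=|\pat_s\phi|^2$, so on solutions the energy simplifies to $E_{\widetilde{H}_{\varpi}}(\phi)=\int_{-\infty}^{\infty}\!\int_0^1 |\pat_s\phi|^2\,dt\,ds$. (Equivalently, one can read the proposition as the statement that $\phi$ is a negative $L^2$-gradient flow line of $A_{\widetilde{H}_{\varpi}}$, so $\frac{d}{ds}A_{\widetilde{H}_{\varpi}}(\phi(s,\cdot))=-\|\pat_s\phi(s,\cdot)\|_g^2$; but the Stokes route below is more self-contained.)

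Next I would compare the two cappings. By construction $\Phi^{+}=\Phi^{-}\#\phi$, so after the standard reparametrisation of the concatenation $\int_{[0,1]^2}(\Phi^{+})^{*}\om=\int_{[0,1]^2}(\Phi^{-})^{*}\om+\int_{\Sigma}\phi^{*}\om$. Substituting into the definition of $A_{\widetilde{H}_{\varpi}}$ yields
\[
A_{\widetilde{H}_{\varpi}}(l^-,\Phi^-)-A_{\widetilde{H}_{\varpi}}(l^+,\Phi^+)=\int_{\Sigma}\phi^{*}\om+\int_0^1\bigl(\widetilde{H}_{\varpi}(l^+(t))-\widetilde{H}_{\varpi}(l^-(t))\bigr)\,dt .
\]
Then I would compute $\int_{\Sigma}\phi^{*}\om=\int\!\int\om(\pat_s\phi,\pat_t\phi)\,ds\,dt$: substituting $\pat_t\phi=X_{\widetilde{H}_{\varpi}}(\phi)+J\pat_s\phi$ and using $\om(\,\cdot\,,J\,\cdot\,)=g(\,\cdot\,,\,\cdot\,)$ together with $\iota_{X_{\widetilde{H}_{\varpi}}}\om=d\widetilde{H}_{\varpi}$ gives $\om(\pat_s\phi,\pat_t\phi)=|\pat_s\phi|^2-\pat_s\bigl(\widetilde{H}_{\varpi}(\phi(s,t))\bigr)$. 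Integrating the exact $\pat_s$-term and using the $s\to\pm\infty$ limits produces $-\int_0^1(\widetilde{H}_{\varpi}(l^+)-\widetilde{H}_{\varpi}(l^-))\,dt$, which cancels the Hamiltonian boundary terms above, leaving precisely $\int_{-\infty}^{\infty}\!\int_0^1|\pat_s\phi|^2=E_{\widetilde{H}_{\varpi}}(\phi)$.

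The hard part will be the passage to the limit $s\to\pm\infty$: one needs $\phi(s,\cdot)\to l^{\pm}$ strongly enough — concretely, $|\pat_s\phi(s,\cdot)|$ decaying exponentially in $s$ — to justify, for each fixed $t$, that $\int_{-\infty}^{\infty}\pat_s(\widetilde{H}_{\varpi}(\phi(s,t)))\,ds=\widetilde{H}_{\varpi}(l^+(t))-\widetilde{H}_{\varpi}(l^-(t))$, to invoke Fubini in $(s,t)$, and to guarantee finiteness of all the integrals involved. This exponential decay at the strip ends follows from the standard linear asymptotic analysis of the Floer operator at a nondegenerate chord, the nondegeneracy being exactly the transversality of Lefschetz thimbles established in Theorem \ref{transversal}; I would invoke that analysis here rather than reprove it. With it in hand, the two displayed formulas combine at once to give the claimed identity, and I expect the algebraic part to be entirely routine.
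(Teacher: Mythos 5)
Your proof is correct and follows essentially the same route as the paper's: compute $\int_\Sigma\phi^*\om$ (equivalently, the cross term $\int g(\pat_s\phi, J\pat_t\phi-\nabla_g\widetilde{H}_{\varpi})$) via $\iota_{X_{\widetilde{H}_{\varpi}}}\om=d\widetilde{H}_{\varpi}$ and the Floer equation, integrate the exact $\pat_s$-term, and use the concatenation $\Phi^+=\Phi^-\#\phi$. The paper organizes it as a completing-the-square identity valid for an arbitrary map with a $\frac{1}{2}\int|\text{Floer eq}|^2$ remainder that vanishes on solutions, whereas you specialize to solutions from the start, but the core computation is the same; your explicit attention to the $s\to\pm\infty$ limits and exponential decay is a reasonable (and correct) supplement that the paper's proof leaves implicit.
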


\begin{proof}We have the following computation:
\begin{align*}
&\int_{\R\times [0,1]} g(\frac{\pat \phi}{\pat s},J\cdot\frac{\pat\phi}{\pat t}-\nabla_g(\widetilde{H}_{\varpi}))\\
&=-\int_{\R\times [0,1]}\om(\frac{\pat \phi}{\pat s},\frac{\pat \phi}{\pat t})-\int_{\R\times [0,1]}\frac{\pat (\widetilde{H}_{\varpi})}{\pat s}\\
&=-\int \phi^*\om-\int^1_0(\widetilde{H}_{\varpi}(l^+(t))-\widetilde{H}_{\varpi}(l^-(t)))dt\\
&=-\int (\Phi^+)^*\om+\int (\Phi^-)^*\om-\widetilde{H}_{\varpi}(l^+(0))+\widetilde{H}_{\varpi}(l^-(0))\\
&=A_{\widetilde{H}_{\varpi}}(l^+,\Phi^+)-A_{\widetilde{H}_{\varpi}}(l^-,\Phi^-).
\end{align*}
Hence we have
\begin{align*}
E_{\widetilde{H}_{\varpi}}(\phi)&=\frac{1}{2}\int^\infty_{-\infty}\int^1_0 \left(\left|\frac{\pat \phi}{\pat s}+J\cdot\frac{\pat \phi}{\pat t}-J\cdot X_{\widetilde{H}_{\varpi}}(\phi) \right|^2 \right)+A_{\widetilde{H}_{\varpi}}(l^-,\Phi^-)-A_{\widetilde{H}_{\varpi}}(l^+,\Phi^+)\\
&=A_{\widetilde{H}_{\varpi}}(l^-,\Phi^-)-A_{\widetilde{H}_{\varpi}}(l^+,\Phi^+)
\end{align*}
\end{proof}

\begin{thm}\label{sec4-thm-ener-conv-expo-1}
Let $\phi:\Sigma \to M$ be a smooth solution of (\ref{equa-flow-real-1}). Then the following three conclusions are equivalent:
\begin{itemize}
\item[(i)] $E_{\widetilde{H}_{\varpi}}<\infty$
\item[(ii)] There exist solutions $l^\pm(t)\in S_{\mathsf{k},W}(L_0,L_1)$ such that
$$
\lim_{s\to \pm\infty}\phi(s,t)=l^\pm(t).
$$
\item[(iii)] There exist constants $\delta>0$ and $c>0$ such that
$$
|\pat_s\phi(s,t)|\le ce^{-\delta|s|},
$$
for all $(s,t)\in \Sigma$.
\end{itemize}
\end{thm}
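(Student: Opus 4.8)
\emph{Proof proposal.} The plan is to run the cyclic chain $(iii)\Rightarrow(i)\Rightarrow(ii)\Rightarrow(iii)$, the only non-routine link being $(i)\Rightarrow(ii)$. The implication $(iii)\Rightarrow(i)$ is immediate: since $J$ is a $g$-isometry and the Floer equation (\ref{perturb-lg-r}) reads $\pat_s\phi=-J(\pat_t\phi-X_{\widetilde{H}_{\varpi}}(\phi))$, one has the pointwise identity $|\pat_s\phi|=|\pat_t\phi-X_{\widetilde{H}_{\varpi}}(\phi)|$, hence $E_{\widetilde{H}_{\varpi}}(\phi)=\int_{-\infty}^{\infty}\!\!\int_0^1|\pat_s\phi|^2\,dt\,ds\le\int_{-\infty}^{\infty}c^2e^{-2\delta|s|}\,ds<\infty$. (Alternatively, $(ii)\Rightarrow(i)$ is just the energy identity of the Proposition preceding this theorem.)

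\emph{$(i)\Rightarrow(ii)$.} First I would establish a priori control of $\phi$ near the ends. Finiteness of $E_{\widetilde{H}_{\varpi}}(\phi)$ forces the tail energies $\int_{|s|\ge S}\!\int_0^1|d\phi|^2$ to vanish as $S\to\infty$; feeding this into the tame condition and exactness exactly as in the $C^0$--$C^1$ interlocking of Lemmata \ref{control-first-direction} and \ref{L1.2}, now applied on half-strips $[S,\infty)\times[0,1]$ instead of on a compact disc, confines $\phi|_{\{|s|\ge S_0\}}$ to a fixed compact $K_0\subset M$ and gives a uniform bound $\sup_{|s|\ge S_0}|d\phi|\le C$; interior elliptic estimates for (\ref{equa-flow-real-1}) on unit squares, together with the energy decay, then upgrade this to $\sup_t|d\phi(s,t)|\to0$ as $|s|\to\infty$. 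For any sequence $s_n\to+\infty$, Arzel\`a--Ascoli and elliptic bootstrapping extract a subsequence with $\phi(s_n,\cdot)\to l$ in $C^\infty([0,1],M)$, where $l(0)\in L_0$, $l(1)\in L_1$ and, since $\int_0^1|\pat_s\phi(s_n,t)|^2dt\to0$, $\tfrac{dl}{dt}=X_{\widetilde{H}_{\varpi}}(l)$; thus $l\in S_{\mathsf{k},W,\varpi}(L_0,L_1)=S_{\mathsf{k},W}(L_0,L_1)$ by Theorem \ref{perturb-hal-equi}, a finite set under the transversality assumption standing after Theorem \ref{transversal}. Finally the limit set of $\{\phi(s,\cdot)\}_{s\to+\infty}$ is compact and connected (the uniform gradient bound makes $s\mapsto\phi(s,\cdot)$ equicontinuous) and sits inside this finite set, hence reduces to a single $l^+$; so $\phi(s,\cdot)\to l^+$, and symmetrically $\phi(s,\cdot)\to l^-$ as $s\to-\infty$.

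\emph{$(ii)\Rightarrow(iii)$.} Fix $l^+$. The standing transversality hypothesis ($L_0\pitchfork^{1,\mathsf{k}}L_1$) says precisely that $l^+$ is a nondegenerate zero of $\Psi_{\widetilde{H}_{\varpi}}$: in a trivialization along $l^+$ its asymptotic operator $A_\infty=J_0\pat_t+S_\infty(t)$ on paths $\xi:[0,1]\to\R^{2n}$ with $\xi(0)\in TL_0$, $\xi(1)\in TL_1$ is self-adjoint with trivial kernel, so its spectrum misses $(-\delta,\delta)$ for some $\delta>0$. Writing $\phi(s,t)=\exp_{l^+(t)}\xi(s,t)$ for $s\ge S_1$ (legitimate since $\|\xi(s,\cdot)\|_{C^1}\to0$ by the previous step), equation (\ref{equa-flow-real-1}) becomes $\pat_s\xi+A(s)\xi+N(s,\xi)=0$ with $A(s)\to A_\infty$ and $\|N(s,\xi)\|\le\varepsilon(s)\|\xi\|$, $\varepsilon(s)\to0$; then $f(s):=\tfrac12\|\xi(s,\cdot)\|_{L^2}^2$ satisfies $f''\ge(2\delta^2-o(1))f$ for $s$ large, giving $f(s)\le f(S_1)e^{-2\delta'(s-S_1)}$ for any $\delta'<\delta$, and elliptic bootstrapping on unit squares turns this $L^2$-decay of $\xi$ (hence of $\pat_s\phi$) into the pointwise bound $|\pat_s\phi(s,t)|\le c\,e^{-\delta'|s|}$; the argument at $s\to-\infty$ is identical, which closes the loop.

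\textbf{Main obstacle.} The essential difficulty is the a priori step inside $(i)\Rightarrow(ii)$: since $M$ is noncompact and $\nabla_g\widetilde{H}_{\varpi}$ is unbounded, finite energy alone yields no $C^0$ or $C^1$ bound, and one must genuinely invoke the tame condition and exactness through the half-strip version of Lemmata \ref{control-first-direction}--\ref{L1.2} to keep the ends of the strip in a compact set --- exactly the mechanism underlying Theorem \ref{c_0_polytope}. Once that confinement is secured, the rest is the classical Floer package: Arzel\`a--Ascoli together with connectedness of the limit set and finiteness of the chord set for $(ii)$, and the spectral-gap differential inequality for $(iii)$.
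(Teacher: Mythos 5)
Your proof is correct and follows essentially the same route as the paper: the same cyclic chain $(iii)\Rightarrow(i)\Rightarrow(ii)\Rightarrow(iii)$, with $(i)\Rightarrow(ii)$ handled as in Theorem \ref{limit} (a priori $C^0$--$C^1$ bounds, translation plus Arzel\`a--Ascoli, and finiteness of $S_{\mathsf{k},W}(L_0,L_1)$ to pin down a single limit chord) and $(ii)\Rightarrow(iii)$ via the spectral gap of the asymptotic operator from Lemma \ref{w:soliton-iso} together with the convexity inequality $f''\ge\delta^2 f$, exactly as in Theorem \ref{sec5:exponential}. The only cosmetic difference is that you run the convexity argument on the displacement $\xi=\exp_{l^+}^{-1}(\phi)$ whereas the paper runs it on $\xi=\partial_s\phi$; both yield the stated exponential decay.
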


\begin{proof}(iii)$\Rightarrow$ (i) is easy. The proof of (i)$\Rightarrow$ (ii) is given in Theorem \ref{limit} and the proof of (ii)$\Rightarrow$ (iii) is given in Theorem \ref{sec5:exponential}.
\end{proof}

If $M$ is an exact K\"ahler manifold,  the action functional $A_{\widetilde{H}_{\varpi}}(l,\Phi)$ can be further simplified. As $M$ is exact, $\omega$ can be expressed as $d\lambda$ for some $\lambda\in \Omega^1(M)$. Let $L_i$ be Lagrangian submanifold corresponding to $p_i\in C_W$. Since $d\lambda|_{L_i}=\om|_{L_i}=0$ and $L_i$ is simply connected, there exists a smooth function $\gamma_i$ defined on $L_i$ such that $\lambda=d\gamma_i$. $\gamma_i$ is uniquely determined if we normalize it by setting $\gamma_i(p_i)=0$.

\begin{thm}\label{exact-functional}
Assume that $M$ is exact, then the action functional $A_{\widetilde{H}_{\varpi}}(l,\Phi)$ is independent of $\Phi$, where $\Phi$ is any homotopy connecting $l$ and $l_0$. If we define
\begin{equation}
A_{\widetilde{H}_{\varpi}}(l)=-\left[\gamma_0(l(0))-\gamma_1(l(1))+\int^1_0 l^*\lambda\right]-\int^1_0 \widetilde{H}_{\varpi}dt,
\end{equation}
then
$$
A_{\widetilde{H}_{\varpi}}(l,\Phi)=A_{\widetilde{H}_{\varpi}}(l)-A_{\widetilde{H}_{\varpi}}(l_0).
$$
\end{thm}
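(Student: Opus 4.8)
The plan is to use exactness, $\omega=d\lambda$, to rewrite the symplectic-area term $\int_{[0,1]\times[0,1]}\Phi^*\omega$ appearing in $A_{\widetilde{H}_{\varpi}}(l,\Phi)$ as a boundary integral of $\lambda$ via Stokes' theorem, and then to evaluate the two boundary pieces lying in $L_0$ and $L_1$ using the primitives $\gamma_0,\gamma_1$. The independence of $\Phi$ comes out first: if $(l,\Phi)$ and $(l,\Phi')$ are two cappings of the same path $l$, then $A_{\widetilde{H}_{\varpi}}(l,\Phi)-A_{\widetilde{H}_{\varpi}}(l,\Phi')$ equals, up to sign, the symplectic area $\int_C\omega$ of the loop $C\colon S^1\times[0,1]\to M$ obtained by gluing $\Phi$ and $\Phi'$, and $C$ satisfies the boundary condition (\ref{sec2:cond-1}). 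Since $\omega=d\lambda$, Stokes gives $\int_C\omega=\int_{\partial C}\lambda$, and $\partial C$ is the union of a closed loop in $L_0$ and a closed loop in $L_1$; as $\lambda$ restricts to the exact form $d\gamma_i$ on $L_i$ (the $\gamma_i$ exist because $\omega|_{L_i}=0$ and $L_i$ is simply connected by Lemma \ref{w:simply connected}), the integral of $\lambda$ over each of these loops vanishes, so $\int_C\omega=0$. Equivalently, this shows that the homomorphism $I_\omega\colon\pi_1(\Omega(L_0,L_1;l_0))\to\R$ introduced above is identically zero in the exact case, so $A_{\widetilde{H}_{\varpi}}(l,\Phi)$ depends only on $l$.

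For the closed formula, fix any homotopy $\Phi\colon[0,1]\times[0,1]\to M$ with $\Phi(0,\cdot)=l_0$, $\Phi(1,\cdot)=l$, $\Phi(s,0)\in L_0$ and $\Phi(s,1)\in L_1$, and apply Stokes to $\Phi^*\lambda$ on the unit square:
\[
\int_{[0,1]\times[0,1]}\Phi^*\omega=\int_{\partial([0,1]\times[0,1])}\Phi^*\lambda.
\]
The edge $\{s=1\}$ contributes $+\int_0^1 l^*\lambda$ and the edge $\{s=0\}$ contributes $-\int_0^1 l_0^*\lambda$; the edge $\{t=0\}$ is the path $s\mapsto\Phi(s,0)$ inside $L_0$ from $l_0(0)$ to $l(0)$, contributing $\gamma_0(l(0))-\gamma_0(l_0(0))$ since $\lambda|_{L_0}=d\gamma_0$; and the edge $\{t=1\}$ is a path inside $L_1$ from $l_0(1)$ to $l(1)$, contributing $-\bigl(\gamma_1(l(1))-\gamma_1(l_0(1))\bigr)$. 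Collecting the four terms and writing $S(l):=\gamma_0(l(0))-\gamma_1(l(1))+\int_0^1 l^*\lambda$, one gets $\int_{[0,1]\times[0,1]}\Phi^*\omega=S(l)-S(l_0)$. Substituting this into $A_{\widetilde{H}_{\varpi}}(l,\Phi)=-\int_{[0,1]\times[0,1]}\Phi^*\omega-\int_0^1\widetilde{H}_{\varpi}(l(t))\,dt$ and comparing with the stated definitions of $A_{\widetilde{H}_{\varpi}}(l)$ and $A_{\widetilde{H}_{\varpi}}(l_0)$ yields the asserted identity.

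The argument is in essence just bookkeeping, with the only real inputs being $\omega=d\lambda$, the existence of the primitives $\gamma_i$ (hence Lemma \ref{w:simply connected}), and the normalization $\gamma_i(p_i)=0$, which merely fixes the additive constants and is irrelevant to the difference between $l$ and $l_0$. The step most prone to error is the orientation and sign bookkeeping along $\partial([0,1]\times[0,1])$ — in particular keeping straight which endpoint of each boundary edge is $l_0(\cdot)$ and which is $l(\cdot)$, and the orientations of the two loops bounding $C$ in the first part. I would also make sure all homotopies are taken smooth so that Stokes applies verbatim, and note that the $t$-dependence of $\widetilde{H}_{\varpi}$ plays no role here, since that term is never paired against $\Phi$.
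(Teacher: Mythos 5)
Your proof is correct and follows essentially the same route as the paper: apply Stokes with $\omega=d\lambda$ to convert $\int\Phi^*\omega$ into boundary integrals over the four edges of the square, then use $\lambda|_{L_i}=d\gamma_i$ on the two Lagrangian edges; the sign bookkeeping matches the paper's computation. Your separate preliminary argument for $\Phi$-independence (gluing two cappings and showing the resulting cylinder has zero symplectic area) is a valid addition but is logically redundant, since the closed formula you derive manifestly depends only on $l$ and $l_0$.
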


\begin{proof} It suffices to compute the first term of $A_{\widetilde{H}_{\varpi}}(l,\Phi)$,
\begin{align*}
-\int_{[0,1]\times[0,1]} \Phi^* \omega&=-\int_{\pat ([0,1]\times[0,1])} \Phi^* \lambda\\
&=-\int_0^1 l^*\lambda+\int^1_0 l_0^*\lambda-\int^1_0 \Phi^*(s,0)d\gamma_0+\int^1_0 \Phi^*(s,1) d\gamma_1\\
&=-[\gamma_0(l(0)-\gamma_1(l(1))+\int_0^1 l^*\lambda]+[\gamma_0(l_0(0)-\gamma_1(l_0(1))+\int_0^1 l_0^*\lambda]
\end{align*}
\end{proof}

\begin{crl}Assume that $M$ is exact. Let $\phi$ be a smooth solution of (\ref{equa-flow-real-1}) flowing from $l^-\in S_{\mathsf{k},W}(L_0,L_1)$ to $l^+\in S_{\mathsf{k},W}(L_0,L_1)$. Then we have
\begin{equation}
E_{\widetilde{H}_{\varpi}}(\phi)=A_{\widetilde{H}_{\varpi}}(l^-)-A_{\widetilde{H}_{\varpi}}(l^+).
\end{equation}
\end{crl}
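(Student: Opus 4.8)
The plan is to obtain this corollary by combining the Proposition immediately preceding it with Theorem \ref{exact-functional}; no additional analysis is required, and the only content is to see how the dependence on the homotopy $\Phi$ disappears once $M$ is exact.

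First I would invoke the Proposition. Our hypothesis is exactly its hypothesis: $\phi$ is a smooth solution of (\ref{equa-flow-real-1}) flowing from $l^-\in S_{\mathsf{k},W}(L_0,L_1)$ to $l^+\in S_{\mathsf{k},W}(L_0,L_1)$ (in particular $\phi$ has finite energy by Theorem \ref{sec4-thm-ener-conv-expo-1}, and $l^-,l^+$ lie in a common component $\Omega(L_0,L_1;l_0)$). Choosing a homotopy $\Phi^-$ from the base path $l_0$ to $l^-$ and setting $\Phi^+:=\Phi^-\#\phi$, the Proposition yields
\begin{equation*}
E_{\widetilde{H}_{\varpi}}(\phi)=A_{\widetilde{H}_{\varpi}}(l^-,\Phi^-)-A_{\widetilde{H}_{\varpi}}(l^+,\Phi^+).
\end{equation*}
Next I would apply Theorem \ref{exact-functional}. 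Since $M$ is exact and each Lefschetz thimble is simply connected (Lemma \ref{w:simply connected}), the primitives $\gamma_i$ on $L_i$ normalized by $\gamma_i(p_i)=0$ exist, and the theorem gives $A_{\widetilde{H}_{\varpi}}(l,\Phi)=A_{\widetilde{H}_{\varpi}}(l)-A_{\widetilde{H}_{\varpi}}(l_0)$ for every admissible homotopy $\Phi$. Applying this to $(l^-,\Phi^-)$ and to $(l^+,\Phi^+)$ and substituting into the identity above, the term $A_{\widetilde{H}_{\varpi}}(l_0)$ cancels, leaving
\begin{equation*}
E_{\widetilde{H}_{\varpi}}(\phi)=\bigl(A_{\widetilde{H}_{\varpi}}(l^-)-A_{\widetilde{H}_{\varpi}}(l_0)\bigr)-\bigl(A_{\widetilde{H}_{\varpi}}(l^+)-A_{\widetilde{H}_{\varpi}}(l_0)\bigr)=A_{\widetilde{H}_{\varpi}}(l^-)-A_{\widetilde{H}_{\varpi}}(l^+),
\end{equation*}
which is the claim.

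Since the whole argument is a line of bookkeeping, there is no genuinely hard step; the only point deserving attention is that $\Phi^+=\Phi^-\#\phi$ is indeed an admissible homotopy, which rests on $\phi$ having well-defined asymptotic limits $l^\pm$ — precisely the equivalence (i)$\Leftrightarrow$(ii) in Theorem \ref{sec4-thm-ener-conv-expo-1} — and on $l^-,l^+$ lying in the same connected component, so that $A_{\widetilde{H}_{\varpi}}(\,\cdot\,)$ is defined via the same base path $l_0$. I would also add the remark that this identity shows $A_{\widetilde{H}_{\varpi}}$ is nonincreasing along trajectories of (\ref{equa-flow-real-1}) (strictly so unless $\phi$ is constant in $s$), consistent with (\ref{equa-flow-real-1}) being its $L^2$ negative gradient flow.
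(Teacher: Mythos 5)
Your proof is correct and is exactly the intended argument: the paper states this corollary without proof as an immediate combination of the preceding Proposition with Theorem \ref{exact-functional}, with the base-path terms $A_{\widetilde{H}_{\varpi}}(l_0)$ cancelling just as you write. Your added remarks on the admissibility of $\Phi^+=\Phi^-\#\phi$ and on monotonicity of the action are accurate and harmless.
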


\section{Deligne-Mumford-Stasheff compactifications}\label{sec-5}

In subsections \ref{Recall1} and \ref{Recall2}, we follow closely Seidel's treatment in \cite{Si3}. In subsection \ref{sec5-cano-sect}, we discuss the auxiliary section of the log-canonical bundle which will be important for Witten equation.

\subsection{Riemann surface with strip-like ends}\label{Recall1}

Let $\cCh$ be a compact Riemann surface with boundary, and let $\cZ$ be a finite set of boundary points of $\cCh$. Then $\cC=\cCh\setminus \cZ$ is a pointed-boundary Riemann surface.  We can classify the points in $\cZ$ as two types: the incoming points and the outcoming points and denote by $\cZ^-$ the subset consisting of incoming points and $\cZ^+$ the subset consisting of outcoming points. We can number the components $C\subset \pat \cC$ that are coming from the same boundary component of $\cCh$ as follows. Let $B$ denote a connected component of $\pat\cCh$ and $\cZ_{B}=\cZ\cap B$. Then $B$ has an induced orientation from $\cCh$. Along this orientation, we can number the punctures by $\zeta_0,\zeta_1,\cdots,\zeta_{d}$, where $d+1=\#\cZ_{B}$, and number the component $C$ ahead of $\zeta_i$ by $C_i$ for $i=0,\cdots,d$.

There are some special pointed-boundary surfaces. The closed upper half plane $\Hb$ can be considered as the closed unit disc $D$  removing one boundary point and defining this point at infinity as the incoming point. The surface $\bar{\Hb}$ is the closed half plane $\Hb$ with the point at infinity defining as the outgoing one. We can remove two boundary points from $D$ and label one point as the incoming point and the other one as the outgoing point. This surface is biholomorphically equivalent to the infinitely long strip $\Sigma=\R\times [0,1]$ with coordinates $(s,t)$, where the incoming point appears at $s=-\infty$ and the outgoing point appears at $s=+\infty$.

Denote $\Sigma^\pm=\R^\pm\times [0,1]$, where $\R^+:=[0,\infty)$ and $\R^-:=(-\infty, 0]$.

\begin{df} A set of \textit{strip-like ends} for a pointed-boundary Riemann surface $\cC$ consists of proper holomorphic embeddings $\ve_\zeta: \Sigma^\pm\to \cC$, one for each $\zeta\in \cZ^\pm$, satisfying
$$
\ve_\zeta^{-1}(\pat \cC)=\R^\pm\times\{0,1\}, \;\text{and}\;\lim_{s\to \pm \infty}\ve_\zeta(s,\cdot)=\zeta,
$$
and with the additional requirement that the images of $\ve_\zeta$ are pairwise disjoint.
\end{df}

The Riemann surface structure on $\Sigma^\pm$ extends to the one-point compactification $\widehat{\Sigma}^\pm=\Sigma\cup \{\pm \infty\}$ and each strip-like end extends to a holomorphic embedding $\widehat{\ve}_\zeta:\widehat{\Sigma}^\pm\to \cCh$ which maps $\pm\infty$ to $\zeta$.

Given two punctured Riemann surfaces $\cC_1$ and $\cC_2$ with two punctures $\zeta^+_1$ and $\zeta^-_2$ respectively, there is a gluing operation $\#_l$ with respect to the gluing length $l>0$. This is defined as follows. Set
\begin{align*}
\cC'_1&=\cC_1\setminus \ve_{\zeta^+_1}((l,\infty)\times [0,1])\\
\cC'_2&=\cC_2\setminus \ve_{\zeta^-_2}((-\infty,-l)\times [0,1]),
\end{align*}
and define the new glued surface $\cC_l:=\cC_1\#_l\cC_2:=(\cC'_1\sqcup \cC'_2)/\sim$, where the equivalence relation $\sim$ is the following identification
$$
\ve_{\zeta^-_2}(s-l,t)\sim \ve_{\zeta^+_1}(s,t),\forall (s,t)\in [0,l]\times [0,1].
$$

A family of pointed-boundary oriented surfaces can be given as a fibration $\pi:\cC_\cR\to \cR$, where $\cR$ is the base manifold. Each fiber is given by a pointed-boundary surface with the set of punctures $\cZ$ and the structure group is the group of oriented diffeomorphisms. A set of strip-like ends for $\cC_\cR\to \cR$ consists of proper embeddings $\ve_\zeta: \cR\times \Sigma^\pm\to \cC_\cR$ fibered over $\cR$, one for each $\zeta\in \cZ^\pm$, which restrict to strip-like ends on each fiber. There is a natural compactification $\hat{\pi}:\cCh_\cR \to \cR$ such that each point $\zeta\in \cZ$ gives a section $\zeta: \cR\to \pat \cCh_\cR$ such that $\cC_{\cR}=\cCh_{\cR}\setminus \zeta (\cR)$. A family of pointed-boundary Riemann surfaces is a family of pointed-boundary oriented surfaces equipped with a family of complex structures $I_\cC$. The proper embeddings $\ve_\zeta:\cR\times \Sigma^\pm \mapsto \cC_\cR$ extend automatically to a smooth fiberwise holomorphically embeddings $\cR\times \widehat{\Sigma}^\pm\to \cCh_\cR$, whose restrictions to $\pm \infty\in \widehat{\Sigma}^\pm$ are the corresponding section $\zeta$.

Any family of $\cC_\cR\to \cR$ admits strip-like ends (\cite[PP. 113]{Si3}). Using the local trivialization of $\cC\to \cR$, we can make the strip-like ends constant in the following sense. For $r_0\in \cR$, we have $\cC:=\cC_\cR|_{r_0}$ and the strip-like ends $\ve_{r_0,\zeta}=\ve_\zeta(r_0,\cdot): \Sigma^\pm \to \cC$. For a sufficiently small neighborhood $U\ni r_0$, there is a fiber diffeomorphism $\Psi: U\times \cC\to \cC_\cR|_U, \Psi(0)=Id$ and $\ve_{\zeta}(r, s,t):=\Psi(r, \ve_{\cR,\zeta}(s,t))$. Then $\Psi^* I_{\cC_\cR}$ is a family of complex structures on $\cC$, parametrized by $r\in U$, which varies only on the compact subset $\cC\setminus \sqcup_{\zeta}\ve_\zeta(\Sigma^\pm)$.

We can study the deformation of the pointed-boundary Riemann surfaces. Let $T\cC$ be the holomorphic tangent bundle of a boundary pointed Riemann surface $\cC$ with set of punctures $\cZ$. Let $T_\R (\pat\cC)$ be the real tangent bundle of the boundary $\pat \cC$. Define the sheaf $T_\pat \cC$ to be the sheaf of sections of $T \cC$ valued in $T_\R(\pat\cC)$ when restricted on boundary $\pat \cC$. The sheaf cohomology group $H^0(\cC, T_\pat \cC)$ gives the global continuous automorphism group, while $H^1(\cC, T_\pat \cC)$ is the obstruction of the infinitesimal deformation and is used to classify the complex structure. If $\cC_\cR\to \cR$ is a family of pointed-boundary Riemann surfaces and $\cC:=\cC_\cR|_{r_0}$, then there is the Kodaira-Spencer map:
$$
\kappa_{r_0}: (T\cR)_{r_0}\to H^1(\cC, T_\pat \cC).
$$
If $\kappa_{r}$ is isomorphism for any $r\in \cR$, then the deformation $\cC_\cR\to \cR$ is called a miniversal deformation. The dimension of the miniversal deformation space is $|\cZ|-3\chi(\cCh)$ if $\cC$ is stable, i.e,. $H^0(\cC, T_\pat \cC)=0$. This number can be computed by using the boundary version of Rieman-Roch theorem of the Cauchy-Rieman operator $\bpat$ (ref. \cite[PP. 114]{Si3}).

\subsection{Pointed discs and Deligne-Mumford-Stasheff compactifications}\label{Recall2}

To construct the genus $0$ Fukaya category, one needs to study the pointed discs and its Deligne-Mumford-Stasheff compactification space.

\begin{df} A \textit{$(d+1)$-pointed disc}, $d\ge 0$, is a pointed-boundary Riemann surface $\cC$ with compactification $\cCh\cong D$, and which has one incoming point and $d$ outgoing points.
\end{df}

We label the incoming point by $\zeta_0$, and then along the anti-clockwise direction label the other $d$ points by $\zeta_1,\cdots,\zeta_d$. So we have $\cZ^-=\{\zeta_0\}$ and $\cZ^+=\{\zeta_1,\cdots,\zeta_d\}$. We denote by $C_i$ the component of the boundary $\pat \cC$ between $\zeta_i$ and $\zeta_{i+1}$ for $i=0,1,\cdots, d$ (here we set $d+1\equiv 0$ for the subscript).

If $d\ge 2$, the $(d+1)$-pointed discs has no nontrivial automorphisms. There is a universal deformation $\cC^{d+1}\to \cR^{d+1}$, where $\dim \cR^{d+1}=d-2$. Given the order of punctures $\zeta_0\to \zeta_1\to \cdots \zeta_d$, the universal family can be constructed as follows. Let $\Conf_{d+1}(\pat D)\subset (\pat D)^{d+1}$ be the configuration space of $(d+1)$-tuples of different points on the circle respect the given order. We have the automorphism group $\aut(D)=\mathbb{P} S\!L_2(\R)$, the deformation space $\cR^{d+1}=\Conf_{d+1}(\pat D)/\aut(D)$ and the compactified universal curve $\cCh^{d+1}=\Conf_{d+1}(\pat D)\times_{\aut(D)}D$. The section $\zeta_k: \cR^{d+1}\to \cCh^{d+1}$ is defined by $[z_0,\cdots,z_d]\mapsto [z_0,\cdots,z_d; z_k]$. Then we have the universal family of the $(d+1)$-pointed boundary Riemann surface $\cC^{d+1}=\cCh^{d+1}\setminus \bigsqcup_k \zeta_k(\cR^{d+1})$.

\begin{df}
Define a \textit{universal choice of strip-like ends} to be a choice of, $\forall d\ge 2$, a set of strip-like ends $\{\ve^{d+1}_k, 0\le k\le d\}$ for $\cC^{d+1}\to \cR^{d+1}$ such that any family of $(d+1)$-pointed discs inherits strip-like ends through its classifying map.
\end{df}

The deformation bundle $\cC_\cR\to \cR$ is not compact. Its compactification is related to the gluing operation of two pointed discs stratified by tree graph.

\begin{df} A \textit{$d$-leafed tree}, for $d>0$, is a proper embedding planar tree $T\subset \R^2$, which has $d+1$ semi-infinite edges: one of them is called \textit{root}, and others are called the \textit{leaves}. The semi-infinite edges are also called the \textit{exterior} and the others \textit{interior} ones.
\end{df}

Let $\Ve(T)$ be the set of vertices of $T$, and let $\Ed^{\Int}(T)$ be the set of the interior edges. A flag is a combination $f=(v,e)$, where $v\in \text{Ve}(T)$ and $e$ is an adjacent edge to $v$. Let $\Fl^{\Int}(T)$ be the set of interior flags. We will orient $T$ by flowing "upwards" from the root to the leaves. A flag $f=(v,e)$ is called positive, if the orientation points away from $v$, and negative otherwise. A flag $f=(v,e)$ related to an exterior edge $e$ always has a fixed orientation, while an interior edge $e$ corresponds to two flags $f^{\pm}(e)$, one is positive and the other negative. Sometimes it is useful to number the flags adjacent to a given vertex $v$ by $f_0(v), \cdots f_{|v|-1}(v)$, where $|v|$ is the valency. This numbering always starts with the unique negative flag $f_0(v)$ and continues anti-clockwise with respect to the given embedding.

\begin{df} A $d$-leafed tree $T$ is called \textit{stable} if $|v|\ge 3$ for any vertices $v$ and \textit{semi-stable} if $|v|\ge 2$.
\end{df}

If assigning  a length $l_e>0$ to each interior edge $e\in \Ed^{\Int}(T)$, then a $d$-leafed tree $T$ with these lengths $\{l_e\}$ will generate a family of $(d+1)$-disc $\cC$'s. $\cC$ is generated by the following gluing operation. Let each vertex $v$ correspond to the deformation space $\cR^{|v|}$ consisting of $|v|$-pointed disc $\cC_v$ with strip-like ends $\cZ(v)$. Let each flag $f^\pm=(v,e)$ correspond to a puncture $\zeta_f\in \cZ^\pm(v)$. Take $\cC_v\in \cC_{\cR(v)}$. The first part of the gluing process is to cut off a piece of each end belonging to an interior flag:
$$
\cC'_v =\cC_v\setminus \bigsqcup_{f=(v,e)\in \Fl^{\Int}(T)} \ve_f(\Sigma^\pm_{l_e}),
$$
where $\Sigma^-_{l_e}=(-\infty,-l_e)\times [0,1]$ and $\Sigma^+_{l_e}=(l_e,\infty)\times [0,1]$.  Then identifying the stubs of these cut-off ends with each other: set
$$
\cC_l=\bigsqcup_v \cC'_v/\sim,
$$
where $\sim$ means the identification $\ve_{f^-(e)}(s-l_e,t)\cong \ve_{f^+(e)}(s,t)$ for any $(s,t)\in [0,l_e]\times [0,1]$. The strip-like ends of $\cC_l$ are inherited from the exterior flags of $T$. The $k$-th strip-like end of $\cC_l$ is given by the map
$$
\ve_{l,k}=\ve_{f_{\mathsf{k}}}:\Sigma^\pm \mapsto \bigsqcup_v \cC'_v\mapsto \cC_l.
$$
The glued surface $\cC_l$ has a thick-thin decomposition:
\begin{align*}
 \cC_l^{thin}:&=\bigsqcup_{k=0}^d \ve_{l,k}(\Sigma^\pm)\;\sqcup \bigsqcup_{e\in \Ed^{\Int}(T)} \ve_{f^+(e)}([0,l_e]\times [0,1])\\
\cC_l^{thick}:&=\bigsqcup_v\left(\cC_v\setminus \bigcup_{k=0}^{|v|-1} \ve_{F_{\mathsf{k}}(v)}(\Sigma^\pm) \right).
\end{align*}
The thin part $\cC^{thin}_l$ represent the gluing domain.

For convenience, we can change the gluing parameter $l_e\in [0,\infty)$ by $\rho_e\in (-1,0]$, where
$l_e=-\ln (-\rho_e)/\pi$, and write $\cC_l=\cC_\rho$. The $\rho_e=0$ case corresponds to $l_e=+\infty$. We can allow the gluing parameter $\rho_e$ to be $0$, which means that we do not do any gluing operations between those $\cC_v$'s. In this case, the tree $T$ degenerates to $\bar{T}$ by collapsing all the interior edges.

Now the gluing operation defines a family of pointed-boundary Riemann surface:
\begin{equation}\label{sec8-DMSC-equa-1}
\cC_\cR\to \cR=(-1,0)^{\Ed^{\Int}(T)}\times \prod_{v} \cR_v,
\end{equation}
and we can also get $\overline{\cC}_\cR\to \overline{\cR}$ if the gluing parameters are allowed to be zero.

As a set, the Deligne-Mumford-Stasheff (DMS) compactification of $\cR^{d+1}$ is
\begin{equation}\label{sec8-DMSC-equa-2}
\overline{\cR}^{d+1}=\coprod_T \cR^T,
\end{equation}
where the union is over all stable $d$-leafed tree $T$, and one sets $\cR^T=\prod_{v\in \Ve(T)} \cR^{|v|}$. The natural topology or smooth structure can be defined on $\overline{\cR}^{d+1}$ as follows. The family of surfaces defined in (\ref{sec8-DMSC-equa-1}) is classified by a smooth gluing map
$$
\gamma^T: \cR^T\to \cR^{d+1}.
$$
One can allow (some or all of ) the gluing parameters to become zero. The result is a degenerate glued surface, which is disjoint union of pointed discs governed by a collapsed tree $\bar{T}$. This determines a point in the $\bar{T}$-stratum in (\ref{sec8-DMSC-equa-2}). Hence we have the canonical extension of the gluing map:
\begin{equation}\label{sec8-DMSC-equa-3}
\bar{\gamma}^T:\overline{\cR}^T:=(-1,0]^{\Ed^{\Int}(T)}\times \cR^T \mapsto \overline{\cR}^{d+1}.
\end{equation}

Now the DMS compactification is the one induced from the maps (\ref{sec8-DMSC-equa-3}) for all $T$. The smooth structure of $\overline{\cR}^{d+1}$ can also be obtained by the embeddings:
$$
\overline{\cR}^{d+1}\subset \MM_{0,d+1}(\R)\subset \MM_{0,d+1},
$$
where $\MM_{0,d+1}(\R)$ and $\MM_{0,d+1}$ are the real and complex Deligne-Mumford spaces of closed oriented Riemann surfaces.

\begin{thm}
$\overline{\cR}^{d+1}$ is a real $(d-2)$-dimensional smooth manifold with corners.
\end{thm}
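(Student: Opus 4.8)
The plan is to build an atlas on $\overline{\cR}^{d+1}$ from the extended gluing maps $\bar{\gamma}^T$ of (\ref{sec8-DMSC-equa-3}), to recognize each chart as a local model for a manifold with corners, and then to verify that the transition maps are smooth; the last point is where the real work lies. Throughout one assumes $d\ge 2$, so the $(d+1)$-pointed discs have no nontrivial automorphisms. First I would record the dimension count: $\cR^{d+1}=\Conf_{d+1}(\pat D)/\aut(D)$ has dimension $(d+1)-\dim\mathbb{P}SL_2(\R)=d-2$, and for a stable $d$-leafed tree $T$ with $k:=\#\Ed^{\Int}(T)$ interior edges one has $\#\Ve(T)=k+1$ and $\sum_{v\in\Ve(T)}|v|=2k+(d+1)$, so that $\dim\cR^T=\sum_{v\in\Ve(T)}(|v|-3)=d-2-k$. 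Hence, in the decomposition (\ref{sec8-DMSC-equa-2}), $\cR^T$ ought to be a codimension-$k$ corner stratum, with the corolla giving the open top stratum $\cR^{d+1}$.

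Next, fix a stable tree $T$ with $k$ interior edges and a point $r_0\in\cR^T$. From the gluing construction of Subsection~\ref{Recall2}, the extended gluing map $\bar{\gamma}^T\colon\overline{\cR}^T=(-1,0]^{\Ed^{\Int}(T)}\times\cR^T\to\overline{\cR}^{d+1}$ restricts, on a sufficiently small neighborhood $U$ of $(0,r_0)$, to a homeomorphism onto a neighborhood of $\bar{\gamma}^T(0,r_0)$: the edges $e$ with $\rho_e\ne 0$ are precisely the nodes that get resolved, so $\bar{\gamma}^T(\rho,r)$ has dual graph the collapse of $T$ along $\{e:\rho_e\ne 0\}$, from which injectivity and openness follow, and on each stratum one recovers the standard moduli coordinates. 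After choosing coordinates on the open cells $\cR^{|v|}$, this identifies $U$ with an open subset of $(-1,0]^k\times\R^{d-2-k}$, the standard codimension-$k$ corner model. As $T$ ranges over all stable $d$-leafed trees, these charts cover $\overline{\cR}^{d+1}$ by (\ref{sec8-DMSC-equa-2}).

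It then remains to check that the transition maps are smooth. If a point lies in the images of $\bar{\gamma}^{T_1}$ and $\bar{\gamma}^{T_2}$ it lies in $\cR^{T_0}$ for a common collapse $T_0$ of $T_1$ and $T_2$, and each $T_i$ is obtained from $T_0$ by expanding vertices; by factoring such expansions one vertex at a time, it suffices to compare the one-step gluing along $\Ed^{\Int}(T_2)$ with the two-step gluing ``first form the $|v|$-pointed piece $\cC_v$ from its subtree, then glue the resulting family as dictated by $T_1$.'' Normalizing the strip-like ends to be constant in local trivializations as in Subsection~\ref{Recall1}, the two prescriptions yield the same families of pointed-boundary surfaces after a change of parameters of the form $\rho_e\mapsto\rho_e\,\psi_e(\rho,r)$ with $\psi_e$ smooth and nowhere vanishing (hence corner-preserving), together with a smooth reparametrization of the factors $\cR^{|v|}$: the cut-and-paste formulas involve $l_e=-\ln(-\rho_e)/\pi$ only through the holomorphic strip-like embeddings $\ve_\zeta$, which depend smoothly on the moduli, and the complex structure on the thick part also varies smoothly. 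I expect this associativity-of-gluing statement -- iterated gluing equals simultaneous gluing up to a smooth corner-preserving change of the $\rho$-coordinates -- to be the main obstacle; it is essentially Seidel's analysis in \cite[pp.~112--115]{Si3} transported to the families at hand. Granting it, the transition maps are diffeomorphisms of manifolds with corners and the atlas $\{\bar{\gamma}^T\}$ makes $\overline{\cR}^{d+1}$ a smooth $(d-2)$-dimensional manifold with corners. As a final consistency check one sees that this smooth structure coincides with the one inherited via $\overline{\cR}^{d+1}\subset\MM_{0,d+1}(\R)\subset\MM_{0,d+1}$: the complex Deligne--Mumford space $\MM_{0,d+1}$ is a smooth compact complex manifold whose boundary divisors are smooth and cross normally, so $\MM_{0,d+1}(\R)$ is a closed smooth manifold and $\overline{\cR}^{d+1}$ is the closure of a single chamber (the Stasheff associahedron) of the complement of the real boundary divisors; transverse local defining functions of the $k$ divisors through a codimension-$k$ point furnish the quadrant model $(-1,0]^k\times\R^{d-2-k}$, and these agree with the $\rho_e$ up to the smooth units $\psi_e$ above.
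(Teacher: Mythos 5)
Your proposal is correct and follows the same route the paper (implicitly) takes: the paper states this theorem without proof, having just defined the topology via the extended gluing maps $\bar{\gamma}^T$ and remarked that the smooth structure can equivalently be read off from the embeddings $\overline{\cR}^{d+1}\subset\MM_{0,d+1}(\R)\subset\MM_{0,d+1}$, deferring the details to Seidel \cite{Si3}. Your dimension count $\dim\cR^T=d-2-k$ is right, and you correctly isolate the one genuinely technical point — that iterated gluing agrees with simultaneous gluing up to a smooth, corner-preserving change of the $\rho_e$-coordinates — which is exactly the content of Seidel's analysis that the paper relies on.
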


Now we have two ways of constructing the strip-like end structure for a surface lying near the boundary of $\overline{\cR}^{d+1}$: one is inherited from the pull-back from the classifying map to the universal family $\cC_\cR^{d+1}\to \cR^{d+1}$ and the other is obtained by the gluing map $\gamma^T$ from those components $\cC^{|v|}$. If the two constructions coincide, we say that the DMS compactification \textit{has a consistent universal choice of strip-like ends}.

\begin{lm}\cite[Lemma 9.3]{Si3} Consistent universal choices of strip-like ends exist.
\end{lm}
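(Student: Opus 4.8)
The plan is to prove this by induction on $d$, recalling the argument of Seidel \cite[\S 9]{Si3}. Recall that a universal choice of strip-like ends amounts to fixing, for every $d\ge 2$, a set of strip-like ends on the universal family $\cC^{d+1}\to\cR^{d+1}$, and that the consistency condition only constrains such a choice near the boundary strata of $\overline{\cR}^{d+1}$, where it must agree with the ends produced by the gluing maps $\gamma^{T}$ out of the lower-complexity pieces $\cC^{|v|}$. For the base case $d=2$ the space $\overline{\cR}^{3}=\cR^{3}$ is a single point (no stable $2$-leafed tree has more than one vertex), so any choice of strip-like ends on the unique $3$-pointed disc is automatically consistent. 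For the inductive step, assume consistent universal choices have been fixed for all $(e+1)$-pointed discs with $e<d$; note that every stable $d$-leafed tree $T$ with $|\Ve(T)|\ge 2$ has all its vertices of valency $|v|\le d$ (a sum-of-valencies count), so each of its pieces $\cC^{|v|}$ already carries the previously chosen ends.

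Next I would record the strip-like ends that are forced near the boundary. For each stable $d$-leafed tree $T$ with $|\Ve(T)|\ge 2$, after shrinking, the extended gluing map $\bar{\gamma}^{T}\colon\overline{\cR}^{T}=(-1,0]^{\Ed^{\Int}(T)}\times\prod_{v}\cR^{|v|}\to\overline{\cR}^{d+1}$ is a diffeomorphism onto a corner-neighborhood of the $T$-stratum, and on each glued curve $\cC_{\rho}$ the $k$-th exterior flag of $T$ carries the strip-like end $\ve_{\rho,k}=\ve_{f_{k}}$ inherited (via the inductive hypothesis) from the component $\cC^{|v|}$ containing that flag. This defines a strip-like-end structure $\ve^{T}$ on an open set $U_{T}\subset\cR^{d+1}$ whose closure meets the $T$-stratum. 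The first key point is that these structures agree on overlaps: whenever the closures of the $T$- and $T'$-strata intersect, on $U_{T}\cap U_{T'}$ a curve is realized by iterated gluing governed by a common refinement, and because gluing is associative (gluing in stages produces the same surface with the same inherited ends on the exterior flags) and because the choices fixed on the smaller moduli were themselves consistent, we get $\ve^{T}=\ve^{T'}$ there. Hence the $\ve^{T}$ patch together into a single strip-like-end structure $\ve^{\partial}$ on an open neighborhood $N$ of $\overline{\cR}^{d+1}\setminus\cR^{d+1}$ inside $\overline{\cR}^{d+1}$.

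Finally I would extend $\ve^{\partial}$ over the rest of $\cR^{d+1}$. The topological input here is that the space of strip-like ends at a puncture of a fixed pointed-boundary Riemann surface is nonempty and contractible, and likewise fibrewise over a base, so that strip-like-end structures on the family $\cC^{d+1}\to\cR^{d+1}$ form the sections of a bundle with weakly contractible fibres over $\overline{\cR}^{d+1}$. Choosing a closed neighborhood $\overline{N'}\subset N$, the pair $(\overline{\cR}^{d+1},\overline{N'})$ is a compact manifold-with-corners together with a closed subset, hence has the homotopy type of a CW pair; since the obstruction groups $H^{*}(\overline{\cR}^{d+1},\overline{N'};\pi_{*-1}(\text{fibre}))$ all vanish, the section $\ve^{\partial}|_{\overline{N'}}$ extends to a section over all of $\overline{\cR}^{d+1}$. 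Its restriction to $\cR^{d+1}$ is the desired universal choice of strip-like ends for $(d+1)$-pointed discs, consistent with gluing by construction; together with the choices already fixed in lower degrees this completes the induction.

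The main obstacle is concentrated in the two points flagged above: checking that the germs of strip-like ends produced along different degeneration trees really do coincide on the overlaps of their neighborhoods — this is exactly where associativity of the gluing operation and the inductive consistency hypothesis are used essentially — and establishing that the space of strip-like ends is contractible, so that the obstruction-theoretic extension runs with no residual obstruction. Everything else, namely the induction bookkeeping and the collar/CW-pair extension, is routine.
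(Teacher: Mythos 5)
Your proposal is correct and follows essentially the same route as the paper, which simply defers to Seidel's \cite[Lemma 9.3]{Si3}: induction on $d$, with the ends near the boundary strata forced by the gluing maps and the inductive consistency hypothesis, and the extension over the interior supplied by the (weak) contractibility of the space of strip-like ends. The two points you flag as the real content — overlap compatibility via associativity of gluing, and contractibility enabling the obstruction-free extension — are exactly the points Seidel's argument rests on.
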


\subsection{The auxiliary section $\sigma$ of the $\log$-canonical bundle}\label{sec5-cano-sect}\

To define the Witten equation on a general Riemann surface $\cC$ with or without boundary, one needs to choose a auxiliary section $\sigma$ of the $\log$-canonical bundle $K_{\cC,\log}=K_\cC\otimes \cO(D), D=\sum_{i=0}^d p_i$. Assume that $\cC$ is a closed Riemann surface. By Kodaira vanishing theorem, we have $H^1(\cC, K_{\cC,\log})=\{0\}$. By Riemann-Roch theorem, we have
$$
\dim H^0(\cC, K_{\cC,\log})=g+|D|-1=g+d.
$$
The stability condition of $\cC$ requires that $\deg(K_{\cC,\log})>0$, which is equivalent to $2g-2+|D|>0$, i. e. $2g+d-1>0$.

If genus $g\ge 1$ or $g=0, d\ge 2$, then the dimension of the global holomorphic section of $K_{\cC,\log}$ is positive and we can always get a global holomorphic section $\sigma$. From now on, when talking about a \textit{global section} of $K_{\cC, \log}$, we always mean a global holomorphic section. In this case, the automorphism group is finite. If genus $g=0, d=1$, we can also get such a global section $\sigma$, but the automorphism group is of complex $1$-dimension. After modulo the equivalence, the global section is unique in this case. Hence if $g=0$, then we require $d\geq 1$ to grantee the existence of the global section $\sigma$.

Now we consider the case of Riemann surfaces with boundary, and for notational simplicity and for our purpose in this paper, we specialize to the case that the closure $\widehat{\mathscr{C}}$ has one boundary component and the genus of $\mathscr{C}$ is $0$ (the general case can be worked out similarly), where recall $\widehat{\mathscr{C}}$ is introduced in subsection \ref{Recall1}. Namely, let $\cC$ be a punctured-boundary disc with $d+1$ distinct punctures on $\pat\cC$. Using the complex structure $j_\cC$, the log-canonical bundle $K_{\cC,\log}$ can be splitted into the direct sum of the real bundle $K^R_{\cC,\log}$ and the imaginary bundle $K^I_{\cC,\log}$.  Define $H^0(\cC,\pat\cC,K_{\cC,\log})$ to be the space of global section $\sigma$ of $K_{\cC,\log}$ with simple poles at punctures and satisfying the boundary condition $\sigma|_{\pat\cC}\in K^R_{\cC,\log}$ (equivalently, $\im(\sigma)|_{\pat \cC}=0$).  Using the index gluing technique from elementary pieces, one can show that
$$
\dim_\R H^0(\cC,\pat\cC,K_{\cC,\log})=d.
$$

For $d\geq 2$, the open moduli space of such discs is $\mathscr{R}^{d+1}$ of $d-2$ real dimension, as discussion in subsection \ref{Recall2}, essentially determined by the relative positions of punctures on $\partial\mathscr{C}$. Fix such a disc, the space of holomorphic sections $\sigma$ of $K_{\mathscr{C}, \text{log}}$ with prescribed simple poles at punctures and $\text{Im}(\sigma)|_{\partial\mathscr{C}}=0$ is $d$ real dimensional. To understand this more concretely, first remark that if holomorphic $\sigma$ is nonzero, the sum of residues must be zero (using genus $0$), by integrating along the loop tracing out the punctured boundary (limiting from the interior). In the strip-like end description, the size of the (real) residue (real as determined by the boundary restriction $\text{Im}(\sigma|)_{\partial\mathscr{C}}=0$) is nothing but the strip width, and the residue is negative for outgoing ends/punctures and positive for incoming ends. We will consider discs with one incoming puncture and $d$ outgoing puncture for the $A_\infty$ structure underlying the associativity of the product operation up to homotopy; and in this case such a $\sigma$ is completely determined by fixing the strip widths at $d$ positive punctures (as the residue at the negative puncture will be determined as the sum), which explains the dimensionality. We discretize the widths (sizes of the negative residues) at $d$ positive punctures from $\mathbb{(R^+)}^d$ to $\mathbb{N}^d$ by choosing natural numbers, partly because $CF(L^{\#}_i, L^{\#}_j)$ (a notion we will introduce later) with generators of different fixed widths are quasi-isomorphic anyway (but we do not need this fact in our construction), partly because $\mathbb{N}$ is subset of $\mathbb{R}^+$ which is closed under multi-additions of lengths (induced from higher multiplications). In other words, if we consider the moduli space of $d+1$ punctured-boundary discs with $d$ positive punctures each with prescribed width valued in natural numbers is precisely $\mathscr{R}^{d+1}\times \mathbb{N}^d$ and each element then admits a unique such a $\sigma$ such that the sizes of its negative residues at those positive punctures are precisely the prescribed widths. They glue compatibly in the expected way in describing the neighborhood of the compactification. If $d=1$, then such a global section $\sigma$ exits and is unique after modding out a real $1$-dimensional automorphism group. The above understanding of the auxiliary section $\sigma$ is crucial because it allows the Witten equation expression to make sense in the same setting, analogous to the role of W-structures in the FJRW closed quantum singularity case.

\section{Witten equation and action functional: \uppercase\expandafter{\romannumeral2}}\label{Witten equation section}\label{sec-6}

\subsection{Witten equation and its perturbation}\label{sec-6.1}\

Let $(M,h,W)$ be a regular tame exact LG system. In section \ref{sec-3}, we have defined the following space
\begin{equation*}
\mathcal{H}=\{\varphi\in C^{\infty}_{c,\epsilon}(M)\;|\;\omega_{\varphi}=\omega+\sqrt{-1}\pat_M\bpat_M\varphi>0\text{ on }M \},
\end{equation*}
where $\om$ is the K\"ahler (symplectic) form associated to the metric $h$.

We extend the definition of the subspace $\G^L_{\mu}\subset \mathcal{H}$ in section \ref{sec-3} (\ref{twothimbles}) to the following version:
\begin{equation*}
\G^L:=\{\varphi \in \mathcal{H}\;|\;\pat_M\bpat_M \varphi|_{L_p^\pm}=0,\forall p\in C_W\}
\end{equation*}

Similarly, for $\mu>0$, define
$$
\G^L_{\mu}=\{\varphi\in\G^L\;|\; ||\pat_M\bpat_M \varphi||_{C^0(M)}<\mu\}.
$$
As before, we identify the set $\G^L_\mu$ and the set of K\"ahler forms $\om+\sqrt{-1}\pat_M\bpat_M \tilde g, \tilde g\in \G^L_\mu$.

Let $p_{i_0},p_{i_1}\in C_W$ are two critical points satisfying $p_{i_0}>p_{i_1}$, then the corresponding positive Lefschetz thimbles satisfy the order relation $L_{i_0}^+>L_{i_1}^+$.

\textbf{Convention}: In the following in this article, we always set $L_{i_j}=L^+_{i_j}, j=1,2$.

For any $\gt\in \G^L_{\mu},\mathsf{k}\in \mathbb{N}$, consider the equation of Hamiltonian chords:
\begin{equation}
\begin{cases}
 \frac{\pat \phi(t)}{\pat t}=-\mathsf{k}\nabla_{\gt} \im(W)(\phi(t))\\
 \phi(0)\in L_{i_0},\phi(T)\in L_{i_1},
 \end{cases}\label{sec9:equa-nega-flow-1}
 \end{equation}

By Lemma \ref{fini-rad} and Theorem \ref{transversal}, we can choose a $\mu$ sufficiently small, then for generic metric $\gt\in \G^L_\mu$ and generic $T\in (1/2,2)$, the solutions of (\ref{sec9:equa-nega-flow-1}) are finite in number, and $L_{i_0}$ and $L_{i_1}$ intersect transversally at time $T$ at any speed $\mathsf{k}\in \mathbb{N}$ in the sense of Section \ref{sec-3}.

Thus, for generic $\gt\in \G^L_\mu$, the set $S_{\mathsf{k},W}$ of all Hamiltonian chords is a finite set. Let $\cC$ be a pointed-boundary Riemann surface with the set of punctures $\cZ=\cZ^+\sqcup \cZ^-$. Each element $\zeta\in \cZ^\pm$ corresponds to a strip-like end $\ve_\zeta(\Sigma^\pm)$. $(\cC,L)$ is called a \textit{pointed-boundary Riemann surface with directed Lagrangian labels}, if any component $C\subset \pat \cC$ is labeled by a Lefschetz thimble $L_C$ and satisfies the following order criterion: for any two components $C_{i_0}$ and $C_{i_1}$ belongs to the same component of $\pat \cCh$, if $i_0<i_1$, then $L_{i_0}>L_{i_1}$ (which means $\text{Im}W(x_{i_0})>\text{Im} W(x_{i_1})$).

By the discussion of Section \ref{sec5-cano-sect}, there exists a nonzero global section $\sigma$ of the canonical bundle $K_{\cC,\log}$ such that the volume form $\nu_\cC=\frac{i}{2} h_\cC \sigma\wedge \bar{\sigma}$, where $h_\cC$ is a smooth positive function on $\cC$. In strip-like ends, we can take $\sigma=dz$ and $h_\cC=1$. Note that $\im(\sigma)|_{\pat \cC}=0$.

Let $\phi\in C^\infty(\cC,M)$, then we have the pull-back section $X_{R,\sigma}(\phi)\in \Omega^{0,1}(\cC, \phi^* TM)$ defined by
$$
X_{R,\sigma}(\phi)=-\frac{1}{2}(X_{\re(W)}(\phi)\im (\sigma)+X_{\im(W)}(\phi)\re(\sigma)).
$$
In local coordinates, if we take $\sigma=dz, z=s+it$, then we have
\begin{align*}
X_{R,\sigma}(\phi)&=-\frac{1}{2}(X_{\re(W)}(\phi)dt+X_{\im(W)}(\phi)ds)\\
&=-\frac{1}{2}(I+J\circ(\;)\circ j_\cC)X_{\im(W)}(\phi)ds\\
&=-\frac{1}{2}(I+J\circ(\;)\circ j_\cC)X_{\re(W)}(\phi)dt.
\end{align*}
With this pull-back section $X_{R,\sigma}(\phi)$, we can define the Witten equation on $\cC$:
$$
\bpat_J \phi+X_{R,\sigma}(\phi)=0,
$$
which has the local expression
$$
\frac{\pat \phi}{\pat s}+J(\phi)\frac{\pat \phi}{\pat t}-\nabla_g \re(W)(\phi)=0.
$$
If $\cC$ is equipped with directed Lagrangian labels $\{L_C\}$, then we can study the boundary value problem:
\begin{equation}
\begin{cases}
\bpat_J \phi+X_{R,\sigma}(\phi)=0\\
\phi|_C\in L_C, \forall \text{ component } C\subset \pat \cC.
\end{cases}
\end{equation}
Now for each puncture $\zeta\in \cZ$, there are two adjacent components $C_{\zeta,0}$ and $C_{\zeta,1}$ with the corresponding Lagrangian labels $L_{\zeta,0}>L_{\zeta,1}$. For each $\zeta\in \cZ$, we can take a Hamiltonian chord $l_\zeta\in S_W(L_{\zeta,0}, L_{\zeta,1})$. Now letting $p>2$ and fixing the set $\{l_\zeta,\zeta\in \cZ\}$, we can define the configuration space
\begin{align*}
&\B^{d+1}(\cC, L, l)\\
=&\{\phi\in W^{1,p}_{\loc}(\cC,M)\;|\;\phi|_C\in L_C, \forall \text{ component }C\subset \pat\cC;\\
&\lim_{s\to\pm\infty}\phi(\ve_\zeta(s,\cdot))=l_\zeta,\forall \zeta\in \cZ,\#\cZ=d+1\}.
\end{align*}
For simplicity, we write it as $\B_{\cC}$. Its tangent space at $\phi$ is given by $T_\phi \B_{\cC}=W^{1,p}(\cC, \phi^*TM,F)$, where $F|_C=\phi^*TC$ for any component $C\subset \pat \cC$. There is a Banach bundle $\E_{\cC}\to \B_{\cC}$ whose fiber at $\phi$ is
$L^p(\cC, \Omega^{0,1}_\cC\otimes \phi^* TM)$.

\begin{df}
The moduli space $\W^{d+1}(\cC, L, l)\subset \B^{d+1}(\cC, L, l)$ consists of those elements $\phi\in W^{1,p}_{\loc}$ satisfying the following problem:
\begin{equation}
\begin{cases}
\bpat_J \phi+X_{R,\sigma}(\phi)=0\\
\phi|_C\in L_C, \forall \text{ component }C\subset\pat \cC\\
\lim_{s\to\pm\infty}\phi(\ve_\zeta(s,\cdot))=l_\zeta,\forall \zeta\in \cZ.
\end{cases}
\end{equation}
For simplicity, we write it as $\W_{\cC}$.
\end{df}

The map $\WI_\cC(\phi)=\bpat_J \phi+X_{R,\sigma}(\phi)$ is a section of the bundle $\E_{\cC}\to \B_{\cC}$ and we call it the Witten map. Then the moduli space $\W_\cC$ is the zero locus of the Witten map. Let $\overline{\cR}$ be the DMS space, then we have the Banach bundle $\B\to \overline{\cR}$ and the Banach bundle $\E\to \B$. Then we have a section of $\E\to \B$ which takes the value $\WI_\cC(\phi)$ at $\phi\in \B_\cC$.  We call this section the \textit{extended Witten map}.

\subsection{Perturbed Witten map}\label{perturb-witten-section}\

For any $\zeta\in \cZ$, there are two adjacent Lefschetz thimbles $L_{\zeta,0}$ and $L_{\zeta,1}$. For any $\varpi_{\zeta,\delta_0}\in V^\epsilon_{\per}$, consider the equation of Hamiltonian chords
\begin{equation}
\begin{cases}
J(\phi)\frac{\pat \phi}{\pat t}-\nabla_g \widetilde{H}_{\zeta}(\phi)=0\\
\phi(0)\in L_{\zeta,0},\phi(1)\in L_{\zeta,1},
\end{cases}
\end{equation}
where $\widetilde{H}_\zeta=\re(W)+\varpi_\zeta$.

Let $S_{W,\varpi_\zeta}(L_{\zeta,0},L_{\zeta,1})$ be the solution set of the above equation.  Note that, by the setting of $V^{\epsilon,\delta_0}_{\per}$ in Definition \ref{perturb-space-stripe}, $L_{\zeta,0}$ will intersect $L_{\zeta,1}$ transversally at time $1$ and we have $S_{W,\varpi_\zeta}(L_{\zeta,0},L_{\zeta,1})=S_{W}(L_{\zeta,0},L_{\zeta,1})$. We call such $\varpi_\zeta$ the \textit{admissible perturbation} on the strip-like end $\ve_\zeta(\Sigma^\pm)$. We take an admissible perturbation $\varpi_\zeta(t,x)$ for any $\zeta\in \cZ$.

Let $\cC$ be a pointed-boundary Riemann surface with directed Lagrangian labels. Denote by $\Omega^{i}(\cC, \mathcal{H})$  the space of  function-valued i-forms on $\cC$. For a $\varpi\in V^{\epsilon,\delta_0}_{\per}$, an element $K\in \Omega^{0}(\cC, \mathcal{H})$ is called an admissible perturbation with respect to $\varpi$, if
\begin{enumerate}
\item[(i)] $\ve_\zeta^*K\equiv \varpi_\zeta(t,x)$ for  $\varpi_\zeta$ on the strip-like end $\Sigma^\pm$ (the domain of $\ve_\zeta$) for any $\zeta\in \cZ$,
\item[(ii)]$
\forall \text{ component }C\subset \pat \cC, \forall \xi\in T C, K(\xi)|_C\equiv 0$.
\end{enumerate}
For an admissible perturbation on $\cC$, we have the perturbation vector valued 1-form on $\cC$:
$$
Y(\phi)=X_K(\phi) \im(\sigma)\in\Omega^{1}(\cC, \phi^* TM),
$$
and
$$
Y^{0,1}(\phi)=\frac{1}{2}(I+I_\cC J)X_K \im(\sigma)\in \Omega^{0,1}(\cC, \phi^* TM),
$$
Here $\sigma$ is the holomorphic 1-form on $\cC$ such that $\nu_\cC=i h_\cC\sigma\wedge \bar{\sigma}$.

We have the perturbed Witten equation:
\begin{equation}\label{perturb-lg-poly}
\bpat_J \phi+X_{R,\sigma}(\phi)=Y^{0,1}(\phi).
\end{equation}
When restricted to the strip-like ends, this equation can be written as
$$
\frac{\pat \phi}{\pat s}+J(\phi)\frac{\pat \phi}{\pat t}-\nabla_g \widetilde{H}_\zeta(\phi)=0,
$$
where $\widetilde{H}_\zeta=\re(W)+\varpi_\zeta$.
\begin{df}
Let $0<\delta_0<\frac{1}{2}$. Define $V^{\epsilon,\delta_0}_{\per}(\cC,\varpi)$ to be the space of admissible perturbations $K$ on $\cC$ with respect to $\varpi$ and such that $|\nabla_g K(z,\phi)|<\delta_0|\nabla_g\re(W)(\phi)|$ for any $z\in \cC, \phi\in M$.
\end{df}

The perturbed Witten map is the section of the bundle $\E_\cC\to \B_\cC$, which is the perturbation of the Witten map by the admissible perturbation $K$.

Let $\overline{\cR}$ be the DMS space. If we want to define the extended Witten map as the perturbation of the Witten map $\WI: \B\to \E$, we must define a family of $K$ in $ \Omega^{0,1}_{\cC/{\bar{\cR}}}(\cC, \mathcal{H})$ with respect to the smooth structure of $\overline{\cR}$, so that $K\in V^{\epsilon,\delta_0}_{\per}(\cC,\varpi)$ for each fixed disc $\cC$.  For any $d$-leafed stable tree $T$ with Lagrangian labels, we have the gluing operation
$$
\gamma^{\bar{T}}: \cR^{\bar{T}}\times (-1,0)^{\Ed^{\Int(T)}}\to \overline{\cR}^{d+1}
$$
Since $K$ is admissible, the perturbation data $\varpi_{\zeta_{f^+(e)}}$ coincides with $\varpi_{\zeta_{f^-(e)}}$ for the gluing edge $e\in \Ed^{\Int(T)}$. Hence $K\in \Omega^{0}_{\cC/{\cR^{\bar{T}}}}(\cC, \mathcal{H})$ can be naturally defined in the image of $\gamma^{T}$. If this extension of $K$ coincides with the original $K\in\Omega^{0}_{\cC/{\overline{\cR}^{d+1}}}(\cC, \mathcal{H})$ for any $T$, then we say that $K$ is a consistent admissible perturbation defined on DMS space $\overline{\cR}$.

\begin{lm} A consistent admissible perturbation $K$ exists on DMS space $\overline{\cR}$.
\end{lm}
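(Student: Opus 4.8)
The plan is to argue by induction on the number $d$ of outgoing punctures, equivalently on $\dim\overline{\cR}^{d+1}=d-2$, following the scheme of Seidel's proof of the existence of consistent universal strip-like ends, \cite[Lemma 9.3]{Si3}, which we have already invoked. We first fix, once and for all, for each ordered pair of Lefschetz thimbles $L_a>L_b$ (and each prescribed width), a single admissible perturbation $\varpi_{(L_a,L_b)}\in V^{\epsilon,\delta_0}_{\per}$ on $\Sigma^\pm$, and at every puncture $\zeta$ we use $\varpi_\zeta:=\varpi_{(L_{\zeta,0},L_{\zeta,1})}$; thus the strip-like-end datum attached to $\zeta$ depends only on the two adjacent Lagrangian labels, which is what makes the gluing of perturbation data well posed. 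For the base cases $d=1,2$ the boundary $\pat\overline{\cR}^{d+1}$ is empty and the only requirement is admissibility on each fixed disc; such a $K$ is produced by extending the strip-like-end data $\varpi_\zeta$ over the compact thick part $\cC\setminus\bigsqcup_\zeta\ve_\zeta(\Sigma^\pm)$ while respecting the boundary condition (ii) and keeping the perturbation small, which is possible since the remaining constraints $\om+\sqrt{-1}\pat_M\bpat_M K(z,\cdot)>0$ and $|\nabla_g K(z,\phi)|<\delta_0|\nabla_g\re(W)(\phi)|$ are open, and this extension can be carried out smoothly in the modulus.

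For the inductive step, assume a consistent admissible perturbation $K^{e+1}$ has been fixed on $\overline{\cR}^{e+1}$ for every $e<d$. The boundary $\pat\overline{\cR}^{d+1}$ is the union of the faces $\bar\gamma^{T}(\overline{\cR}^{T})$ over stable $d$-leafed trees $T$ with $\Ed^{\Int}(T)\neq\emptyset$, and over the open stratum $\cR^{T}=\prod_{v\in\Ve(T)}\cR^{|v|}$ every vertex satisfies $|v|\le d$, so by the inductive hypothesis each component $\cC_v$ of a glued curve $\cC_\rho$ already carries the admissible perturbation $K^{|v|}$. At an interior edge $e$ the datum $\varpi_{\zeta_{f^+(e)}}$ attached to its positive flag equals $\varpi_{\zeta_{f^-(e)}}$ attached to its negative flag, by the preceding normalization, since gluing identifies the two Lagrangian labels adjacent to $e$; hence, through the thick--thin decomposition of $\cC_\rho$, the $K^{|v|}$ patch to a well-defined admissible perturbation over a collar of the face $\bar\gamma^{T}(\overline{\cR}^{T})$. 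Because the strip-like ends entering the gluing are themselves a consistent universal choice and iterated gluing is associative, the collars attached to different faces carry data that agree on the deeper corner strata where the collars overlap, so everything patches to a consistent admissible perturbation $K_{\partial}$ on an open neighborhood $\mathcal{N}$ of $\pat\overline{\cR}^{d+1}$.

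It remains to extend $K_{\partial}$ over all of $\overline{\cR}^{d+1}$ without breaking admissibility, and here the key point is that $V^{\epsilon,\delta_0}_{\per}(\cC,\varpi)$ is \emph{convex}: conditions (i) and (ii) are affine constraints once the $\varpi_\zeta$ are fixed as above, while $\om+\sqrt{-1}\pat_M\bpat_M K(z,\cdot)>0$ and $|\nabla_g K(z,\phi)|<\delta_0|\nabla_g\re(W)(\phi)|$ are open and convex. These fibers assemble into a bundle of non-empty convex sets over the compact smooth manifold with corners $\overline{\cR}^{d+1}$, non-emptiness being the base-case extension construction, and a section defined over the neighborhood $\mathcal{N}$ of the boundary extends globally in the standard way: choose a smooth $\beta:\overline{\cR}^{d+1}\to[0,1]$ with $\beta\equiv1$ near $\pat\overline{\cR}^{d+1}$ and $\operatorname{supp}\beta\subset\mathcal{N}$, fix any global admissible section $K_1$, and set $K:=\beta K_{\partial}+(1-\beta)K_1$. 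Fiberwise convexity makes $K$ admissible on every disc, $K=K_{\partial}$ near $\pat\overline{\cR}^{d+1}$ makes it consistent with the $K^{e+1}$, and — after first shrinking $\mathcal{N}$, and $\delta_0$ if needed — the positivity and ratio bounds persist along the convex combination. This closes the induction.

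I expect the main obstacle to be the middle step: checking that the perturbations produced by gluing along \emph{different} boundary faces genuinely agree on their overlaps, i.e. compatibility on the codimension-$\ge2$ corner strata of $\overline{\cR}^{d+1}$. This is exactly where one must use that the chosen strip-like ends are consistent (\cite[Lemma 9.3]{Si3}) and carefully track associativity of iterated gluing together with the tree-collapsing maps $\bar T$; it is also where one must verify that the glued and interpolated data still lie in $V^{\epsilon,\delta_0}_{\per}(\cC,\varpi)$, which may force one to narrow the collar widths, and $\delta_0$, at each stage of the induction.
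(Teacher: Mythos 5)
Your proposal is correct and is essentially the same argument the paper has in mind: the paper's proof of this lemma is a one-line citation of \cite[Lemma 9.5]{Si3} together with the remark that one can alternatively use ``partition of unity and the induction method with respect to the order of the stable trees'', which is precisely the induction-on-$d$, glue-near-the-boundary-faces, then interpolate-via-convexity-and-a-cutoff scheme you spell out. The points you flag (agreement of the end data $\varpi_{\zeta_{f^\pm(e)}}$ across interior edges, corner compatibility from consistency of strip-like ends, and convexity of the admissibility constraints) are exactly the ingredients of Seidel's argument, so no further comment is needed.
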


\begin{proof}This result was essentially given in \cite[Lemma 9.5]{Si3}. Another construction method is using partition of unity and the induction method with respect to the order of the stable trees.
\end{proof}

Fixing a consistent admissible perturbation $K$, we can define the extended perturbed Witten map $\WI_K$ which is a section of the bundle $\E\to \B$, which takes the value $\WI_{\cC,K}(\phi)$ at the point $(\cC,\phi)\in \B$.

\begin{df}
The perturbed moduli space $\W^{d+1}(\cC, L, l;K)\subset \B^{d+1}(\cC,L,l)$ is a set of maps $\phi\in W^{1,p}_{\loc}$ solving the following problem:
\begin{equation}\label{perturb-lg-poly-system}
\begin{cases}
\bpat_J \phi=\widetilde{X}^{0,1}(\phi)\\
\phi|_C\in L_C, \forall\text{ component } C\subset \pat \cC\\
\lim_{s\to\pm\infty}\phi(\ve_\zeta(s,\cdot))=l_\zeta,\forall \zeta\in \cZ,
\end{cases}
\end{equation}
where $\widetilde{X}^{0,1}=-X_{R,\sigma}(\phi)+Y^{0,1}(\phi)$. For simplicity, we write it as $\W_\cC(K)$.
\end{df}

We have the perturbed Witten map $\WI_{K,\cC}=\bpat_J \phi-\widetilde{X}^{0,1}(\phi)$, which is the perturbed section $\WI_K$ of the Witten map $\WI: \B\to \E$ at the point $\cC$.

In particular, for  two fixed Lefschetz thimbles $L_0$ and $L_1$, and  $l^+,l^-\in S_W(L_0,L_1)$,  we define $\tM(L_0, L_1,l^-,l^+;\varpi)$ to be the set of those $\phi:
\Sigma=\R\times [0,1]\to M$, satisfying the perturbed Witten equation (\ref{perturb-lg-r}) on the strip, with
$$
\phi(s,0)\in L_0, \phi(s,1)\in L_1;\;\lim_{s\to \pm\infty}\phi(s,t)=l^{\pm}(t),\forall t\in [0,1].
$$

Clearly, $\tM(L_0,L_1,l^-,l^+;\varpi)$ is just $\W^2(\cC, L_0, L_1, l^-,l^+;\varpi)$, if we take $\cC$ to be a twice pointed disc.

One can generalize the energy $E_{\widetilde{H}_\varphi}(\phi)$ defined on the strip $\Sigma$ to the general $\cC$ as follows. Notice that $\widetilde{X}^{0,1}(\phi)$ is exactly the $(0,1)$-part of
\begin{equation}
\widetilde{X}(z,\phi)=[X_{\re(W)}(\phi)+X_K(z,\phi)]\im (\sqrt{2}\sigma).
\end{equation}

One can define the energy of $\phi\in \B_{\cC}$ as
\begin{equation}
E_K(\phi)=\frac{1}{2}\int_\cC |d\phi-\widetilde{X}|\nu_\cC.
\end{equation}

\begin{df} Let $\W^{d+1}(K)$ be the moduli space of all $W^{1,p}_{\loc}$ maps satisfying
\begin{equation}\label{sec5-pert-equa-1}
\bpat_J \phi=\widetilde{X}^{0,1}(\phi)
\end{equation}
and $E_K(\phi)<\infty$. Let $\tM(\varpi)$ be the set of maps $\phi :\Sigma=\R\times [0,1]\to M$ satisfying (\ref{perturb-lg-r}) and $E_{\widetilde{H}_{\varpi}}(\phi)<\infty $.
\end{df}

\begin{lm}\label{lg-poly-identity} For any $\phi\in W^{1,p}_{\loc}$, the following identities hold:
\begin{align*}
&(i)\quad (|\bpat_J \phi|^2+|X_{R,\sigma}(\phi)|^2)\nu_\cC=|\bpat_J \phi+X_{R,\sigma}(\phi)|^2 \nu_\cC+d\left( \im(W(\phi)\cdot\sqrt{2}\sigma) \right)\\
&(ii)\quad \frac{1}{2}\left(|d\phi|^2+|X_{\re(W)}(\phi)|^2|\sigma|^{2}\right)\nu_\cC=|\bpat_J \phi+X_{R,\sigma}(\phi)|^2\nu_\cC+\phi^*\om +d\left( \im(W(\phi)\cdot\sqrt{2} \sigma) \right)
\end{align*}
\end{lm}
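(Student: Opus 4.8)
The plan is to derive both identities from one pointwise computation together with a few standard facts, reducing everything to a local holomorphic coordinate on $\cC$. First I would note that $(ii)$ follows from $(i)$ and two elementary pointwise identities. The first is the usual energy splitting, valid for every $W^{1,p}_{\loc}$ map,
\[
\tfrac12|d\phi|^2\,\nu_\cC=|\bpat_J\phi|^2\,\nu_\cC+\phi^*\om,
\]
which is the algebraic decomposition $\tfrac12|d\phi|^2=|\bpat_J\phi|^2+|\pat_J\phi|^2$ combined with $\bigl(|\pat_J\phi|^2-|\bpat_J\phi|^2\bigr)\nu_\cC=\phi^*\om$, both holding because $J$ is compatible with $g$ and $\om$. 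The second is the norm identity
\[
|X_{R,\sigma}(\phi)|^2\,\nu_\cC=\tfrac12\,|X_{\re(W)}(\phi)|^2\,|\sigma|^2\,\nu_\cC,
\]
immediate from $X_{R,\sigma}(\phi)=-\tfrac12\bigl(X_{\re(W)}(\phi)\im(\sigma)+X_{\im(W)}(\phi)\re(\sigma)\bigr)$, the relation $X_{\im(W)}=J\,X_{\re(W)}$ of Lemma \ref{sec2:lm-1} (so $|X_{\im(W)}|=|X_{\re(W)}|$, $J$ being a $g$-isometry), and $|\re(\sigma)|^2+|\im(\sigma)|^2=|\sigma|^2$. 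Adding these two identities to $(i)$ yields $(ii)$, so it remains to prove $(i)$.

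Expanding the square, $|\bpat_J\phi+X_{R,\sigma}(\phi)|^2=|\bpat_J\phi|^2+|X_{R,\sigma}(\phi)|^2+2\langle\bpat_J\phi,X_{R,\sigma}(\phi)\rangle$, so $(i)$ is equivalent to the cross-term identity
\[
2\,\langle\bpat_J\phi,X_{R,\sigma}(\phi)\rangle\,\nu_\cC=-\,d\bigl(\im(W(\phi)\cdot\sqrt2\,\sigma)\bigr).
\]
The right-hand side is a total derivative precisely because $W$ and $\sigma$ are holomorphic: on a Riemann surface there are no $(2,0)$-forms, so $d\sigma=\bpat\sigma=0$, hence $d\bigl(W(\phi)\sqrt2\,\sigma\bigr)=\sqrt2\,d(W\circ\phi)\wedge\sigma$; and since $W$ is holomorphic, $d(W\circ\phi)=\pat_z(W\circ\phi)\,dz+\pat_{\zb}(W\circ\phi)\,d\zb$ with $\pat_{\zb}(W\circ\phi)=\sum_j\pat_{u^j}W(\phi)\,\pat_{\zb}u^j$, whence $d(W\circ\phi)\wedge\sigma=\pat_{\zb}(W\circ\phi)\,d\zb\wedge\sigma$, a multiple of $\nu_\cC$; taking imaginary parts expresses the right-hand side as a constant multiple of $\re\bigl(\pat_{\zb}(W\circ\phi)\bigr)\,\nu_\cC$.

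To match the left-hand side I would work away from the discrete zero set of $\sigma$ in a holomorphic coordinate $z=s+it$ in which $\sigma$ is a constant multiple of $dz$ (on strip-like ends this is already the standing normalization, with $h_\cC\equiv1$ and $\nu_\cC=ds\wedge dt$); since everything is conformally invariant and both sides are continuous, the identity extends across the zeros, and for $p>2$ the map $\phi$ is continuous so $W(\phi)\sqrt2\,\sigma\in W^{1,p}_{\loc}$ and the identity, being pointwise plus an exact-differential rewriting, passes from smooth $\phi$ to $W^{1,p}_{\loc}$ by density. In this coordinate, writing $\bpat_J\phi=\tfrac12\bigl(\pat_s\phi+J\pat_t\phi\bigr)ds-\tfrac12 J\bigl(\pat_s\phi+J\pat_t\phi\bigr)dt$ and $X_{R,\sigma}(\phi)=-\tfrac12\bigl(X_{\im(W)}(\phi)\,ds+X_{\re(W)}(\phi)\,dt\bigr)$, pairing coefficientwise and using that $J$ is $g$-skew together with $X_{\im(W)}=J\,X_{\re(W)}=\nabla_g\re(W)$ and $\nabla_g\im(W)=-X_{\re(W)}$ (all from Lemma \ref{sec2:lm-1}), one finds
\[
\langle\bpat_J\phi,X_{R,\sigma}(\phi)\rangle\,\nu_\cC=-\tfrac12\bigl(\pat_s(\re W\circ\phi)-\pat_t(\im W\circ\phi)\bigr)\,ds\wedge dt=-\re\bigl(\pat_{\zb}(W\circ\phi)\bigr)\,\nu_\cC,
\]
using $\pat_s+i\pat_t=2\pat_{\zb}$ in the last step. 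Comparing with the right-hand side above, and keeping track of the normalization of $\sigma$ (the factor $\sqrt2$ is forced by the conventions relating $h$, $g=2\re h$ and the norm of $\sigma$, consistently with the $\sqrt2$ entering the definition of the energy functional $E_K$ via $\widetilde X$), gives the cross-term identity, hence $(i)$.

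The only genuine difficulty is bookkeeping: pinning down the numerical constants — the various $\tfrac12$'s, the $\sqrt2$, and the normalization of $|\sigma|$ relative to $\nu_\cC$ — so that the two displayed identities come out exactly as stated. The conceptual content, namely that $\tfrac12|d\phi|^2$ splits as $|\bpat_J\phi|^2$ plus $\phi^*\om$ and that the cross term $\langle\bpat_J\phi,X_{R,\sigma}(\phi)\rangle$ is a total derivative because $W$ and $\sigma$ are holomorphic, is routine and is the genus-$0$, boundary analogue of the standard energy identities for the Witten equation used in the closed (FJRW) setting.
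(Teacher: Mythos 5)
Your proposal is correct and follows essentially the same route as the paper: the paper likewise works in a local coordinate with $\sigma$ proportional to $dz$, first records the norm identity $|X_{R,\sigma}(\phi)|^2\nu_\cC=\tfrac12|X_{\re(W)}(\phi)|^2|\sigma|^2\nu_\cC$, computes the cross term $2(\bpat_J\phi,X_{R,\sigma}(\phi))\nu_\cC=-d(\im(W(\phi)\sqrt2\,\sigma))$ via the relations $X_{\im(W)}=JX_{\re(W)}=\nabla_g\re(W)$ to get (i), and then obtains (ii) from (i) together with $|\bpat_J\phi|^2+|\pat_J\phi|^2=|d\phi|^2$ and $(|\bpat_J\phi|^2-|\pat_J\phi|^2)\nu_\cC=-2\phi^*\om$. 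The constants you flag as bookkeeping do come out as stated, so no gap.
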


\begin{proof} In local coordinate chart, we take $\sigma=dz/\sqrt{2}, z=s+it$, and the volume $\nu_\cC=i h_\cC \sigma\wedge \bar{\sigma}=h_\cC ds\wedge dt$ with $h_\cC>0$. We have $|\sigma|^2=h_\cC^{-1}$.

Since $X_{R,\sigma}(\phi)=-\frac{1}{2}\left( X_{\re(W)}(\phi) dt+X_{\im(W)}ds\right)$, we have
\begin{equation}\label{sec9-iden-1}
|X_{R,\sigma}(\phi)|^2\nu_\cC=\frac{1}{2} |X_{\re(W)}(\phi)|^2 |\sigma|^2 \nu_\cC.
\end{equation}

Using the identity
$$
\bpat_J \phi=\frac{1}{2}\left((\frac{\pat \phi}{\pat s}+J\cdot \frac{\pat \phi}{\pat t})ds+ (\frac{\pat \phi}{\pat t}-J\cdot \frac{\pat \phi}{\pat s})dt\right),
$$
we have
\begin{align*}
&\quad 2(\bpat_J \phi, X_{R,\sigma}(\phi)) \nu_\cC\\
&=-\frac{1}{2}\left(h(\frac{\pat \phi}{\pat s}, X_{\im(W)})-h(\frac{\pat \phi}{\pat t}, J\cdot X_{\im(W)})+h(\frac{\pat \phi}{\pat t}, X_{\re(W)})+h(\frac{\pat \phi}{\pat s}, J\cdot X_{\re(W)}) \right)ds\wedge dt\\
&=-\left( h(\frac{\pat \phi}{\pat s}, J\cdot X_{\re(W)}(\phi))+h(\frac{\pat \phi}{\pat t}, X_{\re(W)}) \right)ds\wedge dt\\
&=-[\frac{\pat (\re(W)(\phi))}{\pat s}-\frac{\pat (\im W(\phi))}{\pat t}]ds\wedge dt\\
&=-d(\re(W)(\phi)dt+\im W(\phi) ds)\\
&=-d (\im (W\cdot \sqrt{2}\sigma)).
\end{align*}
This gives (i).

Now by (\ref{sec9-iden-1}) and the identities
\begin{align*}
\begin{cases}
|\bpat_J\phi|^2+|\pat_J \phi|^2=|d\phi|^2\\
(|\bpat_J \phi|^2-|\pat_J \phi|^2)\nu_\cC=-2 \phi^*\omega,
\end{cases}
\end{align*}
it is easy to prove (ii).
\end{proof}

\begin{thm}\label{polytope-lm-2}(Energy identity) Let $\phi\in \W^{d+1}(\cC, L, l)$, then we have
\begin{equation}\label{polytope-lm-2-1}
E_K(\phi)=\int_\cC |\phi_s|^2 |\sigma|^2 \nu_\cC=\A_{\widetilde{H}_\varpi}(l_d)-\sum^{d-1}_{i=0}\A_{\widetilde{H}_\varpi}(l_i),
\end{equation}
where
$$
\A_{\widetilde{H}_\varpi}(l)=\gamma_1(l(1))-\gamma_0(l(0))-\int^1_0 [l^*\lambda+\widetilde{H}_{\varpi}(l(t))\;dt].
$$
\end{thm}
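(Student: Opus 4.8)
The plan is to integrate the pointwise identity (ii) of Lemma \ref{lg-poly-identity} over the punctured disc $\cC$, using the perturbed Witten equation (\ref{perturb-lg-poly}) to kill the term $|\bpat_J\phi + X_{R,\sigma}(\phi)|^2$ up to the explicit perturbation contribution, and then convert the remaining integrals into boundary integrals over $\pat\cC$ and over the limiting circles at the punctures. First I would write the perturbed equation as $\bpat_J\phi + X_{R,\sigma}(\phi) = Y^{0,1}(\phi)$ and record that $Y^{0,1}(\phi) = \frac12(I + I_\cC J)X_K\,\im(\sigma)$ is the $(0,1)$-part of $X_K(z,\phi)\im(\sqrt2\sigma)$; plugging this into Lemma \ref{lg-poly-identity}(ii) and expanding gives
\begin{equation*}
\tfrac12\bigl(|d\phi|^2 + |X_{\re(W)}(\phi)|^2|\sigma|^2\bigr)\nu_\cC = |Y^{0,1}(\phi)|^2\nu_\cC + \phi^*\om + d\bigl(\im(W(\phi)\cdot\sqrt2\,\sigma)\bigr),
\end{equation*}
and the definition $E_K(\phi) = \tfrac12\int_\cC|d\phi - \widetilde X|\nu_\cC$ together with $\bpat_J\phi = \widetilde X^{0,1}(\phi)$ rearranges the left side into $\int_\cC|\phi_s|^2|\sigma|^2\nu_\cC$ after the standard bookkeeping that identifies $E_K(\phi)$ with the $|\phi_s|^2$ integral on strip-like ends and globally via the auxiliary section; this is exactly the first equality in (\ref{polytope-lm-2-1}). (Here one uses that $\phi^*\om$ is exact, $\om = d\lambda$, since $M$ is exact.)

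Next I would carry out the Stokes computation for $\int_\cC \phi^*\om + d(\im(W(\phi)\sqrt2\sigma))$. Writing $\phi^*\om = d(\phi^*\lambda)$, the total integrand becomes $d\bigl(\phi^*\lambda + \im(W(\phi)\sqrt2\,\sigma)\bigr)$, so by Stokes the integral equals the sum of boundary contributions: (a) over each component $C\subset\pat\cC$, where $\phi|_C$ lands in a Lefschetz thimble $L_C$ so $\phi^*\lambda|_C = d(\gamma_C\circ\phi)$ by Theorem \ref{exact-functional}'s setup (primitives $\gamma_i$ with $\gamma_i(p_i)=0$), and $\im(\sigma)|_{\pat\cC} = 0$ kills the second term on the boundary; and (b) over the small loops around each puncture $\zeta$, which in the strip-like coordinate $\ve_\zeta$ give, in the limit, the contribution $-\int_0^1 l_\zeta^*\lambda$ from $\phi^*\lambda$ and the Hamiltonian term from $\im(W(l_\zeta)\sqrt2\sigma)$ where $\sigma = dz$ on the end. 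Collecting the $\gamma$-jumps across the punctures along each arc of the boundary and matching signs (incoming puncture $\zeta_0$ versus the $d$ outgoing punctures $\zeta_1,\dots,\zeta_d$) assembles precisely $\A_{\widetilde H_\varpi}(l_d) - \sum_{i=0}^{d-1}\A_{\widetilde H_\varpi}(l_i)$ with $\A_{\widetilde H_\varpi}(l) = \gamma_1(l(1)) - \gamma_0(l(0)) - \int_0^1[l^*\lambda + \widetilde H_\varpi(l(t))]\,dt$, matching the form of the action functional from Theorem \ref{exact-functional} (up to the sign convention fixed there). The perturbation $K$ enters only through the $\widetilde H_\varpi = \re W + \varpi_\zeta$ term on each strip-like end, since $K$ vanishes on boundary tangent directions and equals $\varpi_\zeta$ on the ends.

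The main obstacle I expect is the careful convergence argument at the punctures: one must justify that the boundary integral over a loop shrinking onto a puncture $\zeta$ converges to the asserted expression in terms of $l_\zeta$, which requires the exponential decay $|\pat_s\phi|\le ce^{-\delta|s|}$ from Theorem \ref{sec4-thm-ener-conv-expo-1}(iii) (available since $E_K(\phi)<\infty$ on each end) to control $\phi^*\lambda$ and to see that the $d(\im(W(\phi)\sqrt2\sigma))$ flux through the end limits to $\int_0^1\widetilde H_\varpi(l_\zeta(t))\,dt$. A secondary bookkeeping point is getting all the signs and the indexing of the $\gamma_C$-primitives right as one traverses $\pat\cCh$ past the $d+1$ punctures; this is purely combinatorial once the boundary is split along the directed Lagrangian labels $L_{C_0} > L_{C_1} > \cdots$, but it is where errors are easy to make. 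The positivity-of-$|\phi_s|^2$ step and the identification $E_K(\phi) = \int_\cC|\phi_s|^2|\sigma|^2\nu_\cC$ are then immediate consequences of the pointwise identity and the equation $\bpat_J\phi = \widetilde X^{0,1}(\phi)$.
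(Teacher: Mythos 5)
Your overall strategy --- substitute the equation, reduce the energy to $\int\phi^*\om$ plus an exact term, and apply Stokes with contributions from the boundary arcs (where $\lambda|_{L_C}=d\gamma_C$ and $\im(\sigma)|_{\pat\cC}=0$) and from the punctures (where the Hamiltonian integrals appear) --- matches the paper's, and your sign/indexing bookkeeping and the remark about exponential decay at the punctures are sensible. But your specific first step has a genuine gap: Lemma \ref{lg-poly-identity}(ii) is an identity for the \emph{unperturbed} Witten operator, and feeding the perturbed equation $\bpat_J\phi+X_{R,\sigma}(\phi)=Y^{0,1}(\phi)$ into it does not produce the quantities in the theorem. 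Concretely, on a strip-like end the left-hand side $\tfrac12\bigl(|d\phi|^2+|X_{\re(W)}(\phi)|^2|\sigma|^2\bigr)$ is \emph{not} $|\phi_s|^2|\sigma|^2$ on-shell once $K\neq0$: writing $\phi_t=J\phi_s+X_{\re(W)+K}$ one finds leftover terms $\langle J\phi_s,X_{\re(W)+K}\rangle+\langle X_{\re(W)},X_{\re(W)+K}\rangle$ (which are total derivatives on-shell but must be tracked explicitly), and the term $|Y^{0,1}(\phi)|^2\nu_\cC=\tfrac12|X_K|^2|\sigma|^2\nu_\cC$ on the right does not cancel against anything. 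Worse, the flux term in Lemma \ref{lg-poly-identity}(ii) is $d\bigl(\im(W(\phi)\sqrt{2}\sigma)\bigr)$, which involves only $W$; its contribution at a puncture is $\int_0^1\re(W)(l_\zeta(t))\,dt$, not $\int_0^1\widetilde{H}_{\varpi}(l_\zeta(t))\,dt$, so the perturbation $\varpi$ never enters the action functional through your computation. Your assertion that ``$K$ enters only through the $\widetilde{H}_{\varpi}$ term on each strip-like end'' is exactly the part you have not established.

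The paper avoids this by not invoking Lemma \ref{lg-poly-identity}(ii) at all: it expands $E_K(\phi)=\tfrac12\int_\cC|d\phi-\widetilde{X}|^2\nu_\cC$ directly, uses the equation once to get $|d\phi-\widetilde{X}|^2=2|\phi_s|^2|\sigma|^2$ (the first equality of (\ref{polytope-lm-2-1})), and then uses it a second time to write $|\phi_s|^2=h\bigl(\phi_s,-J(\phi_t-X_{\re(W)+K})\bigr)$, which splits as $\phi^*\om+d\bigl((\re(W)+K)\im(\sqrt{2}\sigma)\bigr)$ --- note the full perturbed Hamiltonian $\re(W)+K$ inside the exact term, which is precisely what delivers $\widetilde{H}_{\varpi}$ at the punctures after Stokes. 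If you replace your first step by this direct computation (or first prove a perturbed analogue of Lemma \ref{lg-poly-identity}(ii) with $\widetilde{X}$ in place of $X_{R,\sigma}$ and $\re(W)+K$ in place of $\re(W)$), the remainder of your argument goes through as you describe.
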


\begin{proof} In local coordinates, $\sqrt{2}\sigma=dz=s+it$, we have
\begin{align*}
\widetilde{X}(\phi)&=X_{\re(W)+K} dt,\\
\widetilde{X}^{0,1}&=\frac{1}{2}(I+J\cdot j_\cC)\widetilde{X}(\phi)=\frac{1}{2}[X_{\re(W)+K}dt+\nabla_g(\re(W)+K)ds].
\end{align*}
Hence the equation (\ref{sec5-pert-equa-1}) is equivalent to
\begin{equation}
\phi_s +J(\phi_t)-\nabla_g(\re(W)+K)=0.
\end{equation}
By the above equation, we have
\begin{align*}
|d\phi-\widetilde{X}|^2&=|\phi_s ds+\phi_t dt-X_{\re(W)+K} dt|^2\\
&=\left(|\phi_s|^2+|\phi_t-X_{\re(W)+K} |^2\right)|\sigma|^2\\
&=2|\phi_s|^2|\sigma|^2=2|\phi_t-X_{\re(W)+K}|^2 |\sigma|^2.
\end{align*}
Hence, we have
\begin{align*}
E_K(\phi)&=\frac{1}{2}\int_\cC |d\phi-\widetilde{X}|^2 \nu_\cC\\
&=\int h(\phi_s, -J(\phi_t-X_{\re(W)+K}))ds\wedge dt\\
&=\int_\cC h(\phi_s, -J\phi_t)ds\wedge dt+\int_\cC h(\phi_s, \nabla_g (\re(W)+K) ds\wedge dt\\
&=\int_\cC \phi^*\om+\int_\cC d((\re(W)+K)\im(\sqrt{2}\sigma))\\
&=\int_\cC \phi^*\om+\int_{\pat\cC}  (\re(W)+K)\im(\sqrt{2}\sigma)\\
&=I+II.
\end{align*}
For the I-term, noticing that $\om=d\lambda$ and $\lambda|_{L_i}=d\gamma_i$, we have
\begin{align*}
I&=\sum^d_{j=0}\int_{C_j}\phi^*\lambda+\sum^d_{j=0}\phi^*\lambda\\
&=\gamma_0(l_0(0))-\gamma_0(l_d(0))+\sum^{d-1}_{j=1}\left(\gamma_j(l_j(0))-\gamma_{j}(l_{j-1}(1)) \right)+\gamma_d(l_d(1))-\gamma_d(l_{d-1}(1))\\
&+\sum^{d-1}_{j=0}\int_{l_j}\phi^*\lambda-\int_{l_d}\phi^*\lambda\\
&=\left( \gamma_d(l_d(1))-\gamma_0(l_d(0))-\int_{l_d}\phi^*\lambda\right)\\
&-\sum^{d-1}_{j=1}\left(\gamma_{j+1}(l_j(1))-\gamma_{j}(l_{j}(0))- \int_{l_j}\phi^*\lambda\right).
\end{align*}
For the II-term, using the fact that $\im(\sigma)|_{C_j}=0, \varepsilon_{\zeta_j}^*K(z,x)=\varpi(t, x), K(z, x)|_{C_j}\equiv 0$, then we have
\begin{align*}
II&=\sum^d_{j=0} \int_{l_j} (\re(W)+K)\im(\sqrt{2}\sigma)\\
&=\sum^{d-1}_{j=0} \int^1_0 \widetilde{H}_\varpi(l_j(t)) dt-\int^1_0 \widetilde{H}_\varpi(l_d(t))dt.
\end{align*}
Combining I and II, we get the conclusion.
\end{proof}

As the generalization of Theorem \ref{sec4-thm-ener-conv-expo-1}, we can obtain the following result:

\begin{thm}Let $\phi$ be a smooth solution of the equation (\ref{sec5-pert-equa-1}). Then the following three statements are equivalent:
\begin{itemize}
\item[(i)] $E_K(\phi)<\infty$
\item[(ii)] There exist solutions $l^\pm(t)\in S_{W}$ such that
$$
\lim_{s\to \pm\infty}\phi(s,t)=l^\pm(t).
$$
\item[(iii)] There exist constants $\delta>0$ and $c>0$ such that
$$
|\pat_s\phi(s,t)|\le ce^{-\delta|s|},
$$
for all $(s,t)\in \Sigma$.
\end{itemize}
\end{thm}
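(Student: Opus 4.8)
The plan is to adapt the three-step scheme behind Theorem~\ref{sec4-thm-ener-conv-expo-1} to a general pointed-boundary surface $\cC$ with the section $\sigma$ and a consistent admissible perturbation $K$. Away from the punctures, (\ref{sec5-pert-equa-1}) is a first-order elliptic system with totally real (Lagrangian) boundary conditions; on each strip-like end $\ve_\zeta(\Sigma^\pm)$ it reduces through $\ve_\zeta$ to the perturbed Floer equation~(\ref{perturb-lg-r}); and since $\cC$ minus its strip-like ends has compact closure, statements (ii) and (iii) are really assertions about the ends, and the finiteness in (i) localizes to the ends as well. The implication \textup{(iii)}$\Rightarrow$\textup{(i)} is then immediate: by the pointwise identity $|d\phi-\widetilde{X}|^2=2|\pat_s\phi|^2|\sigma|^2$ valid on any solution (see the proof of Theorem~\ref{polytope-lm-2}) one has $E_K(\phi)=\int_\cC|\pat_s\phi|^2|\sigma|^2\,\nu_\cC$, and on each end $|\sigma|^2\nu_\cC$ is comparable to $ds\wedge dt$ while $|\pat_s\phi|\le ce^{-\delta|s|}$, so the end contributions are finite, and the contribution of the compact core is finite since $\phi$ is smooth.

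For \textup{(i)}$\Rightarrow$\textup{(ii)} the first step is to invoke the compactness package of Section~\ref{sec-7}: for a finite-energy solution, Lemma~\ref{control-first-direction} and Lemma~\ref{anti-control}, combined in Theorem~\ref{c_0_polytope}, give a uniform bound $d(\phi,q_0)<C$ --- so the image lies in a fixed compact subset of $M$, which is the essential point since $M$ is noncompact and $W$ need not be proper --- together with $|d\phi|<C$. Finite energy forces the tail energies $\int_{\{|s|\ge R\}\times[0,1]}|\pat_s\phi|^2|\sigma|^2\to 0$ as $R\to\infty$ along each end, so there are $s_k\to\pm\infty$ with $\|\pat_s\phi(s_k,\cdot)\|_{L^2([0,1])}\to 0$; the uniform $C^0$--$C^1$ bounds and elliptic bootstrapping for~(\ref{perturb-lg-r}) then give $C^\infty_{\loc}$-convergence of the translates $\phi(\cdot+s_k,\cdot)$ to a solution of the $s$-independent equation, i.e.\ to some element of $S_W(L_{\zeta,0},L_{\zeta,1})$. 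Finally, to upgrade subsequential convergence to genuine convergence one uses that $S_W(L_{\zeta,0},L_{\zeta,1})$ is finite, hence discrete, by the transversality Theorem~\ref{transversal}, together with the monotone decay of the tail energy; a standard no-return argument then pins down a single limiting chord $l^\pm$. This is exactly Theorem~\ref{limit}.

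For \textup{(ii)}$\Rightarrow$\textup{(iii)} --- the exponential-decay estimate of Theorem~\ref{sec5:exponential} --- one works near the limiting chord $l^\pm$ on each end: express $\phi$ in coordinates adapted to $l^\pm$ and linearize~(\ref{perturb-lg-r}); the linearized operator along the end has the form $\pat_s+\mathcal{A}^\pm$, with $\mathcal{A}^\pm$ the self-adjoint operator controlling the intersection $\phi_{1,\mathsf{k}}(L_{\zeta,0})\pitchfork L_{\zeta,1}$ at $l^\pm$. Transversality (Theorem~\ref{transversal}) makes $\mathcal{A}^\pm$ invertible, so $\operatorname{spec}\mathcal{A}^\pm$ is bounded away from $0$ by some $\delta>0$; a standard differential-inequality argument for the tail energy (equivalently for $\|\pat_s\phi(s,\cdot)\|_{L^2}^2$), using this spectral gap, gives exponential decay in $L^2$, which the interior elliptic estimates for the nonlinear equation upgrade to the pointwise bound $|\pat_s\phi(s,t)|\le ce^{-\delta|s|}$; this then propagates over the compact core of $\cC$.

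The main obstacle is the \textup{(i)}$\Rightarrow$\textup{(ii)} step, and inside it the $C^0$ bound: for a general noncompact $M$ with non-proper $W$ there is a priori no confinement of a finite-energy solution to a compact region, and without such confinement the local compactness and bootstrapping (and the normal-coordinate setup used for decay) all break down. This is precisely what the tame condition supplies, via the interlocking $C^0$--$C^1$ Lemmata~\ref{control-first-direction}--\ref{anti-control} and Theorem~\ref{c_0_polytope}; the secondary difficulty --- upgrading subsequential to genuine convergence --- is handled by discreteness of $S_W$ and monotonicity of the tail energy, as indicated above.
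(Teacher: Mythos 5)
Your proposal is correct and follows essentially the same route as the paper: the easy direction (iii)$\Rightarrow$(i) via the energy identity of Theorem \ref{polytope-lm-2}, the implication (i)$\Rightarrow$(ii) via the tame-condition compactness package (Theorem \ref{c_0_polytope}, Lemma \ref{convergence}) culminating in Theorem \ref{limit}, and (ii)$\Rightarrow$(iii) via the spectral gap of the asymptotic operator and the differential inequality $f''\ge\delta^2 f$ as in Theorem \ref{sec5:exponential}. No substantive differences from the paper's argument.
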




\section{Compactness and regularity}\label{sec-7}
\subsection{Basic settings and energy control}\

Let $\widetilde{X}(\phi)=X_{\re W}(\phi)\im \sigma+Y(\phi)$, the perturbed Witten equation (\ref{perturb-lg-poly}) becomes:
\begin{equation}\label{perturb-lg-poly-revised}
\bpat_J \phi=\widetilde{X}^{0,1}(\phi).
\end{equation}
Note that, if we take $(0,1)$ part of $X_{\re W}(\phi)\im \sigma$, it should be $-X_{R,\sigma}(\phi)$.

Choose a consistent admissible perturbation $K\in V^{\epsilon,\delta_0}_{\per}(\cC,\varpi)$, so we get a $Y$ such that $|Y|<\delta_0 |X_{R,\sigma}(\phi)|$ at any points of $\cC$ if $\delta_0<1/2$.

For any $a>0$, denote by $\Sigma^+_a=[a,\infty)\times [0,1]$ and $\Sigma^-_a=(-\infty, -a]\times [0,1]$. Let $\cC$ be a pointed-boundary Riemann surface. Define $\cC_{Inn,a}:=\cC\setminus \bigcup_{\zeta\in \cZ^\pm}\varepsilon_\zeta(\Sigma^\pm_a)$ and $\cC_{Inn}:=\cC_{Inn,0}$ is the interior part of $\cC$. On the strip-like ends, we consider the length $1$ band $B_a:=\varepsilon_\zeta([a, 1+a]\times [0,1])$ for $a>0$ and some $\zeta\in \cZ^\pm$.

Recall that in (\ref{sec-4-pert-defi-1}) we have defined the space of local perturbation term $\varpi$ of Hamiltonian equations.
Let
$$
\|\varpi\|_{ratio}=\sup_{t, x}\frac{|\nabla_g \varpi(t, x)|}{|\nabla_g \re (W)|}.
$$

\begin{lm}\label{polytope-lm-3} There exists a constant $C$ such that for any $\phi\in \W^{d+1}(\cC, L, l;K)$ and any $a>0$, the following holds
$$
 \int_{B_a}\big  |X_{R,\sigma}(\phi)|^2 \nu_{\mathscr{C}}\leq C.
$$
 \end{lm}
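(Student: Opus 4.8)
The plan is to rewrite $|X_{R,\sigma}(\phi)|^2\nu_{\mathscr C}$ as an exact $2$-form plus a term controlled by $|X_{R,\sigma}(\phi)|^2$ with a small coefficient, and then estimate the resulting boundary integral over $B_a$. Concretely, I would start from the pointwise identity (i) of Lemma \ref{lg-poly-identity}. For $\phi\in\W^{d+1}(\cC,L,l;K)$ the perturbed Witten equation is $\bpat_J\phi+X_{R,\sigma}(\phi)=Y^{0,1}(\phi)$, and since $K\in V^{\epsilon,\delta_0}_{\per}(\cC,\varpi)$ we have $|X_K|<\delta_0|\nabla_g\re W|$ pointwise; together with $|\im\sigma|\le|\sigma|$ and the identity $|X_{R,\sigma}(\phi)|^2\nu_{\mathscr C}=\tfrac12|X_{\re(W)}(\phi)|^2|\sigma|^2\nu_{\mathscr C}$ this yields $|Y^{0,1}(\phi)|^2\nu_{\mathscr C}\le 2\delta_0^2\,|X_{R,\sigma}(\phi)|^2\nu_{\mathscr C}$. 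Substituting into Lemma \ref{lg-poly-identity}(i), discarding the nonnegative term $|\bpat_J\phi|^2\nu_{\mathscr C}$, and absorbing (using $\delta_0<\tfrac12$), one obtains the pointwise differential inequality
\[
(1-2\delta_0^2)\,|X_{R,\sigma}(\phi)|^2\,\nu_{\mathscr C}\ \le\ d\bigl(\im(W(\phi)\cdot\sqrt{2}\,\sigma)\bigr),
\]
which makes sense and can be integrated by parts because $\phi$ is smooth by interior and boundary elliptic regularity for the Witten equation.

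Next I would integrate this inequality over the band $B_a=\varepsilon_\zeta([a,1+a]\times[0,1])\subset\varepsilon_\zeta(\Sigma^\pm)$ and apply Stokes' theorem. The boundary $\pat B_a$ consists of two arcs lying on the Lefschetz thimbles $L_{\zeta,0},L_{\zeta,1}$ (the images of $[a,1+a]\times\{0,1\}$) and two transverse arcs (the images of $\{a\}\times[0,1]$ and $\{1+a\}\times[0,1]$). On the thimble arcs $\im(\sigma)|_{\pat\cC}=0$, so $\im(W(\phi)\sqrt{2}\sigma)=\im W(\phi)\,\re(\sqrt{2}\sigma)$; moreover $\im W(\phi)$ is constant along each thimble --- equal to $\im W(p)$ for the corresponding critical point $p$, by the elementary property that $q\in L^+_p$ forces $\im W(q)\equiv\im W(p)$ --- and in strip coordinates $\int_{[a,1+a]}\re(\sqrt{2}\sigma)$ is the fixed strip width at $\zeta$. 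Hence these two contributions are bounded in absolute value by a constant depending only on $\max_{p\in C_W}|\im W(p)|$, which is finite because $C_W$ is finite. On the transverse arcs $\re(\sqrt{2}\sigma)=0$, so there the contribution equals $\pm\int_0^1\re W\bigl(\phi(\varepsilon_\zeta(s_0,t))\bigr)\,dt$ with $s_0\in\{a,1+a\}$.

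It remains --- and this is the crux --- to bound $\int_0^1\bigl|\re W(\phi(\varepsilon_\zeta(s_0,t)))\bigr|\,dt$ uniformly in $s_0\ge 0$, in $\phi$ and in $a$. I would deduce this from the exponential decay of solutions along the strip-like ends: by the energy identity (Theorem \ref{polytope-lm-2}) one has $E_K(\phi)=\A_{\widetilde{H}_\varpi}(l_d)-\sum_{i=0}^{d-1}\A_{\widetilde{H}_\varpi}(l_i)<\infty$, a quantity depending only on the chords $l_\zeta\in S_W$ and hence uniformly bounded since $S_W$ is finite; so by Theorem \ref{sec4-thm-ener-conv-expo-1} and its generalization to general $\cC$ at the end of Section \ref{sec-6} we get $|\pat_s\phi|\le c\,e^{-\delta|s|}$ on each strip-like end. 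Integrating $\pat_s\phi$ from $s_0$ to the puncture yields $d\bigl(\phi(\varepsilon_\zeta(s,t)),l_\zeta(t)\bigr)\le\tfrac{c}{\delta}e^{-\delta|s|}\le\tfrac{c}{\delta}$, so $\phi$ maps the whole strip-like end into a fixed bounded neighborhood of the compact chord $l_\zeta([0,1])$, on which $\re W$ is bounded; since there are only finitely many chords and the energy bound is uniform, the decay constants $c,\delta$ can be taken uniform over $\W^{d+1}(\cC,L,l;K)$, giving the desired uniform bound. Dividing by $1-2\delta_0^2>0$ then proves the lemma. The main obstacle is exactly this uniform confinement of $\phi$ on the ends: the tame condition alone does not suffice, since $|W(u)|\le C_1'|\pat_M W(u)|^2+C_2'$ only returns a term of the same order as $\int_{B_a}|X_{R,\sigma}(\phi)|^2\nu_{\mathscr C}$ with no small factor enabling absorption; one genuinely needs the exponential convergence to the (finitely many, nondegenerate) Hamiltonian chords with constants independent of the solution.
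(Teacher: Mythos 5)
Your reduction to the boundary term is where the argument breaks, and your proposed fix is circular. You correctly isolate the crux as bounding $\int_0^1|\re W(\phi(\ve_\zeta(s_0,t)))|\,dt$ uniformly in $s_0$, $a$ and $\phi$, but the tools you invoke to do it are all downstream of this lemma in the paper's logical order. Theorem \ref{sec4-thm-ener-conv-expo-1} and its generalization are proved via Theorem \ref{limit} and Theorem \ref{sec5:exponential}: the former explicitly assumes the hypotheses of Lemma \ref{regular} (boundedness of $d(\phi,q_0)$ and $|d\phi|$), and the latter's proof begins ``By the $C^0$ estimate, $\phi$ is a bounded map.'' Those $C^0$/$C^1$ bounds are Theorem \ref{c_0_polytope}, which rests on Corollary \ref{rem-1-energybound} and Lemma \ref{poly-boundary-c_0}, which rest on this very lemma. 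Moreover, even granting the asymptotic convergence $\phi(\ve_\zeta(s,\cdot))\to l_\zeta$ built into the definition of $\W^{d+1}(\cC,L,l;K)$, that only confines each individual $\phi$ near the chords for $s$ large, with a threshold and constants depending on $\phi$; uniformity of the decay constants over the whole moduli space is precisely a compactness statement that is not available at this stage. The tame condition cannot rescue the transverse arcs either, as you yourself note, since it returns a term of the same order with no small factor.

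The paper's proof sidesteps the transverse boundary terms entirely. Instead of integrating the exact-form identity of Lemma \ref{lg-poly-identity}(i) over $B_a$, it pairs the equation pointwise with $\nabla_g\re(W)$ and integrates only in the $t$-direction across the band: $\int_0^1|\nabla_g\re(W)(\phi)|^2dt=\int_0^1 g(\nabla_g\re(W),\pat_s\phi+J\pat_t\phi-\nabla_g\varpi)\,dt$. The middle term is $-\int_0^1\pat_t(\im W(\phi))\,dt$, which telescopes exactly to $\Delta_{01}=\im W(L_0)-\im W(L_1)$ because $\im W$ is constant on each thimble --- so no $\re W$ term on vertical arcs ever appears. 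The first term is handled by Cauchy--Schwarz, $|\nabla_g\re W||\pat_s\phi|\le\tfrac12|\nabla_g\re W|^2+\tfrac12|\pat_s\phi|^2$, the perturbation term by $\|\varpi\|_{ratio}\le 1/4$, and after absorbing the $|\nabla_g\re W|^2$ contributions one is left with $\int_B|\pat_M W(\phi)|^2\nu_\cC\le C(\int_B|\pat_s\phi|^2\nu_\cC+\Delta_{01})$, which is bounded by the energy identity of Theorem \ref{polytope-lm-2} since the chords lie in a fixed compact set. If you want to keep your Stokes-based route for the interior region, that is in fact what Lemma \ref{polytope-lm-1} does --- but only after this band estimate is available to control the transverse arcs by averaging over $a\in[0,1]$.
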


\begin{proof}
Without loss of generality, we can only consider $B=[0,1]\times[0,1]$.  Since $\phi$ satisfies (\ref{perturb-lg-r}) and the boundary condition $\phi([0,1]\times\{i\})\subset L_i$ for $i=0,1$, we have
\begin{align}
&\int_0^1|\nabla_g \re(W)(\phi)|^2dt=\int_0^1g(\nabla_g \re(W),\pat_s\phi+J\pat_t\phi-\nabla_g \varpi(\phi,t))dt\notag\\
\leq& \int_0^1|\nabla_g \re(W)(\phi)| |\partial_s \phi|dt-\int_0^1g(\nabla_g \im(W)(\phi),\partial_t \phi)dt+\int_0^1\|\varpi\|_{ratio}|\nabla_g\re(W(\phi))|^2 dt\notag\\
\leq&(\frac{1}{2} +\|\varpi\|_{ratio})\int_0^1 |\nabla_g \re(W)(\phi)|^2+\frac{1}{2}\int^1_0 |\pat_s \phi|^2 dt+\Delta_{01},
\notag
\end{align}
where $\Delta_{01}=\im W(L_0)-\im W(L_1)$. By the choice of $Y$, $\|\varpi\|_{ratio}\leq 1/4$. Hence by Theorem \ref{polytope-lm-2}, we obtain
\begin{equation}
\int_{B}|\pat_M W(\phi)|^2\nu_\cC<C\left(\int_{B} |\pat_s \phi|^2 \nu_\cC+\Delta_{01}\right)<C,
\end{equation}
where $C$ is independent of $\phi$, since the positions of those chords $l_i$'s appearing in (\ref{polytope-lm-2-1}) are located inside a compact set of $M$ which is independent of $\phi$.
\end{proof}

 \begin{lm}\label{polytope-lm-1} Let $\cC$ be a pointed-boundary Riemann surface corresponding to any given point $r\in\overline{\cR}$. There exists an universal constant $C$ independent of $r$ and any map $\phi\in \W^{d+1}(\cC, L, l;K)$ such that
$$
\int_{\overline{\cC_{Inn}}} |X_{R,\sigma}|^2 \nu_{\mathscr{C}}\leq C,
$$
\end{lm}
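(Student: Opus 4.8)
The plan is to upgrade the local estimate of Lemma \ref{polytope-lm-3} to a uniform estimate over the inner part $\overline{\cC_{Inn}}$ of any fiber $\cC$ in the family over $\overline{\cR}$. The key point is that $\overline{\cC_{Inn}}$ varies in a compact family, so one cannot simply apply Lemma \ref{polytope-lm-3} on a fixed band; instead one needs a covering argument that is uniform over $r\in\overline{\cR}$. First I would fix, for each $r\in\overline{\cR}$, a finite cover of $\overline{\cC_{Inn}}$ by coordinate charts which near the punctures are strip-like bands of the form $\ve_\zeta([a,a+1]\times[0,1])$ for some bounded range of $a$, and in the genuinely interior region are small discs or half-discs. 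On each strip-like band the needed bound $\int_{B_a}|X_{R,\sigma}(\phi)|^2\nu_\cC\le C$ is exactly Lemma \ref{polytope-lm-3}, with $C$ independent of $\phi$ and of $a$. On the interior charts, one can run the same computation as in Lemma \ref{polytope-lm-3}: pair $\nabla_g\re(W)(\phi)$ against the perturbed Witten equation, use Cauchy--Schwarz and the bound $\|\varpi\|_{ratio}\le 1/4$ (here played by $|Y|<\delta_0|X_{R,\sigma}(\phi)|$ with $\delta_0<1/2$) to absorb the potential-gradient term, and control the remaining terms by the energy $E_K(\phi)$ together with boundary contributions of $\im(W(\phi)\sqrt2\sigma)$; by the energy identity (Theorem \ref{polytope-lm-2}), $E_K(\phi)$ is bounded by a constant depending only on the chords $l_i$, which lie in a fixed compact set, hence independent of $\phi$.

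The second ingredient is the uniformity in $r$. For this I would use the Deligne--Mumford--Stasheff compactification: over the compact base $\overline{\cR}^{d+1}$, the universal inner region $\overline{\cC_{Inn}}$ (say $\cC_{Inn,1}$, the complement of the length-$1$-truncated strip-like ends) forms a compact family, because near a boundary stratum of $\overline{\cR}$ the surface degenerates through the thick--thin decomposition and the thick parts $\cC_v\setminus\bigcup\ve(\Sigma^\pm)$ glue into finitely many pieces, each of which is one of the finitely many topological types indexed by stable trees $T$. Thus a finite number of the local estimates above, with a uniformly bounded number of charts and a uniform constant, suffices to cover $\overline{\cC_{Inn}}$ for \emph{all} $r$ simultaneously. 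Summing the finitely many local bounds $\int_{\text{chart}}|X_{R,\sigma}(\phi)|^2\nu_\cC\le C$ gives $\int_{\overline{\cC_{Inn}}}|X_{R,\sigma}|^2\nu_\cC\le C$ with $C$ independent of both $r$ and $\phi$, which is the assertion.

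The main obstacle I expect is making precise the claim that the inner regions $\overline{\cC_{Inn}}$ form a compact family with a uniformly bounded combinatorial complexity, i.e.\ controlling what happens as $r$ approaches a deep stratum of $\overline{\cR}^{d+1}$ where many interior edges of the tree $T$ shrink simultaneously. One must check that the thin part (the glued necks) stays inside the strip-like-end region where Lemma \ref{polytope-lm-3} applies band-by-band, and that the number and geometry of the thick pieces remain uniformly bounded (this is exactly the finiteness of the index set of stable trees with $d+1$ leaves). Once this bookkeeping is set up — essentially repackaging the gluing description from Subsection \ref{Recall2} — the analytic content is just finitely many applications of the argument already used for Lemma \ref{polytope-lm-3}, so no new estimate is needed.
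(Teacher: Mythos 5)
Your overall strategy — localize, use Lemma \ref{polytope-lm-3} near the punctures, and handle the rest separately — is genuinely different from the paper's, and as written it has a gap in the ``rest'' part. The paper does not cover $\cC_{Inn}$ by charts at all: it integrates the pointwise identity (i) of Lemma \ref{lg-poly-identity}, namely $(|\bpat_J\phi|^2+|X_{R,\sigma}(\phi)|^2)\nu_\cC=|\bpat_J\phi+X_{R,\sigma}(\phi)|^2\nu_\cC+d(\im(W(\phi)\sqrt2\sigma))$, over all of $\cC_{Inn,1+a}$ in one stroke. By Stokes, the exact term contributes only on the outer boundary: on the Lagrangian arcs $C_j$ the relevant piece is controlled because $\im(\sigma)|_{C_j}=0$ and $\im W$ is constant on each thimble, and on the $d+1$ cut arcs $l^a_j$ inside the strip-like ends one averages over $a\in[0,1]$ to convert the arc integrals of $|\re W(\phi)|$ into band integrals $\int_{B_j}|\re W(\phi)|\nu_\cC$, which are then bounded by Lemma \ref{polytope-lm-3} together with the tame condition $|W|\le C(|\pat_M W|^2+1)$. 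Your proposal loses exactly this cancellation: on an ``interior chart'' there is no Lagrangian boundary condition, so the step in Lemma \ref{polytope-lm-3} that evaluates $\int_0^1 g(\nabla_g\im W(\phi),\pat_t\phi)\,dt$ as the fixed difference $\Delta_{01}$ of $\im W$ on the two thimbles is unavailable, and the chart-boundary term $\int_{\pat(\mathrm{chart})}\im(W(\phi)\sqrt2\sigma)$ requires a bound on $|W(\phi)|$ in the interior that does not exist at this stage — the $C^0$ bound on $W\circ\phi$ (Lemma \ref{poly-boundary-c_0}) is proved \emph{later} and depends on the present lemma via Corollary \ref{rem-1-energybound}, so invoking it here would be circular.

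There is a second, quantitative problem with the covering strategy: $\cC_{Inn}$ removes only the strip-like ends at the actual punctures of $\cC$, not the long necks created by gluing near $\overline{\cR}\setminus\cR$. The combinatorial types (stable trees) are indeed finite, but the neck lengths $l_e$ are unbounded, so the number of unit-size charts needed to cover $\cC_{Inn}$ is \emph{not} uniformly bounded over $r\in\overline{\cR}$ (the paper acknowledges this in its Notation 1). Even if each chart contributed a uniform constant, summing an unbounded number of them would not yield a universal $C$. The global Stokes argument avoids both difficulties simultaneously, because the interior cuts telescope and only the fixed $d+1$ outer bands ever appear. If you want to keep a localization flavor, the correct repackaging is: strip-like bands handled by Lemma \ref{polytope-lm-3}, plus a single application of Stokes to the whole inner region — not a chart-by-chart estimate.
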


\begin{proof} Integrating the identity (i) of Lemma \ref{lg-poly-identity} over ${\cC_{Inn,1+a}}$ where $a\in [0,1]$, we have
\begin{align*}
\int_{{\cC_{Inn,1+a}}} (|\bpat_J \phi|^2+|X_{R,\sigma}(\phi)|^2)\nu_\cC &\le  \int_{{\cC_{Inn,1+a}}}|Y^{0,1}|^2 \nu_\cC+\int_{\pat {\cC_{Inn,1+a}}}\im(W(\phi)\sqrt{2}\sigma)\\
\end{align*}

Due to the choice of $Y$ and using the equation of $\phi$, we obtain
\begin{align*}
&\int_{{\cC_{Inn,1+a}}} |X_{R,\sigma}(\phi)|^2 \nu_\cC \le \sum^d_{j=0} \int_{l^a_j} \re(W(\phi))\im
(\sqrt{2}\sigma)\\
&+\sum^d_{j=0} \int_{l^a_j} \im (W(\phi))\re
(\sqrt{2}\sigma)+\sum^d_{j=0} \int_{C_j} \im (W(\phi))\re
(\sqrt{2}\sigma)\\
&\le \sum^d_{j=0} \int_{l^a_j} |\re(W(\phi))|\im
(\sqrt{2}\sigma)|+C,
\end{align*}
where $l^a_j$ are the boundary components and $C$ is an universal constant. Here we used the fact that $\im(\sigma)|_{C_j}=0$.

Now integrating the above inequality for $a\in [0,1]$, we obtain

\begin{equation*}\label{int-control-polytope-pre-1}
\int_{{\cC_{Inn,1}}} |X_{R,\sigma}(\phi)|^2 \nu_\cC\le \int^1_0 da \int_{{\cC_{Inn,1+a}}} |X_{R,\sigma}|^2 \nu\leq C+C\sum_j \int_{B_j}|\re W(\phi)|\nu_{\cC}.
\end{equation*}
By Lemma \ref{polytope-lm-3}, we get the conclusion.
\end{proof}

\begin{crl}\label{rem-1-energybound} Let $\cC^*$ be either $\cC_{Inn}$ or any length $1$ band $B$ and let $p>1$. There exists an universal constant $C$ such that for any $\cC$ in the DMS space $\overline{\cR}$ and any $\phi\in \W^{d+1}(\cC, L, l;K)$, the following estimates hold:
\begin{align}\label{p-control}
\int_{\cC^*} d^2(\phi, q_0)+|d\phi|^2 \le C,\; \int_{\cC^*} |\pat_M W(\phi)|^p \nu_{\cC}\le C,
\end{align}
\end{crl}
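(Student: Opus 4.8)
The plan is to derive Corollary~\ref{rem-1-energybound} from the two energy bounds already established (Lemma~\ref{polytope-lm-1} and Lemma~\ref{polytope-lm-3}) together with the tame condition and the pointwise identity (ii) of Lemma~\ref{lg-poly-identity}, all on a fixed surface but with constants uniform over $\overline{\cR}$. The two model domains $\cC^*=\cC_{Inn}$ and $\cC^*=B$ are treated in the same way, since Lemma~\ref{polytope-lm-1} controls $\int_{\cC_{Inn}}|X_{R,\sigma}|^2\nu_\cC$ and Lemma~\ref{polytope-lm-3} controls $\int_{B_a}|X_{R,\sigma}|^2\nu_\cC$; in the strip-like end coordinates $\sigma$ is comparable to $dz$ up to fixed constants, so from here on I write the estimates for a generic such $\cC^*$ with the quantity $A:=\int_{\cC^*}|X_{R,\sigma}|^2\nu_\cC\le C$ in hand.

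First I would translate the bound on $|X_{R,\sigma}(\phi)|^2$ into a bound on $\int_{\cC^*}|\pat_M W(\phi)|^2\nu_\cC$. By the definition of $X_{R,\sigma}$ and $X_W$ (Section~\ref{sec-3}), $|X_{R,\sigma}(\phi)|^2\nu_\cC=\tfrac12|X_{\re W}(\phi)|^2|\sigma|^2\nu_\cC$ and $|X_{\re W}|$ is comparable to $|\nabla_g\re W|=|\pat_M W|_g$, with comparison constants coming from the bounded geometry of $M$ (the metric coefficients $h^{\ib j}$ are uniformly bounded above and below); in the strip-like ends $|\sigma|$ is bounded below, and on $\cC_{Inn}$ the positive function $h_\cC$ with $\nu_\cC=ih_\cC\sigma\wedge\bar\sigma$ is bounded below by a constant uniform in $r\in\overline{\cR}$ because the relevant part of the universal curve is compact. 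Hence $\int_{\cC^*}|\pat_M W(\phi)|^2\nu_\cC\le C$. To upgrade from $L^2$ to $L^p$ for arbitrary $p>1$, I would \emph{not} interpolate — instead I would observe that the $C^1$-bound already available from the earlier sections is not what is invoked here; rather, since the statement only asks for an \emph{integral} bound, I use the elliptic a priori estimate: $\phi$ solves $\bpat_J\phi=\widetilde X^{0,1}(\phi)$ with a right-hand side pointwise controlled by $|\pat_M W(\phi)|$ (plus the $\delta_0$-small perturbation $Y$), so local $W^{1,p}$ elliptic estimates on discs/half-discs bootstrap the $L^2$ energy bound to an $L^p$ bound on $\pat_M W(\phi)$ on the slightly shrunk domain; the shrinking is absorbed by the band structure ($\cC^*$ is taken inside a slightly larger domain of the same type on which we already have $L^2$ control), and uniformity in $r$ follows because the overlap/covering geometry of $\overline{\cR}$ is compact.

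Second I would handle the two remaining quantities $\int_{\cC^*}d^2(\phi,q_0)\,\nu_\cC$ and $\int_{\cC^*}|d\phi|^2\,\nu_\cC$. For the first, apply (ii) of Definition~\ref{sec:df-tame-1}: $d(\phi,q_0)\le C_1|\pat_M W(\phi)|+C_2$, so $d^2(\phi,q_0)\le 2C_1^2|\pat_M W(\phi)|^2+2C_2^2$, and integrating over $\cC^*$ (which has finite, $r$-uniform area) together with the $L^2$ bound on $\pat_M W(\phi)$ just obtained gives the claim. For the $\int_{\cC^*}|d\phi|^2$ bound, integrate identity (ii) of Lemma~\ref{lg-poly-identity} over $\cC^*$: the left side is $\tfrac12\int_{\cC^*}(|d\phi|^2+|X_{\re W}(\phi)|^2|\sigma|^2)\nu_\cC$, the term $|\bpat_J\phi+X_{R,\sigma}(\phi)|^2\nu_\cC=|Y^{0,1}(\phi)|^2\nu_\cC\le\delta_0^2|X_{R,\sigma}(\phi)|^2\nu_\cC$ is controlled by $A$, the term $\int_{\cC^*}\phi^*\om$ is controlled either by exactness ($\om=d\lambda$, Stokes, boundary terms on Lagrangians where $\lambda=d\gamma_i$, chords in a fixed compact set) exactly as in Theorem~\ref{polytope-lm-2}, or directly by the energy identity, and the exact term $d(\im(W(\phi)\sqrt2\sigma))$ integrates to a boundary integral of $\re W(\phi)$ over $\pat\cC^*$ (using $\im\sigma|_{\pat\cC}=0$) which is bounded by Lemma~\ref{polytope-lm-3} and the tame control $|W(u)|\le C_1'|\pat_M W(u)|^2+C_2'$ from \eqref{sec2:iden-tame-1}. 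Collecting terms yields $\int_{\cC^*}|d\phi|^2\nu_\cC\le C$ uniformly.

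The main obstacle I anticipate is the uniformity of all constants over the (noncompact-in-the-gluing-direction but corner-compact) DMS moduli $\overline{\cR}^{d+1}$: one must check that the comparison constants for $\sigma$, $h_\cC$, and the strip-like-end coordinates, as well as the constants in the local elliptic estimates used for the $L^2\to L^p$ step, do not degenerate as $r$ approaches a boundary stratum. This is where the thick–thin decomposition from Section~\ref{sec-5} is essential: on the thin parts the geometry is literally the standard flat strip (so all constants are the flat-model ones), and on the thick parts one has a compact family of Riemann surfaces, so a finite subcover argument gives a single constant. The boundary-term bookkeeping in the $|d\phi|^2$ step — making sure every boundary contribution is either killed by $\im\sigma|_{\pat\cC}=0$, absorbed into the $X_{R,\sigma}$ energy, or bounded via the tame inequalities and the fixed compact location of the chords — is routine but must be done carefully to keep the constant $r$-independent.
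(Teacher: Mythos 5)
Your handling of the first two quantities is sound and matches the paper: the $L^2$ bound on $|\pat_M W(\phi)|$ comes from Lemmata \ref{polytope-lm-3} and \ref{polytope-lm-1}, the bound on $\int d^2(\phi,q_0)$ follows from tame condition (ii), and the $|d\phi|^2$ bound follows from the energy identity (Theorem \ref{polytope-lm-2}) together with identity (ii) of Lemma \ref{lg-poly-identity}. You are also right that one cannot invoke the $C^1$ bound here (it is proved later and depends on this corollary) and that plain interpolation does not reach $p>2$.

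However, your replacement for the $L^2\to L^p$ upgrade of $\int_{\cC^*}|\pat_M W(\phi)|^p\nu_\cC$ — ``local $W^{1,p}$ elliptic estimates bootstrap the $L^2$ energy bound to an $L^p$ bound on $\pat_M W(\phi)$'' — does not work, and this is the heart of the corollary. Elliptic estimates run in the wrong direction: they bound $\|\phi\|_{W^{1,p}}$ \emph{in terms of} $\|\widetilde X^{0,1}(\phi)\|_{L^p}$, i.e.\ in terms of the very $L^p$ norm of $|\pat_M W(\phi)|$ you are trying to produce, so the bootstrap is circular at its first step. Moreover $|\pat_M W(\phi)|$ is a zeroth-order quantity measuring how far $\phi$ wanders in the noncompact target; no amount of regularity of $\phi$ controls it without controlling the image of $\phi$, and in two dimensions $W^{1,2}$ does not embed in $L^\infty$. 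The paper's mechanism, which your proposal is missing, is: (a) the scalar function $z\mapsto d(\phi(z),q_0)$ lies in $W^{1,2}(\cC^*)$ with uniformly bounded norm, since $|\nabla(d(\phi,q_0))|\le|d\phi|$ and both $\int d^2(\phi,q_0)$ and $\int|d\phi|^2$ are already bounded; (b) the borderline Sobolev (John--Nirenberg/Trudinger--Moser) inequality then gives $\int_{\cC^*}e^{2\alpha d(\phi,q_0)}\le C$ for every $\alpha>0$; and (c) tame condition (iii) of Definition \ref{sec:df-tame-1}, $|\pat_M W|\le C_1e^{\delta d(\cdot,q_0)}+C_2$ — which your proof never uses and which is used in the paper essentially only here — converts exponential integrability of $d(\phi,q_0)$ into $\int_{\cC^*}|\pat_M W(\phi)|^p\le C$ for every $p$. (The elliptic bootstrap you describe is instead the tool of the subsequent Lemma \ref{poly-boundary-c_0}, applied to $W\circ\phi$, and its hypothesis is precisely the $L^{2p}$ bound that this corollary supplies.)
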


\begin{proof} By Lemma \ref{polytope-lm-3} and Lemma \ref{polytope-lm-1}, we know that
$$
\int_{\cC^*} |d\phi|^2+|X_{R,\sigma}|^2 \nu_{\cC}\le C
$$
Since $|\pat_M W(\phi)||\sigma|=|X_{R,\sigma}|$, by the tame condition of $W$ we have
$$
\int_{\cC^*} d^2(\phi, q_0)\nu_{\cC}\le C.
$$
Thus the $W^{1,2}$-norm of the function $d(\phi,q_0)$ is bounded. By John-Nirenberg inequality, for any $\alpha>0$ we have
$$
\int_{\cC^*} e^{2\alpha d(\phi,q_0)}\le C\exp\left(\alpha ||d(\phi,q_0)||^2_{W^{1,2}(\cC^*)} \right) \le C,
$$
By the tame condition of $W$, this implies that for any $p>1$,
$$
\int_{\cC^*} |\pat_M W(\phi)|^p \nu_{\cC}\le C.
$$
\end{proof}

\subsection{$C^0$ estimate}\

We first need some general elliptic estimates:
\begin{thm}\label{sect-5-regu-main-1} Let $U\subset \C$ be a bounded domain containing a connected segment $\pat_1 U$ of the boundary $\pat U$ and let $K\subset U$ be a compact set satisfying $\pat K\cap \pat U\subset\subset \pat_1 U$. Fix positive integers $k, n$, and a real number $p>2$. Let $\J_n\subset \R^{2n\times 2n}$ denote the set of complex structure on the vector space $\R^{2n}$. Then for every constant $c_0>0$ there exists a constant $c=c(c_0, K, U,n,k,p)>0$ such that the following holds. If $J\in W^{k,p}(U, \J_n)$ satisfies
$$
||J||_{W^{k,p}(U)}\le c_0,
$$
then every function $\phi\in W^{k+1,p}(U,\R^{2n})$ with the constraint $\phi(\pat_1 U)\subset \R^{n}\subset \C^n\cong \R^{2n}$ satisfies the inequality
\begin{equation}\label{sesct-5-regu-equa-1}
||\phi||_{W^{k+1,p}(K)}\le c\left(||\pat_s \phi+J\pat_t \phi||_{W^{k,p}(U)}+||\phi||_{W^{k,p}(U)}+||\phi||_{W^{1,\infty}(U)}\right).
\end{equation}
\end{thm}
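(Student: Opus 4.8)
The plan is to reduce the statement to the standard interior and boundary elliptic estimates for the Cauchy–Riemann operator with constant complex structure $i$, and then absorb the error coming from the variable, merely $W^{k,p}$-regular structure $J$ by a bootstrap/iteration argument. First I would fix a reference: at each point of $K$ choose a complex-linear change of coordinates so that $J$ agrees with the standard $i$ at that point (shrinking to a finite cover of $K$ by such charts, respecting the totally real boundary condition $\R^n\subset\C^n$ along $\partial_1 U$ — here one uses that $J\in W^{k,p}$ with $k\ge 1$ and $p>2$, so $J$ is continuous by Sobolev embedding and the charts can be chosen continuously). Writing $\partial_s\phi+J\partial_t\phi = \partial_s\phi + i\partial_t\phi + (J-i)\partial_t\phi$, the equation becomes $\bar\partial\phi = f - (J-i)\partial_t\phi$, where $f:=\partial_s\phi+J\partial_t\phi$ is the quantity we control.

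The next step is to invoke the classical estimate for $\bar\partial$ with the linear Lagrangian boundary condition: for a pair $K'\subset\subset U'$ with $\partial K'\cap\partial U'\subset\subset\partial_1 U$, one has
$$
\|\psi\|_{W^{k+1,p}(K')}\le c\bigl(\|\bar\partial\psi\|_{W^{k,p}(U')}+\|\psi\|_{W^{k,p}(U')}\bigr)
$$
for $\psi\in W^{k+1,p}(U',\R^{2n})$ with $\psi(\partial_1 U)\subset\R^n$ (interior Calderón–Zygmund plus the reflection trick across the totally real subspace, exactly as in Appendix B of \cite{Ck} or \cite{Si3}; this is where the hypothesis $\partial K\cap\partial U\subset\subset\partial_1 U$ is used so the estimate localizes away from the uncontrolled part of $\partial U$). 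Applying this with $\psi=\phi$ and $\bar\partial\phi = f-(J-i)\partial_t\phi$ gives
$$
\|\phi\|_{W^{k+1,p}(K)}\le c\Bigl(\|f\|_{W^{k,p}(U)}+\|(J-i)\partial_t\phi\|_{W^{k,p}(U')}+\|\phi\|_{W^{k,p}(U)}\Bigr),
$$
so everything comes down to estimating the product term $\|(J-i)\partial_t\phi\|_{W^{k,p}}$.

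The main obstacle is precisely this product estimate, since $J$ is only $W^{k,p}$ and $\partial_t\phi$ only $W^{k,p}$ a priori, and $W^{k,p}$ is not a Banach algebra at the bottom of the scale when $kp$ is not large. I would handle it by induction on $k$. For $k=1$: expand $\nabla\bigl((J-i)\partial_t\phi\bigr)=(\nabla J)\partial_t\phi+(J-i)\nabla\partial_t\phi$; the second term is controlled in $L^p$ by $\|J-i\|_{L^\infty}\|\phi\|_{W^{2,p}}$, and the first by $\|\nabla J\|_{L^p}\|\partial_t\phi\|_{L^\infty}\le \|J\|_{W^{1,p}}\|\phi\|_{W^{1,\infty}}\le c_0\|\phi\|_{W^{1,\infty}}$ — this is exactly where the otherwise mysterious $\|\phi\|_{W^{1,\infty}(U)}$ term on the right-hand side of \eqref{sesct-5-regu-equa-1} enters, and it is harmless because it is assumed finite. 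For the $L^\infty$-size of $J-i$ one localizes to charts small enough that $\|J-i\|_{L^\infty}$ on each is smaller than $1/(2c)$, so that the term $c\|J-i\|_{L^\infty}\|\phi\|_{W^{2,p}}$ can be absorbed into the left-hand side; this dictates the chart radii, hence the dependence of $c$ on $c_0$ (via continuity/modulus bounds for $J$) together with $K,U,n,k,p$. For the inductive step from $k$ to $k+1$, one differentiates the product $k$ times via the Leibniz rule and estimates each term using the multiplicative Sobolev inequalities $W^{j,p}\cdot W^{k-j,p}\hookrightarrow W^{k-j,p}$ valid for $p>2$ (so that $W^{1,p}\hookrightarrow L^\infty$ in dimension $2$ and the interpolation/Gagliardo–Nirenberg estimates close up), again localizing so the top-order term with coefficient $J-i$ is absorbed, and using the already-established $W^{k,p}(K')$ bound on $\phi$ from the previous step on a slightly larger compact set to control all lower-order contributions. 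Chaining the finitely many charts covering $K$ and summing gives the claimed global inequality on $K$.
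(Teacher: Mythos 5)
Your proposal is correct and follows essentially the same route as the paper, which simply defers to \cite[Lemma 3.3]{CGMS} for the interior case and to Floer's boundary estimate (Lemma \ref{sect-5-regu-bdry-1}) plus interpolation for the boundary case: you have written out the standard perturbation argument $\bar\partial\phi=f-(J-i)\partial_t\phi$, the Leibniz expansion identifying $(\nabla^k J)\partial_t\phi$ as the source of the $\|\phi\|_{W^{1,\infty}}$ term, and the smallness/absorption step that those references carry out. The only detail left implicit is that absorbing $c\|J-i\|_{L^\infty}\|\phi\|_{W^{k+1,p}}$ requires applying the estimate to a cutoff $\beta\phi$ so that both sides live on the same domain, which is the standard device and not a gap.
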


\begin{proof} If $U$ is an open set and $\pat_1 U=\emptyset$, Theorem \ref{sect-5-regu-main-1} was proved in \cite[Lemma 3.3]{CGMS}. If $\pat_1 U\neq \emptyset$, a similar estimate for $k=1$ was proved in \cite[Theorem 8.3.5]{Oh1}. The conclusion is obtained by modifying slightly the proof in \cite[Lemma 3.3]{CGMS} by using the elliptic boundary estimate as shown in Lemma \ref{sect-5-regu-bdry-1} below, together with interpolation inequality.
\end{proof}

\begin{lm}\label{sect-5-regu-bdry-1}\cite[Lemma 2.2]{f1}
 (We denote by $D$ either the open unit disc or the half disc with boundary $\pat D = (- 1,1)$.)  Denote the standard Cauchy-Riemann operator $\bpat_0 =
(\pat /\pat x) + i(\pat /\pat y)$ for maps from $\C$ to $\C^n$.
For every $l>k,l-\frac{2}{q}>k-\frac{2}{p}>0$, there exists a constant $C$ such that if a compactly supported $\xi\in W^{k,p}(D,\C^n)$ with $\xi|_{\pat D}\subset \R^n$ and $\bpat_0 \xi \in W^{l-1,q}(D,\C^n)$, then $\xi\in  W^{l,q}(D,\C^n)$, and
$$
||\xi||_{W^{l,q}(D)}\le C||\bpat_0 \xi||_{W^{l-1,q}(D)}.
$$

\end{lm}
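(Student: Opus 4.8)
The plan is to localize to the flat model and exploit that $\bpat_0$ together with the linear totally real boundary condition $\xi|_{\pat D}\subset\R^n$ is an elliptic boundary value problem of Lopatinski--Shapiro type; concretely, splitting into real and imaginary parts decouples the estimate into a homogeneous Dirichlet problem for the Laplacian and a pure gradient recovery. The compact support of $\xi$ is exactly what permits the estimate to carry no lower-order term on the right: $\bpat_0$ is injective on compactly supported functions.

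When $D$ is the open disc there is no boundary condition (the compact support of $\xi$ makes $\xi|_{\pat D}\subset\R^n$ vacuous). Extending $\xi$ and $f:=\bpat_0\xi$ by zero, they become compactly supported on $\C$ with $\bpat_0\xi=f$ on all of $\C$, whence $\xi=\tfrac1{2\pi z}\ast f$: the difference of the two sides is entire and $O(1/|z|)$ at infinity, so it vanishes by Liouville. Thus each $\nabla^m\xi$ is a Calderon--Zygmund singular integral of $\nabla^{m-1}f$, and the $L^q$-boundedness of such operators for $1<q<\infty$, together with the Poincare (Gagliardo--Nirenberg) inequality on compactly supported functions to control the lower-order norms, gives $\|\xi\|_{W^{l,q}}\le C\|f\|_{W^{l-1,q}}$; this is the input used in the proof of Theorem \ref{sect-5-regu-main-1} from \cite[Lemma 3.3]{CGMS}.

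For the half-disc with flat boundary $\Gamma=(-1,1)$, work componentwise and write $\xi=a+ib$ with $a,b$ real-valued; then $\bpat_0\xi=f=f_1+if_2$ is the system $\pat_x a-\pat_y b=f_1$, $\pat_y a+\pat_x b=f_2$, while the boundary condition reads $b|_\Gamma=0$. Eliminating $a$ shows that $b$ solves the homogeneous Dirichlet problem
\[
\Delta b=\pat_x f_2-\pat_y f_1 \ \text{ in } D, \qquad b|_\Gamma=0,
\]
whose right-hand side lies in $W^{l-2,q}(D)$. Since $\xi$, hence $b$, is supported away from the curved part of $\pat D$, the standard elliptic $L^q$ estimate up to a flat boundary (Agmon--Douglis--Nirenberg) gives $b\in W^{l,q}$ with $\|b\|_{W^{l,q}}\le C(\|\Delta b\|_{W^{l-2,q}}+\|b\|_{L^q})$, the lower-order term being absorbed via compact support, while interior estimates handle the region away from $\Gamma$. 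With $b$ in hand, $\pat_x a=f_1+\pat_y b$ and $\pat_y a=f_2-\pat_x b$ exhibit $\nabla a\in W^{l-1,q}$ with the desired bound, so $\|a\|_{W^{l,q}}\le C\|\nabla a\|_{W^{l-1,q}}\le C\|f\|_{W^{l-1,q}}$, again using compact support; adding the estimates for $a$ and $b$ yields $\|\xi\|_{W^{l,q}(D)}\le C\|\bpat_0\xi\|_{W^{l-1,q}(D)}$.

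It remains to justify the membership $\xi\in W^{l,q}$ itself: the hypothesis $k-2/p>0$ forces $\xi\in C^0$ by the two-dimensional Sobolev embedding, so (being compactly supported) $\xi\in L^q(\C)$, and then the interior identity $\xi=\tfrac1{2\pi z}\ast f$, respectively the Dirichlet/gradient splitting above, upgrades $\xi$ to $W^{l,q}$ --- directly, or after a routine one-derivative-at-a-time bootstrap; the remaining exponent inequalities serve only to secure the relevant Sobolev embeddings. The delicate point is obtaining regularity uniformly up to the flat boundary with no lower-order remainder, and the decoupling into a genuinely homogeneous Dirichlet problem for $\im\xi$ is what makes this clean. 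One could instead extend by Schwarz reflection $\Xi(z)=\overline{\xi(\bar z)}$ across $\Gamma$, legitimate precisely because $\xi|_\Gamma$ is real, and reduce to the interior case --- but the reflected right-hand side $\overline{f(\bar z)}$ need not be smooth across $\Gamma$, which brings one back to exactly the same Dirichlet-plus-gradient analysis.
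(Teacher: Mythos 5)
The paper does not actually prove this lemma: it is quoted verbatim as \cite[Lemma 2.2]{f1} (Floer), so there is no in-text argument to compare against. Your write-up is a correct outline of a standard proof, but it takes a slightly different route from Floer's: he obtains the first-order estimate on the half-plane by the reflection $\tilde\xi(z)=\overline{\xi(\bar z)}$ (which, contrary to your closing remark, works cleanly at that order because $\im\xi|_\Gamma=0$ makes $\tilde\xi$ continuous across $\Gamma$, so $\bpat_0\tilde\xi$ is just the reflected right-hand side in $L^q$ with no distributional defect on $\Gamma$), and then gets higher regularity by differentiating tangentially ($\pat_x^j\xi$ satisfies the same boundary condition) and recovering normal derivatives from $\pat_y\xi=-i\bpat_0\xi+i\pat_x\xi$. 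Your decoupling into a homogeneous Dirichlet problem for $\im\xi$ plus gradient recovery for $\re\xi$ is an equally standard alternative, and the interior case via the Cauchy kernel and Calder\'on--Zygmund theory is exactly the input to Theorem \ref{sect-5-regu-main-1}.

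The one step you should tighten is the claim that the lower-order term in the boundary estimate for $b=\im\xi$ is ``absorbed via compact support.'' The Agmon--Douglis--Nirenberg inequality gives $\|b\|_{W^{l,q}}\le C(\|\Delta b\|_{W^{l-2,q}}+\|b\|_{L^q})$, and Poincar\'e only trades $\|b\|_{L^q}$ for $\|\nabla b\|_{L^q}$, which is not yet controlled; interpolation alone runs in a circle. You need a genuine base-case bound $\|\nabla b\|_{L^q}\le C\|f\|_{L^q}$ with $C$ independent of the support, which you can get either from the $W^{1,q}_0$ well-posedness of the Dirichlet problem on the half-plane with the divergence-form datum $\pat_xf_2-\pat_yf_1\in W^{-1,q}$, or most economically from the reflection trick at first order as above; only then does the ADN estimate plus Poincar\'e close the argument. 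Relatedly, to keep the constant uniform over all compactly supported $\xi$ (as the lemma asserts), one should run the boundary estimate on the full half-plane after extending by zero (legitimate since $\xi$ vanishes near the curved arc), rather than covering a varying support by half-balls. With these repairs the proof is complete.
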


For $d=1$, let $\mathscr{C}$ denote the strip $\mathbb{R}\times [0,1]$ and $\mathscr{W}^{2}$ below should be treated as $\widetilde{\mathscr{W}}$ by convention, see Theorem \ref{thm:split} for notation. For $d\geq 2$, let $\mathscr{C}=\mathscr{C}^{d+1}$ be the universal curve over (uncompactified) Deligne-Mumford-Stasheff $\mathscr{R}=\mathscr{R}^{d+1}$, equipped with a choice of strip-like ends consistent with the choices for the universal curves for the smaller $d'$. The fibers of points in $\mathscr{R}$ near $\overline{\mathscr{R}}\backslash\mathscr{R}$ are discs modeled by gluing the strip-like ends from (several) fibers of $\mathscr{R}^{d'+1}$'s of smaller $d'$. For each such fiber, we remove the images of strip-like ends associated to the punctures and the glued regions in the interior of the disc coming from gluing the strip-like ends (or strips) due to interior stretching/degeneration (or breaking of strips); and the remaining subspace of $\mathscr{C}$ (consisting of ``thick'' part of each disc fiber) is a trivial fibration along the gluing parameter direction in $\mathscr{R}$ near $\overline{\mathscr{R}}\backslash \mathscr{R}$, and thus can be covered by a finite cover consisting of product open sets in $\mathscr{C}$ each of which is an open disc or half disc (the image of a standard disc or half disc under a holomorphic parametrization) in the fiber direction and an open set in $\mathscr{R}$ direction. On each fiber $\mathscr{C}_b, b\in \mathscr{R}$, via projecting onto the fiber direction, this induces a finite cover for the thick part of $\mathscr{C}_b$. The complement of thick part in $\mathscr{C}_b$, which consists of strips (via data of strip-like end or from inductive gluing from strip-like ends of $\mathscr{C}^{d'+1}_b$'s of smaller $d'$), can be covered by standard discs (half discs) with radii $1/2$ via the parametrizations induced by inclusions. Here, the total number in the covering for the glued strip regions becomes unbounded over the points in $\mathscr{R}$ near $\overline{\mathscr{R}}\backslash\mathscr{R}$).

\textbf{Notation 1} ($\{D'_{b,a}\}_{a\in A_b}$, $\{D_{b,a}\}_{a\in A_b}$): To summarize, we have an open cover $\{D^{+\epsilon}_{b, a}\}_{a\in A_b}$ for each fiber $\mathscr{C}_b, b\in \mathscr{R}$ that over the fiberwise complement of (glued) strip-like regions in $\mathscr{C}$ are induced from a finite collection of product open nerighborhoods in $\mathscr{C}$. Here $A_b$ is the index set for the open cover for $\mathscr{C}_b$. Denote the finite index subset $A^{\text{thick}}_b$ for open neighborhoods projected from the product neighborhoods. For each $a\in A^{\text{thick}}_b$, denote the holomorphic parametrization $\theta_{b, a}: B_{1/2+\epsilon}(0)\to D^{+\epsilon}_{b,a}$ from the standard disc (respectively mapping from standard half disc $ B_{1/2+\epsilon}(0)\cap \mathbb{H}$). For $a\in A_b\backslash A^{\text{thick}}_b$, we have $\theta_{b,a}: B_{1/2}(0)\to D_{b,a}$ induced from the inclusion (respectively, the half disc version). We can and will assume that this open cover is chosen fine enough such that $D'_{b,a}:=\theta_{b, a}(B_{1/8}(0)), a\in A_b$ (where for half discs this is understood as $\theta_{b,a}(B_{1/8}(0)\cap\mathbb{H})$) still covers $\mathscr{C}_b$. Denote (for $a\in A^{\text{thick}}_b$) $D_{b,a}:=\theta_{b,a}(B_{1/2}(0))$ (respectively $\theta_{b,a}(B_{1/2}(0)\cap\mathbb{H})$ for the half disc version). The main reason for $1/8$ radius shrinking will be clear in the proof for Lemma \ref{control-m-cases} below.

\textbf{Notation 2} ($\{g_b\}_{b\in\mathscr{R}}$, $C_c$, $d_b$): We can fix a consistent choice of fiberwise Riemannian metrics $\{g_b\}_{b\in\mathscr{R}=\mathscr{R}^{d+1}}$ on $\mathscr{C}=\mathscr{C}^{d+1}_b$ where $g_b$ restricted to strip-like ends and inductively glued regions is the natural standard metric; and for the remaining thick parts due to fiberwise compactness and trivial fibering along gluing parameter direction near $\overline{\mathscr{C}}\backslash\mathscr{C}$, any two such choices are comparable (by a fixed constant depending on the two choices $\{g_b\}$ and $d$). 
By the same reasoning (and precompactness of $B_{1/2}(0)$ in $B_{1/2+\epsilon}(0)$) (respectively the half disc version), we have a constant $C_c\geq 1$ such that $\frac{1}{C_c^2}g_0\leq (\theta_{b,a})^\ast g_b\leq C_c^2 g_0$ on $B_{1/2}(0)$ (respectively the same form for the half disc version), where $g_0$ is the standard metric. Here, $C_c$ depends on $d$ and the choice $\{g_b\}_{b\in \mathscr{R}^{d+1}}$ and the above cover whose dependence can be removed by going to a refinement. Denote the induced distance by $d_b$ on $\mathscr{C}_b$ and the (induced) standard distance $d_{\text{std}}$ on $B_{1/2}(0)$ (respectively for the half disc version). Here $d$ is the number from $\mathscr{C}^{d+1}$ and $d_b$ denotes the induced distance on the fiber $\mathscr{C}_b$ which will always bear a subscript $b$.


We will proceed to establish various estimates and bounds on Witten equation defined over this universal curve (one for each $d$), which will eventually lead to $C^0$ and $C^1$ bounds for the moduli spaces; and if transverse, $0$-dimensional moduli spaces will give rise to (structure constants of) the Fukaya category of LG model.

We first consider the control on the boundary:
\begin{lm}\label{poly-boundary-c_0}
For $(\phi:\mathscr{C}_b\to M)\in \W_\cC(K)=\W^{d+1}(\cC, L, l;K)$,
$$
\sup_{z\in \pat\mathscr{C}_b}d(\phi (z),q_0)<C,
$$
where the constant $C$  is independent on $\phi$.
\end{lm}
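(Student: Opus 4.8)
The plan is to bound the distance to $q_0$ along the boundary $\partial\mathscr{C}_b$ by combining the energy identity of Theorem \ref{polytope-lm-2} with a one-dimensional estimate on each boundary arc, exactly parallel in spirit to the proof of Lemma \ref{polytope-lm-3}. Recall that each boundary component $C_j\subset\partial\mathscr{C}_b$ is mapped into a Lefschetz thimble $L_{C_j}$, and at the punctures the map limits to fixed Hamiltonian chords $l_\zeta$, which lie in a fixed compact set $K_0\subset M$ independent of $\phi$. So the boundary restriction $\phi|_{\partial\mathscr{C}_b}$ is a path-like object that starts and ends (in the strip-like-end picture) at points of $K_0$, and I want to show it cannot wander far from $q_0$.

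First I would work on a single strip-like end, say $\varepsilon_\zeta(\Sigma^\pm)$, where the equation becomes $\phi_s+J\phi_t-\nabla_g\widetilde{H}_\zeta=0$ with $\phi(s,0)\in L_0$, $\phi(s,1)\in L_1$. Fix a value $s$ and restrict to the segment $t\mapsto\phi(s,t)$. Using the equation to replace $\phi_t$, one has $\phi_t=-J\phi_s+\nabla_g\widetilde{H}_\zeta+J\,X_{\widetilde{H}_\zeta}$ (up to the usual identifications), so $|\phi_t|\le C(|\phi_s|+|\partial_M W(\phi)|)$ pointwise. Then for the endpoints $\phi(s,0)$ and a generic interior point,
\[
d(\phi(s,1),\phi(s,0))\le\int_0^1|\phi_t(s,t)|\,dt\le C\Big(\int_0^1|\phi_s|\,dt+\int_0^1|\partial_M W(\phi)|\,dt\Big).
\]
Integrating in $s$ over a unit band $B_a$ and using Corollary \ref{rem-1-energybound} (the $L^p$ bound on $|\partial_M W(\phi)|$ over a length-$1$ band) together with the total energy bound from Theorem \ref{polytope-lm-2}, the right-hand side is bounded by a universal constant $C$ after averaging over $a$; hence there is at least one slice $s_*$ in each band on which $d(\phi(s_*,1),\phi(s_*,0))\le C$. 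Combined with the fact that one endpoint of the full boundary arc is anchored in $K_0$, a telescoping/chaining argument along consecutive bands controls $d(\phi(s,i),q_0)$ for all $s$ on the strip-like ends, using the tame condition (ii) to convert the $|\partial_M W|$ integrals into distance control (since $d(\phi,q_0)\le C_1|\partial_M W|+C_2$).

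For the remaining compact ``thick'' part $\overline{\mathscr{C}_{Inn}}$, the boundary is a compact $1$-manifold with endpoints matched to the slices $\phi(s_*,i)$ already controlled above, and on it Corollary \ref{rem-1-energybound} gives $\int_{\mathscr{C}_{Inn}}|d\phi|^2\le C$ and $\int|\partial_M W(\phi)|^p\le C$; restricting the trace of $\phi$ to $\partial\mathscr{C}_{Inn}$ and using the Sobolev trace inequality in dimension $2$ (so $W^{1,2}$ on the surface controls $L^q$ on the boundary curve for all finite $q$), one bounds $\int_{\partial\mathscr{C}_{Inn}}|\partial_\tau\phi|\,d\tau$ by a universal constant, hence the oscillation of $\phi$ along $\partial\mathscr{C}_{Inn}$; anchoring at the already-controlled slice points finishes the bound $\sup_{z\in\partial\mathscr{C}_b}d(\phi(z),q_0)<C$. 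The uniformity over $\overline{\mathscr{R}}$ follows because all the constants entering (the number of thick charts, the comparison constant $C_c$ for metrics, the energy bound) are universal by the setup in Notations 1 and 2, and the strip-like-end estimate is manifestly independent of which fiber we sit in.

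The main obstacle I anticipate is the uniformity over the compactification $\overline{\mathscr{R}}$: as one approaches the boundary strata, strips get glued in and the number of bands on a given strip-like region grows without bound, so a naive chaining of the per-band estimate $d(\phi(s_*,1),\phi(s_*,0))\le C$ over arbitrarily many bands could accumulate. The resolution is that the \emph{sum} of the band energies is bounded by the total energy (Theorem \ref{polytope-lm-2}), so the per-band increments are not just individually bounded but summable; more precisely one should run a Cauchy--Schwarz argument giving $\sum_a(\text{increment on band }a)\le C(\sum_a E(B_a))^{1/2}\cdot(\#\text{bands})^{1/2}$ is \emph{not} good enough, and instead one needs the sharper observation that on the long glued strips the map is exponentially close to a single chord $l_\zeta\in K_0$ (by Theorem \ref{sec4-thm-ener-conv-expo-1}(iii) and its generalization at the end of Section \ref{sec-6}), so in fact $\phi$ stays within a fixed neighborhood of $K_0$ on all but finitely many bands, and only the finitely many ``transition'' bands and the thick part contribute to the oscillation. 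Making this exponential-decay input uniform as the gluing length varies is the delicate point, but it is exactly the kind of estimate already packaged in the energy-vs-decay equivalence theorems cited above.
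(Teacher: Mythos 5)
Your approach differs substantially from the paper's, and contains two gaps that you half-recognize but do not close. The paper sidesteps the chaining problem entirely by working with the scalar composite $W\circ\phi$ rather than with $\phi$ itself. After a cutoff $\rho$ on each disc or half-disc chart, $W_\rho=\rho\, W(\phi)$ satisfies a $\bar\partial$-equation whose right-hand side is controlled pointwise by $|W|+|\partial_M W|^2$ (using the tame condition and the ratio bound $|Y|<\delta_0|X_{R,\sigma}|$); and since each Lefschetz thimble is mapped by $W$ onto a horizontal ray in $\mathbb{C}$, the boundary values of $W_\rho$ lie on a straight line, putting the problem squarely within Floer's boundary $\bar\partial$-estimate (Lemma \ref{sect-5-regu-bdry-1}). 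That estimate, the $L^{2p}$ control of $|\partial_M W(\phi)|$ over unit bands and thick charts from Corollary \ref{rem-1-energybound}, and Sobolev embedding $W^{1,p}\hookrightarrow C^0$ for $p>2$ then give a uniform $C^0$ bound on $W\circ\phi$ over all of $\mathscr{C}_b$; finally Lemma \ref{w: lef-control-1} translates boundedness of $\re W(\phi)$ on the boundary (which lies on thimbles) into a bound on $d(\phi,q_0)$. The argument is local in each chart; no propagation along the boundary is attempted.

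Your proposal tries to chain oscillation estimates along the boundary and needs two things that do not hold. (a) For the thick part you want to bound $\int_{\partial\mathscr{C}_{Inn}}|\partial_\tau\phi|\,d\tau$ from $\|\phi\|_{W^{1,2}}\le C$ plus a trace inequality; but the trace of a $W^{1,2}$ function on a two-dimensional domain lies only in $W^{1/2,2}$ of the boundary curve, which controls neither $L^\infty$ of $\phi$ nor, a fortiori, $L^1$ of $\partial_\tau\phi$ --- for the latter one would need roughly $W^{2,p}$ regularity in the interior. (b) For the accumulation of band increments near $\overline{\mathscr{R}}\setminus\mathscr{R}$, you invoke the exponential-decay theorems; but the proof of Theorem \ref{sec5:exponential} explicitly begins with ``by the $C^0$ estimate, $\phi$ is a bounded map,'' i.e.\ it uses Theorem \ref{c_0_polytope}, of which the present lemma is an ingredient, so this route is circular. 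Both issues disappear once you switch to the $W\circ\phi$ picture, since the $L^p$ elliptic estimate upgrades to $C^0$ locally in every chart and uniformity over $\overline{\mathscr{R}}$ then follows automatically from the uniform covers and metric comparisons fixed in Notations 1 and 2.
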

\begin{proof} Consider any $\phi$ with domain $\mathscr{C}_b$.

On each (half) disc, the equation for $\phi$ can be written as
$$
\pat_s \phi+J\pat_t \phi=\frac{1}{2}(X_{\re(W)}(\phi)\im(\sigma) + X_{\im(W)}(\phi) \re(\sigma))+Z(\phi),
$$
where $Z=Y^{0,1}$.
In complex coordinates $u^j=\phi^j+i\phi^{j+n}$, writing $\mu^j=Z^j+i Z^{j+n}$, we have
$$
\pat_\zb u^j= \sum_{\ib}h^{\ib j}\pat_\ib \overline{W}\sigma+\mu^j.
$$
On a (half) disc $D_{b,a}$, it is centered at $0$ with radius $1/2$ in the local coordinate. Choose a smooth function $\rho:\R^2\mapsto [0,1]$ with support in the open ball $B_{1/2}(0)$ and satisfying $0\leq \rho \leq 1$ and $\rho\equiv 1$ on $B_{1/8}(0)$. Let $W_{\rho}(s,t)=\rho W(\phi(s,t))$. It is a map from a (half) disc with radius $1/2$ to $\C\cong \R^2$. In complex coordinates, we have
\begin{align}
\bpat W_{\rho}=&\bpat \rho W+\rho \bpat (W\circ \phi)\notag\\
=&\bpat \rho W+\rho  \sum_j\pat_j W (\sum_{\ib}h^{\ib j}\pat_\ib \overline{W}\sigma+\mu^j).\notag
\end{align}
 As $|Y^{0,1}(\phi)|< \delta_0 |X_{R, \sigma}(\phi)|$, we have
 $$
 \|\bpat W_{\rho}\|_{L^{p}(D_{b,a})}\leq C(\|W\|_{L^{p}(D_{b,a})}+\|\pat_M W\|^2_{L^{2p}(D_{b,a})})
 $$
Since the image of $W_\rho$ lies on the line when restricted to the boundary, by Lemma \ref{sect-5-regu-bdry-1}, for any $p>2$, we have
\begin{align*}
\|W_{\rho}\|_{W^{1,p}(D'_{b,a}) }\leq&C(\|W\|_{L^{p}((D_{b,a})}+\|\pat_M W\|^2_{L^{2p}(D_{b,a})})\\
\leq&C(1+\|\pat_M W\|^2_{L^{2p}(D_{b,a})}).
\end{align*}
In the last inequality, we have used the tame condition of $W$.

The last expression is bounded by (\ref{p-control}). By Sobolev embedding theorem,  we have $|W\circ \phi| \leq C$ in $D'_{b,a}=\theta_{b,a}(B_{1/8}(0))$. By the fact that $\{D'_{b,a}\}_{a\in A_b}$ covers $\mathscr{C}_b$, we know that $|W\circ \phi|$ is bounded on $\mathscr{C}_b$, and in particular is bounded on $\pat \mathscr{C}_b$. However, $\phi$ maps pieces of $\pat \mathscr{C}_b$ to Lefschetz thimbles, so by Lemma \ref{w: lef-control-1}, we get the bound of $\phi$ on the boundary pieces.
\end{proof}

\textbf{Notation 3} ($m$): For $(\phi: \mathscr{C}_b\to M)\in\mathscr{W}^{d+1}(\mathscr{C}, L, l; K)$. With the above choice of $g_b$\footnote{This gives a norm for $T_z\mathscr{C}_b$ for all $z$; and using this and the norm from $h$ for tangent vectors on $M$, we can define the following quantity $|d\phi(z)|$.}, denote $$m:=m_\phi:=\max_{z\in\mathscr{C}_b}|d\phi(z)|,$$ which depends on $\phi$ but with the dependence suppressed for notational clarity. For a given $\phi$, it is clear that $m:=m_\phi<\infty$.

The next lemma gives a control of $|\pat_M W(\phi(z))|$ (thus $d(\phi,q_0)$ via the tame condition) in terms of $m$.

\begin{lm}\label{control-first-direction}
 For any $\nu>0$, there is a constant $C$ depends only on $q$ and the geometry of $M$ such that for any $(\phi: \mathscr{C}_b\to M)\in \mathscr{W}^{d+1}(\mathscr{C}, L, l; K)$ we have
 $$
\sup_{z\in \mathscr{C}_b}|\pat_M W(\phi(z))|\leq C(m+1)^{1/\nu}.
 $$
\end{lm}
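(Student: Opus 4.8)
The plan is to prove this by a local elliptic bootstrap on each of the (half) discs $D_{b,a}$ from the covering $\{D_{b,a}\}_{a\in A_b}$ of $\mathscr{C}_b$ described in Notation 1, in the same spirit as the proof of Lemma~\ref{poly-boundary-c_0}, but now localizing the estimate on $\pat_M W(\phi)$ rather than on $W(\phi)$ itself. The key point is that on each such disc the equation reads, in complex coordinates $u^j = \phi^j + i\phi^{j+n}$,
$$
\bpat_\zb u^j = \sum_{\ib} h^{\ib j}\pat_\ib \overline{W}\,\sigma + \mu^j,
$$
with $|\mu^j|<\delta_0|X_{R,\sigma}(\phi)|\le C|\pat_M W(\phi)|$. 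Differentiating the equation once (or, more precisely, applying a cutoff $\rho$ supported in $B_{1/2}(0)$ with $\rho\equiv 1$ on $B_{1/8}(0)$ to $\pat_M W(\phi)$, viewed as a section), one gets a $\bpat_0$-equation for $\rho\cdot \pat_M W(\phi)$ whose right-hand side involves $\nabla_M^2 W(\phi)$ contracted with $d\phi$, plus lower-order terms controlled by the tame condition~(\ref{sec2:iden-tame-1}), which bounds $|\nabla_M^2 W|$ by $C'_1|\pat_M W|^2 + C'_2$. Note that the boundary condition is the right one: since $\phi$ maps boundary arcs into Lefschetz thimbles $L_C$, and $W|_{L_C}$ has constant imaginary part with $\pat_M W$ restricted to the thimble being a real multiple of the outgoing gradient direction, the quantity $\pat_M W(\phi)$ (appropriately trivialized in the chosen coordinates — as already done in the construction of $\sigma$ with $\im(\sigma)|_{\pat\cC}=0$) satisfies a totally real boundary condition, so Lemma~\ref{sect-5-regu-bdry-1} applies.

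Concretely, first I would fix $p>2$ and use Corollary~\ref{rem-1-energybound} (the bound $\int_{\cC^*}|\pat_M W(\phi)|^p\,\nu_\cC\le C$ together with $\int_{\cC^*}|d\phi|^2\le C$) to get $L^p$ control on the right-hand side of the $\bpat_0$-equation for $\rho\cdot\pat_M W(\phi)$, interpolated against the pointwise bound $|d\phi|\le m$. This yields, via Lemma~\ref{sect-5-regu-bdry-1} and Sobolev embedding $W^{1,p}\hookrightarrow C^0$ for $p>2$, an estimate of the form
$$
\sup_{D'_{b,a}}|\pat_M W(\phi)| \le C\bigl(m^{\alpha} + 1\bigr)
$$
for some fixed exponent $\alpha$ (coming from the number of times $|d\phi|$ appears, together with the quadratic growth of $\nabla^2_M W$). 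Since the discs $D'_{b,a}$ cover $\mathscr{C}_b$ this gives a global bound $\sup_{\mathscr{C}_b}|\pat_M W(\phi)|\le C(m^\alpha+1)$. The exponent $\alpha$ here is a fixed number, but to get the sharp statement with $1/\nu$ for \emph{arbitrary} $\nu>0$, one iterates the bootstrap: each pass through the elliptic estimate improves the power of $m$ in the bound — roughly, if $|\pat_M W(\phi)|\le C(m^\beta+1)$ going in, then the right-hand side of the equation for $\rho\cdot\pat_M W(\phi)$ is controlled in $L^p$ by $m\cdot m^{2\beta}$ interpolated with the $L^p$-bound on $|\pat_M W(\phi)|$ from Corollary~\ref{rem-1-energybound}, which is \emph{independent} of $m$, so the H\"older interpolation between the pointwise bound (power of $m$) and the fixed $L^p$ bound (power $0$) can be tuned to push the resulting power of $m$ below any prescribed $1/\nu$ after enough iterations. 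This is exactly the role of the freedom in choosing the integrability exponent and the interpolation parameter.

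The main obstacle I anticipate is making the iteration/interpolation bookkeeping precise and uniform over the whole universal curve $\mathscr{C}^{d+1}$, including the fibers near the boundary strata $\overline{\mathscr{R}}\setminus\mathscr{R}$ where the number of discs in the covering of $\mathscr{C}_b$ is unbounded. The subtlety is that the constant $C$ must not blow up as one passes to degenerate configurations; this is handled by the observation (Notation~1 and Notation~2) that the extra discs needed for the long glued strip regions are all \emph{standard} strips with the standard metric, so the local elliptic constant $c$ in Lemma~\ref{sect-5-regu-bdry-1} and the comparison constant $C_c$ between $g_b$ and $g_0$ are uniform across all of them — only finitely many ``thick'' discs carry genuinely varying geometry, and those vary in a compact family. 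One also has to check that the interpolation inequality $\|f\|_{L^{p_1}}\le \|f\|_{L^{p_0}}^{1-\lambda}\|f\|_{L^\infty}^{\lambda}$ combines correctly with Corollary~\ref{rem-1-energybound} so that, for each target $\nu$, a finite number of bootstrap steps suffices, with the number of steps and the constant $C$ depending only on $\nu$, $W$, $q$, and the bounded-geometry bounds of $M$ — not on $\phi$ or on the point $b\in\overline{\mathscr{R}}$. Once this uniformity is in place, the stated estimate $\sup_{z\in\mathscr{C}_b}|\pat_M W(\phi(z))|\le C(m+1)^{1/\nu}$ follows, and via the tame condition~(ii) of Definition~\ref{sec:df-tame-1} it simultaneously controls $d(\phi,q_0)$, preparing the ground for the interlocking $C^0$--$C^1$ argument with Lemma~\ref{L1.2}.
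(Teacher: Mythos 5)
There is a genuine gap, and it sits exactly where the exponent $1/\nu$ is supposed to come from. The outer skeleton of your argument (localize on the covering discs $D'_{b,a}$, feed in Corollary \ref{rem-1-energybound}, conclude by Sobolev embedding for $p>2$) does match the paper, but the core mechanism does not work. In your scheme the right-hand side of the $\bar\partial$-equation for $\rho\cdot\pat_M W(\phi)$ contains $\nabla_M^2W(\phi)$ contracted with a derivative of $\phi$, and the only information on that derivative is $\|d\phi\|_{L^2}\le C$ and $|d\phi|\le m$ pointwise. H\"older plus interpolation gives $\|\nabla^2_MW(\phi)\cdot d\phi\|_{L^p}\le C\,m^{1-2/b}$ for some $b>p>2$, so one pass yields $\sup|\pat_MW(\phi)|\le C(m^{1-2/b}+1)$ --- and a second pass yields exactly the same exponent, because the improved bound on $|\pat_MW(\phi)|$ only improves the factor $\|\nabla^2_MW(\phi)\|_{L^a}$, which was already bounded independently of $m$ by Corollary \ref{rem-1-energybound} and the tame condition; the factor $|d\phi|$, the sole source of the power of $m$, is untouched by iteration. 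So iterating with fixed $p$ stalls at a fixed positive exponent and never reaches an arbitrary $1/\nu$. A second gap is the boundary condition: $\pat_MW(\phi(z))$ for $z\in\pat\cC$ lies in the totally real subspace of covectors real on $T_{\phi(z)}L_C$, but that subspace \emph{varies} with $\phi(z)$ along the thimble, whereas Lemma \ref{sect-5-regu-bdry-1} requires a fixed $\R^n\subset\C^n$; straightening it requires a $\phi$-dependent frame whose derivative reintroduces $d\phi$ (hence $m$) into the equation, so the boundary terms cannot be waved through as you do.

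The paper's proof avoids all of this: it never differentiates the equation and never invokes an elliptic estimate. It applies the chain rule to the \emph{scalar} function $F^\nu=|\pat_MW(\phi)|^\nu$, bounds $|dF^\nu|\le C\nu\, m\,(1+|\pat_MW(\phi)|^{\nu+1})$ via the tame estimate (\ref{sec2:iden-tame-1}), gets $\|F^\nu\|_{W^{1,p}(D'_{b,a})}\le C(m+1)$ from Corollary \ref{rem-1-energybound} (valid for every integrability exponent), and finishes with $W^{1,p}\hookrightarrow C^0$. The $1/\nu$ is produced by raising to the $\nu$-th power \emph{before} embedding, not by iterating, and since $F^\nu$ is a scalar no boundary condition is needed. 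As written, your proposal does not establish the stated bound; the fix is to import this power trick (or to take $p=p(\nu)\to 2^+$ and genuinely handle the frame terms).
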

\begin{proof}
Consider function $F^\nu(\phi)=|\pat_M W(\phi)|^\nu$,
$$|dF^\nu(\phi)|\leq \nu|\nabla_M^2 W(\phi)|\;|\pat_M W(\phi)|^{\nu-1}m\leq C\nu m(1+|\pat_M W(\phi)|^{\nu+1}),$$
where the last one is by tame condition. By (\ref{p-control})
$$
\int_{\Omega}\big (|dF^\nu(\phi)|^p+|F^\nu(\phi)|^p \big)\nu_{\mathscr{C}} \leq C(m^p+1),
$$
where the constant $C$ depends only on $p$ and $\nu$, but not $\Omega$. Here the exponent $\nu$ should not be confused with the area form $\nu_{\mathscr{C}}$. Fix a $p>2$, Sobolev embedding gives
$$
\max_{z\in \Omega}|F^\nu(\phi(z))|\leq C(m+1).
$$
 Thus the lemma is proved, as $\Omega$ runs over all $\{D'_{b,a}\}_{a\in A_b}$.
\end{proof}
The next step is to control $m$ in terms of $|\pat_M W(\phi(z))|$. Before doing so, we need some observation. By Lemma \ref{poly-boundary-c_0}, there exists a closed geodesic ball $M_R\subset M$ of radius $R$ centered at $q_0$ such that $\phi(\pat \Sigma)$ lies  in the ball of the same center with a smaller radius $R'$. By Weinstein's Lagrangian neighborhood theorem, there exists an open neighborhood $U_i$ of $V_i:=L_i\cap M_R$ in $M$ such that $$\{p\in M\;|\;d(p,L_i\cap M_{R'})<r_1 \}\subset U_i$$ for some positive $r_1$ as well as a symplectomorphism
 \begin{equation}\label{sec5: c-0-3}
\psi_{L_i}:U_i \mapsto T^*V_i, \psi_{L_i}|_{V_i }\equiv id.
 \end{equation}

Since the Lefschetz thimble $L_i$ is diffeomorphic to $\R^n$, we can regard $V_i$ as an open subset of $\R^n$ and regard $\psi_{L_i}$ as the map $(U_i,V_i)\mapsto (\C^n,\R^n)$ onto the image such that $\om=\psi_{L_i}^*\om_0$, where $\omega_0$ is the standard symplectic form in $\C^n$.


\textbf{Notation 4} ($r_0, r_1, r_2$): Let $r_2=\min\{r_0,r_1,1\}$, where $r_0$ is the injectivity radius of $M$ and $r_1$ is the number that works for all $L_i$ above.

\textbf{Notation 5} ($N$): For $(\phi:\mathscr{C}_b\to M)\in\mathscr{W}^{d+1}(\mathscr{C}, L, l; K)$, let $$N:=N_\phi:=\sup_{z\in\mathscr{C}_b}|\pat_M W(\phi(z))|,$$ which depends on $\phi$ but with the dependence suppressed for notational clarity. Note that for a fixed $\phi$, $N:=N_\phi$ is finite.

We have:

\begin{lm}\label{control-m-cases}
Let $\{D'_{b,a}\subset D_{b,a}\}_{a\in A_b, b\in\mathscr{R}}$ be a choice of cover in Notation 1 above. At least one of the following conclusions holds for some 
$\gamma>0$ and a constant $C>0$ which is independent of $\phi\in\mathscr{W}^{d+1}(\mathscr{C}, L, l; K)$ (mapping from a fiber $\mathscr{C}_b$ of the universal curve $\mathscr{C}$):

\begin{enumerate}

\item[(i)] $m\le C$;

\item[(ii)] $r_2^2\le C \frac{1}{ln(m)}$ and $m>C$ \footnote{Note that item (ii) can be put into the form of item (i), but we prefer to leave it in the current form to be more revealing to the logic structure of the argument in the proof.};

\item[(iii)] $m$ is achieved at $z_0$, there exists a disc chart $\theta_{b,a}: B_{1/2}(0)\to D_{b,a}$ mapping $x_0$ to $z_0$ and containing $B_{m^{-1}}(x_0)$ where the radius is with respect to the local coordinate, and still denoting $\phi\circ\theta_{b,a}$ by $\phi$, we have
\begin{equation}
\int_{B_{m^{-1}}(x_0)}|d\phi|^2 \le C (\frac{1}{ln(m)}+\frac{N^2+1}{m^{\gamma}});
\end{equation}

\item[(iv)] $m$ is achieved at $z_0$, there exists a half disc chart $\theta_{b,a}: B_{1/2}(0)\cap\mathbb{H}\to D_{b,a}$ mapping $x_0$ to $z_0$ and containing half disc $B_{m^{-1}}(z_1)\cap\mathbb{H}$ around $x_0$, and still denoting $\phi\circ\theta_{b,a}$ by $\phi$, we have
$$
\int_{B_{m^{-1}}(x_1)\cap\mathbb{H}} |d \phi|^2\le C(\frac{1}{ln(m)}+\frac{N^2+1}{m^{\gamma}}).
$$
\end{enumerate}
\end{lm}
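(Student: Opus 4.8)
The plan is to run a dichotomy argument on the quantity $m = m_\phi$. First I would locate a point $z_0 \in \mathscr{C}_b$ where $|d\phi|$ attains its maximum $m$; since $\mathscr{C}_b$ is covered by the charts $\{D'_{b,a}\}$, $z_0$ lies in one such $D'_{b,a}$, and I work in the associated holomorphic (half-)disc parametrization $\theta_{b,a}$ (using Notation 2 to convert between $g_b$ and the standard metric $g_0$ at the price of the fixed constant $C_c$). If $m \le C$ for a suitable constant we are in case (i) and there is nothing to do, so assume $m$ large. In the local coordinate around $x_0 = \theta_{b,a}^{-1}(z_0)$, the ball $B_{m^{-1}}(x_0)$ (or $B_{m^{-1}}(x_0)\cap \mathbb{H}$ on the boundary) is contained in the chart once $m$ is large enough, because the charts have a fixed interior radius $1/8$ and $m^{-1} \to 0$; this is exactly why the $1/8$-shrinking in Notation 1 was arranged.

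The core of the argument is an energy-on-small-balls estimate. On the fixed-size (half-)disc I would split $\int_{D_{b,a}} |d\phi|^2$ into a small-ball piece $\int_{B_{m^{-1}}(x_0)}|d\phi|^2$ and the complement. On the complement, I want to show the integrand is controlled by something that integrates to $O(1/\ln m)$: the point is that by Corollary \ref{rem-1-energybound} (i.e. (\ref{p-control})) the total energy $\int_{D_{b,a}}|d\phi|^2$ is bounded by a universal constant $C$, so if we could further show this energy does not concentrate too badly we would be done — but in general it can concentrate, and that is precisely what case (ii) (a genuine bubbling-type alternative, which forces $r_2^2 \lesssim 1/\ln m$) and the explicit $\frac{1}{\ln m}$ and $\frac{N^2+1}{m^\gamma}$ terms in cases (iii)-(iv) are absorbing. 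Concretely I expect to use a monotonicity/mean-value type inequality for the Witten equation: on a ball of radius $\rho \in [m^{-1}, r_2]$ centered at $z_0$, the energy $E(\rho) = \int_{B_\rho(z_0)}|d\phi|^2$ satisfies a differential inequality whose inhomogeneous part comes from the $X_{R,\sigma}$ and $Y^{0,1}$ terms; since $|X_{R,\sigma}(\phi)| = |\pat_M W(\phi)|\,|\sigma|$ and $\sup |\pat_M W(\phi)| = N$, the inhomogeneity is controlled by $N$, producing the $N^2/m^\gamma$ term after integrating from $m^{-1}$ to $r_2$. Integrating the logarithmic derivative $\frac{d}{d\rho}\ln E(\rho)$-type quantity over the dyadic range $[m^{-1}, r_2]$ — whose logarithmic length is $\ln(r_2 m) \sim \ln m$ — and using the total energy bound $C$ to cap the telescoping sum, yields either the desired $\frac{1}{\ln m}$ bound on the small-ball energy (cases (iii)-(iv)), or, if the chain of inequalities breaks because $r_2$ was not large enough relative to $\ln m$, the alternative (ii).

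I would organize the proof as: (a) reduce to the case $m$ large and fix the maximizing chart; (b) set up the local Witten equation in this chart, invoking the rescaling of $g_b$ and the bound $|Y^{0,1}| < \delta_0|X_{R,\sigma}|$; (c) establish the monotonicity-type inequality for $E(\rho)$ on annuli $B_\rho \setminus B_{\rho/2}$, tracking the error terms in $N$; (d) run the dyadic summation argument over $[m^{-1}, r_2]$, using $\int_{D_{b,a}}|d\phi|^2 \le C$ from (\ref{p-control}) to bound the sum, and branch into case (ii) versus cases (iii)/(iv) according to whether the geometric constraint $r_2^2 \lesssim 1/\ln m$ is triggered; (e) separate the interior case (iii) from the boundary case (iv) by using the reflection/boundary elliptic estimate from Lemma \ref{sect-5-regu-bdry-1} and the Weinstein neighborhood $\psi_{L_i}$ (Notation 4, \ref{sec5: c-0-3}) so that the small half-ball near $\pat\mathscr{C}_b$ can be doubled to an interior estimate. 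The main obstacle I anticipate is step (c)-(d): making the monotonicity inequality for the \emph{perturbed} Witten equation quantitative enough that the accumulated error over the logarithmically-long range of scales is genuinely $O(1/\ln m) + O(N^2 m^{-\gamma})$ rather than just $O(1)$ — this requires carefully exploiting that the inhomogeneous term has the specific form $|\pat_M W(\phi)|\,|\sigma|$ with $|\sigma|$ bounded and $\sup|\pat_M W(\phi)| = N$ a single global constant, and choosing $\gamma$ in terms of the exponent $p>2$ from the Sobolev embedding so that the $W^{1,p}$-bound on $\pat_M W(\phi)$ from (\ref{p-control}) converts into the claimed power of $m$.
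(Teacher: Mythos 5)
Your first half matches the paper's strategy: locate the maximum point $z_0$, split into interior/boundary cases by the distance to $\partial\mathscr{C}_b$, and extract a "good slice" by observing that $f'(r)\ge l(r)^2/(\pi r)$ together with the uniform energy bound of Corollary \ref{rem-1-energybound} over a range of scales of logarithmic length $\sim\ln m$ forces some arc $\partial B_r\cap\mathbb{H}$ to have length $l_{\min}\lesssim 1/\sqrt{\ln m}$; the dichotomy with case (ii) is then simply whether $C/\ln m$ beats $r_2^2$ (so the small arc fits inside a Weinstein/geodesic neighborhood), not a bubbling alternative as you describe it.

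The genuine gap is in your steps (c)--(d): you never explain how a boundary circle of small \emph{length} controls the \emph{energy} it encloses, and the "monotonicity formula for the perturbed Witten equation" you propose to supply this is not what the paper does, nor is it clear it can be made to work — small boundary length by itself says nothing about interior energy. The paper's mechanism is different and has two specific ingredients you omit entirely. First, the isoperimetric inequalities (Lemma \ref{sec5:c-0-sublemma-0} in the Weinstein chart $\psi_{L_i}$ for the Lagrangian-boundary case, Lemma \ref{sec5:c-0-sublemma-1} for the interior case) bound the \emph{symplectic area} $\int_{B_r}\phi^*\omega$ by $C\,l_{\min}^2\lesssim 1/\ln m$; this step uses the exactness of $\omega$ (and $\lambda|_{L_i}=d\gamma_i$ on the simply connected thimble) in an essential way to replace $\phi$ by a capping map with the same area. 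Second, the energy identity, Lemma \ref{lg-poly-identity}(ii), converts the symplectic-area bound into the energy bound: for a solution of the perturbed equation, $\tfrac12|d\phi|^2\nu_{\mathscr C}\le |Y|^2\nu_{\mathscr C}+\phi^*\omega+d(\im(W(\phi)\sqrt2\sigma))$, and the $|Y|^2$ bulk term and the $\im(W\sigma)$ boundary term are estimated by $(N^2+1)m^{-\gamma}$ via $|Y|<\delta_0|X_{R,\sigma}|$ and the tame condition $|W|\le C(|\partial_M W|^2+1)$ — this is where the $\frac{N^2+1}{m^\gamma}$ term actually comes from, not from integrating an inhomogeneity across dyadic scales. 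Without the isoperimetric inequality and the energy identity, your argument does not close, and your "reflection/doubling" idea for the boundary case is also not how the Lagrangian boundary is handled (the paper rescales the good arc inside the Weinstein chart to build a comparison half-disc with boundary on $\mathbb{R}^n$).
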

To prove Lemma \ref{control-m-cases}, we need two types of isoperimetric inequalities. The first one is the linear isoperimetric inequality which holds for a symplectic vector space.

 \begin{lm}\label{sec5:c-0-sublemma-0}\cite[Lemma 4.4.3]{MS} Let $(V,\omega)$ be a symplectic vector space with a $\omega$-tame complex structure $J$ and a compatible inner product $g_J$. Let $\Lambda$ and $J\Lambda$ be Lagrangian subspaces of $V$. Then
  $$
  |A(\gamma)|\le \frac{1}{4\pi}L(\gamma)^2,
  $$
  for every smooth loop $\gamma:S^1 \to V$
  where
  $$
  A(\gamma):=\frac{1}{2}\int_{[0,2\pi]}\omega(\dot{\gamma}(\theta), \gamma(\theta))d\theta, \;\; L(\gamma):=\int_{[0,2\pi]}|\dot{\gamma}(\theta)|_{g_J}d\theta,
  $$
  and
  $$
  |A(\gamma)|\le \frac{1}{2\pi}L(\gamma)^2
  $$
  for every smooth path $\gamma:[0,\pi]\to V$, such that
  $\gamma(0),\gamma(\pi)\in \Lambda$, where
  $$
  A(\gamma):=\frac{1}{2}\int_{[0,\pi]}\omega(\dot{\gamma}(\theta), \gamma(\theta))d\theta, \;\; L(\gamma):=\int_{[0,\pi]}|\dot{\gamma}(\theta)|_{g_J} d\theta.
  $$
 \end{lm}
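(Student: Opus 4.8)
The plan is to follow the standard proof of this linear isoperimetric inequality (see \cite[Lemma 4.4.3]{MS}): first establish the loop estimate $|A(\gamma)|\le\frac{1}{4\pi}L(\gamma)^2$ by a Fourier-series computation, and then deduce the path estimate from it by a Schwarz-reflection doubling argument.

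For the loop case, I would first reparametrize $\gamma$ to have constant speed --- this changes neither $A(\gamma)$ nor $L(\gamma)$, and it normalizes $\int_0^{2\pi}|\dot\gamma|_{g_J}^2\,d\theta=L(\gamma)^2/(2\pi)$; the reparametrization is essential here, since only for constant speed does Cauchy--Schwarz become an equality. After a linear change of coordinates one may assume $J$ is $\omega$-compatible, so that $g_J(v,w)=\omega(v,Jw)$. Expanding in a Fourier series $\gamma(\theta)=\sum_{k\in\Z}e^{ik\theta}v_k$ with $v_k$ in the complexification $V\otimes\C$ and $v_{-k}=\bar v_k$, a direct computation gives $A(\gamma)=\pi\sum_{k}ik\,\omega(v_k,\bar v_k)$ and $\int_0^{2\pi}|\dot\gamma|_{g_J}^2\,d\theta=2\pi\sum_{k}k^2|v_k|^2$, where $|v_k|^2:=g_J(v_k,\bar v_k)\ge 0$. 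The key elementary input is $|\omega(v,\bar v)|\le|v|^2$ for all $v\in V\otimes\C$: writing $v=u_1+iu_2$ with $u_j\in V$, one has $\omega(v,\bar v)=-2i\,\omega(u_1,u_2)$ and $|v|^2=|u_1|^2+|u_2|^2$, while $|\omega(u_1,u_2)|\le|u_1|\,|u_2|$ by $\omega$-compatibility of $g_J$. Combining this with $|k|\le k^2$ yields $|A(\gamma)|\le\pi\sum_k k^2|v_k|^2=\frac12\int_0^{2\pi}|\dot\gamma|_{g_J}^2\,d\theta=\frac{1}{4\pi}L(\gamma)^2$.

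For the path case I would double $\gamma$ across $\Lambda$. Since $\Lambda$ is Lagrangian, $V=\Lambda\oplus J\Lambda$ and $J\Lambda$ is also Lagrangian; let $R\colon V\to V$ be the linear reflection which is $+\mathrm{id}$ on $\Lambda$ and $-\mathrm{id}$ on $J\Lambda$. One checks that $\Lambda$ and $J\Lambda$ are $g_J$-orthogonal (for $u,w\in\Lambda$, $g_J(u,Jw)=-\omega(u,w)=0$), so $R$ is a $g_J$-isometry, and that $R$ is anti-symplectic, $\omega(Rv,Rw)=-\omega(v,w)$ (because $\omega$ vanishes on $\Lambda$ and on $J\Lambda$). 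Given $\gamma\colon[0,\pi]\to V$ with $\gamma(0),\gamma(\pi)\in\Lambda$, I would set $\tilde\gamma(\theta)=\gamma(\theta)$ for $\theta\in[0,\pi]$ and $\tilde\gamma(\theta)=R\gamma(2\pi-\theta)$ for $\theta\in[\pi,2\pi]$; the two pieces agree at $\theta=0$ and $\theta=\pi$ since $\gamma(0),\gamma(\pi)$ are $R$-fixed, so $\tilde\gamma$ is a (piecewise smooth) loop. The isometry property gives $L(\tilde\gamma)=2L(\gamma)$, while the anti-symplectic property together with the substitution $s=2\pi-\theta$ gives $A(\tilde\gamma)=2A(\gamma)$. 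Applying the loop inequality to $\tilde\gamma$ then gives $2|A(\gamma)|=|A(\tilde\gamma)|\le\frac{1}{4\pi}\bigl(2L(\gamma)\bigr)^2$, i.e.\ $|A(\gamma)|\le\frac{1}{2\pi}L(\gamma)^2$.

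I do not expect a deep obstacle, since this is a classical fact; the step demanding the most care is the doubling identity $A(\tilde\gamma)=2A(\gamma)$, where one must track signs through the anti-symplectic reflection and the orientation-reversing substitution. Two further minor points: (a) the reduction to an $\omega$-compatible $J$ can be avoided by working with the given $J$-invariant inner product $g_J$ directly, at the cost of checking $|\omega(u_1,u_2)|\le|u_1|_{g_J}|u_2|_{g_J}$ by hand; and (b) the doubled loop $\tilde\gamma$ is only Lipschitz at $\theta=0,\pi$, which is harmless because $A$ and $L$ are continuous on $W^{1,1}$ curves and $\tilde\gamma$ admits $C^\infty$-approximations with arbitrarily small change in $A$ and $L$.
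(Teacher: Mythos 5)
The paper gives no proof of this lemma---it is quoted directly from \cite[Lemma 4.4.3]{MS}---so there is nothing internal to compare against; your argument is essentially the standard one from that reference (constant-speed reparametrization plus Fourier expansion for the loop case, then Schwarz reflection across $\Lambda$ for the path case), and it is correct. Your sign bookkeeping in the doubling step checks out: $\dot{\tilde\gamma}(\theta)=-R\dot\gamma(2\pi-\theta)$ contributes one sign, the anti-symplectic property of $R$ another, and the substitution $s=2\pi-\theta$ a third, yielding $A(\tilde\gamma)=2A(\gamma)$ as claimed, and your remarks (a) on the tame-versus-compatible issue and (b) on smoothing the corners of the doubled loop address the only two points where care is needed.
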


The isoperimetric inequality of the second type is the usual one without Lagrangian boundary constraints.

\begin{lm}\label{sec5:c-0-sublemma-1}\cite[Lemma 8.7.8]{Oh1}. Let $(M, J, g)$ be an almost complex manifold with a Riemannian metric $g$. If the metric $g$ is complete, has bounded curvature and its injectivity radius is bounded away from zero (with lower bound $r_0>0$), and if $J$ is uniformly continuous with respect to $g$, then the following holds: for all  $x\in M$, the exponential map $\text{exp}_x : B_{ r_0}(0)\subset T_x M \mapsto B_{ r_0}(x)\subset M $ is a diffeomorphism; every loop $\gamma$ in $M$ contained in some ball $B = B_r(x)$ with $r \leq r_0$ bounds a disc in $B$ of area less than $C\cdot length(\gamma)^2$ for some constant $C$.
 \end{lm}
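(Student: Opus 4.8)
The plan is to reduce the statement to the standard global isoperimetric inequality for complete bounded-geometry manifolds, essentially following \cite[Lemma 8.7.8]{Oh1} as indicated in the citation. First I would fix $x\in M$ and invoke the hypotheses on $(M,J,g)$: completeness gives that $\exp_x$ is defined on all of $T_xM$, the lower bound $r_0>0$ on the injectivity radius together with the standard comparison theory (using the two-sided curvature bound) shows that $\exp_x$ restricted to the ball $B_{r_0}(0)\subset T_xM$ is a diffeomorphism onto $B_{r_0}(x)\subset M$, with uniform bounds (independent of $x$) on the first derivatives of $\exp_x$ and its inverse on, say, $B_{r_0/2}(0)$ — this uniformity is precisely what bounded geometry buys us and is what will make the final constant $C$ independent of $x$.

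Next, given a loop $\gamma$ contained in a ball $B=B_r(x)$ with $r\le r_0$, I would pull it back via $\exp_x^{-1}$ to a loop $\tilde\gamma$ in the Euclidean ball $B_r(0)\subset T_xM\cong\R^{2n}$. In Euclidean space $\tilde\gamma$ bounds the obvious cone (or any standard filling) whose area is controlled by $\text{length}(\tilde\gamma)^2$ up to a universal constant. Pushing this disc forward by $\exp_x$ gives a disc in $B$, and the distortion estimates from the previous paragraph convert both the length bound and the area bound back and forth between $M$ and $T_xM$ with uniform constants; the uniform continuity of $J$ with respect to $g$ is needed only to ensure the almost complex structure (hence the induced metric quantities entering ``area'') is comparably controlled across all the balls $B_r(x)$. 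Combining these estimates yields $\text{area} < C\cdot\text{length}(\gamma)^2$ with $C$ depending only on $r_0$ and the geometry bounds, not on $x$.

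The main obstacle — really the only subtle point — is making the constant $C$ genuinely uniform in $x$: one must check that the curvature bound plus injectivity radius bound indeed yield $x$-independent control on the metric distortion of $\exp_x$ on balls of radius $\le r_0$ (Rauch comparison for the upper and lower derivative bounds, plus a standard argument that the injectivity radius lower bound prevents the exponential chart from degenerating), and that ``area'' computed with the pulled-back metric versus the flat metric differ by an $x$-independent factor. I would also remark that one may replace $r_0$ by $\min\{r_0,1\}$ so that all the balls in question sit inside a region where these comparisons are clean; this is harmless for the application. Once the uniformity is in hand, the filling-disc construction and the quadratic length bound are routine, and the first assertion about $\exp_x$ being a diffeomorphism on $B_{r_0}(0)$ is just the classical injectivity-radius statement.
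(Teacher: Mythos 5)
The paper does not prove this lemma at all: it is quoted verbatim from \cite[Lemma 8.7.8]{Oh1} and used as a black box, so there is no in-paper argument to compare against. Your sketch is the standard proof of that cited result (pull the loop back through $\exp_x$, fill by a cone over a point of the loop inside the convex Euclidean ball, push forward, and use the curvature and injectivity-radius bounds to get $x$-independent bi-Lipschitz control of $\exp_x$), and it is correct in outline; you also correctly identify the only delicate point, namely the uniformity of the distortion constants, and the harmless replacement of $r_0$ by $\min\{r_0,1\}$ is exactly what the paper does anyway when it sets $r_2=\min\{r_0,r_1,1\}$ before applying the lemma.
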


\begin{proof} [\textbf{Proof of Lemma \ref{control-m-cases}}]


Consider any $\phi$ with the domain $\mathscr{C}_b$, a fiber of the universal curve $\mathscr{C}$. This determines $m=m_\phi$ and $N=N_\phi$.

Fix a number $\alpha\in(0,1/8)$.

We can assume $m>1$ and $m^{-1/2}+m^{-1}< m^{-1/2+\alpha}$; or (i) already holds by a constant independent of $\phi$. Indeed, if the first inequality holds and the second inequality does not hold, then $1\geq (m^{1/2})(m^\alpha-1)\geq m^\alpha-1$, thus $m\leq 2^{1/\alpha}$.

Since at the strip-like end associated $\zeta$ with the local coordinate $\phi_\zeta$ for $\phi$, we have $|\partial_s\phi_\zeta(s,t)|\to 0$ and $|\partial_t \phi_\zeta(s,t)|\to |\nabla_g \text{Re}(W)(l_\zeta(t))|=|\partial_M W(l_\zeta(t))|$. Denote $C_{\text{chords}}:=\max_{\zeta, t}|\partial_M W(l_\zeta(t))|$, which is also a constant independent of $\phi$.

Thus we define the constant\footnote{The constant $C$ in Lemma \ref{control-m-cases} is the maximum of $C_{\text{(i)}}$ and the last bounds $C$ in inqualities (\ref{integrationbound}) for cases (1) to (4) below, which are independent of $\phi$. Note that if items (i) and (ii) holds for $C_{\text{(i)}}$, then they hold for this bigger $C$.} in item (i) as $$C_{\text{(i)}}:=\max(1, 2^{1/\alpha}, C_{\text{chords}}, 4^4, 4^2, e^{\tilde C r_0^{-2}})=\text{max}(C_{\text{chords}}, 4^4, e^{\tilde C r_0^{-2}})$$ which is independent of $\phi$ (and the fiber $\mathscr{C}_b$), where the need for the these quantities will be apparent below and $\tilde C$ is the bound in inequality \ref{lastbound} below.

The conclusion in Lemma \ref{control-m-cases} has four alternatives, we call them items (i) to (iv); and below we have four cases to consider, we call them cases (1) to (4), to avoid confusion.

\textbf{Notation 6} ($z_0, d_0, x_1 , L_i$): Assume that item (i) does not hold, then in particular $m>C_{\text{chords}}$, thus the maximum in the definition of $m$ is realized on $\mathscr{C}_b$ (not its closure including chords at infinity) and denote the maximum point by $z_0$. In the case that $d_b(z_0,\partial\mathscr{C}_b)\leq\frac{1}{8C_c}$, here the distance between $z_0$ and the boundary of $ \mathscr{C}_b$ is defined using the distance $d_b$ defined via the metric on the fiber $\mathscr{C}_b$ we have chosen, we can define the following quantity $d_0$: We can choose $z_2\in\partial\mathscr{C}_b$ that realizes this distance, and choose a $D'_{b,a_0}=\theta_{b,a_0}(B_{1/8}(0)\cap\mathbb{H})$ that covers $z_2$. Then $D_{b,a_0}=\theta_{b,a}(B_{1/2}(0)\cap\mathbb{H})$ covers $z_0=\theta_{b,a_0}(x_0)$ for some $x_0$ with $d_{\text{std}}(x_0, 0)=|x_0|\leq C_cd_b(z_0, z_2)+d_{\text{std}}(\theta_{b,a_0}^{-1}(z_2), 0)< 2\cdot\frac{1}{8}=\frac{1}{4}$. Denote $x_0=:(s_0, d_0)$ in the local coordinate for $B_{1/2}(0)$ and denote $x_1:=(s_0, 0)$, then the closed half disc centered at $x_1$ of radius $1/4$ still lies in $B_{1/2}(0)\cap\mathbb{H}$ with the restricted holomorphic parametrization $\theta_{b,a_0}|_{{B_{1/4}(x_1)}\cap\mathbb{H}}$. Denote the thimble associated to the boundary component that realizes this distance by $L_i$.

We separate our proof into the following 4 cases:
\begin{enumerate}
\item $d_b(z_0, \partial\mathscr{C}_b)\leq\frac{1}{8C_c}$ and $m^{-1}\le d_0< m^{-1/2}$. Since we assume item (i) does not hold, by the definition of $C_{\text{(i)}}$ above, we have $m^{-1/4}<1/4$. Thus for $\alpha\in (0,1/8)$, we have $m^{-1/2+2\alpha}<1/4$, and $B_r(x_1)\cap \mathbb{H}\subset B_{1/2}(0)\cap\mathbb{H}$ for all $r\in [m^{-1/2+\alpha}, m^{-1/2+2\alpha}]$. Still denote $\phi\circ\theta_{b,a_0}$ by $\phi$ below.


Define
$$
f(r):=\int_{B_r(x_1) \cap \mathbb{H}}|d\phi|^2\nu_{\mathscr{C}} \quad\text{and}\quad l(r):=\int_{\pat B_r(x_1) \cap \mathbb{H}}|\frac{\pat \phi}{\pat \tau}|d \tau.
$$

 A direct calculation gives
\begin{equation}\label{c-0-derivative-polytope}
f'(r)\geq\frac{l(r)^2}{\pi r}.
\end{equation}
Letting $l_{\text{min}}:=\min_{r\in [m^{-1/2+\alpha},m^{-1/2+2\alpha}]} l(r)$ and integrating the above inequality, we have
\begin{equation}\label{delta-energy}
f(m^{-1/2 +2\alpha})-f(m^{-1/2 +\alpha})\geq\frac{\alpha}{\pi}l_{\text{min}}^2 ln(m).
\end{equation}
On the other hand, by Lemma \ref{polytope-lm-1} and Lemma \ref{polytope-lm-2} we know that the integration of $|d\phi|^2\nu_{\mathscr{C}}$ on such a disc with bounded radius is bounded (by a constant only depending on the already assigned boundary conditions and chords), so $f(m^{-1/2 +2\alpha})\leq C$. Combining this with (\ref{delta-energy}), we know
\begin{equation}\label{arc-bound}
l_{\text{min}}^2\le C \frac{1}{ln(m)}
\end{equation}
for a constant $C$ independent of $\phi$.

If $C \frac{1}{ln(m)}\geq r_2^2$, then the conclusion (ii) holds. Hence we assume that
\begin{equation}
C \frac{1}{ln (m)}< r_2^2
\end{equation}
holds. In this case, by (\ref{arc-bound}) we have $l_{\text{min}}< r_2\leq r_1$.

By definition, that the minimum $l_{\text{min}}$ is attained on the arc $\hat{a}_{\text{min}}(\tau)=x_1+m^{-1/2+\beta}e^{i\tau}$ with $\tau\in [0,\pi]$, for some $\beta\in [\alpha,2\alpha]$.
By construction, we have $\phi(\hat{a}_{\text{min}}(0))\in \phi(\partial\mathscr{C}_b)\subset L_i\cap M_{R'}$, and the image of $\phi\circ\hat{a}_{\text{min}}$ lies in the Weinstein neighborhood $U_i$ of $L_i\cap M_R$, the domain of the symplectomorphism $\psi_{L_i}: (U_i, L_i\cap M_R)\to (T^*\R^n\cong\C^n, \R^n)$ onto the image. The two ends of $\phi\circ\hat{a}_{\text{min}}$ lie on $L_i$, thus the curve $\psi_{L_i}\circ\phi\circ\hat{a}_{\text{min}}$ in $\C^n$ has its two ends landing on $\R^n$. We define a new map
$\varphi: B_{m^{-1/2+\beta}}(x_1)\cap \mathbb{H} \mapsto \C^n$ by
\begin{align*}
x_1+rm^{-1/2+\beta}e^{i\tau}\mapsto r(\psi_{L_i}(\phi(\hat{a}_{\text{min}}(\tau)))),
\end{align*}
which is scaling towards $0$ rather than $x_1$ (recall that $\phi$ here is $\phi\circ\theta_{b,a_0}$) and it is a half disc with boundary in $\R^n$.


Since $\om$ is exact on $M$, $\lambda|_{L_i}=dh_0$ is exact for some (and any) primitive $\lambda$ of $\om=d\lambda$ (as the thimble $L_i$ is contractible), and the ends of $\psi_{L_i}\circ \phi\circ\hat{a}_{\text{min}}$ lie on $\R^n$, by applying Lemma \ref{sec5:c-0-sublemma-0}, we have
\begin{align*}
&\int_{B_{m^{-1/2+\beta}}(x_1)\cap\mathbb{H}} \phi^* \om\\
=&\int_{B_{m^{-1/2+\beta}}(x_1)\cap \mathbb{H}} (\psi_{L_i}^{-1}\circ\varphi)^* \om+\int_{\partial (B_{m^{-1/2+\beta}}(x_1)\cap \mathbb{H})} (\phi^*\lambda|_{L_i}-(\psi_{L_i}^{-1}\circ\varphi)^* \lambda|_{L_i})\\
=&\int_{B_{m^{-1/2+\beta}}(x_1)\cap \mathbb{H}} (\psi_{L_i}^{-1}\circ\varphi)^* \om+\int_{B_{m^{-1/2+\beta}}(x_1)\cap \partial\mathbb{H}} (\phi^*\lambda|_{L_i}-(\psi_{L_i}^{-1}\circ\varphi)^* \lambda|_{L_i})\\
=&\int_{B_{m^{-1/2+\beta}}(x_1)\cap \mathbb{H}} \varphi^* \om_0 +\int_{-\hat a_{\text{min}}(0)\cup\hat a_{\text{min}}(\pi)} (\phi^*h_{L_i}-\phi^*h_{L_i}) \\
=&\int^1_0\int^\pi_0 \om_0(r\frac{d (\psi_{L_i}(\phi(\hat{a}_{\text{min}}(\tau))))}{d\tau}, \psi_{L_i}(\phi(\hat{a}_{\text{min}}(\tau))))d\tau dr\\
\le& \int^1_0 rdr \left(\frac{1}{2\pi}\left(\int^\pi_0 |\frac{d (\psi_{L_i}(\phi(\hat{a}_{\text{min}}(\tau))))}{d\tau}|_{g_0} d\tau\right)^2\right)\\
\le & \frac{1}{4\pi}|d\psi_{L_i}|^2_{C^0} \left( \int^\pi_0 |\frac{d (\phi(\hat{a}_{\text{min}}(\tau)))}{d\tau}|_{g} d\tau\right)^2\\
\le & Cl_{\text{min}}^2.
\end{align*}

So by (ii) of Lemma \ref{lg-poly-identity},
\begin{align*}
&\int_{B_{m^{-1/2+\beta}}(x_1)\cap \mathbb{H}}|d\phi|^2 \nu_{\mathscr{C}}\\
\leq& \int_{B_{m^{-1/2+\beta}}(x_1)\cap \mathbb{H}}(\phi^*\omega+|Y|^2 \nu_{\mathscr{C}})+\int_{\pat(B_{m^{-1/2+\beta}}(x_1)\cap \mathbb{H})}\im(W(\phi)\cdot\sqrt{2}\sigma)\\
\leq&C\big(l_{\text{min}}^2+\int_{B_{m^{-1/2+\beta}}(x_1)\cap \mathbb{H}}N^2+\int_{\pat(B_{m^{-1/2+\beta}}(x_1)\cap \mathbb{H})}(N^2+1)\big).
\end{align*}
Here we use the tame condition to get $|W|\leq C(|\pat_M W|^2+1)$. So we have
\begin{align}
&\int_{B_{m^{-1/2+\beta}}(x_1)\cap \mathbb{H}}|d\phi|^2 \nu_{\mathscr{C}} \nonumber\\
\leq &C(l_{\text{min}}^2+(N^2+1)(m^{4\alpha-1}+m^{2\alpha-\frac{1}{2}})) \nonumber\\
\leq &C(l_{\text{min}}^2+(N^2+1)m^{2\alpha-\frac{1}{2}}) \nonumber \\
\leq &C(\frac{1}{ln(m)}+(N^2+1)m^{2\alpha-\frac{1}{2}}) \label{integrationbound},
\end{align}
where $C$ on each line is a new $C$, the second inequality is by $m>1$ and $2\alpha-\frac{1}{2}<0$, and the last inequality is by (\ref{arc-bound}).

By the setting in this case, the distance between $x_0$ and $x_1$ in the local chart is $d_0$ and which is at most $m^{-1/2}$ in case (1), and $x_0$ and $x_1$ have the same horizontal coordinate. Since we assume that item (i) does not hold, by definition of $C_{\text{(i)}}$, we have $m^{-1/2}+m^{-1}<m^{-1/2+\alpha}\leq m^{-1/2+\beta}$. Therefore, a ball of radius $m^{-1}$ centered at $x_0$ is contained in the half disc $B_{m^{-1/2+\beta}}(x_1)\cap\mathbb{H}$. So item (iii) is satisfied as we choose $\gamma=\frac{1}{2}-2\alpha$.


\item $d_b(z_0,\partial\mathscr{C}_b)\leq\frac{1}{8C_c}$ and $d_0<m^{-1}$.\
Similar to the previous case, we choose $x_1$ as above, and then the half disc centered at $x_1$ with radius $m^{-1}$ contains $x_0$ (that corresponds to $z_0$) and is contained in the half disc $B_{m^{-1/2+\beta}}(x_1)$. The integration bound (\ref{integrationbound}) restricted to $B_{m^{-1}}(x_1)\cap \mathbb{H}$ gives rise to item (4).


\item $d_b(z_0,\partial\mathscr{C}_b)\leq\frac{1}{8C_c}$ and $d_0\geq m^{-1/2}$. Since $d_0\leq |x_0|<1/4$ by construction, the full disc $B_{m^{-1/2}}(x_0)$ lies in $B_{1/2}(0)\cap\mathbb{H}$ and is away from the boundary. The remaining argument will proceed as in the next case.

\item $d_b(z_0,\partial\mathscr{C}_b)>\frac{1}{8C_c}$.\
Then $z_0$ lies in a whole disc chart $D'_{b,a_0}=\theta_{b,a}(B_{1/8}(0))$ in the cover, where $z_0=\theta_{b,a_0}(x_0)$. As we assume item (i) does not hold, by the definition of $C_{\text{(i)}}$, we have $m^{-1/2}<\frac{1}{4}$, so $B_{m^{-1/2}}(x_0)$ lies entirely in the local chart $D_{b,a_0}=\theta_{b,a}(B_{1/2}(0))$ and away from the boundary.

Now both cases (3) and (4) proceed as follows: By the same argument as in (\ref{c-0-derivative-polytope}) and (\ref{delta-energy}) but integrating over an annulus centered at $z_0$ with the radius $m^{-1}\leq r\leq m^{-1/2}$, we can find a $\beta\in [0,1/2]$ such that
\begin{align}
l_{\text{min}}^2=(\int_{\pat(B_{m^{-1+\beta}}(x_0))}|\frac{d\phi}{d\tau}|d\tau)^2<\tilde C\frac{1}{ln (m)}.\label{lastbound}
\end{align}

By the choice of $C_{\text{(i)}}$ and as item (i) does not hold, $l_{\text{min}}<(\frac{\tilde C}{ln(m)})^{1/2}<r_0$; so the image of $\partial B_{m^{-1}+\beta}(x_0)$ under $\phi$ lies in a geodesic ball of radius $r_0$ in $M$, and it spans a disc $\varphi':D\to M$ inside this geodesic ball (e.g. using geodesics to connect from a point on the loop to other points).
We have the control of the symplectic area:
$$
\int_{B_{m^{-1+\beta}}(x_0)} \phi^*\om=\int_{D}(\varphi')^\ast \omega\le C\frac{1}{ln (m)},
$$
where the first equality uses exactness\footnote{There is another argument which does not use the exactness of $\omega$ but gives a weaker $\gamma$. Both work for our purpose.} of $\omega$, and the inequality uses the isoperimetric inequality in Lemma \ref{sec5:c-0-sublemma-1}, together with the above estimate of $l_{\text{min}}^2$ in inequality (\ref{lastbound}).

The rest argument is exactly the same as case (1), and item (iii) follows with $\gamma=\frac{1}{2}$.
\end{enumerate}
\end{proof}

\begin{lm}\label{anti-control}
There exists some $k>0$ such that
\begin{equation}
m\le C(N^k+1)
\end{equation}
holds. The constant  $C$ depends only on $l:=\{l_{\xi}\}$ involved in $\W^{d+1}(\cC, L, l;K)$, and bounds in the bounded geometry of $M$.
\end{lm}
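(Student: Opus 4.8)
The plan is to route the four mutually exclusive alternatives of Lemma \ref{control-m-cases} into a single dichotomy. Alternatives (i) and (ii) give $m\le C$ directly: in case (i) this is the assertion itself, and in case (ii) the inequality $r_2^2\le C/\ln m$ forces $\ln m\le Cr_2^{-2}$, hence $m\le e^{Cr_2^{-2}}$, a constant depending only on $r_2$ (the injectivity radius of $M$ and the Weinstein data attached to the thimbles). So from now on one is in case (iii) or (iv); in particular $m$ exceeds any fixed constant we wish, $m$ is attained at an interior point $z_0=\theta_{b,a}(x_0)$ (respectively an almost-boundary point), and on the (half-)ball of radius $m^{-1}$ around $x_0$ inside the chart $\theta_{b,a}$ one has
$$
\int_{B_{m^{-1}}(x_0)}|d\phi|^2\le C\Big(\tfrac1{\ln m}+\tfrac{N^2+1}{m^{\gamma}}\Big),
$$
with $\gamma>0$ the fixed exponent produced in the proof of Lemma \ref{control-m-cases} (one may take $\gamma=\tfrac12-2\alpha>\tfrac14$); inspecting that proof, the same bound actually holds on a ball (resp.\ half-ball) of radius $\gg m^{-1}$, a fact used in the next step.

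Next I would blow up at the point where $m$ is attained. Set $\psi(w):=(\phi\circ\theta_{b,a})(x_0+w/m)$ in the interior case — in the boundary case, centering instead at the adjacent boundary point $x_1$ and translating so that the thimble-boundary becomes $\{t=0\}$ — so that $\psi$ is defined on at least a fixed ball $B_1$ (resp.\ half-ball $B_1^+$) and, thanks to the larger-radius version of the energy bound, the maximum point of $|d\psi|$ lies strictly inside it. Since $\theta_{b,a}^\ast g_b$ is uniformly comparable to the Euclidean metric (constant $C_c$ recorded above) and $m=\max|d\phi|$, we get $|d\psi|\ge C_c^{-1}$ at this maximum point. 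Because the blow-up multiplies the holomorphic $1$-form $\sigma$ by $1/m$ while $|\sigma|$ stays uniformly bounded away from the punctures, $\psi$ solves $\bpat_J\psi=g$ with $|g|\le \tfrac{C}{m}\,|\pat_M W(\psi)|\le CN/m$ on $B_1$ (resp.\ $B_1^+$); and $\|d\psi\|_{L^2(B_1)}^2$ is the scaling-invariant quantity $\int_{B_{m^{-1}}(x_0)}|d\phi|^2$, hence $\le C(1/\ln m+(N^2+1)/m^{\gamma})$. By Lemma \ref{control-first-direction}, $N\le C(m+1)^{1/\nu}$ for every $\nu>0$; fixing $\nu$ with $2/\nu<\gamma$ makes this $L^2$ norm $o(1)$ as $m\to\infty$, so for $m$ beyond an absolute constant it lies below the threshold of the elliptic estimate.

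I would then invoke the interior estimate of Theorem \ref{sect-5-regu-main-1} in case (iii), and its boundary counterpart — Lemma \ref{sect-5-regu-bdry-1}, applied after composing with the Weinstein chart $\psi_{L_i}$ that flattens the thimble so the constraint reads $\psi(\pat)\subset\R^n$ — in case (iv); uniformly over the family $\overline{\mathscr R}$ and over the (uniformly bounded) complex structures and perturbations $K$, this yields
$$
\|d\psi\|_{L^\infty(B_{1/2})}\le C\big(\|d\psi\|_{L^2(B_1)}+\|g\|_{L^p(B_1)}\big)\le C\Big(\big(\tfrac1{\ln m}+\tfrac{N^2+1}{m^\gamma}\big)^{1/2}+\tfrac{N}{m}\Big).
$$
Combining with $|d\psi|\ge C_c^{-1}$ at the maximum point (which lies in the relevant ball) and squaring gives $\tfrac1{C_c^2}\le C\big(\tfrac1{\ln m}+\tfrac{N^2+1}{m^\gamma}+\tfrac{N^2}{m^2}\big)$; for $m$ large the first term is $\le\tfrac1{2C_c^2}$, and since $\gamma<2$ the last term is dominated by the middle one, so $m^{\gamma}\le C(N^2+1)$, i.e.\ $m\le C(N^{2/\gamma}+1)$. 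Taking $k:=2/\gamma$ finishes the argument (the complementary regime $m\le\text{const}$ is trivial), with constants depending only on $\{l_\zeta\}$, the tame constants of $(M,h,W)$, and the bounded-geometry bounds of $M$. The main obstacle I anticipate is twofold: (a) pinning down the interior/boundary mean-value (elliptic) estimate with constants uniform over the family and over the perturbation data — this is exactly where the small-energy threshold and the uniform bound on $|\sigma|$ are indispensable; and (b) the bookkeeping in the half-disc case, where the maximum point is only guaranteed to be near, not on, the boundary, so the blow-up must be centered at the nearby boundary point and carried out at a large enough scale that the maximum point sits strictly inside the domain of the boundary estimate.
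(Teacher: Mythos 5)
Your overall strategy coincides with the paper's: dispatch alternatives (i) and (ii) of Lemma \ref{control-m-cases} immediately, then in cases (iii)/(iv) rescale by $1/m$ around the maximum point, use the small rescaled energy $\int_{B_{m^{-1}}}|d\phi|^2\le C(1/\ln m+(N^2+1)/m^{\gamma})$ together with an elliptic estimate to force $|d\hat\phi|$ at the centre below $C_c^{-1}$ unless $m\le C(N^k+1)$. The differences from the paper are minor in spirit (e.g.\ the paper closes the final inequality by a direct contradiction argument rather than by invoking Lemma \ref{control-first-direction}, which is a harmless alternative), but your key elliptic step has a genuine gap as written.

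The gap: you claim that Theorem \ref{sect-5-regu-main-1} (resp.\ Lemma \ref{sect-5-regu-bdry-1}) applied to the rescaled map $\psi$ yields the mean-value-type inequality $\|d\psi\|_{L^\infty(B_{1/2})}\le C\bigl(\|d\psi\|_{L^2(B_1)}+\|g\|_{L^p(B_1)}\bigr)$. Neither result gives this. Theorem \ref{sect-5-regu-main-1} controls $\|\psi\|_{W^{k+1,p}(K)}$ by $\|\bpat_J\psi\|_{W^{k,p}}+\|\psi\|_{W^{k,p}}+\|\psi\|_{W^{1,\infty}}$; after rescaling, $\|\psi\|_{W^{1,\infty}}$ is only $O(1)$ (it equals $\max|d\hat\phi|=1$ up to the comparability constant $C_c$, plus the $C^0$ bound), so the resulting bound on $\|d\psi\|_{L^\infty(B_{1/2})}$ is $O(1)$ and cannot produce the contradiction with $|d\psi(0)|\ge C_c^{-1}$. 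A mean value inequality with only the $L^2$ energy on the right requires either the small-energy/Bochner argument for the energy density or — as the paper does — differentiating the equation first: set $\xi=\partial_s\hat\phi$, derive the linear Cauchy--Riemann equation (\ref{comp-fina-3}) whose inhomogeneous term is bounded by $C\frac{N^2+N+1}{m}|\xi|$, apply the \emph{linear} estimate to $\xi$, and convert $\|\xi\|_{L^p}$ to $\|\xi\|_{L^2}^{2/p}$ via the a priori bound $|\xi|\le C_c$. This is the missing mechanism, and it is exactly where your ``threshold'' remark needs to be replaced by an actual argument. A secondary issue: in case (iv) you propose flattening the thimble with the Weinstein chart $\psi_{L_i}$; that map is symplectic but not holomorphic, so it destroys the Cauchy--Riemann structure needed for Lemma \ref{sect-5-regu-bdry-1}. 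The paper instead builds an adapted parallel frame along $L_i$ in which $J$ becomes $J_0$ and the boundary condition becomes $\mathrm{span}(e_1,\dots,e_n)\cong\R^n$, at the cost of the frame no longer being orthonormal (handled by a compactness argument over finitely many charts).
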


\begin{proof}
By Lemma \ref{control-m-cases}, at least one of items (i)-(iv) holds. If items (i) or (ii) holds, then this lemma holds automatically.

Now we assume that item (iii) holds. Then we have $\theta_{b,a}(B_{m^{-1}}(x_0))\subset D_{b,a}$ from a local chart $D_{b,a}$ in the chosen cover, such that $m$ is attained at the point $z_0=\theta_{b,a}(x_0)$.

Denote $r_c:=\frac{r_2}{C_c}\leq 1$. We have $|d\theta_{b,a}|\leq C_c$ using $g_b$ on $\mathscr{C}_b$ and the standard metric in $B_{1/2}(0)$. Define the rescaled map $$\hat{\phi}: B_{r_c}(0)\subset \C\to M,\; \; \hat{\phi}(z):=\phi(\theta_{b,a}(x_0+z/m)).$$ $$d(\hat\phi(z),\hat\phi(0))\leq \sup_{z\in B_{r_c}(0)}|(d\hat\phi)(z)|\;|z|\leq m\cdot C_c\cdot \frac{1}{m}\cdot \frac{r_2}{C_c}\leq r_2.$$

Denote $p_0:=\hat\phi(0)=\phi(z_0)$. Since $r_2$ is no greater than the injectivity radius $r_0$ of $M$, the image of $\hat{\phi}$ lies in the normal coordinate ball $B_{r_2}(p_0)$. $\hat{\phi}$ satisfies the following equation:
\begin{equation}\label{comp-fina-1}
\hat{\phi}_s+J(\hat{\phi})\hat{\phi}_t=\frac{G}{m},
\end{equation}
where $G:=\widetilde{X}(\phi\circ\theta_{b,a})(\frac{\pat}{\pat s})$. Denote by $\xi(s,t)=\hat{\phi}_s(s,t)$ and $\eta(s,t)=\hat{\phi}_t(s,t)$.

We can choose an orthonormal basis $\{e_i(p_0),i=1,\cdots,2n\}$ of the tangent space $T_{p_0}M$ such that
$$
J(p_0)(e_1(p_0),\cdots, e_{2n}(p_0))=(e_1(p_0),\cdots, e_{2n}(p_0))J_0,
$$
where $J_0=\begin{pmatrix} 0&-I_n\\I_n &0\end{pmatrix}$ is the standard complex structure in $\R^{2n}$. Using the parallel transport along the geodesic radial direction of $B_{r_2}(p_0)$, we obtain the orthonormal basis $\{e_i(p),i=1,\cdots, 2n\}$ of $T_pM$, for all $p\in B_{r_2}(p_0)$. Since $J$ is parallel with respect to the connection $\nabla$, we have
 $$
J(p)(e_1(p),\cdots, e_{2n}(p))=(e_1(p),\cdots, e_{2n}(p))J_0,\forall p\in B_{r_2}(p_0).
$$
Let $\xi=\sum_i \xi^i e_i, \eta=\sum_i \eta^i e_i$ and $G=\sum_i G^i e_i$, the equation (\ref{comp-fina-1}) becomes the component equation:
\begin{equation}\label{comp-fina-2}
\xi^i+(J_0\cdot \eta)^i=\frac{G^i}{m}, i=1,\cdots,2n,
\end{equation}
here $J_0\cdot \eta:=J_0\cdot(\eta^i)=(\sum_j(J_0)^i_j\eta^j)$. Taking the derivative $\pat_s$ of the equation (\ref{comp-fina-2}), we have
\begin{equation}\label{comp-fina-3}
\pat_s \xi^i +(J_0\cdot \pat_t \xi)^i=\frac{(\nabla_\xi G)^i-\sum_{k,j}\Gamma^i_{kj}(\hat{\phi})G^k\xi^j}{m}, i=1,\cdots, 2n.
\end{equation}

By the definition of $G$ and the choice of $\varpi$, we have
$$
\sqrt{\sum_i \left(\frac{(\nabla_\xi G)^i-\sum_{k,j}\Gamma^i_{kj}(\hat{\phi})G^k\xi^j}{m}\right)^2}\le \frac{|\nabla_\xi G|+C|G||\xi|}{m}\le C\frac{N^2+N+1}{m}|\xi|,
$$
here we have used $|\Gamma^i_{kj}|\leq C$ from the bounded geometry of $(M,h)$.

Now by applying the standard elliptic estimate for a disc domain to the Cauchy-Riemann equation (\ref{comp-fina-3}), for a fixed $p>2$, we have
 \begin{align*}
 \|\xi\|_{W^{1,p}(B_{r_{c}/2}(0))}\leq C(\frac{N^2+N+1}{m}\| \xi\|^{2/p} _{L^2(B_{r_{c}}(0))}+ \|\xi\|^{2/p}_{L^{2}(B_{r_{c}}(0))}).
 \end{align*}
Here we used the fact that $\frac{1}{C_c}|\xi|\leq 1$ (thus $\frac{|\xi|^p}{C_c^p}\leq \frac{|\xi|^2}{C_c^2}$, so $|\xi|^p\leq C_c^{p-2} |\xi|^2$) in the above inequality.
So
 $$
 \|\xi\|_{W^{1,p}(B_{r_{c}/2}(0) )}\le C(\frac{N^2+1}{m}+1)\| \xi\|^{2/p} _{L^2(B_{r_{c}}(0))}.
 $$
 By Sobolev embedding theorem and (iii) of Lemma \ref{control-m-cases}, we obtain
 $$
|\xi(0)|\le C(\frac{N^2+1}{m}+1)(\frac{1}{ln(m)}+\frac{N^2+1}{m^{\gamma}})^{1/p}.
 $$
 Note that by the equation for $\phi$ we know $|\eta|\le|\xi|+CN/m$ and $$|d\hat{\phi}(0)|=1\le |\xi(0)|+|\eta(0)|.$$ We have
 \begin{equation}\label{longestimate}
 1\le C\left(\frac{N}{m}+(\frac{N^2+1}{m}+1)(\frac{1}{ln(m)}+\frac{N^2+1}{m^{\gamma}})^{1/p}\right).
 \end{equation}
 This implies (by arguing by contradiction) that there exists some $k>0$ and $C$ which are independent of $N$ and $m$ (in particular, independent of LG solutions $\phi$ with which $N$ and $m$ are associated) such that
 \begin{equation}
  m \le C(N^{k} +1).
 \end{equation}
So we have proved this lemma in the case (iii).

Now assume that item (iv) in Lemma \ref{control-m-cases} holds. Denote $r_c:=\frac{r_2}{C_c}\leq 1$. Define the rescaled map $$\hat\phi: B_{r_c}(0)\cap\mathbb{H}\to M,\;\; \hat\phi:=\phi(\theta_{b,a}(x_1+z/m)).$$ Denote $p_0=\hat\phi(0)=\phi(\theta_{b,a}(x_1))$, where we recall Notation 6. We define the vector fields $\xi$ and $\eta$ as in the discussion for item (iii) above. 
Then the image $\hat{\phi}(B_{r_c}(0))$ is contained in the normal coordinate ball $B_{r_2}(p_0)\subset M$. We pick an orthonormal frame $\{e_1, \cdots, e_n\}$ for $TL|_{L\cap B_{r_2}(p_0)}$, this extends to a frame $\{e_1, \cdots, e_n, Je_1, \cdots, Je_n\}$ for $TM|_{L\cap B_{r_2}(p_0)}$, then parallel transport it along normal geodesics into a frame for $TM|_{U(L\cap B_{r_2}(p_0))}$ where $U(L\cap B_{r_2}(p_0))$ is a fiber bundle over $L\cap B_{r_2}(p_0)$ where each fiber is the image under the exponential map of normal vectors to $L$ of length less than $r_2$. This frame serves the same purpose as in the previous case for item (iii), except they are not orthonormal frame anymore (only orthonormal along each fiber of $U(L\cap B_{r_2}(p_0))$). By construction, the equation (\ref{comp-fina-1}) has the form (\ref{comp-fina-2}) with $\xi(\pat \mathbb{H}\cap B_{r_c}(0))\subset \text{span}(e_1,\cdots, e_n)\cong\R^n$. Taking the derivative $\pat_s$ to the equation (\ref{comp-fina-2}), we obtain the equation (\ref{comp-fina-3}). Now we can follow almost the same route as in item (iii) to get the conclusion. However, there are two different points in the proof. The first one is that we must use Lemma \ref{sect-5-regu-bdry-1} to treat the boundary estimate. The second one is that the basis we took is not orthonormal basis anymore, hence the constant appeared in the estimate may depend on the choice of the holomorphic coordinate chart. However, by Lemma \ref{poly-boundary-c_0} and the setting of the half disc we choose, the image $\hat{\phi}(B_{r_c}(0)\cap \mathbb{H})$ is always contained in a compact set $K$ which only depends on the geometry of the tame system and the energy of $\phi$. By the geometry of the tame system, we mean the bounded geometry of $(M,h)$ together with the tame condition coming from a tame system $(M,h,W)$ under consideration. Hence there are only finitely many holomorphic charts whose union covers $K$. Hence we can take a uniform constant $C$ in the inequality (\ref{anti-control}). Thus we proved the lemma in case (iv).

Finally, we can choose a common $k$ and $C$ for all cases, namely, define $k'$ to be the maximum of $k$ chosen in all cases and let $C':=2C$. By considering whether $N\leq 1$ or not, we see that $m\leq C'(N^{k'}+1)$ in all cases.
\end{proof}

Combining Lemma \ref{control-first-direction}, Lemma \ref{anti-control} and the tame condition, we have
\begin{thm}\label{c_0_polytope}
We have
$$
d(\phi,q_0)<C,\;\;|d\phi|<C,
$$
For all $\phi$  in $\W^{d+1}(\cC, L, l;K)$, where the  The constant  $C$ depends only on Hamiltonian chords $l:=\{l_{\xi}\}$ involved in $\W^{d+1}(\cC, L, l;K)$, and bounds in the bounded geometry of $M$.
\end{thm}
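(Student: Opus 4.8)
The plan is to derive the theorem formally from Lemma \ref{control-first-direction}, Lemma \ref{anti-control} and part (ii) of the tame condition in Definition \ref{sec:df-tame-1}: all the genuine analysis has already been carried out in those two lemmata (the $C^0$--$C^1$ interlocking, via the isoperimetric inequalities and the case analysis of Lemma \ref{control-m-cases}), and what remains is a short elementary bootstrap together with some bookkeeping of the constants.

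First I would fix, for a given $\phi\in\W^{d+1}(\cC,L,l;K)$ with domain a fiber $\mathscr{C}_b$ of the universal curve $\mathscr{C}=\mathscr{C}^{d+1}$, the two finite quantities $N=N_\phi=\sup_{z\in\mathscr{C}_b}|\pat_M W(\phi(z))|$ and $m=m_\phi=\max_{z\in\mathscr{C}_b}|d\phi(z)|$ from Notation 3 and Notation 5. Lemma \ref{anti-control} furnishes a fixed exponent $k>0$ and a constant $C$, depending only on the chords $l=\{l_\zeta\}$ and the bounded geometry of $M$, with
\[
m\le C(N^k+1).
\]
Lemma \ref{control-first-direction} holds for every $\nu>0$; I would invoke it with the specific choice $\nu=k+1$, which gives a constant $C$ (depending only on $W$ and the bounded geometry of $M$) with $N\le C(m+1)^{1/(k+1)}$.

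Next I would substitute the first bound into the second. Using $(a+b)^{1/(k+1)}\le a^{1/(k+1)}+b^{1/(k+1)}$ for $a,b\ge 0$, one obtains, with $C$ denoting a constant that changes from line to line but keeps the stated dependence,
\[
N\le C\big(C(N^k+1)+1\big)^{1/(k+1)}\le C\big(N^{k/(k+1)}+1\big).
\]
Since the exponent $\theta:=k/(k+1)$ is strictly less than $1$, this self-referential inequality forces $N$ to be bounded: applying Young's inequality $N^\theta\le \tfrac{1}{2C}N+C_\theta$ and absorbing the linear term yields $N\le 2C(C_\theta+1)=:C_0$, a constant with the same dependence (alternatively one argues by contradiction as in the proof of Lemma \ref{anti-control}). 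Feeding $N\le C_0$ back into $m\le C(N^k+1)$ gives $m\le C(C_0^k+1)=:C_1$, i.e. $|d\phi|<C_1$ on all of $\mathscr{C}_b$. Finally, part (ii) of Definition \ref{sec:df-tame-1} reads $d(\phi,q_0)\le C_1'|\pat_M W(\phi)|+C_2'\le C_1' C_0+C_2'$, which is the desired $C^0$ bound; replacing the various constants by their maximum produces a single $C$.

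The only point requiring attention is that every constant above — the $k$ and $C$ of Lemma \ref{anti-control}, the $C$ of Lemma \ref{control-first-direction} for our fixed $\nu=k+1$, and the tame constants $C_1',C_2'$ — is genuinely independent of the particular solution $\phi$ and of the fiber $\mathscr{C}_b\subset\mathscr{C}^{d+1}$; this is exactly what those statements assert, so the resulting $C$ depends only on the Hamiltonian chords $l=\{l_\zeta\}$ appearing in $\W^{d+1}(\cC,L,l;K)$ and on the bounds in the bounded geometry of $M$ (with $W$ fixed throughout). I do not expect a real obstacle here: the hard part is upstream, in establishing Lemma \ref{anti-control}, and the step above is the clean extraction of a bound from a self-improving estimate.
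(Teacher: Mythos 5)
Your proposal is correct and follows exactly the route the paper intends: the paper's "proof" of Theorem \ref{c_0_polytope} is literally the one line "Combining Lemma \ref{control-first-direction}, Lemma \ref{anti-control} and the tame condition," and your bootstrap (choosing $\nu=k+1$ after $k$ is fixed by Lemma \ref{anti-control}, absorbing the sublinear power $N^{k/(k+1)}$, then invoking tame condition (ii)) is the standard way to extract the bound from that pair of interlocking estimates. The constant-dependence bookkeeping is also handled correctly.
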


\subsection{Compactness}\label{cpt}
\subsubsection*{Elliptic regularity}\

\begin{lm} \label{regular}Let $p>2$.  If $\phi$ is a locally $W^{1,p}$-solution of the perturbed Witten equation (\ref{perturb-lg-poly}), with the boundary condition $\phi|_C\in L_C, \forall{\; component }\;C\subset \pat \cC$. Suppose that both $d(\phi,q_0)$ and $|d\phi|$ are bounded, then $\phi$ is in the space $C^\infty(\Sigma)\cap C^0(\overline{\Sigma})$.
\end{lm}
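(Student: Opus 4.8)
The plan is to prove interior and boundary elliptic regularity for the perturbed Witten equation by a standard bootstrap, using the $C^0$ and $C^1$ bounds (Theorem \ref{c_0_polytope}) as the starting point and working locally on the discs/half-discs from Notation 1. First I would reduce to a local statement: cover $\cC$ by interior discs and boundary half-discs, and in a local holomorphic coordinate $z=s+it$ write the equation in the form
$$
\pat_s\phi+J(\phi)\pat_t\phi=\widetilde{X}(\phi)\Big(\frac{\pat}{\pat s}\Big),
$$
where, because $d(\phi,q_0)$ is bounded, $\phi$ has image in a fixed compact set $\Omega\subset M$, and on $\Omega$ all the data ($J$, the metric $h$, $W$, the Hamiltonian term $X_{\re W}$, the perturbation $K$ and hence $\widetilde X$) are smooth with all derivatives bounded. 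Rewriting in a fixed coordinate chart on $\Omega$ (using the bound $|d\phi|<C$ to keep everything in range), the equation becomes $\bpat_0\phi = f(z,\phi) + A(z,\phi)\pat\phi$-type, i.e. a perturbed $\bpat_0$-equation with smooth coefficients and a smooth inhomogeneous term; the inhomogeneity $\widetilde X(\phi)$ is already $W^{1,p}_{loc}$ since $\phi$ is and the coefficients are smooth.

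Next I would run the bootstrap. On interior discs this is exactly \cite[Lemma 3.3]{CGMS}/Theorem \ref{sect-5-regu-main-1} with $\pat_1 U=\emptyset$: starting from $\phi\in W^{1,p}_{loc}$ with $p>2$ we get $\phi\in W^{2,p}_{loc}$, hence $\phi\in C^{1,\alpha}_{loc}$, and then differentiating the equation and iterating gives $\phi\in W^{k,p}_{loc}$ for all $k$, so $\phi\in C^\infty$ in the interior. On boundary half-discs the Lagrangian boundary condition $\phi|_C\in L_C$ is handled as follows: near a boundary point, after a further change of coordinates I would use a Weinstein-type chart (as already set up in \eqref{sec5: c-0-3}) to straighten $L_C$ to $\R^n\subset\C^n$, so that $\phi$ now satisfies a perturbed $\bpat_0$-equation on a half-disc with the totally real boundary condition $\phi(\pat_1 U)\subset\R^n$. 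Then Lemma \ref{sect-5-regu-bdry-1} (the boundary Calderón–Zygmund estimate for $\bpat_0$ with boundary values in $\R^n$), combined with the interior estimate exactly as in Theorem \ref{sect-5-regu-main-1}, upgrades $\phi$ to $W^{2,p}$ up to the boundary, hence $C^{1,\alpha}(\overline{U})$; differentiating tangentially along the boundary and iterating (the normal derivative is then recovered algebraically from the equation) gives $\phi\in W^{k,p}(\overline{U})$ for all $k$, hence $\phi\in C^\infty(\overline{U})$. Patching over the finite (interior) and countable (boundary) cover from Notation 1 yields $\phi\in C^\infty(\Sigma)\cap C^0(\overline\Sigma)$ — in fact $C^\infty$ up to the boundary on each component away from punctures; note the statement only claims $C^0(\overline\Sigma)$, which is already contained in the $C^{1,\alpha}$ bound.

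The only genuine subtlety — and the step I would treat most carefully — is the boundary regularity at the corners/transition between different boundary arcs $C$ and, more importantly, making sure the straightening of the Lagrangian boundary condition is done by a map that is smooth and has bounded derivatives on the relevant compact set, so that the perturbed $\bpat_0$-equation one feeds into Lemma \ref{sect-5-regu-bdry-1} genuinely has the hypotheses of that lemma (compactly supported test function, boundary values in $\R^n$, right Sobolev exponents with $l-\tfrac2q>k-\tfrac2p>0$). Since the thimbles are smooth Lagrangians, $\phi(\pat\cC)$ lies in a fixed compact subset by Lemma \ref{poly-boundary-c_0}, and the Weinstein neighborhoods with their symplectomorphisms $\psi_{L_i}$ are already available, this is a bookkeeping issue rather than a real obstacle; the analytic content is entirely in Theorem \ref{sect-5-regu-main-1} and Lemma \ref{sect-5-regu-bdry-1}, both of which are cited. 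I do not expect any bubbling or compactness issue here because there is no limit being taken — this is regularity of a single solution with a priori $C^1$ bounds.
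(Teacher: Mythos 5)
Your proposal is correct and follows essentially the same route as the paper: localize in charts on the domain and target, straighten the Lagrangian boundary condition, apply Theorem \ref{sect-5-regu-main-1} together with Lemma \ref{sect-5-regu-bdry-1}, and bootstrap to all orders. The only cosmetic difference is that the paper chooses a holomorphic target chart making $J\equiv i$ (so the $\|\cdot\|_{W^{1,\infty}}$ term in (\ref{sesct-5-regu-equa-1}) drops out), whereas you keep $J$ variable in a Weinstein-type chart and absorb that term using the hypothesis that $|d\phi|$ is bounded; both are valid.
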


\begin{proof} Since $p>2$, any locally $W^{1,p}$-solution $\phi$ is continuous. We can choose a small neighborhood $U_z$ of $z\in \mathscr{C}$ and a coordinate neighborhood $V_{\phi(z)}$ of $\phi(z)\in M$ such that $\phi(U_z)\subset V_{\phi(z)}$. Then we can choose a holomorphic coordinate transformation $\Psi: V_{\phi(z)}\to \C^n$ such that $\Psi\circ J=i\Psi$ and $\Psi\circ \phi(\overline{U_z}\cap \pat \mathscr{C})$ is mapped to a segment of a bounded domain in $\C^n$.  Now we apply the Theorem \ref{sect-5-regu-main-1} to $\Psi\circ \phi$. Note that in this case the term $||\Psi\circ\phi||_{W^{1,\infty}}$ in the estimate (\ref{sesct-5-regu-equa-1}) disappears because $J=i$ is constant now. So we have the estimate
\begin{align*}
||\Psi\circ\phi||_{W^{2,p}(K_z)}&\le c(||\nabla_g \widetilde{H}_{\varpi}(\Psi\circ \phi)||_{W^{1,p}(U_z)}+||\Psi\circ \phi||_{W^{1,p}(U_z)})\\
&\le c(||\widetilde{H}_{\varpi}||_{C^2((\phi(U_z))}+1) (||\Psi\circ \phi||_{W^{1,p}(U_z)}),
\end{align*}
where $K_z$ is a smaller compact subset of $U_z$.

Then using a bootstrap argument, we can prove that $\phi$ is smooth at any point $z\in \mathscr{C})$.
\end{proof}

We need a lemma which is similar to Theorem B.4.2 in \cite{MS} to study the convergence of solutions. We rewrite it and include the proof for completeness.

\begin{lm}\label{convergence}
Suppose that $\Omega$ is a bounded open set in the half plane $\Hb$ and $T$ is a positive number. Let $\{v_k\}$ be a sequence of maps from $\Omega$  to $\C^n$ with $v_k(\Omega\cap \Hb)\subset L_0$ (or $L_1$) and $|v_k|_{C^1(\Omega)}<C$ for some $C>0$, $k=1,2\cdots$.
Let $\{h_k\}$ be a sequence of smooth maps from $\C^n\times [0,T]$ to $\C^n$ with $h_k\to h$ with all derivatives, for some map $h$ on $B_C(0)\times  [0,T]$.

If $v_k$ satisfies:
\begin{equation}
\partial_{s}v_k+J\partial_t v_k+h_k(v_k,t)=0,
\end{equation}
then on any compact set $K\subset\Omega$, $\{v_k\}$ have a subsequence converging with all derivatives to some $v$ on $K$, with
\begin{equation}\label{convergence-1}
\partial_s v+J \partial_t v+  h(v,t)=0.
\end{equation}
\end{lm}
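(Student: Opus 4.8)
The statement is a standard ``elliptic bootstrap + Arzel\`a--Ascoli'' compactness result for solutions of a perturbed Cauchy--Riemann equation with uniform $C^1$ bounds, so the plan is to upgrade the $C^1$ bound to uniform bounds on all higher derivatives on compact subsets, and then extract a convergent subsequence by diagonalization. The boundary condition $v_k(\Omega\cap\partial\mathbb{H})\subset L_i$ (here $L_i$ is a fixed linear subspace $\cong\R^n\subset\C^n$ after the coordinate change, as in the setup preceding Lemma \ref{sect-5-regu-main-1}) is handled by the boundary elliptic estimate of Lemma \ref{sect-5-regu-bdry-1}, exactly as in the proof of Lemma \ref{regular}.

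First I would fix an arbitrary compact $K\subset\Omega$ and choose open sets $K\subset\subset U_1\subset\subset U_2\subset\subset\cdots\subset\subset\Omega$. By hypothesis $\|v_k\|_{C^1(\Omega)}\le C$ and, since $h_k\to h$ with all derivatives on $B_C(0)\times[0,T]$, the sequence $\{h_k\}$ is bounded in $C^\ell$ on $B_C(0)\times[0,T]$ for every $\ell$. Write the equation as $\partial_s v_k+J\partial_t v_k = -h_k(v_k,t)=:g_k$. The $C^1$ bound on $v_k$ together with the $C^0$ bound on $h_k$ gives a uniform $L^\infty$ (hence $L^p$ for any $p$) bound on $g_k$ over $U_2$; and since $v_k$ is $C^1$-bounded it is in particular $W^{1,p}(U_2)$-bounded. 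Applying Theorem \ref{sect-5-regu-main-1} (with $k=1$ there, using the interior version away from $\partial\mathbb{H}$ and the boundary version near $\Omega\cap\partial\mathbb{H}$, where $\phi(\partial_1 U)\subset\R^n$ is exactly the constraint $v_k(\Omega\cap\partial\mathbb{H})\subset L_i$) yields a uniform $W^{2,p}(U_1)$ bound on $v_k$. Now bootstrap: differentiating the equation, $\partial_s(\partial v_k)+J\partial_t(\partial v_k)$ equals $-\partial g_k$ up to terms involving $\partial J$, and the right-hand side is controlled in $W^{1,p}$ by the $W^{2,p}$ bound just obtained together with the $C^2$ bound on $h_k$; hence Theorem \ref{sect-5-regu-main-1} (respectively Lemma \ref{sect-5-regu-bdry-1} near the boundary, using that $\partial^j v_k$ tangent to $\partial\mathbb{H}$ still lands in $\R^n$) gives a uniform $W^{3,p}$ bound on a slightly smaller neighborhood. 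Iterating, $\{v_k\}$ is bounded in $W^{\ell,p}(K)$ for every $\ell$, uniformly in $k$.

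By the Rellich--Kondrachov theorem, the inclusion $W^{\ell+1,p}(K)\hookrightarrow W^{\ell,p}(K)$ is compact, and by Sobolev embedding $W^{\ell,p}(K)\hookrightarrow C^{\ell-2}(K)$ for $p>2$; so a diagonal subsequence of $\{v_k\}$ converges in $C^\ell(K)$ for every $\ell$ to some limit $v\in C^\infty(K)$, for an exhaustion of $\Omega$ by such compacta. Passing to the limit in $\partial_s v_k+J\partial_t v_k+h_k(v_k,t)=0$ term by term — the $C^1$ convergence $v_k\to v$ handles the derivative terms, and $h_k\to h$ uniformly together with $v_k\to v$ uniformly handles the zeroth order term — gives equation (\ref{convergence-1}). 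Since $v_k(\Omega\cap\partial\mathbb{H})\subset L_i$ and the convergence is in particular $C^0$ up to the boundary portion inside $K$, the limit $v$ also satisfies $v(\Omega\cap\partial\mathbb{H})\subset L_i$.

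The only genuinely delicate point is the boundary bootstrap: one must ensure at each stage of the iteration that the quantity being estimated (a tangential derivative of $v_k$, or $v_k$ itself) satisfies a linear Lagrangian boundary constraint of the type required by Lemma \ref{sect-5-regu-bdry-1}, and that the cutoff/compact-support hypothesis there can be arranged by multiplying by bump functions supported in the $U_j$'s — standard but needing care, exactly as in the proof of Theorem \ref{sect-5-regu-main-1} and Lemma \ref{regular}. Away from the boundary this is the textbook argument of \cite[Theorem B.4.2]{MS}, which is why the lemma is stated as a mild variant of that result.
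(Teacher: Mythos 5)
Your proposal is correct and follows essentially the same route as the paper: an elliptic bootstrap (interior estimates plus the boundary estimate of Lemma \ref{sect-5-regu-bdry-1}) upgrading the uniform $C^1$ bound to uniform bounds on all derivatives over compact subsets, followed by Arzel\`a--Ascoli and a diagonal extraction, and then passing to the limit in the equation; the paper uses Schauder/H\"older estimates where you use $L^p$ estimates, which is immaterial. The one step you gloss over that the paper executes explicitly is the localization of the Lagrangian boundary condition: since $L_0$ is a general Lagrangian and not a linear subspace, the paper first extracts a $C^0$-convergent subsequence and uses the limit together with the uniform $C^1$ bound to produce a finite atlas of charts $\varphi_j$ straightening $L_0$ to $\R^n$, valid for all large $k$, before the bootstrap can be run with the linear boundary constraint your elliptic lemmas require.
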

\begin{proof}
Since $v_k$ have uniform $C^1$-bound, by Arzela-Ascoli theorem we can get a subsequence, still denoted by $\{v_k\}$, which converges to some $v$, with $\|v\|_{C^{\alpha}(K)}<\infty$ for some $\alpha>0$.

For each point $z\in \Omega$, there is neighborhood $V_z$ of $v(z)$ and a coordinate map $\varphi_z: V_z\to \C^n$ such that $\varphi_z(V_z\cap L_0)\subset \R^n$ (or $\varphi_z(V_z\cap L_1)\subset \R^n$) if it is not empty. By the convergence of $\{v_k\}$ and the uniform boundedness of $dv_k$,  there is a neighborhood $U_z$ of $z$ in $\Omega$ such that $v_k(U_z)\subset V_z$ if $k$ is large enough. Choose a finite covering  $\{U_1,\cdots,U_l\}$ of $K$ from these $\{U_z|z\in \Omega\}$. Then for sufficiently large $k$, we have $v_k(U_j)\subset V_j$. Let $\varphi_j: V_j\to \C^n$ be the corresponding coordinate map. Define $\widetilde{v}^j_k=\varphi_j\circ v_k$. We have
\begin{equation}
\partial_s \widetilde{v}^j_k+i\partial_t \widetilde{v}^j_k+ \tilde{h}_k(\widetilde{v}^j_k,t)=0
\end{equation}
on $U_j$, where $\tilde{h}_k=(\varphi_j)_* h_k\circ \varphi^{-1}_j$.

Since all the derivatives of $h_k$ have uniform upper bound and $v_k$ has uniform $C^1$-bound, we know that $ \tilde{h}_k(\widetilde{v}^j_k,t)$ has a uniform $C^1$-bound. By Schauder estimate of elliptic operators, $\widetilde{v}^j_k$ has a uniform $C^{1,\alpha}$-estimate for $0<\alpha<1$. By bootstrap argument, we can prove that $\widetilde{v}^j_k$ has a uniform $C^k$-estimate in $U_j$ for any $k\in \mathbb{N}$.  Hence, there exists a subsequence of $\widetilde{v}^j_k$ which converges to some  $\tilde{v}^j\in C^k(U_j,\C^n)$ for any given $k$. $\tilde{v}^j$ satisfies the equation
$$
\partial_s \widetilde{v}^j+i\partial_t \tilde{v}^j+ \tilde{h}(\tilde{v}^j,t)=0,
$$
where $\tilde{h}=(\varphi_j)_* h\circ \varphi^{-1}_j$.

Clearly, $v:=\varphi^{-1}_j \circ \tilde{v}^j$ satisfies (\ref{convergence-1}).  Thus the lemma is proved.
\end{proof}

\begin{thm}\label{d control} We have $\|\phi\|_{C^k(\Omega)}<C$ for any $\phi\in \W^{d+1}(\cC, L, l;K)$, where the constant  $C$ depends only on Hamiltonian chords $\{l_{\xi}\}$ involved in $\W^{d+1}(\cC, L, l;K)$, and the geometry of $M$.
\end{thm}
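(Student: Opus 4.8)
The plan is to upgrade the $C^0$ and $C^1$ bounds of Theorem \ref{c_0_polytope} to uniform $C^k$ bounds for every $k$ by a bootstrap argument localized over the universal curve. First I would fix $\phi\in\W^{d+1}(\cC,L,l;K)$ with domain a fiber $\cC_b$ and recall that by Theorem \ref{c_0_polytope} we have $d(\phi,q_0)<C$ and $|d\phi|<C$ uniformly; in particular the image of $\phi$ lies in a fixed compact set $M_{R}\subset M$ depending only on the chords $l$ and the geometry, so only finitely many coordinate charts of $M$ are needed to cover it, and on these charts $W$, $J$, the metric, and the perturbation $K$ (which is admissible and of bounded ratio) have bounded $C^k$-norms for every $k$. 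I would work with the cover $\{D'_{b,a}\subset D_{b,a}\}_{a\in A_b}$ from Notation 1: on each interior disc chart the relevant operator is $\pat_s+J\pat_t$ and I apply the interior elliptic estimate of Theorem \ref{sect-5-regu-main-1} (with $\pat_1 U=\emptyset$), while on each half-disc chart, after composing with a holomorphic coordinate map $\Psi$ on $M$ sending the relevant thimble into $\R^n\subset\C^n$ (exactly as in Lemma \ref{regular}), I apply the boundary version of Theorem \ref{sect-5-regu-main-1} together with Lemma \ref{sect-5-regu-bdry-1}.

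The bootstrap then runs as in Lemma \ref{regular}: starting from $\phi\in W^{1,p}_{\loc}$ with $p>2$ (so $\phi$ is $C^0$) and with $\|d\phi\|_{C^0}<C$, the perturbed Witten equation $\bpat_J\phi=\widetilde X^{0,1}(\phi)$ has right-hand side bounded in $W^{1,p}$ on each chart in terms of $\|W\|_{C^2}$, $\|K\|_{C^2}$ on $\phi(D_{b,a})$ and $\|\phi\|_{W^{1,p}}$, all of which are already controlled; hence $\|\phi\|_{W^{2,p}(D'_{b,a})}<C$. Iterating, $\|\phi\|_{W^{j+1,p}(D'_{b,a})}<C$ for every $j$, with constants depending only on $j$, $p$, the chords $l$, and the bounded geometry of $M$ (the $C^k$-bounds on the coefficients $W$, $J$, $K$ over the fixed compact set $M_R$ being absorbed into these). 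Since the cover $\{D'_{b,a}\}_{a\in A_b}$ is such that each $D'_{b,a}$ is the image of a fixed standard (half-)disc of radius $1/8$ under a holomorphic parametrization with uniformly controlled derivatives (Notation 2), these local estimates patch to a uniform estimate $\|\phi\|_{C^k(\Omega)}<C$ for $\Omega=\cC_{Inn}$ or any length-$1$ band, and by Sobolev embedding this yields the stated $C^k$ bound.

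The one genuinely delicate point is \emph{uniformity of the constant over the non-compact index set $A_b$ and over $b\in\overline{\cR}$}. Over the thick part of $\cC_b$ the cover is finite with metrics comparable by a fixed constant $C_c$ (Notation 2), so there is no issue; but over the glued strip regions the number of charts is unbounded as $b$ approaches $\overline{\cR}\setminus\cR$. Here I would use that on these regions the induced metric is the \emph{standard} flat strip metric (Notation 2), the complex structure is standard, and the auxiliary section is $\sigma=dz$ with $h_\cC=1$, so the local elliptic constants in Theorem \ref{sect-5-regu-main-1} and Lemma \ref{sect-5-regu-bdry-1} are literally the same constant for every such chart — translation-invariance of the standard strip makes the constant independent of which glued strip chart we sit in and of the gluing length. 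The only input on $M$ is $\|W\|_{C^k(M_R)}$, $\|K\|_{C^k}$, and the bounded geometry constants, none of which depend on $b$. Combining this translation-invariance on the thin part with the finite comparable cover on the thick part gives a single $C$ valid for all $\phi\in\W^{d+1}(\cC,L,l;K)$ as $\cC$ ranges over $\overline{\cR}$, which is what the theorem asserts.
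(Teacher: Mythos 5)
Your proof is correct and follows essentially the same route as the paper's: both rest on the uniform $C^0$ and $C^1$ bounds of Theorem \ref{c_0_polytope}, localize into finitely many holomorphic charts on $M$ in which $J$ becomes standard and the thimbles sit in $\R^n$, and then run an elliptic bootstrap. The only cosmetic difference is that the paper routes the bootstrap through the Schauder estimates inside Lemma \ref{convergence}, choosing a uniform radius $r$ so that $\phi(B_r(z))$ lands in a single chart, whereas you invoke the $W^{k,p}$ estimates of Theorem \ref{sect-5-regu-main-1} and Lemma \ref{sect-5-regu-bdry-1} and make the uniformity of the constants over the cover of the universal curve (in particular over the unboundedly many glued strip charts near $\overline{\cR}\setminus\cR$) explicit --- a point the paper leaves implicit.
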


\begin{proof} By Theorem \ref{c_0_polytope}, we can choose a compact subset $K$ of $M$, such that the image of $\phi $ is contained in $K$.   Choose a finite covering  of $K$ consisting of the coordinate neighborhoods $\{U_1,\cdots,U_l\}$. Let $\varphi_i :U_i\to \C^n$ be the coordinate maps such that $\varphi_i(U_i\cap L_0)\subset \R^n$ ( or $\varphi_i(U_i\cap L_1)\subset \R^n$) in case that they are not empty. Since we have uniform $C^1$-bound for any solutions, then there exists a $r>0$ such that for any $z\in\Sigma$ and $\phi\in \W^{d+1}(\cC, L, l;K)$, $\phi(B_{r}(z)\cap \Sigma)$ is contained in some $U_i$. Then we can apply Lemma\ref{convergence} to get the $C^k$-bound for any $k\in \mathbb{N}$.
\end{proof}

\begin{thm}\label{limit}
If $\phi$ satisfies the same condition in Lemma \ref{regular}, then on any strip-like ends, $\phi\circ \ve_\zeta(s,)\to l$, for some Hamiltonia chord $l$, as $s\to +\infty \;(-\infty)$ (depending on the direction of the ends). In particular,
\begin{equation}
\tM(\varpi)=\bigcup_{l^-,l^+\in S_{W}}\tM(l^-,l^+;\varpi).
\end{equation}
\end{thm}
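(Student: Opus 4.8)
\textbf{Proof proposal for Theorem \ref{limit}.}

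The plan is to establish convergence on each strip-like end by a standard action/energy argument combined with the exponential decay estimate, and then to read off the decomposition of $\tM(\varpi)$ as a formal consequence. First I would fix a strip-like end, say an outgoing one $\ve_\zeta:\Sigma^+\to\cC$, and set $\psi(s,t)=\phi(\ve_\zeta(s,t))$. On this end the perturbed Witten equation \eqref{perturb-lg-poly} reduces (by the computation already carried out in Section \ref{sec-6}) to Floer's equation $\pat_s\psi+J\pat_t\psi-\nabla_g\widetilde H_\zeta(\psi)=0$ with boundary conditions $\psi(s,0)\in L_{\zeta,0}$, $\psi(s,1)\in L_{\zeta,1}$. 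By Theorem \ref{c_0_polytope} and Theorem \ref{d control}, $\phi$ has uniform $C^0$ and $C^k$ bounds, so $\psi$ and all its derivatives are bounded on $\Sigma^+$ and the image lies in a fixed compact subset of $M$. The energy $E_K(\phi)$ is finite by the energy identity Theorem \ref{polytope-lm-2} (the right-hand side depends only on the chosen chords and is finite), hence in particular $\int_{\Sigma^+}|\pat_s\psi|^2\to 0$ as the domain recedes to $s=+\infty$; concretely $\int_{[S,\infty)\times[0,1]}|\pat_s\psi|^2\,ds\,dt\to 0$ as $S\to\infty$.

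Next I would run the usual three-step limiting argument. From the finite-energy tail and the $C^k$ bounds, for any sequence $s_j\to\infty$ the translates $\psi(s_j+\cdot,\cdot)$ subconverge in $C^\infty_{loc}$ (using Lemma \ref{convergence} applied to the translated maps) to a solution $\psi_\infty$ of the same equation with $\pat_s\psi_\infty\equiv 0$; such a $\psi_\infty(s,t)=l(t)$ is $s$-independent and solves the Hamiltonian chord equation $J\pat_t l-\nabla_g\widetilde H_\zeta=0$ with $l(0)\in L_{\zeta,0}$, $l(1)\in L_{\zeta,1}$, i.e. $l\in S_{W,\varpi_\zeta}(L_{\zeta,0},L_{\zeta,1})=S_W(L_{\zeta,0},L_{\zeta,1})$ by Theorem \ref{perturb-hal-equi}. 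Since the chord set is finite (Theorem \ref{transversal}) and the action $A_{\widetilde H_\varpi}$ is monotone along $s$ (its $s$-derivative is $-\int_0^1|\pat_s\psi|^2\le 0$) and bounded, $A_{\widetilde H_\varpi}(\psi(s,\cdot))$ converges to a single value, so every subsequential limit chord has the same action; combined with finiteness this forces a unique limit chord $l^+$, hence $\psi(s,\cdot)\to l^+$ in $C^\infty([0,1],M)$ as $s\to\infty$. The incoming ends are handled identically with $s\to-\infty$, yielding $l^-$. This is exactly statement (i)$\Rightarrow$(ii) of Theorem \ref{sec4-thm-ener-conv-expo-1} in the present generality; in fact once convergence to a nondegenerate chord is known, Theorem \ref{sec5:exponential} upgrades it to the exponential decay $|\pat_s\phi(s,t)|\le ce^{-\delta|s|}$, which I would invoke but not reprove.

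Finally, the displayed equality $\tM(\varpi)=\bigcup_{l^-,l^+\in S_W}\tM(l^-,l^+;\varpi)$ is then immediate: any $\phi\in\tM(\varpi)$ has, by definition, $E_{\widetilde H_\varpi}(\phi)<\infty$ and satisfies the hypotheses of Lemma \ref{regular} by virtue of the $C^0$--$C^1$ bounds, so the argument above produces chords $l^\pm\in S_W(L_0,L_1)\subset S_W$ with $\phi\in\tM(l^-,l^+;\varpi)$; conversely each $\tM(l^-,l^+;\varpi)$ is contained in $\tM(\varpi)$ since an $\R$-translation-invariant-in-limit solution asymptotic to fixed chords has energy $A_{\widetilde H_\varpi}(l^-)-A_{\widetilde H_\varpi}(l^+)<\infty$ by the energy identity. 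The main obstacle is the uniqueness of the asymptotic limit — ruling out ``drifting'' between distinct chords of equal action — which is where the finiteness of $S_W$ (hence the discreteness, guaranteed by the transversality Theorem \ref{transversal}) together with the monotonicity of the action functional does the essential work; the compactness of the target image and the uniform derivative bounds from Theorem \ref{d control} are what make the $C^\infty_{loc}$ extraction legitimate in the noncompact setting.
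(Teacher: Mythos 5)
Your extraction of subsequential limits is exactly the paper's argument: translate along the strip-like end, use the uniform $C^1$ (hence, via Lemma \ref{convergence}, $C^\infty_{loc}$) bounds to subconverge, and use the finite-energy tail to conclude that any such limit is $s$-independent, hence a chord in $S_W$. The set-theoretic decomposition at the end is also fine.

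The gap is in the uniqueness of the asymptotic limit. You argue that $A_{\widetilde{H}_{\varpi}}(\psi(s,\cdot))$ is monotone and bounded, so all subsequential limit chords share the same action, and that this ``combined with finiteness'' forces a single limit. But equal action does not distinguish chords: nothing in the hypotheses prevents two distinct chords $l_1\neq l_2$ in the finite set $S_W(L_{\zeta,0},L_{\zeta,1})$ from having equal action, and finiteness alone does not upgrade ``same action'' to ``same chord''. Nor does finite energy directly forbid drifting between them: by Cauchy--Schwarz, a transition from a neighborhood of $l_1$ to a neighborhood of $l_2$ over an $s$-interval of length $T$ costs only $O(1/T)$ energy, so infinitely many ever-slower back-and-forth transitions are compatible with $E<\infty$. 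The mechanism that actually works --- and the one the paper uses --- is connectedness of the trajectory $s\mapsto\psi(s,\cdot)$ together with discreteness of the finite chord set: if two subsequences converged to distinct chords, then by continuity one could choose intermediate times $s_j\to\infty$ at which $\sup_{l\in S_W}\|\psi(s_j,\cdot)-l\|_{C^0([0,1])}\ge\epsilon$ for some fixed $\epsilon>0$; translating by $s_j$ and extracting a $C^\infty_{loc}$ limit as in your first step would produce yet another chord, contradicting that uniform separation. Replace your action-comparison step with this intermediate-value argument and the proof closes; the rest of your write-up (including the appeal to Theorem \ref{perturb-hal-equi} to identify the perturbed chord set with $S_W$, and the converse inclusion via the energy identity) is correct.
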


\begin{proof}
Without loss of generality, assume that $\phi$ is defined on some $(a,+\infty)\times [0,1]$, after some local chart map on the disc.
Define $\phi_{\tau}(s,t)=\phi(s+\tau,t)$ for $\tau>a$. $\phi_k$ and its first order derivatives are uniformly bounded on $[0,1]\times[0,1]$. So by Lemma \ref{convergence}, we can get a subsequence $\{\phi_{k_j}\}$, converging to some $\phi^+$ in $C^1$-topology. $\phi^+$ should satisfy the equation
\begin{equation}
\partial_s\phi^++J \partial_t\phi^+-\nabla_g\widetilde{H}_{\varpi}(\phi^+,t)=0.
\end{equation}
and the equation
\begin{equation}
\partial_s\phi^+=J \partial_t\phi^+-\nabla_g  \widetilde{H}_{\varpi}(\phi,t)=0.
\end{equation}
by the finite energy condition. So $\phi^+(s,t)=l^+(t)$ for some $l^+ \in  S_{W}$.

If $l^+$ is not the limit of the whole sequence $\phi_k$, we can get another subsequence, called $\{\phi_{k'_j}\}$, which converges to some $l'^+\in  S_{W}$. Select such $k'_j$ to make sure $k_j<k'_j$. As $ S_{W}$ has finite elements, there should exist a sequence $\{s_j\}$, $k_j<s_j<k'_j$, and a positive number $\epsilon$, so that
\begin{equation}\label{sec4:limit}
\sup_{l\in  S_{W}}\|\phi(s_j,\cdot)-l\|_{C^0([0,1])}>\epsilon
\end{equation}
This is impossible, because we can repeat the above argument to find a subsequence of $\{\phi_{s_j}\}$, which converges to some $l''^+\in S_{W}$ on $[-1,1]\times[0,1]$, which contradicts  (\ref{sec4:limit}).
\end{proof}

The proof of the above theorem implies the following conclusions.

\begin{lm}\label{higher convergence}
If $\phi \in \W^{d+1}(\cC, L, l;K)$, then $\forall \zeta\in \cZ$
\begin{equation}
\lim_{s\to\pm\infty}\phi(\ve_\zeta(s,\cdot))=l_\zeta,
\end{equation}
with all its derivatives.
\end{lm}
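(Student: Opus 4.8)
The plan is to upgrade the $C^1$-convergence established in Theorem \ref{limit} to $C^k$-convergence for every $k$, essentially by a standard elliptic bootstrapping argument applied uniformly along the strip-like end. First I would fix a strip-like end $\ve_\zeta$ and, as in the proof of Theorem \ref{limit}, work with the shifted maps $\phi_\tau(s,t)=\phi(\ve_\zeta(s+\tau,t))$, which satisfy the perturbed Witten equation in the strip form $\pat_s\phi_\tau+J\pat_t\phi_\tau-\nabla_g\widetilde H_\varpi(\phi_\tau,t)=0$ with Lagrangian boundary conditions $\phi_\tau(s,0)\in L_{\zeta,0}$, $\phi_\tau(s,1)\in L_{\zeta,1}$. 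By Theorem \ref{c_0_polytope} and Theorem \ref{d control} we already have uniform $C^k$-bounds on all of $\phi$ (hence on all $\phi_\tau$) on compact pieces $[-1,1]\times[0,1]$, so by Arzel\`a--Ascoli any sequence $\tau_j\to\infty$ has a subsequence with $\phi_{\tau_j}\to\phi^+$ in $C^k$ for every $k$; and the limit solves the ODE $\pat_t\phi^+=\nabla_g\widetilde H_\varpi(\phi^+,t)$, i.e. $\phi^+(s,\cdot)=l_\zeta(\cdot)$ by the finite-energy/limit identification already proved.

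The key step is to promote this subsequential $C^k$-convergence to genuine convergence of the full family as $s\to\pm\infty$. Here I would reuse the ``no-gap'' argument from the proof of Theorem \ref{limit}: since $S_W$ is finite, if $\phi(\ve_\zeta(s,\cdot))$ did not converge to $l_\zeta$ in $C^k$, one could extract two subsequences converging to distinct chords, producing intermediate times $s_j\to\infty$ with $\sup_{l\in S_W}\|\phi(\ve_\zeta(s_j,\cdot))-l\|_{C^k}>\epsilon$; applying the compactness lemma (Lemma \ref{convergence}, whose conclusion is $C^k$ for all $k$, or equivalently Theorem \ref{d control}) to the windows around $s_j$ yields a further subsequence converging in $C^k$ to some chord, contradicting the lower bound. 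Alternatively — and this is cleaner — once $C^0$-convergence $\phi(\ve_\zeta(s,\cdot))\to l_\zeta$ is known, the exponential decay $|\pat_s\phi|\le ce^{-\delta|s|}$ from Theorem \ref{sec4-thm-ener-conv-expo-1}(iii) (equivalently its generalization to $\W^{d+1}$ stated after Theorem \ref{polytope-lm-2}) combined with uniform interior elliptic estimates on unit strips $[s-1,s+1]\times[0,1]$ gives $\|\pat_s\phi\|_{C^{k}([s,s+1]\times[0,1])}\le c_k e^{-\delta'|s|}$, from which $C^k$-convergence of $\phi(\ve_\zeta(s,\cdot))$ to $l_\zeta$ follows by integrating in $s$.

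Concretely, the steps in order are: (1) write the equation on the strip-like end in the model strip coordinates and record the uniform $C^1$-bound from Theorem \ref{c_0_polytope}; (2) invoke Theorem \ref{d control}/Lemma \ref{convergence} to upgrade to uniform $C^k$-bounds on each unit sub-strip, uniformly in the position along the end; (3) recall from Theorem \ref{limit} that $\phi(\ve_\zeta(s,\cdot))\to l_\zeta$ in $C^0$ (hence $C^1$); (4) apply the exponential decay estimate for $\pat_s\phi$ together with the uniform elliptic estimates to get $\|\pat_s\phi\|_{C^k}$ decaying exponentially in $|s|$; (5) conclude that $\phi(\ve_\zeta(s,\cdot))$ is $C^k$-Cauchy as $s\to\pm\infty$ with limit $l_\zeta$, for every $k$, i.e. the convergence holds with all derivatives. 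The main obstacle I anticipate is purely bookkeeping: making sure the elliptic estimates near the boundary $\R\times\{0,1\}$ are applied in charts adapted to the Lagrangians $L_{\zeta,0},L_{\zeta,1}$ (as in Lemma \ref{sect-5-regu-bdry-1} and the proof of Lemma \ref{anti-control}), and that all constants are uniform in $s$ because the geometry of $M$ and the perturbation $\varpi_\zeta$ are translation-invariant on the strip-like end; no genuinely new analytic input beyond what Section \ref{sec-7} already provides should be needed.
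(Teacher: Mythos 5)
Your primary argument is exactly what the paper intends: the paper offers no separate proof of this lemma, remarking only that it follows from the proof of Theorem \ref{limit}, and that proof is precisely your steps (1)--(3) — shifted maps $\phi_\tau$ on the strip-like end, Lemma \ref{convergence} (whose conclusion is already convergence with all derivatives on compact sets), and the no-gap argument using the finiteness of $S_W$. Since Lemma \ref{convergence} yields $C^k$ subsequential limits for every $k$, running the no-gap argument in the $C^k$ norm rather than the $C^0$ norm immediately gives the statement; no further input is needed.

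One caution about your ``cleaner'' alternative, steps (4)--(5): in this paper the exponential decay estimate $|\pat_s\phi|\le ce^{-\delta|s|}$ is Theorem \ref{sec5:exponential}, whose proof explicitly invokes Lemma \ref{higher convergence} (it needs to know that $\xi=\pat_s\phi$ and its derivatives tend to zero before the differential inequality $f''\ge\delta^2 f$ can be closed up into a decay bound); likewise item (iii) of Theorem \ref{sec4-thm-ener-conv-expo-1} is deferred to that same theorem. So deducing the present lemma from exponential decay would be circular in the paper's logical ordering. Keep the no-gap route as the actual proof and treat the decay-based argument only as an a posteriori consistency check.
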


Suppose we have a sequence of pointed discs $\{\cC_i\}$, that each one is equipped with the same directed Lagrangian labels  $\{L_C\}$ for ordered $d+1$ punctures ${\zeta_0}_i,\cdots,{\zeta_d}_i$. Then we have a Lefschetz timble pair $(L_{k,0}, L_{k,1})$ for $k=0,\cdots,d$. Suppose we fixed the choice of Hamiltonian chords $l_{\zeta_k}=l_k \in S_W(L_{k,0}, L_{k,1}),k=0,\cdots,d$ and we have Solutions  $\phi_i\in \W^{d+1}(\cC_i, L, l;K)$.  With the help of  Theorem \ref{d control} and Theorem \ref{limit}, we can use a standard argument as in the proof of  Theorem 1 of \cite{f1} to prove the following two conclusion.
\begin{thm}[Gromov compactness, unstable case]\label{gromov-unst}
Suppose $\cC_i\to \cC$ for some pointed disc $\cC$, with the limit of punctures ${\zeta_k}_i\to \zeta_k$. Then we must have a collection of Hamiltonian chords $l'=\{l'_{\zeta_k}\in S_W(L_{k,0},L_{k,1}) \;|\;k=0,\cdots,d\}$, and a $
\phi\in \W^{d+1}(\cC, L, l';K)$, that
$$
\phi_i \to\phi,
$$
subsequently in $C^{\infty}_{\loc}$.

 Moreover, at each outgoing  strip-like end $\epsilon_k(\Sigma^+)$ for puncture $\zeta_k$,we have a set $\{l^0:=l'_{\zeta_k},l^1,...,l^{N-1}, l^N:=l_{\zeta_k}\}\subset S_W(L_{k,0},L_{k,1})$, and $\psi^m\in \tM(l^{m-1},l^m;\varpi)$, together with $N$ subsequence $\{s_i^m\}$ of real numbers, such that
\begin{equation}
s_i^m*\phi_i\circ\epsilon_k \to \psi^m\;\text{subsequently in}\;C^{\infty}_{\loc}.
\end{equation}
Here $s_i^m*\phi_i\circ\epsilon_k(s,t)=\phi_i\circ\epsilon_k(s+s_i^m,t)$.
On incoming point, we have the similar result, with the only difference that $\{l^0:=l_{\zeta_k},l^1,...,l^{N-1}, l^N:=l'_{\zeta_k}\}$.
\end{thm}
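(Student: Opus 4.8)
The plan is to combine the uniform a priori estimates already proved with a standard elliptic bootstrap on $\cC$ and a soft-rescaling (strip-breaking) analysis at the punctures, following the argument of \cite[Theorem~1]{f1}. Since $\cC_i\to\cC$ with $\cC$ itself a $(d+1)$-pointed disc (the domain does not degenerate), for $i$ large there are oriented diffeomorphisms $\cC_i\cong\cC$ compatible with the chosen strip-like ends and converging to the identity; pulling back, we may regard all $\phi_i$ as maps on a single pointed disc $\cC$ solving the perturbed Witten equation (\ref{perturb-lg-poly}) for a smoothly convergent family of data. By Theorem~\ref{c_0_polytope} there is a constant $C$ — depending only on the fixed chords $\{l_{\zeta_k}\}$ and on the bounded geometry of $M$ — with $d(\phi_i,q_0)<C$ and $|d\phi_i|<C$ for all $i$; hence every image lies in one fixed compact set $\mathscr K\subset M$, and the uniform bound on $|d\phi_i|$ precludes interior and boundary bubbling outright. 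Covering $\mathscr K$ by finitely many holomorphic charts that straighten the thimble boundary conditions onto $\R^n$, the elliptic estimates of Theorem~\ref{sect-5-regu-main-1} and Lemma~\ref{sect-5-regu-bdry-1}, applied chartwise and bootstrapped as in Theorem~\ref{d control}, give uniform $C^k_{\loc}$ bounds on compact subsets of $\cC$. By Arzel\`a--Ascoli and a diagonal argument, after passing to a subsequence $\phi_i\to\phi$ in $C^\infty_{\loc}(\cC)$, and by Lemma~\ref{convergence} the limit $\phi$ solves (\ref{perturb-lg-poly}) on $\cC$ with the prescribed Lagrangian labels.

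To identify $\phi$ as an element of some $\W^{d+1}(\cC,L,l';K)$, I would bound its energy. By the energy identity of Theorem~\ref{polytope-lm-2}, $E_K(\phi_i)=\A_{\widetilde{H}_\varpi}(l_{\zeta_d})-\sum_{j=0}^{d-1}\A_{\widetilde{H}_\varpi}(l_{\zeta_j})$ depends only on the fixed chords $\{l_{\zeta_k}\}$, hence is independent of $i$; Fatou's lemma then gives $E_K(\phi)<\infty$. Therefore Theorem~\ref{limit} applies to $\phi$ on each strip-like end, producing chords $l'_{\zeta_k}\in S_W(L_{k,0},L_{k,1})$ with $\phi\circ\ve_{\zeta_k}(s,\cdot)\to l'_{\zeta_k}$, so that $\phi\in\W^{d+1}(\cC,L,l';K)$ with $l':=\{l'_{\zeta_k}\}$.

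For the breaking, fix an outgoing puncture $\zeta_k$ and work in its strip-like coordinate on $\Sigma^+$. The difference between $l'_{\zeta_k}$, the limit of $\phi$, and $l_{\zeta_k}$, the common limit of the $\phi_i$, records energy escaping to $s=+\infty$. One runs the usual induction: having produced $l^0:=l'_{\zeta_k},l^1,\dots,l^{m-1}$ with strictly decreasing action $\A_{\widetilde{H}_\varpi}$, if $l^{m-1}\ne l_{\zeta_k}$ one selects translation parameters $s_i^m\to+\infty$ — detected by $\A_{\widetilde{H}_\varpi}$ dropping by a definite amount, using its monotonicity along the negative gradient flow on the end — so that $s_i^m*\phi_i\circ\ve_{\zeta_k}$ is $C^\infty_{\loc}$-precompact; a subsequential limit $\psi^m$ is a finite-energy solution of (\ref{perturb-lg-r}) on $\Sigma$, hence lies in $\tM(l^{m-1},l^m;\varpi)$ for a new chord $l^m$ by Theorem~\ref{sec4-thm-ener-conv-expo-1}. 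Since a nonconstant finite-energy Floer strip from $l^{m-1}$ to $l^m$ has energy $\A_{\widetilde{H}_\varpi}(l^{m-1})-\A_{\widetilde{H}_\varpi}(l^m)$, bounded below by the positive gap $\hbar:=\min\{\A_{\widetilde{H}_\varpi}(l)-\A_{\widetilde{H}_\varpi}(l'):l,l'\in S_W,\ \A_{\widetilde{H}_\varpi}(l)>\A_{\widetilde{H}_\varpi}(l')\}$ — finite and strictly positive because $S_W$ is finite by Theorem~\ref{transversal} — the induction terminates after finitely many steps $N$ with $l^N=l_{\zeta_k}$. The incoming puncture is treated symmetrically, with $l_{\zeta_k}$ and $l'_{\zeta_k}$ interchanged. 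Exponential decay on the ends (Theorem~\ref{sec5:exponential}) provides the quantitative control to carry this out and to ensure no energy is lost between consecutive pieces.

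I expect the third step to be the main obstacle: making the soft rescaling precise, i.e.\ choosing the $\{s_i^m\}$ so that every bit of the escaping energy is captured by some $\psi^m$ and nothing slips away ``between scales''. The clean way, as in \cite{f1}, is to run the argument entirely through the action functional $\A_{\widetilde{H}_\varpi}$: exponential convergence on the ends together with the energy gap $\hbar$ force the escaping energy to concentrate at finitely many scales, and the same continuity/contradiction argument that derives a contradiction from (\ref{sec4:limit}) in the proof of Theorem~\ref{limit} glues the subsequential limits into the asserted chain $l^0,\dots,l^N$. The remaining ingredients — the $C^\infty_{\loc}$ compactness on $\cC$, the identification of the limit equation, and the finiteness of $E_K(\phi)$ — follow directly from Theorems~\ref{c_0_polytope}, \ref{d control}, \ref{polytope-lm-2} and Lemma~\ref{convergence}.
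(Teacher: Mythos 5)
Your proposal is correct and follows essentially the same route as the paper, which simply invokes the uniform bounds of Theorems \ref{c_0_polytope} and \ref{d control}, the asymptotic analysis of Theorem \ref{limit}, and ``a standard argument as in the proof of Theorem 1 of \cite{f1}'' for the strip-breaking at the ends. Your write-up in fact supplies more detail than the paper does (the energy identity bound via Theorem \ref{polytope-lm-2}, the action-gap termination argument), and all of it is consistent with the intended argument.
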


In particular we have
\begin{thm}[Gromov compactness results for Floer homology]\label{thm:split}
Suppose $L_0$ and $L_1$ are two fixed Lefschetz thimbles, and $l^-,l^+\in S_W(L_0,L_1)$. For any sequence  $\{\phi_i\}\subset \tM(l^-,l^+;\varpi)$, there exist $N$ real sequences $\{s^1_i\},\cdots,\{s^N_i\}$ and $\{l^0:=l^-,l^1,...,l^{N-1}, l^N:=l^+\}\subset S_{W}$,  as well as $\phi^m\in \tM(l^{m-1},l^m;\varpi), m=1,2,...,N$, such that
\begin{equation}
s^m_i*\phi_i\to \phi^m\;\text{in}\;C^{\infty}_{\loc}.
\end{equation}
\end{thm}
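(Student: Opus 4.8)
The plan is to deduce Theorem \ref{thm:split} from Theorem \ref{gromov-unst} (Gromov compactness in the unstable case) by specializing to $d=1$, where the domains are strips rather than higher-valence discs. Concretely, I would take a sequence $\{\phi_i\}\subset\tM(l^-,l^+;\varpi)$ of solutions on the strip $\Sigma=\R\times[0,1]$ with fixed asymptotics $l^\pm\in S_W(L_0,L_1)$. Since the domain moduli here is a single point (the strip has no moduli up to translation), the statement of Theorem \ref{gromov-unst} about $\cC_i\to\cC$ is automatic; what remains is the analysis of the two strip-like ends. First I would invoke Theorem \ref{c_0_polytope} and Theorem \ref{d control} to get uniform $C^0$ and $C^k$ bounds on all $\phi_i$ (independent of $i$, depending only on the fixed chords and the geometry of $M$). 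In particular the images $\phi_i(\Sigma)$ lie in a fixed compact set $K\subset M$.

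The core mechanism is the standard ``sliding window'' argument, already used in the proof of Theorem \ref{limit}: because the action $\A_{\widetilde H_\varpi}$ decreases along the flow (the energy identity, Theorem \ref{polytope-lm-2} and its corollary give $E_{\widetilde H_\varpi}(\phi_i)=\A_{\widetilde H_\varpi}(l^-)-\A_{\widetilde H_\varpi}(l^+)$, a fixed finite number), and because $S_W(L_0,L_1)$ is finite (by Theorem \ref{transversal} and the transversality assumption), the solutions $\phi_i$ must spend most of the strip close to chords, transitioning between finitely many of them. I would formalize this by: choosing translations $s^1_i,\dots,s^N_i$ that center the successive ``transition regions'' where $\int_{[s,s+1]\times[0,1]}|\pat_s\phi_i|^2$ exceeds a fixed threshold $\hbar>0$; applying Lemma \ref{convergence} together with the uniform $C^k$ bounds to extract, for each $m$, a $C^\infty_{\loc}$-convergent subsequence of $s^m_i*\phi_i$ to some finite-energy solution $\phi^m$ on $\Sigma$; identifying $\phi^m\in\tM(l^{m-1},l^m;\varpi)$ via Theorem \ref{sec4-thm-ener-conv-expo-1}/Theorem \ref{limit}, which forces the $s\to\pm\infty$ limits to be chords; and checking consistency of the labels $l^0=l^-$, $l^N=l^+$ using the telescoping of action values. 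The number of breaks $N$ is bounded by $(\A_{\widetilde H_\varpi}(l^-)-\A_{\widetilde H_\varpi}(l^+))/\hbar$, hence finite, and a diagonal argument over $m=1,\dots,N$ produces a single subsequence working simultaneously.

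The main obstacle, and the step I would spend the most care on, is the \textit{no energy loss in the necks} claim: one must rule out energy escaping to the intermediate ``thin'' regions between consecutive windows, i.e.\ show that once the sliding windows are chosen with the correct gaps, $\phi_i$ restricted to the complementary long strips $C^0$-converges (after translation) to a constant chord with all derivatives, with no residual energy. This is exactly where the exponential decay estimate (Theorem \ref{sec4-thm-ener-conv-expo-1}(iii), proved in Theorem \ref{sec5:exponential}) and the $\epsilon$-regularity implicit in Lemma \ref{convergence} combine: a region of small energy over a long strip forces $|\pat_s\phi_i|$ to be pointwise small there, and then the exponential-decay estimate (applied on that sub-strip, whose length can be taken $\to\infty$) pins the map to a single chord. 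I expect the bookkeeping of which chord appears at each end, and verifying that the ``out'' limit of $\phi^m$ genuinely equals the ``in'' limit of $\phi^{m+1}$, to be routine but notationally heavy; the genuinely analytic input is the uniform $C^k$-bound (Theorem \ref{d control}) plus the exactness-driven finiteness of the total energy, both of which are already available. Since $d=1$ is the stable-domain-free case, I would remark that this is precisely the ``$\mathscr{W}^2$ treated as $\widetilde{\mathscr{W}}$ by convention'' situation, so no domain degeneration in $\overline{\mathscr{R}}$ enters and the statement is a clean corollary of Theorem \ref{gromov-unst}.
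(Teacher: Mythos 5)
Your proposal is correct and matches the paper's intended argument: the paper itself only says that Theorems \ref{gromov-unst} and \ref{thm:split} follow from Theorem \ref{d control} and Theorem \ref{limit} ``by a standard argument as in the proof of Theorem 1 of \cite{f1}'', and your sliding-window/breaking analysis (uniform $C^k$ bounds, fixed total energy from the action identity, finiteness of $S_W(L_0,L_1)$, Lemma \ref{convergence} for local convergence, Theorem \ref{limit} for identifying asymptotics, and exponential decay to exclude energy loss in the necks) is precisely that standard argument written out. No substantive divergence from the paper's route.
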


\begin{thm}[Gromov compactness, stable case]\label{gromov:st}
Suppose $\cC_i$ can be written as $\cC_i=\cC_i^1\#_{l_i}\cC_i^2$, with $\cC_i^2$ taking $m$-th to $m+r$-th punctures($m>0$,$1<r<d-2$), and the gluing parameter $l_i\to +\infty$ as $i\to \infty$. Suppose $\cC_i^1\to \cC^1$ and $\cC_i^2\to \cC^2$ and denote by $L^1,L^2$ the corresponding Lagrangian labels for them, inheriting from $L$. Denote by $\zeta'$ the gluing point. Then we must have $l'_k\in S_W(L_{k,0},L_{k,1})$ for $k=0,\cdots,d$,and one  $l^*\in S_W(L_{\zeta',0},L_{\zeta',1})$, so that
$$
\phi_i\to \phi^1\text{ on } \cC_i^1, \phi_i\to \phi^2\text{ on } \cC_i^2 \text{ in } C^{\infty}_{loc},
$$
for some $\phi^1\in \W^{d+1}(\cC, L^1, l^1;K),\phi^2\in \W^{d+1}(\cC, L^2, l^2;K)$, where
$$ l^1= \{l'_0,\cdots,l'_{m-1},l^*,l'_{m+r+1},\cdots,l'_d\}  , l^2=\{l^*,l'_m,\cdots,l'_{m+r}\}.
$$
\end{thm}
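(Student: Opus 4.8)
The plan is to prove this the standard way for Gromov compactness of Floer-type equations, in the spirit of the proof of Theorem~1 of \cite{f1}: extract $C^\infty_{\loc}$-limits on the ``thick'' pieces from the uniform estimates already established, then analyze the long gluing neck to see that no energy escapes there and that the two limit pieces are matched at the node $\zeta'$ by a single Hamiltonian chord $l^{*}$. Throughout, the relevant a priori bounds come from Theorems~\ref{c_0_polytope} and \ref{d control}: every $\phi_i\in\W^{d+1}(\cC_i,L,l;K)$ has image in a fixed compact subset of $M$ and satisfies $|d\phi_i|\le C$ with $C$ depending only on the finite chord set $S_W$ and the bounded geometry of $M$; elliptic bootstrapping as in Lemma~\ref{regular} upgrades this to uniform $C^{k}$ bounds on compact subsets of $\cC_i$ measured in the consistent fiberwise metrics $\{g_b\}$, which are standard on strip-like ends and on the glued neck. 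Moreover, by the energy identity of Theorem~\ref{polytope-lm-2}, $E_K(\phi_i)=\A_{\widetilde{H}_\varpi}(l_d)-\sum_{j=0}^{d-1}\A_{\widetilde{H}_\varpi}(l_j)$ is a fixed number since the $l_j$ range over $S_W$, so the energies are uniformly bounded; since $|d\phi_i|$ is uniformly bounded there is no bubbling, so the only noncompactness is the prescribed domain degeneration $\cC_i=\cC_i^{1}\#_{l_i}\cC_i^{2}$ with $l_i\to\infty$. (Exactness of $M$ and contractibility of the thimbles, Theorem~\ref{exact-functional} and Lemma~\ref{w:simply connected}, are what make the $C^0$ bound and the energy identity available in the first place.)

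On the thick parts I would extract the limits. Because $\cC_i^{1}\to\cC^{1}$ and $\cC_i^{2}\to\cC^{2}$ as pointed discs, a compact exhaustion of $\cC^{1}$ (resp.\ $\cC^{2}$) minus its punctures embeds, for $i$ large, into $\cC_i$ with uniformly bounded geometry, so by Arzel\`a--Ascoli together with the elliptic convergence Lemma~\ref{convergence} a diagonal subsequence of $\phi_i$ converges in $C^\infty_{\loc}$ to maps $\phi^{1}\colon\cC^{1}\to M$ and $\phi^{2}\colon\cC^{2}\to M$ solving the perturbed Witten equation~(\ref{perturb-lg-poly}), with $\phi_i\to\phi^{1}$ on $\cC_i^{1}$ and $\phi_i\to\phi^{2}$ on $\cC_i^{2}$. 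Here one uses that, by consistency of the admissible perturbation $K$ (and of the auxiliary section $\sigma$) under the gluing maps $\gamma^{T}$, the data on $\cC_i$ restrict in the limit $l_i\to\infty$ to admissible data on $\cC^{1}$ and $\cC^{2}$. By lower semicontinuity $E_K(\phi^{1}),E_K(\phi^{2})<\infty$, so by Theorem~\ref{limit} and Lemma~\ref{higher convergence} each limit converges, with all derivatives, to a Hamiltonian chord at every strip-like end: to chords $l'_k\in S_W(L_{k,0},L_{k,1})$ at the exterior ends (with $L^1,L^2$ the inherited labels) and to chords $l^{*}_{(1)}$ and $l^{*}_{(2)}$ at the two ends lying over the node $\zeta'$.

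The main obstacle is the matching statement $l^{*}_{(1)}=l^{*}_{(2)}=:l^{*}$, i.e.\ the long-neck analysis. The neck of $\cC_i$ is biholomorphic to a strip $[-l_i,l_i]\times[0,1]$ with thimble boundary conditions $L_{\zeta',0},L_{\zeta',1}$ and a fixed strip width, which is equal on the two sides of $\zeta'$ by consistency of $\sigma$ (forcing equal chord speed), and on which the perturbation term is exponentially small. Combining the uniform energy bound with the exponential decay estimate of Theorem~\ref{sec4-thm-ener-conv-expo-1}, one shows that the restrictions of $\phi_i$ to the near-halves of the neck converge to the finite-energy strip pieces already contained in $\phi^{1}$ and $\phi^{2}$, while any residual energy on the middle of the neck is carried by a (possibly broken) finite-energy trajectory between chords of $S_W(L_{\zeta',0},L_{\zeta',1})$ exactly as in Theorem~\ref{thm:split}; running the same subsequence-and-contradiction argument as in Theorem~\ref{limit}, using $\#S_W<\infty$, identifies the chord limit from the $\cC^{1}$ side with that from the $\cC^{2}$ side, producing the common node chord $l^{*}$. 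This is the step where one has to be careful about how the two normalizations of strip-like ends on $\cC_i$ near $\overline{\cR}\setminus\cR$ (the one pulled back from the universal curve and the one from gluing) interact, and it is exactly the point deferred to \cite[Theorem~1]{f1}.

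Putting the three parts together gives $\phi^{1}\in\W^{d+1}(\cC^{1},L^{1},l^{1};K)$ and $\phi^{2}\in\W^{d+1}(\cC^{2},L^{2},l^{2};K)$ with $l^{1}=\{l'_0,\dots,l'_{m-1},l^{*},l'_{m+r+1},\dots,l'_d\}$ and $l^{2}=\{l^{*},l'_m,\dots,l'_{m+r}\}$, and $\phi_i\to\phi^{1}$ on $\cC_i^{1}$, $\phi_i\to\phi^{2}$ on $\cC_i^{2}$ in $C^\infty_{\loc}$, as claimed. The remaining verifications --- that the convergence is genuinely $C^\infty_{\loc}$ on each glued piece, that the two limit discs are stable (which is forced here by $1<r<d-2$), and that the broken configuration is compatible with the boundary stratum of $\overline{\cR}$ --- are routine and parallel the argument behind Theorem~\ref{gromov-unst} and \cite[Theorem~1]{f1}.
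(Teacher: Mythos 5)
Your proposal is correct and follows essentially the same route as the paper, which simply invokes the uniform bounds of Theorem \ref{d control} and the asymptotic analysis of Theorem \ref{limit} together with the standard argument from \cite[Theorem 1]{f1}; your write-up is in fact a more detailed expansion of that one-line citation, correctly identifying the energy identity, the $C^\infty_{\loc}$ extraction on thick parts via Lemma \ref{convergence}, and the finite-chord-set neck analysis as the key ingredients. The only point to watch is the matching at the node: as in Theorem \ref{gromov-unst}, further strip breaking can occur in the long neck, so the identification of a single $l^*$ should be read modulo such intermediate broken trajectories, which is consistent with how you phrased it.
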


\section{Fredholm theory}\label{Fredholm theory section}\label{sec-8}

\subsection{Linearization}\

Recall that in section \ref{Witten equation section}  we defined the perturbed Witten map $\WI_{\cC, K}(\phi)$, which is a section of the bundle $\E_\cC\to \B_\cC$. Consider the linearization $D\WI_{\cC, K}(\phi)$ of $\WI_{\cC, K}(\phi)$, write it as $D_\phi$ for simplicity.

If we varies the disc, but keep the branes and Hamiltonian chords in order, we have the Banach manifold (which is already defined in Section \ref{sec-6.1})
$$
\B^{d+1}(L,l)=\{(\cC,\phi)\;|\;\phi\in \B^{d+1}(\cC,L, l), \cC\in \overline{\cR}\}.
$$

 Thus we can extend the Witten map $\WI_{\cC, K}$ to a map $\WI_{ K}$  on $\B^{d+1}(L,l)$ by $ \WI_{ K}(\cC,\phi)=\WI_{\cC, K}(\phi)$. Thus
 we can write $D\WI_{ K}=D_{\phi}+D_{\cC}$, where $D_{\cC}$ is a linear map on tangent space of the pointed disc configuration space at $\cC$.

  Similarly we define the extend moduli space  $\W^{d+1}(L, l;K)=\{(\phi,\cC)|\phi\in \W^{d+1}(\cC, L, l;K)\}$. Clearly, $\W^{d+1}(L, l;K)$ is the zero locus of $\WI_{ K}$.

By definition, $D_{\phi}$ is a elliptic type differential operator. On strip-like ends, $\WI_{\cC, K}(\phi)$ can be written as
$$
\WI_{\cC, K}(\phi)=\frac{\pat \phi}{\pat s}+J(\phi)\frac{\pat \phi}{\pat t}-\nabla_g \widetilde{H}_{\varpi}(\phi),
$$
and we can write
\begin{equation}\label{w-linearization-strip}
D_\phi \xi=\nabla_{s}\xi+J\nabla_t\xi-\nabla_{\xi}\nabla\widetilde{H}_{\varpi}(\phi).
\end{equation}

The operator $D_\phi$ on these parts can be written as the form $D_\phi=\pat_s-A(s)$. For each ends $\epsilon_\zeta(\Sigma^+)(\epsilon_\zeta(\Sigma^-))$, if we let $s\to+\infty(-\infty)$, then $\phi\to l$ for some $l\in S_{W}(L_{\zeta,0},L_{\zeta,1})$. On the other hand, we get a limit operator $A_l$ of $A(s)$.  We study these limit operators first.

For any $l\in S_{W}(L_0,L_1)$, define
\begin{equation}
w^{1,2}(l)=\{\xi\in W^{1,2}([0,1],l^*TM)\;|\;\xi(0)\in T_{l(0)}L_0, \xi(1)\in T_{l(1)}L_1 \},
\end{equation}
and
\begin{equation}
l^2(l)=L^2 ([0,1],l^*TM).
\end{equation}

\begin{lm}\label{w:soliton-iso}
Assume that $L_0$ intersects $L_1$ transversally  at time $1$ along $l$. Then the operator
$$
A_{l}:w^{1,2}(l)\mapsto l^2(l),
$$
defined by
$$
A_{l}\xi=-J{\nabla}_t \xi+\nabla_{\xi}(\nabla\widetilde{H}_{\varpi})
$$
is an isomorphism.
\end{lm}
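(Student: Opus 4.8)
The plan is to prove Lemma \ref{w:soliton-iso} by the standard ODE argument for linearized operators over a Hamiltonian chord, exploiting the fact that $A_l$ is a first-order ordinary differential operator on the compact interval $[0,1]$ with suitable Lagrangian boundary conditions. First I would reformulate the operator: writing $\nabla_t\xi$ for the covariant derivative along $l$, the equation $A_l\xi=0$ is a linear first-order ODE for $\xi\in\Gamma([0,1],l^*TM)$, namely $J\nabla_t\xi=\nabla_\xi(\nabla\widetilde H_\varpi)$, i.e. $\nabla_t\xi = J^{-1}\,S(t)\,\xi$ for a smooth path of symmetric endomorphisms $S(t)$ (here $J^{-1}=-J$). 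By the existence and uniqueness theorem for linear ODEs, the solution is determined by its initial value $\xi(0)$, and parallel-transporting back to a fixed vector space we obtain a linear flow $\Psi(t)\in GL(2n,\R)$; moreover, because $\nabla_t\xi=-J S(t)\xi$ is the linearization of a Hamiltonian flow, $\Psi(t)$ is a path of symplectic matrices (this is exactly the content of the earlier discussion relating equation \eqref{sec9:equa-nega-flow-1} to the Hamiltonian equation). A solution $\xi$ of $A_l\xi=0$ lying in $w^{1,2}(l)$ must satisfy $\xi(0)\in T_{l(0)}L_0$ and $\xi(1)=\Psi(1)\xi(0)\in T_{l(1)}L_1$; after identifying via parallel transport, the transversality hypothesis $L_0\pitchfork^{1,\mathsf k}L_1$ along $l$ says precisely that $\Psi(1)(T_{l(0)}L_0)$ and $T_{l(1)}L_1$ intersect only in $0$. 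Hence $\xi(0)=0$, so $\xi\equiv0$, which shows $A_l$ is injective.

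Next I would show $A_l$ is surjective, equivalently (since $A_l$ is Fredholm of some index — see below) that its cokernel vanishes; the cleanest route is to observe that the formal $L^2$-adjoint $A_l^*$ is an operator of the same type (again a first-order ODE with symmetric zeroth-order term, conjugated by $J$) with the "complementary" Lagrangian boundary conditions, and to rerun the injectivity argument for $A_l^*$. The boundary conditions for $A_l^*$ are again of Lagrangian type, and the relevant transversality is equivalent to the original transversality of $L_0$ and $L_1$ at time $1$ along $l$ (transversality of two Lagrangian subspaces is a symmetric condition and is preserved under the linear symplectic flow). Therefore $A_l^*$ is injective as well, so $A_l$ is surjective. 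Combined with injectivity, $A_l:w^{1,2}(l)\to l^2(l)$ is a bijective bounded operator, and by the open mapping theorem it is an isomorphism of Banach spaces. Alternatively, and perhaps more transparently: the operator $D_\phi=\partial_s-A_l$ on the infinite strip is Fredholm, and a standard spectral-flow/Maslov-index computation shows $A_l$ as an unbounded self-adjoint-type operator on $l^2(l)$ has $0$ in its spectrum if and only if $\Psi(1)(T_{l(0)}L_0)\cap T_{l(1)}L_1\ne 0$; transversality rules this out, so $A_l$ is invertible.

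The Fredholm property itself is routine: $A_l$ differs from the invertible model operator $\xi\mapsto -J\nabla_t\xi$ (with the given Lagrangian boundary conditions, which is invertible since constant Lagrangian boundary data over $[0,1]$ gives no kernel unless the Lagrangians coincide — and one can first reduce to a case where they are transverse by a compact perturbation) by the zeroth-order, hence compact, operator $\xi\mapsto \nabla_\xi(\nabla\widetilde H_\varpi)$. So $A_l$ is Fredholm of index $0$; injectivity then immediately gives the isomorphism, and I would not even need the separate surjectivity argument. I expect the only genuinely delicate point to be bookkeeping the identification of $T_{l(t)}L_i$ with subspaces of a fixed $\R^{2n}$ via parallel transport and checking that the linearized flow $\Psi(t)$ is indeed symplectic with respect to the transported symplectic form — this uses that $\widetilde H_\varpi$ generates a Hamiltonian flow and that $J$, $\omega$, $g$ are compatible and parallel, all of which are available from Section \ref{sec-3} and the Kähler hypothesis; the rest is the standard linear-algebra translation of "transversal intersection at time $1$" into "$0\notin\mathrm{spec}(A_l)$".
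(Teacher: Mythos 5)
Your proposal is correct and follows essentially the same route as the paper: injectivity via uniqueness for the linear ODE together with the transversality hypothesis, plus an appeal to self-adjointness/Fredholm theory to upgrade injectivity to bijectivity. The paper's own proof establishes injectivity exactly as you do (by identifying solutions of $A_l\xi=0$ with the derivative of the Hamiltonian flow applied to $\eta\in T_{l(0)}L_0$, so transversality forces $\eta=0$), and then shows by integration by parts that $A_l$ is formally self-adjoint on $w^{1,2}(l)$ -- the boundary term $g(\xi',J\xi)\big|_0^1$ vanishes precisely because $T_{l(0)}L_0$ and $T_{l(1)}L_1$ are Lagrangian.

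One small inaccuracy in your write-up: the formal adjoint $A_l^\ast$ does \emph{not} carry ``complementary'' Lagrangian boundary conditions. When you integrate by parts, the requirement that $g(\xi',J\xi)\big|_0^1$ vanish for all $\xi$ with $\xi(i)\in T_{l(i)}L_i$ forces $\xi'(i)\in (J\,T_{l(i)}L_i)^\perp=T_{l(i)}L_i$ (the Lagrangian condition), so the adjoint domain is the \emph{same} space $w^{1,2}(l)$. This is exactly why $A_l$ is self-adjoint, which is the observation the paper uses. Your alternative, cleaner route -- that $A_l$ is Fredholm of index $0$ because the zeroth-order term $\nabla_\xi(\nabla\widetilde H_\varpi)$ is a compact perturbation -- is valid and makes explicit a point the paper leaves implicit (self-adjoint plus injective only gives surjective once one knows the range is closed, i.e.\ once one has the Fredholm property). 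So your argument fills in a detail that the paper glides over, while otherwise being the same proof.
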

\begin{proof}
For any $\eta\in T_{l(0)}L_0$, choose a smooth curve $\lambda$ lying in $L_0$ with $\lambda(0)=l(0)$, $\dot\lambda(0)=\eta$. Denote $\xi(t)=\partial_s \phi_t(\lambda(s))|_{s=0}$, where $\phi_t$ is the time-$t$ map of the flow generated by the equation (\ref{perturb-hal-r}).  Note that $\phi_t(\lambda(0))=l(t)$.

We have
\begin{equation}
J\partial_t(\phi_t(\lambda(s)))-\nabla\widetilde{H}_{\lambda}(\phi_t(\lambda(s)))=0
\end{equation}

 Taking the derivative $\nabla_s$ to the above equation at $s=0$, we have
\begin{equation}\label{w-linearization-1}
-J\nabla_t\xi+\nabla_{\xi}(\nabla\widetilde{H}_{\lambda})=0.
\end{equation}
By the assumption that $L_0$ intersects $L_1$ transversally  at time $1$ along $l$,  $\xi(1)=(\phi_1)_*\eta\not\in T_{l(1)}L_1$. Therefore there is no solution of this equation in $w^{1,2}(l)$, and then the operator $A_l$ is injective.

For $\xi,\xi' \in w^{1,2}(l)$
\begin{align}
\int_0^1g(\xi',-J\nabla_t \xi+\nabla_{\xi}(\nabla\widetilde{H}_{\varpi}))dt=&-g(\xi',J\xi)|_0^1+\int_0^1g(\nabla_t\xi', J\xi)dt\notag\\
+&\int_0^1 \text{Hess}\widetilde{H}_{\varpi}(\xi,\xi')dt\notag\\
=&\int_0^1g(\xi,-J\nabla_t \xi'+\nabla_{\xi'}(\nabla\widetilde{H}_{\varpi}))dt.
\end{align}
The last equality holds because, as $L_0$ and $L_1$ are Lagrangian submanifolds, $g(\xi',J\xi)=0$ at both end points. Thus, the operator is injective and self-adjoint, so it is an isomorphism.
\end{proof}

Now Lemma \ref{w:soliton-iso} implies the following result.
\begin{thm}If for any $\zeta\in \cZ$, $L_{\zeta,0}$ and $L_{\zeta,1}$ intersect transversally at time $1$ along $l_\zeta$, then the operator $D_\phi$ is a Fredholm operator.
\end{thm}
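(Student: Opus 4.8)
The plan is to invoke the standard machinery for Cauchy--Riemann type operators on surfaces with strip-like ends: an elliptic operator on such a surface, whose asymptotic (limit) operators at the punctures are invertible, is automatically Fredholm. Concretely, one packages $D_\phi$ as a bounded linear map between the appropriate weighted Sobolev completions $W^{1,p}(\cC,\phi^*TM,F)\to L^p(\cC,\Omega^{0,1}_\cC\otimes\phi^*TM)$, and verifies the two ingredients of the general Fredholm criterion: (1) on the compact ``core'' $\cC_{Inn,a}$ the operator is elliptic with the usual Lopatinski--Shapiro (totally real) boundary conditions coming from $F|_C=\phi^*TC$, and (2) on each cylindrical/strip end $\ve_\zeta(\Sigma^\pm)$ the operator takes, after the identifications furnished by the strip-like ends and parallel transport, the translation-invariant-at-infinity form $D_\phi=\pat_s - A(s)$ with $A(s)\to A_{l_\zeta}$ as $s\to\pm\infty$, where $A_{l_\zeta}$ is exactly the operator analysed in Lemma~\ref{w:soliton-iso}.

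The key steps, in order, would be: first, recall the explicit form of $D_\phi$ on the strip-like ends from \eqref{w-linearization-strip}, namely $D_\phi\xi=\nabla_s\xi+J\nabla_t\xi-\nabla_\xi\nabla\widetilde H_\varpi(\phi)$, and rewrite it as $\pat_s - A(s)$ by using a unitary trivialization of $\phi^*TM$ along the end that is parallel in $t$ (so that $J\nabla_t$ becomes $J_0\pat_t$ up to a zeroth-order term absorbed into $A(s)$). Second, invoke Lemma~\ref{higher convergence}: along each end $\phi\circ\ve_\zeta(s,\cdot)\to l_\zeta$ with all derivatives, hence $A(s)$ converges in operator norm to the asymptotic operator $A_{l_\zeta}\xi=-J\nabla_t\xi+\nabla_\xi(\nabla\widetilde H_\varpi)$ acting on $w^{1,2}(l_\zeta)$. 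Third, apply Lemma~\ref{w:soliton-iso}: since by hypothesis $L_{\zeta,0}$ intersects $L_{\zeta,1}$ transversally at time $1$ along $l_\zeta$, each $A_{l_\zeta}$ is an isomorphism; in particular it is self-adjoint with trivial kernel, so $0$ is not in its spectrum, which is precisely the nondegeneracy condition needed at the ends. Fourth, combine these with interior elliptic estimates (the boundary version, e.g. Theorem~\ref{sect-5-regu-main-1} and Lemma~\ref{sect-5-regu-bdry-1}) and the exponential decay weight to produce the two-sided elliptic estimate $\|\xi\|_{W^{1,p}}\le C(\|D_\phi\xi\|_{L^p}+\|\xi\|_{L^p(\cC_{Inn,a})})$ with the correction term taken over a compact set; since the inclusion of $W^{1,p}$ into $L^p$ over a compact set is compact, this yields that $D_\phi$ has finite-dimensional kernel and closed range. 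Fifth, run the same argument for the formal adjoint $D_\phi^*$ (whose asymptotic operators are the adjoints $-A_{l_\zeta}$, equally invertible) to bound the cokernel, concluding that $D_\phi$ is Fredholm.

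The main obstacle I expect is bookkeeping rather than conceptual: one must be careful that the chosen Banach completions are compatible with the strip-like end structure on the varying domain, that the parallel transport trivialization along the ends is genuinely available (this uses the $C^0$/$C^1$ bounds of Theorem~\ref{c_0_polytope} so that $\phi$ lands in a fixed compact set of $M$ with bounded geometry), and that the perturbation term $X_K\im\sigma$ is asymptotically trivial along the ends (it reduces to the $\varpi_\zeta$-perturbation built into $\widetilde H_\varpi$, exactly as arranged in the admissibility conditions on $K$). A secondary technical point is handling the boundary Lagrangian conditions uniformly along the ends: the totally real subbundle $F$ must converge to the pair $(T_{l_\zeta(0)}L_{\zeta,0}, T_{l_\zeta(1)}L_{\zeta,1})$, which is again guaranteed by Lemma~\ref{higher convergence}. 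Once these are in place, the Fredholm property follows from the classical semi-Fredholm plus exponential-weight argument (as in, e.g., the treatment for Floer's equation), so the proof is essentially an application of Lemmata~\ref{w:soliton-iso} and \ref{higher convergence} together with the standard elliptic package.
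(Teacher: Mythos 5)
Your proposal is correct and is exactly the argument the paper intends: the paper itself gives no written proof beyond the remark that Lemma \ref{w:soliton-iso} implies the theorem, leaving the standard ``invertible asymptotic operators plus elliptic estimates on the compact core'' machinery implicit, which is precisely what you spell out. The only cosmetic point is that with nondegenerate (invertible) limit operators $A_{l_\zeta}$ no exponential weight is actually needed on the Sobolev spaces; the unweighted $W^{1,p}\to L^p$ setup already yields the semi-Fredholm estimate and the cokernel bound via the formal adjoint.
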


\subsection{Index and orientation}\label{index-orientation section}
\subsubsection*{Landau-Ginzburg branes}\label{lift}\

Define $\mathcal{G}M$ to be the Lagrangian Grassmannian bundle of $TM$. Namely, we have the fibration
$$
\xymatrix{
\mathcal{G}_p M \ar[r]^{} &    \mathcal{G}M \ar[d]^{} \\
                        &  M }
$$
where the fiber over each point $p\in M$ is the Grassmannian of Lagrangian subspaces of $T_pM$.

We then get an exact sequence of homotopy group
\begin{equation}\label{fibration}
\cdots\to \pi_2(M) \to \pi_1(\mathcal{G}_p M)\to \pi_1(\mathcal{G} M)\to\pi_1(M)\to *.
\end{equation}
Note that
\begin{equation}\label{fibration-maslov}
\pi_1(\mathcal{G}_p M)\cong H_1(\mathcal{G}_p M)\cong \mathbb{Z},
\end{equation}

 where the isomorphism is given by the Maslov index.

The sequence
\begin{equation}\label{fibration-homology}
\cdots\to \pi_2(M) \to H_1(\mathcal{G}_p M)\to H_1(\mathcal{G} M)\to H _1(M)\to *,
\end{equation}
is also exact since the abelization is right exact (see \cite{Sh1}, page 77). Let  $\widetilde{\mathcal{G}}M$ to be the universal Abelian covering space of $\mathcal{G}M$. For a Lefschetz thimble $L$, we can have a natural inclusion $i:L\mapsto \mathcal{G}M$.

We also need the notion of Pin structure in formulating the orientation theory of moduli spaces. In general, a Pin structure on a vector bundle $F\mapsto B$ is a principle $\text{Pin}_n$ bundle $P^{\#}=P^{\#}(F)$ with an isomorphism $P^{\#}\times_{\text{Pin}_n} \R^n\cong F$.

Since any Lefschetz thimble $L$ is simply connected, there is a natural Pin structure $P$ on  the bundle $TL$. We call it  the
Pin structure of $L$.
\begin{df}
A \textit{Landau-Ginzburg brane} $L^{\#}=(L,i^{\#}, P)$ is a Lefschetz thimble $L$ with a chosen Pin structure $P$ on it, as well as a lift $i^{\#}:L\mapsto \widetilde{\mathcal{G}}M$ of the inclusion $i:L\mapsto \mathcal{G}M$, such that the diagram
$$
\xymatrix{
&\widetilde{\mathcal{G}}M\ar[d]^{\pi} \\
L\ar[r]^i\ar[ur]^{i^{\#}} &\mathcal{G}M
}
$$
commutes.
\end{df}

 A Landau-Ginzburg brane is an anchored Lagrangian brane defined in \cite{fooo3} (or ref. \cite{Sh1}).

\subsubsection*{Grading datum}\

In \cite{Sh1}, Sherdian introduced a $\pmb{G}$-graded $A_{\infty}$ category. This $\pmb{G}$-graded $A_{\infty}$ category can be regarded as a generalization of ordinary $A_{\infty}$ category. In particular, the grading datum $\pmb{G}$ can be regarded as a generalization of  $\Z$ grading, which is most frequently used on the study of Fukaya categories, see \cite{Si3}, Chapters 9. We will adopt the setting of \cite{Sh1}, section 2.

\begin{df}
An \textit{unsigned grading datum} $\pmb{G}$ is an abelian group $Y$ together
  with a morphism $f:\Z\mapsto Y$. We will use the shorthand $\pmb{G} = \{ \Z \mapsto Y \}$. If we write the cokernel of $f$ by $X$, we also write this datum as
  $$
\begin{CD}
 \Z@>f>>Y@>g>>X.
\end{CD}
$$
Suppose we have two unsigned grading data $\pmb{G_1} = \{ \Z \mapsto Y_1 \},\pmb{G_2} = \{ \Z \mapsto Y_2 \}$, a morphism $\pmb{p} : \pmb{G_1}\mapsto \pmb{G_2}$ is a morphism $p:Y_1\mapsto Y_2$ that makes the following diagram commute:
$$
\begin{CD}
\Z@>>>Y_1\\
@VVidV@VVpV\\
\Z@>>>Y_2.
\end{CD}
$$
\end{df}
\begin{df}
We define the sign grading datum, $\pmb{G}_{\sigma}:= \{ \Z \mapsto \Z_2 \}$.
\end{df}
\begin{df}\label{signed-grading}
We define a \textit{grading datum} $(\pmb{G}, \pmb{\sigma})$ to be an unsigned grading
datum $\pmb{G}$, together with a sign morphism $\pmb{\sigma}$, which is a morphism of unsigned grading data from $\pmb{G}$ to $\pmb{G}_{\pmb{\sigma}}$. We denote by $\sigma$ the coresponding morphism $Y\to \Z_2$ in defining $\pmb{G}$.
\end{df}

If we take the direct sum of $g:Y \mapsto X$ with $\Z  \mapsto \Z$, we get a new unsigned datum
$$
\begin{CD}
 \Z@>(f, 0)>>Y\oplus \Z@>g\oplus id>>X\oplus \Z.
\end{CD}
$$
We can also define a sign morphism for this unsigned datum: if $\sigma$ is the morphism $Y\mapsto \Z_2$ in defining the sign morphism $\pmb{G}\mapsto \pmb{G}_{\sigma}$, we define $\widetilde{\sigma}(y,k)=\sigma(y)$. In this way, we can define a new grading datum from a given grading datum $\pmb{G}$, we call it $\pmb{G}\oplus \Z$.

\begin{df}
Suppose we have a grading datum $(\pmb{G}, \pmb{\sigma})$ with the unsigned grading datum $\Z\mapsto Y$. A \textit{$Y$-graded Abelian group (vector space)} is an Abelian group (vector space) $V$, together with a collection of Abelian groups(vector spaces) $V_y$, indexed by $y \in Y$, and an isomorphism
$$
V\cong \bigoplus_{y\in Y}V_y.
$$
\end{df}

\begin{df}
A \textit{$\pmb{G}$-graded algebra} is an algebra assigned with $Y$-grading, that is an  algebra whose multiplication respects the $Y$-grading.  For a   $\pmb{G}$-graded algebra $\pmb{R}$, a \textit{$\pmb{G}$-graded module (bimodule)} $A$ on $\pmb{R}$ is a $R$ module (bimodule) on a $\pmb{G}$-graded Abelian group $V$, that the multiplication respects the $Y$-grading.
\end{df}

\subsubsection*{Grading datum in Landau-Ginzburg model}\

We will define a grading datum from our geometric data. By  (\ref{fibration-maslov}) and (\ref{fibration-homology}) , We can get an unsigned datum $\pmb{G}_0$:
\begin{equation}\label{homology-datum}
\begin{CD}
H_1(\mathcal{G}M_p)\cong\Z@>f_1>>H_1(\mathcal{G}M)@>g_1>>H_1(M).
\end{CD}
\end{equation}
To define a sign morphism $\pmb{\sigma}$, we firstly define a real vector bundle $\mathscr{L} \mapsto \mathcal{G}M$, whose fibre over a point is identified with the Lagrangian subspace at that point. The first Stiefel-Whitney class $\omega_1(\mathscr{L})$ of this bundle is an element in $H^1(\mathcal{G}M)$. Thus we can define a map $\sigma_1:H_1(\mathcal{G}M)\mapsto \Z_2$ by pairing with this element $\omega_1(\mathscr{L})$. Using this, we can get a morphism $\pmb{G_0}\mapsto\pmb{G}_{\sigma}$, still denoted by $\pmb{G_0}$. By direct summing with $\Z$ we get a new grading datum $\pmb{G}=\pmb{G_0}\oplus \Z$. We will alway use the notation $\pmb{G}$ to denote  this grading datum.

\begin{rem}
The grading datum $\pmb{G_0}$ is the one used  in \cite{Sh1}. We ``add'' some information encoded by $\Z$ in our grading datum, which actually encodes the speed $\mathsf{k}$ of the Hamiltonian chords.
\end{rem}
\begin{rem}
Here is an example for these setting. Define $D$ to be the union of hyperplanes $\{Z_i=0 \}$ for $i=0,1,\cdots,n$ in $\CP^n$.    For any hypersurface in $\CP^n$, its part in $\CP^n\setminus D$   can be written as
$$N:=\{(z_1,\cdots,z_n)\in (\C^*)^n\;|\;W(z_1,\cdots,z_n)=0\}$$ for some $W$, where $z_i=Z_i/Z_0$. Sheridan (\cite{Sh1}) has studied the Fukaya category of $N$ in terms of `relative Fukaya category' In defining the grading datum there,  he used the exact sequence
\begin{equation}\label{grading-datum}
\begin{CD}
H_1(\mathcal{G}N_p)\cong\Z@>f_1>>H_1(\mathcal{G}N)@>g_1>>H_1(N)@>>>0,
\end{CD}
\end{equation}
where $\mathcal{G}N$ is the bundle of Lagrangian subspaces of $TN$. Let $M=\CP^n\setminus D$, denote by $h$ the restriction of the Fubini-Study metric on $M$. The inclusion $N\to M$ induces the commutative diagram
$$
\begin{CD}
\pi_2(N)@>2c_1(TN)>>H_1(\mathcal{G}N_p)@>>>H_1(\mathcal{G}N)@>>>H_1(N)@>>>0\\
@VVV@VVaV@VVbV@VVcV\\
\pi_2(M)@>2c_1(TM)>>H_1(\mathcal{G}M_p)@>>>H_1(\mathcal{G}M)@>>>H_1(M)@>>>0
\end{CD}
$$
As $\CP^n\setminus D\cong (\C^*)^n$, $2c_1(TM)=0$. By the same argument as in Lemma 3.24 of \cite{Sh1}, we have $2c_1(TN)=0$, so the diagram is reduced to
$$
\begin{CD}
0@>>>H_1(\mathcal{G}N_p)@>>>H_1(\mathcal{G}N)@>>>H_1(N)@>>>0\\
@.@VVaV@VVbV@VVcV\\
0@>>>H_1(\mathcal{G}M_p)@>>>H_1(\mathcal{G}M)@>>>H_1(M)@>>>0.
\end{CD}
$$

We can expect to get some implication of ``correspondence'' between the affine space and one of its hypersurfaces. This can be expected to be used in studying the relation between the Floer theory of LG model $(M,W,h)$ and the Fukaya category on $N$.
\end{rem}

\begin{rem}
However, any $\pmb{G_0}$-graded vector space or $\pmb{G_0}$-grade algebra can be also regarded a $\pmb{G}$-graded one, as we can directly assign $(y,0)$ for an element which has $\pmb{G_0}$ grading $y$. In this sense, the rich examples in section 2.2 in \cite{Sh1} can be regarded as examples for $\pmb{G_0}$ grading.
\end{rem}

 \subsubsection*{Grading for Hamiltonian chords}\

In section \ref{section-weq-act}, we considered the Hamiltonian chord equation (\ref{perturb-hal-r}):
$$
J(\phi)\frac{\pat \phi}{\pat t}-\nabla_g \widetilde{H}(\phi)=0
$$
It generates a flow $\phi_{\widetilde{H},t}$.  For $x\in L_1$, $(\phi_{\widetilde{H},t})^{-1}_{*}({TL_1}_x)$ gives a path in $\mathcal{G}M$ as $t$ goes from $0$ to $1$. We can lift it to a path in $\widetilde{\mathcal{G}}M$. Using this lift, we can start from a brane $L_1^{\#}$ and get a brane at the time $t=1$, denoted by $(\phi^{\#}_{\widetilde{H}})^{-1}(L_1^{\#})$.

Suppose we have a pair of branes $(L_0^{\#},L_1^{\#})$. By our settings in section \ref{sec-3}, we know that  $\phi_{\widetilde{H},1}(L_0)$ intersects $L_1$ transversally
, so $\phi_{\widetilde{H},1}^{-1}(L_1)$ intersects $L_0$ transversally, too.  Note that the group $H_1(\mathcal{G}M)$ have a covering action on $\widetilde{\mathcal{G}}M$. For $y\in H_1(\mathcal{G}M)$, denote the action by $\tau_{y}$. We will assign a grading in $\pmb{G}$ for each Hamiltonian chord $l$ between $L_0$ and $L_1$. First we choose an element $y$ in $H_1(\mathcal{G}M)$, which is the unique element that $l$ lifts to a path from $L_0^{\#}$ to $\tau_y L_1^{\#}$ with zero relative Maslov index.

To be more precise, the chord $l$ correspond to a intersection point $p_0=l(0)$ of $L_0$ and a new Lagrangian submanifold $\widetilde{L_1} =(\phi_{\widetilde{H},1})^{-1}(L_1)$.  We connect $T_{p_0}(L_0)$ and $T_{p_0}\widetilde{L_1}$ by a path $h_l$ in $\mathcal{G}_{p_0}M$ of zero relative Maslov index and lift it to a path $\widetilde{h_l}$ with $\widetilde{h_l}(0)\in L_0^{\#}$.  We choose a $y$ in $H_1(\mathcal{G}M)$, so that $\widetilde{h_l}(1)\in (\psi^{\#}_{\widetilde{H}})^{-1}\tau_{-y}(L_1^{\#})$.

 On the other hand, we recall that, each Hamiltonnian has a speed parameter $\mathsf{k}$, which is used to define the space $S_W$ in Definition \ref{chord-space-df}. For any $\pmb{G}$-graded algebra $\pmb{R}$, we get a $\pmb{G}$-graded module  $CF(L_0^{\#},L_1^{\#})$, freely generated by elements in $S_W(L_0,L_1)$. For the element $l$, the grading is defined to be $\mathsf{y}=(y,\mathsf{k})$.

\subsubsection*{Fredholm index and orientation}\

Suppose we have an element $  \phi \in  \B^{d+1}(\cC, L, l)$. For any $a>0$, we denote $\Sigma^{a,+}=[a,+\infty)\times[0,1],\Sigma^{a,-}=(-\infty,-a]\times[0,1]$ , for simplicity. By definition, we have $\phi\circ \ve_{\xi^{\pm}}\to \text{ some }l^{\pm}$, as $s\to {\pm}\infty$, where the sign depends on wether it is an incoming or outgoing end.

Choose a large $a$ and make perturbation for $\phi$ to guarantee that $\phi\circ\ve_{\xi^{\pm}}(s,t)=l^{\pm}(t)$, on these strip-like ends. Thus, on an end, we have $A(s)=A_l$ for some chord $l$.

 Choose a function

 $$
\begin{cases}
\rho_0(s)=0&  s<0\\
0\leq \rho_0(s) \leq 1 & 0 \leq s \leq 1\\
\rho_0(s)=1 &s \geq 1.
\end{cases}
$$

 Let
 $$
 \widetilde{\phi}(s,t)=\psi_{\widetilde{H},\rho_0(s-a)t}(\phi(s,t)),\Psi(\xi)(s,t)=D\psi_{\widetilde{H},\rho_0(s-a)t}(\xi)(s,t),
 $$
 on the outgoing end, and let
$$
 \widetilde{\phi}(s,t)=\psi_{\widetilde{H},\rho_0(s+a)t}(\phi(s,t)),\Psi(\xi)(s,t)=D\psi_{\widetilde{H},\rho_0(a+s)t}(\xi)(s,t),
 $$
 on the incoming end. We also require that $\widetilde{\phi}=\phi$ and $\Psi$ is identity on other part of the disc.

Thus we get an isomorphism $\Psi$ from some Banach space $\widetilde{\E_\cC}(\phi)$ to $\E_\cC|_{\phi}$, where $\widetilde{\E_\cC}(\phi)$ is some subspace of $\widetilde{\phi}(s,t)^*TM$.

 Let $\widetilde{D_{\phi}}=\Psi^{-1}\circ D_{\phi} \circ\Psi$. Note that
 $$
 \widetilde{D_{\phi}}=\Psi^{-1}\circ\big(\pat_s+A_l\big)\circ\Psi \xi=\pat_s\xi+J \pat_t\xi
 $$
 on each ends.

 Define a trivial Hermitian vector bundle over the upper half plane $\R\times\R_{\geq 0}$ whose fibre are $T_{p_0}M$. Note that, for each Hamiltonian chord $l$, we associate an intersection point $p$, together with a path $h_l\in \mathcal{G}M_p$. Using this, we define the Lagrangian boundary condition on $\R$ given by $h_l(\rho_0(s))$.  If the disc $r$ has $d+1$ boundary punctures, we get $d+1$ Fredholm operators $D_0,\cdots,D_d$, we call them orientation operators.




 We can get an elliptic operator $\hat{D}$ by gluing these orientation operators to $\widetilde{D_{\phi}}$. By the standard index gluing formula, we know



\begin{align*}
\text{i}(\hat{D})=&\text{i}(\widetilde{D_{\phi}})-\text{i}(D_0)+\text{i}(D_1)+\cdots+\text{i}(D_{d+1})+n\\
=&\text{i}(D_{\phi})-\text{i}(D_0)+\text{i}(D_1)+\cdots+\text{i}(D_{d+1})+n,
\end{align*}

 \begin{align}
 \text{det}(\hat{D})\cong& \text{det}(\widetilde{D_{\phi}})\otimes\text{det}(D_0)^{\vee}\otimes\text{det}(D_1)\otimes\cdots\otimes\text{det}(D_{d+1})\notag\\
 \cong &\text{det}(D_{\phi})\otimes\text{det}(D_0)^{\vee}\otimes\text{det}(D_1)\cdots\otimes\text{det}(D_{d+1}).\label{orientation-bundle}
 \end{align}

Now $\hat{D}$ can be regarded as the Cauchy-Riemann operator plus a zero order term, on a bundle pair $(D^2,E,F)$.

 The bundle $F$ on $S=\pat  D^2$ can be regarded as a map $F:S\to \mathcal{G}M$, this define a class $[\rho]\in H_1(\mathcal{G}M)=Y(M)$.  The bundle pair $(E,F)$ also has a Maslov index, denoted by $\mu(D^2,E,F)$. By Lemma 3.3 in \cite{Sh1}, we have
 $$
 [\rho]=f_1(\mu(D^2,E,F)),
 $$
 where $f_1$ is the map in (\ref{grading-datum}).

Now we consider the boundary condition for $\hat{D}$.  Suppose that the $d+1$ Lefschetz thimbles involved in the boundary condition are lifted to Landau-Ginzburg branes $L^{\#}_0,\cdots,L^{\#}_d$, and $l_j\in S_W(L_{j-1},L_j)$ for $j=1,\cdots,d$, $l_0\in S_W(L_0,L_d)$ are corresponding Hamiltonian chords. Suppose that the grading for $l_j$ is $\mathsf{y}_j=(y_j,k_j)$ for $j=0,\cdots,d$.

At an incoming strip-like end with Hamiltonian chord $l_j$, the brane $L_{j-1}^{\#}$ becomes $(\phi_{\widetilde{H}})^{-1}\tau_{-y}(L_j^{\#})$, after passing the chord $l_j$, and then becomes $\tau_{-y}(L_j^{\#})$ as $s$ goes from $+\infty$ to $1$, along the boundary piece $\{s,1\}$, by the definition of $\Psi$.
So if we lift $F$ to a map $F^{\#}:[0,1]\to \mathcal{G}M$, then $F^{\#}(1)=\tau_{(-y_1-\cdots-y_d+y_0)}F^{\#}(0)$, this means
$$
 f_1(\mu(D^2,E,F))=[\rho]=-y_1-\cdots-y_d+y_0.
$$
 So
 \begin{equation}\label{fredholm-index}
 f_1(i(\hat{D})-n)=f_1(i(D_{\phi}))=-y_1-\cdots-y_d+y_0,
 \end{equation}
 where the first equality is obtained by the index formula for $\hat{D}$.

\subsection{Exponential decay}\

\begin{thm}\label{sec5:exponential}
For any $\phi\in \W^{d+1}(\cC, L, l;K)$,  at any strip-like ends $\epsilon_{\zeta}(\Sigma^{\pm})$, we have
$$
|\partial_s (\phi\circ\epsilon_{\zeta}(s,t))|<ce^{-\delta|s|},
$$
for some positive numbers $\delta$ and $c$. In particular, for any $\phi\in \tM(l^-,l^+; \varpi)$ for some given $l^-,l^+\in S_W(L_0,L_1)$,  there are positive numbers $\delta$ and $c$, such that
\begin{equation}
|\partial_s \phi(s,t)|<ce^{-\delta|t|}.
\end{equation}
Furthermore, if a sequence $\{\phi_i\}\in \W^{d+1}(\cC, L, l;K)$ converging to some $\phi$ locally, and is uniform on some strip-like end $\epsilon_{\zeta}(\Sigma^+)$ ($\epsilon_{\zeta}(\Sigma^-)$), we have
$$
|\partial_s (\phi_i\circ\epsilon_{\zeta}(s,t))|<ce^{-\delta|s|},
$$
for some $(\delta,c)$ independent of $i$.
\end{thm}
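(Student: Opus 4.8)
The plan is to prove exponential decay by the standard "differential inequality for the local energy" argument adapted to our setting, combined with the $C^0$--$C^1$ bounds from Theorem \ref{c_0_polytope} and the nondegeneracy (transversality at time $1$) of the chords $l_\zeta$. The key observation is that, on a strip-like end, the Witten equation becomes Floer's equation $\pat_s\phi+J\pat_t\phi-\nabla_g\widetilde H_\varpi=0$, so that $\phi$ is a negative gradient flow line of the action functional $\A_{\widetilde H_\varpi}$ and the local energy on the slice $\{s\}\times[0,1]$ is exactly $\varepsilon(s):=\tfrac12\int_0^1(|\pat_s\phi|^2+|\pat_t\phi-X_{\widetilde H_\varpi}(\phi)|^2)\,dt=\int_0^1|\pat_s\phi|^2\,dt$. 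First I would record, using Theorem \ref{polytope-lm-2}/its corollary together with Theorem \ref{c_0_polytope}, that the total tail energy $\int_{\Sigma^\pm}|\pat_s\phi|^2$ is finite and bounded uniformly (independently of $\phi$, and — for the last assertion — of $i$). Hence $\varepsilon(s)\to 0$ as $s\to\pm\infty$.

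Next comes the core estimate: I would show there are constants $s_0,\delta_0>0$, depending only on the geometry of $(M,h)$, the tame data, and the chords $l_\zeta$ (through the nondegeneracy of the Hessian of $\A_{\widetilde H_\varpi}$ at $l_\zeta$ established in Lemma \ref{w:soliton-iso}), such that for $|s|\ge s_0$ one has the second-order differential inequality $\varepsilon''(s)\ge \delta_0^2\,\varepsilon(s)$. The proof of this is the usual one: differentiate $\varepsilon$, use the equation and integration by parts (the Lagrangian boundary conditions $\phi(s,i)\in L_i$ kill the boundary terms exactly as in the self-adjointness computation in Lemma \ref{w:soliton-iso}), and then invoke a spectral-gap/Poincaré inequality for the self-adjoint operator $A_{l_\zeta}=-J\nabla_t+\nabla_{(\cdot)}\nabla\widetilde H_\varpi$ on $w^{1,2}(l_\zeta)$, which is invertible by Lemma \ref{w:soliton-iso}, hence has spectrum bounded away from $0$. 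Since $\phi(s,\cdot)$ is $C^1$-close to $l_\zeta$ for $|s|$ large (by the convergence statement, Theorem \ref{limit} and Lemma \ref{higher convergence}), the linearized inequality persists with a slightly smaller $\delta_0$ for the nonlinear $\varepsilon$. The standard ODE lemma (if $\varepsilon\ge 0$, $\varepsilon''\ge\delta_0^2\varepsilon$ on $[s_0,\infty)$ and $\varepsilon\to 0$, then $\varepsilon(s)\le \varepsilon(s_0)e^{-\delta_0(s-s_0)}$) then gives exponential decay of $\varepsilon(s)$, and from the interior elliptic estimate (Theorem \ref{sect-5-regu-main-1} applied on unit bands, using the uniform $C^1$ bound) one upgrades $L^2$-decay of $\pat_s\phi$ on a band to a pointwise bound $|\pat_s\phi(s,t)|\le c\,e^{-\delta|s|}$, with $\delta<\delta_0$ and $c$ depending only on the stated data. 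The special case $\phi\in\tM(l^-,l^+;\varpi)$ over the strip $\Sigma=\R\times[0,1]$ is literally this argument applied at both ends (the variable is called $t$ there in the statement, but it is the strip coordinate $s$).

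For the final "uniform over the sequence" clause, I would run the same argument but track constants: the hypotheses are that $\phi_i\to\phi$ locally and the convergence is uniform on the end $\epsilon_\zeta(\Sigma^\pm)$; together with the uniform $C^1$ bound (Theorem \ref{d control}) and the uniform energy bound, this makes $s_0$, $\delta_0$, and the band-elliptic constant $c$ independent of $i$, so $|\pat_s(\phi_i\circ\epsilon_\zeta(s,t))|\le c\,e^{-\delta|s|}$ with $(\delta,c)$ uniform. The main obstacle I anticipate is making the passage from the linearized spectral gap to the genuine differential inequality for the nonlinear local energy fully rigorous in the presence of the perturbation term $K$ and the curvature of $M$: one must control the error terms in $\varepsilon''(s)-\|A_{l_\zeta}\text{-quadratic form}\|$ by $\|\phi(s,\cdot)-l_\zeta\|_{C^1}$ times $\varepsilon(s)$, which in turn requires knowing a priori that this $C^1$-distance is small for $|s|\ge s_0$ with $s_0$ uniform — this is where the uniform convergence hypothesis and Lemma \ref{higher convergence} are essential, and where one has to be careful that the constant $C$ in the final bound genuinely depends only on the chords and the bounded geometry, not on the individual solution. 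Everything else is the routine elliptic bootstrap already packaged in Theorems \ref{sect-5-regu-main-1}, \ref{d control} and \ref{limit}.
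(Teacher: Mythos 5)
Your proposal follows essentially the same route as the paper: set $\xi=\partial_s\phi$, form the slice energy $f(s)=\tfrac12\int_0^1|\xi|^2dt$, derive $f''\ge\delta^2 f$ for $|s|$ large from the invertibility of $A_{l_\zeta}$ in Lemma \ref{w:soliton-iso} together with the convergence from Lemma \ref{higher convergence}, integrate the ODE inequality, and then upgrade $L^2$-decay to pointwise decay (the paper uses a maximum principle for the second-order elliptic equation satisfied by $\xi$ where you use band elliptic estimates; both are standard and interchangeable). The uniformity claim for the sequence is handled the same way in both.

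One step of your sketch is inaccurate as stated: the Lagrangian boundary conditions do \emph{not} kill the boundary terms in the computation of $f''$. In the self-adjointness computation of Lemma \ref{w:soliton-iso} the boundary term is $g(\xi',J\xi)\vert_0^1$ with both $\xi,\xi'$ tangent to the thimble, so it vanishes; but in the $f''$ computation the boundary term is $g(\nabla_s\xi,J\xi)\vert_0^1$, and $\nabla_s\xi$ has a normal component equal to the second fundamental form $\mathrm{I\!I}(\xi,\xi)$, so the term equals $g(\mathrm{I\!I}(\xi,\xi),J\xi)\vert_0^1$ and is generically nonzero. The paper rewrites it as a bulk integral $\int_0^1\partial_t\,g(\tilde{\mathrm{I\!I}}(t)(\xi,\xi),J\xi)\,dt$ and absorbs it, together with the curvature and $\nabla^2 d\widetilde H_\varpi$ terms, into the spectral-gap term: all of these contributions are cubic in $\xi$ with coefficients bounded by the $C^0$--$C^1$ estimates, hence $o(\|\xi\|_{L^2}^2)$ once $\|\xi\|_{C^0}$ is small. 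This is a routine repair that does not change your argument's structure, but you should carry the second-fundamental-form term rather than assert it vanishes.
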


\begin{proof} We only prove the special case for $\phi\in \tM(l^-,l^+; \varpi)$, and the general case is similar. The proof here is similar to the proof of \cite[Theorem 4]{f1}.

Let $\xi=\partial_s \phi$. Taking the covariant derivative $\nabla_s$ of  (\ref{perturb-lg-r}), we have

\begin{equation}\label{w-linearization-2}
\nabla_s \xi+J\nabla_t\xi-\nabla_{\xi}(\nabla_g \widetilde{H}_{\varpi})=0.
\end{equation}
Define
\begin{equation}
f(s)=\frac{1}{2}\int_0^1|\xi(s,t)|^2 dt.
\end{equation}
We have
\begin{equation*}
\frac{df}{ds}=\int_0^1g(\nabla_s \xi,\xi)dt,
\end{equation*}
and
\begin{align*}
\frac{d^2f}{ds^2}=&\int_0^1g(\nabla_s \xi,\nabla_s \xi)dt+\int_0^1g(\nabla_s \nabla_s \xi,\xi)dt\\
=&\int_0^1|\nabla_s\xi|^2dt-\int_0^1g(\nabla_s(J\nabla_t\xi-\nabla_{\xi}(\nabla_g\widetilde{H}_{\varpi})),\xi)dt\notag\\
=&\int_0^1 |\nabla_s\xi|^2dt+\int_0^1g(-J\nabla_t\nabla_s\xi-JR(\xi,\pat_t \phi)\xi,\xi) dt+\nabla_s g(\nabla_\xi(\nabla_g \widetilde{H}_{\varpi}),\xi)\\
-&g(\nabla_\xi (\nabla_g \widetilde{H}_{\varpi}),\nabla_s \xi )dt\notag\\
=&2\int_0^1 |\nabla_s\xi|^2dt+g((\nabla_s\xi)^{\bot}, J\xi)|^1_0+\int^1_0 [-g(\nabla_s\xi, \nabla_\xi(\nabla_g H))+R(J\xi, \xi,\xi,\pat_t\phi)\\
+&\nabla_\xi(\nabla d \widetilde{H}_{\varpi}(\xi,\xi) )-\nabla d\widetilde{H}_{\varpi}(\xi,\nabla_s\xi)] dt\\
=& 2\int_0^1 |\nabla_s\xi|^2dt+g(\mathrm{I\!I}(\xi,\xi), J\xi)|^1_0+\int^1_0 [R(J\xi, \xi,\xi,\pat_t\phi)+\nabla^2 d \widetilde{H}_{\varpi}(\xi,\xi,\xi)]dt,
\end{align*}
Where $\mathrm{I\!I}$ is the second fundamental form of $L_0\cup L_1$ in $M$. To obtain the above equality, we used the following facts:
\begin{align*}
&g(\nabla_X(\nabla_g H), Y)=(\nabla_X (dH))(Y)=\nabla dH(X,Y);\\
&\nabla_X (\nabla d H(Y, Z))=(\nabla_X (\nabla dH))(Y,Z)-\nabla dH (\nabla_X Y, Z)-\nabla dH(Y, \nabla_X Z),
\end{align*}
for any  $H\in C^\infty(M)$ and vector fields $X,Y,Z\in \Gamma(TM)$.

Using the negative gradient flow of $\im(W)$, we can move a small neighborhood of $l^+(0)\subset L_0$ to the neighborhood of $l^+(1)\in M$. Furthermore, a local smooth deformation can deform it into the neighborhood of $l^+(1)$ in $L_1$. Hence we get a family of second fundamental form $\tilde{\mathrm{I\!I}}(t)$. So we have
\begin{align*}
\frac{d^2f}{ds^2}=&2\int_0^1 |\nabla_s\xi|^2dt+\int^1_0 [\pat_t g(\tilde{\mathrm{I\!I}}(t)(\xi,\xi), J\xi)+R(J\xi, \xi,\xi,\pat_t\phi)+\nabla^2 d \widetilde{H}_{\varpi}(\xi,\xi,\xi)]dt
\end{align*}

By the $C^0$ estimate, $\phi$ is a bounded map. Hence $\nabla_t\tilde{\mathrm{I\!I}}(t), \nabla^2 d \widetilde{H}_{\varpi}(\phi)$ are bounded operators. By (\ref{w-linearization-2}) and Lemma \ref{w:soliton-iso}  there is a $\delta>0$, such that
\begin{equation}\label{proof:exponential}
\int_0^1 |\nabla_s \xi|^2dt\ge \delta^2\int_0^1|\xi|^2dt,
\end{equation}
if $|s|$ is large enough. On the other hand, $\xi$ and its derivatives converge to zero as $|s|\to +\infty$ by Lemma \ref{higher convergence}.  so for any $s>s_0$ for some $s_0$, we have
\begin{equation}
\frac{d^2f}{ds^2}\ge \delta^2 f(s).
\end{equation}
This implies the exponential decay
$$
f(s)\le c e^{-\delta s},\forall s\ge s_0.
$$
 By using a standard maximum principle to the second order elliptic equation induced by (\ref{w-linearization-2}),  we can get the exponential decay of $\partial_s \phi$.

If $\{\phi_k\}$ is a sequence in$\tM(l^-,l^+; \varpi)$ converging to $\phi$ uniformly on $\Sigma^+$($\Sigma^-$),  then (\ref{proof:exponential}) holds if $\phi$ is replaced by $\phi_k$, and the constant is independent of $k$. Thus we get the exponential decay of $\partial_s \phi_k$.
\end{proof}

\subsection{Transversality}\label{subsection8.4}

\subsubsection*{Transversality on stripes}\

When considering on the area $\Sigma$, for two Hamiltonian chords $l^-,l^+$ from  Lefschetz thimble $L_0$ to $L_1$, the Banach manifold $\B(\cC,\{l^-,l^+\})$ is exactly
$$
\mathcal{P}^{1,p}(l^-,l^+)=\{\phi\in W^{1,p}_{loc}(\Sigma,M)|\phi(s,0)\in L_0,\phi(s,1)\in L_1,\lim_{s\to \pm\infty}\phi(s,)=l^{\pm}\}.
$$
We define the fiber bundle $\LL^p$ over $\mathcal{P}^{1,p}(l^-,l^+)$ by setting the fiber $\LL^{p}(\phi)=L^{p}(\Sigma,\phi^*TM)$ for $\phi\in \mathcal{P}^{1,p}(l^-,l^+)$.

In the beginning of Section \ref{section-weq-act}, we have defined the Banach space
$$
\mathcal{V}^{\epsilon}_{per}=\{\varpi\in \mathcal{V}_{per}| \;||\varpi||_{\epsilon}<+\infty\},
$$
where the norm
$$
||\varpi||_{\epsilon}=\sum_{k=0}^{\infty} \epsilon_k \sup_{(x,t)\in M\times [0,1]} |\nabla^k \varpi(x,t)|,
$$
and $\{\epsilon_k\}$ is a sequence of positive real numbers converging rapidlyto zero.

For a positive $\delta_0$ smaller enough, we have defined an open subset $\mathcal{V}^{\epsilon,\delta_0}_{per}$ in Definition \ref{perturb-space-stripe}, so that then for any $\varpi\in \mathcal{V}^{\epsilon}_{per}(\delta_0)$, Lemma \ref{perturb-fini-rad} and Theorem \ref{perturb-hal-equi}, holds.  For such a perturbation function $\varpi$, we have $S_{W}=S_{W,\varpi}$.

Define the map
$$
\mathcal{F}: \mathcal{P}^{1,p}(l^-,l^+)\times \mathcal{V}_{\per}^{\epsilon}(\delta_0) \to \LL^{p}
$$
by
$$
\mathcal{F}(\phi,\varpi)=\pat_s \phi+J\pat_t \phi-\nabla_g(\widetilde{H}_\varpi)(\phi).
$$
We define the universal moduli space
\begin{align*}
\tM(l^-,l^+)=\{(\phi, \varpi)\in \mathcal{P}^{1,p}(l^-,l^+)\times \mathcal{V}_{\per}^{\epsilon,\delta_0} |\mathcal{F}(\phi,\varpi)=0\}.
\end{align*}
This moduli space depends on the metric $g$ which is given by the K\"ahler form $\omega_\varphi=\omega+\sqrt{-1}\pat\bpat \varphi,\varphi\in \mathcal{H}$.  Note that by Theorem \ref{transversal} and Theorem \ref{perturb-hal-equi}, we can choose generic $\varphi\in \mathcal{H}$ such that $S_{\re(W)}$ has finitely many trajectories.

The linearization operator of  $\mathcal{F}$ at the zero point $(\phi,\varpi)\in \tM(l^-,l^+;g)$ is given by
$$
D\mathcal{F}_{(\phi,\varpi)}(\xi,\chi)=\nabla_s \xi+J (\phi)\nabla_t \xi-\nabla_{\xi}\nabla_g\widetilde{H}_\varpi(\phi)-\nabla_g \chi.
$$
Applying the same argument as in \cite{Oh0}, we have the following conclusion.

\begin{thm}
If $\phi$ is a non-constant solution of (\ref{perturb-lg}), then the set
$$
\Sigma_{\phi}=\{(s,t)\in \Sigma\;|\; \frac{\pat \phi}{\pat s}\neq 0, \frac{\pat \phi}{\pat t}\neq 0, \phi(s,t)\notin \phi((\R\setminus \{s\})\times \{t\})\}
$$
 is open and dense in $\Sigma$.
\end{thm}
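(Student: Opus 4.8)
The plan is to follow the classical ``somewhere injective'' argument, now adapted to Floer strips with Lagrangian boundary and the potential-perturbed equation \eqref{perturb-lg}. The key point is that the set $\Sigma_\phi$ is a countable intersection of open sets together with an explicit open condition, so the density will come from Baire category once we rule out the ``bad'' behaviors. First I would reduce to two separate facts: (a) the set $\Sigma_\phi^{\mathrm{reg}}:=\{(s,t)\in\Sigma\mid \partial_s\phi(s,t)\ne 0,\ \partial_t\phi(s,t)\ne 0\}$ is open and dense; (b) relative to $\Sigma_\phi^{\mathrm{reg}}$, the set of ``injective points'' $\{(s,t)\mid \phi(s,t)\notin\phi((\R\setminus\{s\})\times\{t\})\}$ is open and dense. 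Then $\Sigma_\phi$ is the intersection of these two dense open sets, hence dense open. Openness of (a) is immediate; openness of the injective-point condition in (b) uses the exponential decay at the ends (Theorem \ref{sec5:exponential}) to make the ``same-$t$ slice'' set effectively compact in $s$, so a nearby point cannot suddenly collide.

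For (a), the heart is a unique continuation / elliptic argument. Writing $\xi=\partial_s\phi$, differentiating \eqref{perturb-lg-r} gives the linear equation $\nabla_s\xi+J\nabla_t\xi-\nabla_\xi(\nabla_g\widetilde H_\varpi)=0$ (this is \eqref{w-linearization-2}), which after passing to a local holomorphic chart becomes a Cauchy--Riemann type equation $\partial_s\xi+i\partial_t\xi+(\text{zeroth order})\xi=0$. By Aronszajn's unique continuation theorem, if $\xi$ vanishes on an open set then $\xi\equiv 0$, forcing $\phi$ to be independent of $s$; but then the finite-energy condition and the asymptotics force $\phi$ to be a single Hamiltonian chord, i.e.\ constant in $s$, and since $\phi$ is assumed non-constant this is excluded unless $\phi(s,t)=l(t)$ which contradicts being a (genuinely non-constant) solution of \eqref{perturb-lg}. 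Hence the zero set $\{\partial_s\phi=0\}$ has empty interior, so $\{\partial_s\phi\ne 0\}$ is open dense; for $\{\partial_t\phi\ne 0\}$ one uses the equation itself: on $\{\partial_s\phi=0\}$ the equation reads $J\partial_t\phi=\nabla_g\widetilde H_\varpi$, and the zeros of $\partial_t\phi$ on that set are the zeros of $\nabla_g\widetilde H_\varpi$, i.e.\ a discrete set by the tame/regular-Morse hypothesis, hence nowhere dense. Combining, $\Sigma_\phi^{\mathrm{reg}}$ is open and dense.

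For (b), I would argue exactly as in McDuff--Salamon / Floer--Hofer--Salamon and in \cite{Oh0}: suppose $(s_0,t_0)\in\Sigma_\phi^{\mathrm{reg}}$ is not in the closure of the injective points. Then there is a sequence $s_k\to s_0$, $s_k\ne s_0$ fixed $t_0$-slice collisions, i.e.\ a sequence $s_k'\ne s_k$ with $\phi(s_k,t_0)=\phi(s_k',t_0)$; using exponential decay the $s_k'$ stay in a compact set and, after passing to a subsequence, $s_k'\to s_0'$ with $\phi(s_0,t_0)=\phi(s_0',t_0)$. If $s_0'\ne s_0$ one derives, via the linearized operator being injective (Lemma \ref{w:soliton-iso} applied along the slice together with the somewhere-regular property at $(s_0,t_0)$), that the two local branches of $\phi$ through this common value agree to infinite order, hence by unique continuation coincide globally on overlapping strips, which by a continuation/translation argument forces $\phi$ to be periodic in $s$, contradicting finite energy; if $s_0'=s_0$ one contradicts $\partial_s\phi(s_0,t_0)\ne 0$ directly. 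This shows the injective points are dense in $\Sigma_\phi^{\mathrm{reg}}$, and openness was noted above.

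\textbf{Main obstacle.} The delicate step is (b): controlling the ``runaway'' collisions $s_k'\to\pm\infty$ and ruling out the periodic/translated coincidence. The exponential decay of Theorem \ref{sec5:exponential} is what makes the collision set $s$-precompact (so no escape to infinity), and the injectivity of the asymptotic operator $A_l$ from Lemma \ref{w:soliton-iso} is what upgrades a single coincidence into a global coincidence, after which a standard unique-continuation-plus-translation argument closes the loop. I expect the bookkeeping of these two inputs, rather than any new analytic estimate, to be the bulk of the work; the potential term $X_{\re W}$ enters only through the already-established $C^0$ and $C^1$ bounds (Theorem \ref{c_0_polytope}) and the elliptic regularity (Lemma \ref{regular}), so the argument runs parallel to the compact-target case treated in \cite{Oh0}.
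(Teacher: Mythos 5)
Your overall route is the intended one: the paper gives no proof of this theorem at all, deferring to ``the same argument as in \cite{Oh0}'', which is precisely the Floer--Hofer--Salamon/Oh somewhere-injectivity scheme you reconstruct (unique continuation for $\partial_s\phi$, exponential decay to make the same-$t$ collision set precompact in $s$, and the translation/coincidence argument). Your part (a) for $\{\partial_s\phi\neq 0\}$ and your part (b) are sound in outline, modulo the usual bookkeeping, though in (b) the relevant input for upgrading a coincidence to a global one is unique continuation for the \emph{difference} of two local solution branches, not the asymptotic isomorphism of Lemma \ref{w:soliton-iso}.

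There is, however, a genuine gap in your treatment of the condition $\partial_t\phi\neq 0$. You only rule out zeros of $\partial_t\phi$ lying on $\{\partial_s\phi=0\}$, but for the perturbed equation (\ref{perturb-lg-r}) one has $\partial_t\phi=J\,\partial_s\phi+X_{\widetilde{H}_{\varpi}}(\phi)$, so $\partial_t\phi$ vanishes exactly where $\partial_s\phi=\nabla_g\widetilde{H}_{\varpi}(\phi)$, which can perfectly well occur where $\partial_s\phi\neq 0$; your ``combining'' step therefore does not show that $\{\partial_t\phi=0\}$ has empty interior. Worse, unlike $\xi=\partial_s\phi$, which satisfies the homogeneous equation (\ref{w-linearization-2}), the section $\eta=\partial_t\phi$ satisfies an \emph{inhomogeneous} Cauchy--Riemann-type equation (differentiating in $t$ produces the term $(\partial_t\nabla_g\varpi)(t,\phi)$, since $\varpi$ depends explicitly on $t$), so Aronszajn/Carleman unique continuation cannot be applied to $\eta$ as you apply it to $\xi$. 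A correct argument must separately exclude an open set on which $\phi$ is locally $t$-independent, i.e.\ locally a reparametrized gradient segment of $\widetilde{H}_{\varpi}$; when $\varpi$ is $t$-independent this does follow (unique continuation for $\eta$ then forces global $t$-independence, contradicting $L_0\cap L_1=\emptyset$, as the two thimbles lie in distinct level sets of $\im W$), but for general admissible $t$-dependent $\varpi$ an extra argument is required. Note that this issue is invisible in \cite{Oh0}, where the equation is unperturbed and $\partial_t u=J\partial_s u$, so the two non-vanishing conditions coincide. A smaller point: to get openness and density of the injective-point set you must also handle points where $\phi(s,t)$ equals an asymptotic chord value $l^{\pm}(t)$ (collisions with $s_k'\to\pm\infty$ produce exactly these), which is why the references include $u(z)\neq z^{\pm}(t)$ in the definition of the regular set.
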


\begin{prop}\label{surjection-stripe}
$\tM(l^-,l^+)$ is an infinite dimensional Banach manifold.
\end{prop}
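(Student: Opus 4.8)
The plan is to apply the standard Sard--Smale/implicit function theorem machinery in the universal setting. The key point is the surjectivity of the linearization
$$
D\mathcal{F}_{(\phi,\varpi)}(\xi,\chi)=\nabla_s \xi+J(\phi)\nabla_t\xi-\nabla_\xi\nabla_g\widetilde{H}_\varpi(\phi)-\nabla_g\chi
$$
at every zero $(\phi,\varpi)$. Once surjectivity is shown, $\tM(l^-,l^+)$ is cut out transversally by $\mathcal{F}$, and the implicit function theorem for Banach manifolds gives that it is a smooth Banach submanifold of $\mathcal{P}^{1,p}(l^-,l^+)\times\mathcal{V}^{\epsilon,\delta_0}_{\per}$; since the latter is infinite-dimensional (the perturbation space $\mathcal{V}^{\epsilon,\delta_0}_{\per}$ is an open subset of the infinite-dimensional Floer-$\epsilon$ Banach space) and the Fredholm index of $D_\phi$ is finite, the universal moduli space is infinite-dimensional. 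So the whole statement reduces to surjectivity.

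First I would recall that $D_\phi\xi:=\nabla_s\xi+J\nabla_t\xi-\nabla_\xi\nabla_g\widetilde{H}_\varpi(\phi)$ is Fredholm (this was proved earlier, using Lemma~\ref{w:soliton-iso} at the ends), hence has closed range of finite codimension; therefore $D\mathcal{F}_{(\phi,\varpi)}$ has closed range, and it suffices to prove that the range is dense. Suppose not: then there is a nonzero $\zeta\in L^q(\Sigma,\phi^*TM)$ ($1/p+1/q=1$) annihilating the range, i.e.
$$
\int_\Sigma g(\zeta, D_\phi\xi)=0 \quad\text{for all }\xi, \qquad \int_\Sigma g(\zeta,\nabla_g\chi)=0 \quad\text{for all }\chi\in T_\varpi\mathcal{V}^{\epsilon,\delta_0}_{\per}.
$$
The first batch of equations forces $\zeta$ to be a weak solution of the formal adjoint equation $D_\phi^*\zeta=0$; elliptic regularity (bootstrapping as in Lemma~\ref{regular}) makes $\zeta$ smooth, and unique continuation for this Cauchy--Riemann-type operator shows that the zero set of $\zeta$ has empty interior, so $\zeta$ is nonzero on the open dense set $\Sigma_\phi$ from the preceding theorem. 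Then I would pick a point $(s_0,t_0)\in\Sigma_\phi$ where $\zeta\neq 0$ and where $\phi(s_0,t_0)$ is not on any Hamiltonian chord image and is a non-self-intersection point of the ``slice'' $\{\phi(\cdot,t_0)\}$; using a cutoff supported near $\phi(s_0,t_0)$ in $M$ and near $t_0$ in $[0,1]$ one constructs a perturbation $\chi\in T_\varpi\mathcal{V}^{\epsilon,\delta_0}_{\per}$ (note the admissibility constraints in $\mathcal{V}_{\per}$ vanish near such a point, since $\phi(s_0,t_0)$ avoids the chord images) with $\nabla_g\chi$ pointed so that $\int_\Sigma g(\zeta,\nabla_g\chi)\neq 0$, contradicting the second batch of equations. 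This is the argument of \cite{Oh0}, which is why the excerpt says ``Applying the same argument as in \cite{Oh0}''.

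The main obstacle is the perturbation-construction step: the perturbation space $\mathcal{V}_{\per}$ is constrained to have $\nabla_g\varpi=0$ on the chord images $\bigcup_{l\in S_W}[0,1]\times\operatorname{Im}(l)$, and $\mathcal{V}^{\epsilon,\delta_0}_{\per}$ additionally requires the $C^2$- and ratio-smallness bounds $\|\varpi\|_2<\delta_0$, $\|\varpi\|_{\mathrm{ratio}}<\delta_0$. One must verify that a localized bump perturbation achieving the nonvanishing pairing can be taken small in all these norms (simply rescale the amplitude — the pairing $\int g(\zeta,\nabla_g\chi)$ scales linearly while we only need it nonzero) and genuinely inside $\mathcal{V}^{\epsilon,\delta_0}_{\per}$, which is where one uses that the bad point is chosen away from the chord images so the linear constraint is not active there, and that $|\nabla_g\re W|$ is bounded below near that point (it lies in a compact region by the $C^0$ estimate, and away from $C_W$) so the ratio bound is harmless. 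The other technical ingredient to be careful with is unique continuation for $\zeta$; this is standard for first-order operators of Cauchy--Riemann type with the given boundary conditions (Aronszajn's theorem after squaring), but the Lagrangian boundary conditions at $\phi(s,0)\in L_0$, $\phi(s,1)\in L_1$ must be propagated — again standard, and already implicitly used elsewhere in the paper.
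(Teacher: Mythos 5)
Your proposal is correct and follows essentially the same route as the paper: assume the cokernel contains a nonzero $\gamma\in L^q$, upgrade it to a smooth solution of the adjoint equation by elliptic regularity, pick a somewhere-injective point of $\phi$ (the set $\Sigma_\phi$ of the preceding theorem) where $\gamma\neq 0$, and build a localized bump perturbation $\chi$ whose pairing with $\gamma$ is nonzero, using a cutoff in the $t$-variable to kill contributions from the other preimages. If anything, you are more explicit than the paper about two points it glosses over --- the unique-continuation step guaranteeing $\gamma\neq 0$ at a good point, and the verification that the bump lies in $\mathcal{V}^{\epsilon,\delta_0}_{\per}$ (i.e.\ is compatible with the vanishing condition along the chord images and the $\|\cdot\|_2$, $\|\cdot\|_{ratio}$ bounds) --- while the paper instead spells out the explicit two-case construction of the bump depending on whether $\gamma(p_0)$ is orthogonal to $\operatorname{Im}(d\phi(p_0))$.
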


\begin{proof} If we can show that $D\mathcal{F}$ is surjective at zero point of $\mathcal{F}$, then by applying the implicit functional theorem we can obtain the conclusion.

To prove that $D\mathcal{F}$ is surjective at $(\phi,\varpi)\in  \tM(l^-,l^+)$, we use the contradiction method.

Suppose that it is not surjective at the zero point $(\phi,\varpi)$, then there is a nonzero section $\gamma\in  L^{q}(\Sigma,{\phi}^*TM) $ such that
\begin{equation}\label{sec5: equa-tran-1}
\int_{\Sigma} g(\nabla_s \xi+J \nabla_t \xi-\nabla_{\xi}(\nabla_g \widetilde{H}),\gamma)=0 \;\text{and}\; \int_{\Sigma} g(\nabla_g \chi,\gamma)=0,
\end{equation}
for any pair $(\xi,\chi)$ in the tangent space. The first equality shows that $\gamma$ is a weak solution of the dual equation $D\mathcal{F}_{(\phi,\varpi)}^*\gamma=0$. By the elliptic regularity, we know that $\gamma$ is smooth.

Choose a point $p_0=(s_0,t_0)\in \Sigma$, that $\phi^{-1}(\phi(p_0))$ consists only of finite points. We denote them by $p_0,\cdots,p_m$.  As $p_0\in \Sigma$, we can choose a small neighborhood $U$ of $\phi(p_0)$ such that $\phi^{-1}(U)$ consists of disjoint open sets $V_0,\cdots,V_m$ satisfying the conditions that $p_0\in V_0$, $\phi: V_0 \to U_0=\phi(V_0)\subset U$ being an embedding and the distance $d(\Pro_2 (V_0), \Pro_2(\phi^{-1}(U_0)\setminus V_0))>0$, where $\Pro_2: \Sigma=\R\times [0,1] \to [0,1]$ is the projection.

If we can find a function $\chi_0$ on $M$ with $\nabla_g \chi_0$ supported on $U$, and

\begin{equation}\label{sec5:ineq-1}
\int_{V_0}g(\nabla_g\chi_0,\gamma)dsdt>0,
\end{equation}
then we can choose a function $\alpha:[0,1]\to [0,1]$ which is one on $\Pro_2(V_0)$ and zero on $\Pro_2(\phi^{-1}(V)\setminus V_0)$. Let $\chi=\alpha \chi_0$, then we have
$$
\int_{\Sigma} g(\nabla_g\chi, \gamma)dsdt>0,
$$
which contradicts with the second equality of (\ref{sec5: equa-tran-1}) and the theorem is proved.

In order to find such a $\chi_0$, we introduce  two smooth auxiliary functions $\rho_1$ and $\rho_2$ as follows. Let $\rho_1\in C^\infty(\R)$ be a positive even function satisfying
$$
\begin{cases}
\rho_1(s)=0,&   s\le -1\\
0\le \rho_1(s)\le 1,&  -1\le s\le -\frac{1}{2}\\
\rho_1(s)=1,&-\frac{1}{2}\le s\le 0\\
\end{cases}
$$
and let $\rho_2\in C^\infty(\R)$ be an increasing positive function satisfying
$$
\begin{cases}
\rho_2(s)=0,&  s\le -1\\
\rho_2(s)=1,& s \ge 1\\
0\le \rho_2(s)\le 1,&-1\le s\le 1\\
\rho_2'(0)=\frac{1}{2}.
\end{cases}
$$
Note that $\rho_1$ is the same one as in the proof of Theorem \ref{transversal}.

We have two cases. \

Case 1: $g(\gamma(p_0), \xi)=0, \forall \xi\in \im (d\phi(p_0))$.\

In this case, we must have $n\ge 2$. Since $\phi$ is an embedding from $V_0$ to $U_0=\phi(V_0)\subset U$, we can choose local coordinate system near $\phi(p_0)$ such that $\phi$ can be identified with the standard embedding $(x_1,y_1)\mapsto (x_1,y_1,0,\cdots,0)^T$ near $p_0=(0,0)$. Assume that $\gamma(0)=(0,\cdots,0,c)^T$ for some $c>0$.

Let $\chi_{\epsilon}(x_1\cdots,y_{n})=\rho_1(x_1/\epsilon)\cdots\rho_1(y_{n}/\epsilon) \rho_2(y_{n}/\epsilon)$ for any $\epsilon>0$. $\chi_{\epsilon}$ is defined on $M$ if $\epsilon$ is small enough.

We have
\begin{align*}
&\int_{V_0}g(\nabla_g \chi_{\epsilon}(\phi(s,t)),\gamma(s,t))dsdt=\int_{V_0} d\chi_\epsilon(\phi(s,t))(\gamma(s,t))dsdt\\
=&\int_{[-\epsilon,\epsilon]^{2}}  (\rho_1'(x/\epsilon)\rho_1(y/\epsilon)/\epsilon,\rho_1'(y/\epsilon)\rho_1(x/\epsilon)/\epsilon,0,\cdots,0,\frac{1}{2\epsilon})^T \cdot \gamma dxdy\\
=&\int_{[-\epsilon,\epsilon]^{2}} (\rho_1'(x/\epsilon)\rho_1(y/\epsilon)/\epsilon,\rho_1'(y/\epsilon)\rho_1(x/\epsilon)/\epsilon,0,\cdots,0,0)^T \cdot \gamma dxdy\\
+&\int_{[-\epsilon,\epsilon]^{2}} (0,\cdots,0,\frac{1}{2\epsilon})^T \cdot\gamma dxdy\\
=&2c\epsilon+O(\epsilon^2)>0,
\end{align*}
for sufficiently small $\epsilon$.

\noindent Case 2: $g(\gamma(p_0), \xi)>0$ for some $\xi \in \im (d\phi(p_0))$.\

As done in Case 1,  we can choose a local coordinate system $(x_1,y_1,\cdots, y_n)$ in the neighborhood $U_0$ of $p_0=(0,0)$ and identify $\phi$ with the standard embedding $(x_1,y_1)\mapsto (x_1,y_1,0,\cdots,0)^T$ such that $\frac{\xi(0)}{|\xi(0)|_g}=e_1:=(1,0,\cdots,0)^T$. Let $a=e_1^T\cdot \gamma(0)>0$.

For small $\epsilon>0$, let $\chi_{\epsilon}=\rho_2(x_1/\epsilon)\rho_1(x_2/\epsilon)\cdots\rho_1(x_{2n})$.

We have
\begin{align*}
&\int_{V_0}g(\nabla_g \chi_{\epsilon},\gamma)dsdt=\int_{V_0} d \chi_\epsilon(\gamma(s,t))dsdt\\
=&\int_{[-\epsilon,\epsilon]^{2}}  (\rho_2'(x/\epsilon)\rho_1(y/\epsilon)/\epsilon,\rho_1'(y/\epsilon)\rho_2(x/\epsilon)/\epsilon,0,\cdots,0,0)^T\cdot\gamma dxdy\\
=&a \int^1_{-1}\rho_1(y)dy \epsilon+O(\epsilon^2)>0,
\end{align*}
for sufficiently small $\epsilon$.

Note that in each case the function $\chi_\epsilon$ satisfies $\|\nabla^k\chi\|<C(1/\epsilon)^k$. So if we choose $b$ and $\epsilon$ small enough, then we find that the function $\chi_0:=b\chi_\epsilon \in \mathcal{V}_{\per}^\epsilon$ and satisfies (\ref{sec5:ineq-1}).

\end{proof}

\begin{thm}\label{moduli-smooth-stripe}
 For generic $\varpi\in \mathcal{V}_{\per}^\epsilon(\delta_0)$, the moduli space $\tM(l^-,l^+;\varpi)$ is a smooth manifold with dimension $\ind(D_\phi)$ for any $\phi \in \tM(l^-,l^+; \varpi)$.
\end{thm}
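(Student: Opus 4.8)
The plan is to deduce the statement from the Sard--Smale theorem applied to the universal moduli space $\tM(l^-,l^+)$, which Proposition \ref{surjection-stripe} has just shown to be a Banach manifold. First I would set up the projection
\[
\Pi:\tM(l^-,l^+)\to \mathcal{V}_{\per}^{\epsilon,\delta_0},\qquad \Pi(\phi,\varpi)=\varpi,
\]
and observe that this is a $C^\infty$ (or at least $C^l$ for arbitrarily large $l$, which is all one needs) Fredholm map: its linearization at $(\phi,\varpi)$ is the restriction of $D\mathcal F_{(\phi,\varpi)}$ to the kernel-complement in the $\varpi$-direction, and since $D_\phi$ is Fredholm by the theorem in Subsection \ref{Fredholm theory section} (using that $L_0\pitchfork^{1}L_1$ along $l^\pm$ and Lemma \ref{w:soliton-iso}), the projection $\Pi$ is Fredholm with $\ind\Pi=\ind D_\phi$. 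The fibre $\Pi^{-1}(\varpi)$ is exactly $\tM(l^-,l^+;\varpi)$. Then Sard--Smale (in the form of \cite[Theorem 1.3.18]{Ck}, as already used in the proof of Theorem \ref{transversal}) gives a residual set of $\varpi\in\mathcal{V}_{\per}^{\epsilon,\delta_0}$ that are regular values of $\Pi$; for such $\varpi$ the implicit function theorem makes $\tM(l^-,l^+;\varpi)$ a smooth manifold of dimension equal to $\ind\Pi=\ind D_\phi$.

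The steps, in order: (1) verify that $\tM(l^-,l^+)$ and $\mathcal{V}_{\per}^{\epsilon,\delta_0}$ are separable Banach manifolds of class $C^l$ — separability of the Floer-$\epsilon$ space is standard, and we only need $l$ finite but larger than $\ind D_\phi+1$, so the loss of one derivative in the $C^l$ Sard--Smale statement is harmless. (2) Check that $\Pi$ is a $C^l$ Fredholm map: surjectivity of $D\mathcal F_{(\phi,\varpi)}$ from Proposition \ref{surjection-stripe} identifies $T_{(\phi,\varpi)}\tM$ with $\ker D\mathcal F_{(\phi,\varpi)}$, and a short linear-algebra argument (the standard ``$D_\phi$ Fredholm $\Rightarrow$ projection Fredholm of the same index'' lemma) gives the Fredholm property of $d\Pi$ with $\ind d\Pi=\ind D_\phi$; one should remark that $\ind D_\phi$ is constant on each connected component, which is automatic since all $\phi$ in a fixed $\tM(l^-,l^+;\varpi)$ lie over the same pair of chords and hence the index is determined by the homotopy data (cf. the index formula \eqref{fredholm-index}). (3) Invoke Sard--Smale to produce the residual set of regular values. (4) Conclude smoothness and the dimension count via the implicit function theorem at each $\phi\in\tM(l^-,l^+;\varpi)$. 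One should also note the harmless compactness point that the exponential decay Theorem \ref{sec5:exponential} and the $C^0$--$C^1$ bounds of Theorem \ref{c_0_polytope} guarantee that elements of $\tM(l^-,l^+;\varpi)$ genuinely lie in the Banach manifold $\mathcal P^{1,p}(l^-,l^+)$ with the prescribed asymptotics, so no escape of solutions occurs.

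The main obstacle I expect is not in any single step but in the bookkeeping needed to apply Sard--Smale cleanly in this non-compact, bounded-geometry setting: one must make sure the universal moduli space is second countable (so that the $C^l$ Sard--Smale theorem applies), that the Fredholm index of $d\Pi$ is locally constant (handled by the index identification above), and — most delicately — that the surjectivity of $D\mathcal F$ established in Proposition \ref{surjection-stripe} is used with the correct $L^p/W^{1,p}$ exponents so that the elliptic regularity bootstrap gives genuine smoothness of $\gamma$ and hence genuine smooth manifolds. A secondary subtlety is that one wants the residual set of good $\varpi$ to be taken inside $\mathcal{V}_{\per}^{\epsilon,\delta_0}$ (so that Lemma \ref{perturb-fini-rad} and Theorem \ref{perturb-hal-equi} still hold and $S_{W}=S_{W,\varpi}$); since $\mathcal{V}_{\per}^{\epsilon,\delta_0}$ is an \emph{open} subset of the Banach space $\mathcal{V}_{\per}^{\epsilon}$, residual-in-the-open-subset is residual-in-the-Banach-space intersected with it, and this causes no trouble. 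Everything else is routine, following verbatim the argument in \cite{Oh0} referenced just before Proposition \ref{surjection-stripe}.
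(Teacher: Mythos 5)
Your proposal is correct and follows essentially the same route as the paper: the paper's proof likewise defines the projection $\pi:\tM(l^-,l^+)\to\mathcal{V}_{\per}^{\epsilon,\delta_0}$, notes it is Fredholm with $\ind(d\pi_{(\phi,\varpi)})=\ind D_\phi$ via the standard argument of \cite[Proof of Theorem 3.1.2]{MS}, and applies Sard--Smale to conclude. Your additional remarks on separability, locally constant index, and staying inside the open set $\mathcal{V}_{\per}^{\epsilon,\delta_0}$ are sensible bookkeeping that the paper leaves implicit.
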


\begin{proof} Define the projection $\pi: \tM(l^-,l^+)\subset \mathcal{P}^{1,p}(l^-,l^+)\times \mathcal{V}_{\per}^{\epsilon,\delta_0}\to \mathcal{V}_{\per}^{\epsilon,\delta_0}$. Then a routine argument (ref. \cite[Proof of Theorem 3.1.2]{MS}) shows that $\pi$ is Fredholm and $\ind(d\pi_{(\phi,\varpi)})=\ind D_\phi$. By Sard-Smale theorem (ref. \cite[Proposition 2.24]{Ck}), the regular value of $\pi$ is generic in $\mathcal{V}_{\per}^{\epsilon,\delta_0}$. If taking a regular $\varpi$, $D_{\phi}$ is surjective.

\end{proof}

\subsubsection*{Transversality in general}\label{subsec-trans-general}\

Fixed a $\varpi$ so that $\tM(l^-,l^+; \varpi)$ is smooth for any pair $(l^-,l^+)$ of Hamiltonian chords.
In subsection \ref{perturb-witten-section}, we have defined the space of consistent admissible perturbation family in $\Omega^{0,1}_{\cC/{\overline{\cR}}}(\cC, \mathcal{H})$. Choose a positive sequence $\epsilon=\{\epsilon_k\}$ such that $\epsilon_k\to 0$ rapidly enough. We define a norm on the disc $\cC$:
$$
\|K\|_{\cC,\epsilon}=\sum_{k=0}^{\infty} ||\pat^k K||_{C^0(M)}\epsilon_k,
$$
and a uniform norm on the moduli space $\overline{\cR}$:
$$
\|K\|_{\epsilon}=\sup_{\cC\in \overline{\cR}}\|K\|_{\cC}.
$$

We define $\mathcal{T}^{\delta_0}$ to be the space of consistent admissible $K$, with $\|K\|_\epsilon<\delta_0$.

In the beginning of Section (\ref{Fredholm theory section}) we defined the extended map $WI_K$ on $\B^{d+1}(L, l)$. If we also take the variation of $K$ into consideration, we can define a universal Witten map $WI$ by letting   $WI(\phi,\cC,K)=\WI_K(\phi,\cC)=\WI_{\cC, K}(\phi)$. Its linearization at $(\cC,\phi,K)$ can be written as

$$
D_{\cC,\phi,K}(\gamma,\xi,\kappa)=D_{\cC}\gamma+D_{\phi}\xi+D_{K}\kappa.
$$

An element $\chi$ of the cokernel of $D_{\cC,\phi,K}$ is an element in $L^q(\cC, \Omega^{0,1} \otimes \phi^* TM)$ such that $D_{\cC,\phi,K}^*\chi=0$ (here $q$ satisfies $\frac{1}{p}+\frac{1}{q}=1$). So $D_{\phi}^*\chi=0$, and by elliptic estimate, we know that $\chi$ is smooth. Moreover, we have
$$
\int_r \langle \chi,D_{K}\kappa \rangle=0,
$$
for any $\kappa\in T_K \mathcal{T}^{\delta_0}$. However, by definition, $D_{K}\kappa$ is the $(0,1)$ part of the  vector-field-valued form on the disc, induced from $\kappa$. By similar arguments as in the proof of Proposition \ref{surjection-stripe}, we get $\chi\equiv 0$. So we have
\begin{lm}
$D_{\cC,\phi,K}$ is surjective.
\end{lm}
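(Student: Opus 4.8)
The plan is to reduce the surjectivity of the full linearization $D_{\cC,\phi,K}$ on the universal parameter space to a unique-continuation statement for the formal adjoint $D_\phi^*$, and then kill the adjoint element by choosing a suitable perturbation direction $\kappa$, exactly as in the proof of Proposition \ref{surjection-stripe} for strips. First I would note that, since $D_\phi$ is Fredholm (by the theorem following Lemma \ref{w:soliton-iso}, using transversality of the chords at time $1$), the image of $D_{\cC,\phi,K}$ is closed, and hence it is enough to show that any $\chi$ in the cokernel vanishes. Such a $\chi$ lies in $L^q(\cC, \Omega^{0,1}\otimes \phi^*TM)$, $\tfrac1p+\tfrac1q=1$, satisfies $D^*_{\cC,\phi,K}\chi=0$ in the weak sense, and in particular $D_\phi^*\chi=0$. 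By the interior and boundary elliptic regularity (the boundary case being handled by Lemma \ref{sect-5-regu-bdry-1} in local holomorphic charts that straighten the Lagrangian labels to $\R^n$, as in Lemma \ref{regular}), $\chi$ is smooth up to the boundary $\pat\cC$ and decays exponentially on the strip-like ends by Theorem \ref{sec5:exponential}, so the pairing integrals below all converge.

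Next I would record the pairing condition coming from the $K$-directions: for every $\kappa\in T_K\mathcal{T}^{\delta_0}$,
$$
\int_\cC \langle \chi, D_K\kappa\rangle\,\nu_\cC=0,
$$
where $D_K\kappa$ is the $(0,1)$-part of the vector-field-valued $1$-form $X_\kappa(\phi)\,\im(\sqrt2\,\sigma)$ induced from $\kappa$. Off a measure-zero set, $\phi$ is an embedding in the sense of the theorem preceding Proposition \ref{surjection-stripe} (which applies on the interior and, via the local straightening, also near the boundary where $\phi$ maps into a Lefschetz thimble). So I would pick a point $p_0=(z_0)$ in the open dense set where $\phi$ is locally injective and $\phi^{-1}(\phi(z_0))$ is finite, choose a coordinate neighborhood of $\phi(z_0)$ in $M$ together with a disjoint decomposition of the $\phi$-preimage as in the strip argument, and then localize: multiplying a bump in $M$ by a cutoff in the domain supported near $z_0$ and away from the other preimages, I reduce to producing a single function $\chi_0\in\mathcal{H}$ (so that $K+\varepsilon\,\kappa$ with $\kappa=f_{i\jb}(\chi_0)\,dz^i\otimes d\bar z^j$ stays consistent and admissible: the admissibility conditions (i) and (ii) of Subsection \ref{perturb-witten-section} are met because the bump is supported in the interior of a single disc fiber away from the chords and the boundary, where the consistency at glued edges is automatic) whose differential pairs nontrivially with $\chi$ on the local piece $V_0$. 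This is done, as in Cases 1 and 2 of the proof of Proposition \ref{surjection-stripe}, by straightening $\phi$ locally to the standard embedding $(x_1,y_1)\mapsto(x_1,y_1,0,\dots,0)$ and taking $\chi_\varepsilon$ a product of the auxiliary functions $\rho_1,\rho_2$ scaled by $\varepsilon$: in the case $\chi(z_0)\perp\im d\phi(z_0)$ one uses $n\ge 2$ and a $\rho_2$ factor in a normal direction to get $\int_{V_0}\langle\nabla_g\chi_\varepsilon,\chi\rangle=2c\varepsilon+O(\varepsilon^2)>0$, and in the case $\langle\chi(z_0),\xi\rangle>0$ for some $\xi\in\im d\phi(z_0)$ one places the $\rho_2$ factor in the $\xi$-direction and gets $a\int_{-1}^1\rho_1(y)\,dy\,\varepsilon+O(\varepsilon^2)>0$. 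Rescaling $\chi_0=b\chi_\varepsilon$ with $b,\varepsilon$ small makes $\|\chi_0\|_{\cC,\epsilon}<\delta_0$, so $\kappa$ is an admissible tangent direction, contradicting the vanishing of the pairing. Hence $\chi\equiv 0$.

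The only genuinely new point relative to Proposition \ref{surjection-stripe} is the bookkeeping needed to ensure that the localized perturbation, when viewed as an element of the family $\mathcal{T}^{\delta_0}$ over the compactified Deligne-Mumford-Stasheff space $\overline{\cR}$, is still \emph{consistent}: it must agree with the gluing-induced perturbation near the boundary strata. This is arranged exactly as in the existence proof for consistent admissible perturbations (the lemma at the end of Subsection \ref{perturb-witten-section}, following \cite[Lemma 9.5]{Si3}) — one supports $\chi_0$ in the thick part of a single disc fiber, away from all strip-like ends and glued regions, so that extending by the gluing formula changes nothing; equivalently one runs the same partition-of-unity/induction-on-trees argument to propagate the local perturbation consistently. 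I expect this consistency bookkeeping to be the main obstacle in making the argument fully rigorous, since one must simultaneously keep $\kappa$ small in the $\|\cdot\|_\epsilon$-norm, supported where it does not interfere with the boundary consistency, and still effective at the chosen point $z_0$; but since $z_0$ can be taken in the interior thick part of the fiber (the open dense set where $\phi$ is locally injective has nonempty intersection with it), all three requirements can be met at once, and the proof goes through as sketched.
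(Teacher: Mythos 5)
Your proposal is correct and follows essentially the same route as the paper: identify a cokernel element $\chi$ annihilated by $D_\phi^*$, upgrade it to a smooth section by elliptic regularity, and kill it by pairing against localized perturbation directions $D_K\kappa$ exactly as in the strip case of Proposition \ref{surjection-stripe}. The paper's own proof is just a two-line reduction to that proposition; your extra care about supporting $\kappa$ in the thick part of a single fiber so that consistency over $\overline{\cR}$ is preserved is a sensible elaboration of what the paper leaves implicit.
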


Define
$$
\W^{d+1}_{\delta_0}(L,l):=\{(\phi,\cC,K)| \phi \in \W^{d+1}(\cC,L,l; K), \cC\in \overline{\cR},||K||_\epsilon<\delta_0\}.
$$
It is the zero locus of the universal Witten map $WI$.

By the above lemma, we have

\begin{prop}
$\W^{d+1}_{\delta_0}(L,l)$ is  an infinite dimensional Banach manifold.
\end{prop}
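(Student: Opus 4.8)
\textbf{Proof proposal for Proposition (surjectivity gives the Banach manifold structure of $\W^{d+1}_{\delta_0}(L,l)$).}

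The plan is to invoke the implicit function theorem for Banach manifolds, in exactly the same way as in the proof of Proposition \ref{surjection-stripe} and Theorem \ref{moduli-smooth-stripe}. First I would recall that $\W^{d+1}_{\delta_0}(L,l)$ is the zero locus of the universal Witten map $WI: \B^{d+1}(L,l)\times \mathcal{T}^{\delta_0}\to \E$ (where now $\E$ is the total space of the Banach bundle with fibre $L^p(\cC,\Omega^{0,1}_\cC\otimes\phi^*TM)$ over the enlarged configuration space that also records $\cC\in\overline{\cR}$). The key input is the lemma immediately preceding the statement: at every zero $(\phi,\cC,K)$, the linearization $D_{\cC,\phi,K}=D_\cC\gamma+D_\phi\xi+D_K\kappa$ is surjective. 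Since $D_\phi$ is Fredholm (by the theorem at the end of Subsection \ref{subsection8.4}'s predecessor, using that each $L_{\zeta,0}$, $L_{\zeta,1}$ intersect transversally along $l_\zeta$), and $D_\cC$, $D_K$ are bounded operators with the finite-dimensional and (respectively) Banach-space domains $T_\cC\overline{\cR}$ and $T_K\mathcal{T}^{\delta_0}$, the full operator $D_{\cC,\phi,K}$ is a Fredholm operator plus a bounded perturbation onto a Banach complement, hence has closed image and finite-dimensional cokernel; combined with surjectivity this shows its kernel is a closed, complemented subspace of the tangent space. That is precisely the hypothesis needed to apply the implicit function theorem, and conclude that $WI^{-1}(0)=\W^{d+1}_{\delta_0}(L,l)$ is a smooth Banach submanifold.

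More concretely I would proceed in the following steps. Step 1: check that $WI$ is a $C^l$-section of the Banach bundle $\E\to \B^{d+1}(L,l)\times\mathcal{T}^{\delta_0}$ for any prescribed $l$; this is the standard verification that the nonlinear Witten map is smooth on the $W^{1,p}$-completions, using $p>2$ so that Sobolev multiplication and composition with the smooth data $J$, $W$, $\sigma$, $K$ behave well, and using the exponential convergence at the strip-like ends (Theorem \ref{sec5:exponential}, Lemma \ref{higher convergence}) to control the behaviour near the punctures. Step 2: quote the preceding lemma for surjectivity of $D_{\cC,\phi,K}$ at each zero. Step 3: split the tangent space $T_{(\phi,\cC,K)}\big(\B^{d+1}(L,l)\times\mathcal{T}^{\delta_0}\big)=\ker D_{\cC,\phi,K}\oplus Q$ with $Q$ a closed complement (possible because $\ker$ is complemented — it contains the complemented finite-codimension piece coming from the Fredholm operator $D_\phi$, and the $D_K$-direction provides the remaining surjectivity with a splitting coming from the explicit perturbations constructed in Proposition \ref{surjection-stripe}). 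Step 4: apply the implicit function theorem on a neighbourhood of each zero to realize $\W^{d+1}_{\delta_0}(L,l)$ locally as the graph of a smooth map from a neighbourhood in $\ker D_{\cC,\phi,K}$, giving it the structure of a smooth Banach manifold; since $\ker D_{\cC,\phi,K}$ is infinite-dimensional (the $K$-direction alone is), the manifold is infinite-dimensional. These local charts are compatible because they all come from the same ambient Banach manifold.

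The main obstacle is Step 1 together with the bookkeeping in Step 3: verifying that the universal Witten map together with its linearization is genuinely a smooth Fredholm-type setup \emph{uniformly over the compactified Deligne--Mumford--Stasheff base} $\overline{\cR}$, where the domain curves degenerate. Near $\overline{\cR}\setminus\cR$ the strip-like ends are being glued with length $l_e\to\infty$, so one must use the consistency of the choice of strip-like ends and of the consistent admissible perturbation $K$ (the lemma on existence of consistent admissible $K$) to ensure the Banach bundle $\E\to\B$ and the section $WI_K$ extend smoothly across the boundary strata, and that the operator $D_\cC$ capturing variation of the domain has the right mapping properties there. Everything else — the surjectivity lemma, Fredholmness of $D_\phi$, the explicit perturbation arguments — has already been set up earlier in the paper, so once the smoothness and uniformity over $\overline{\cR}$ is granted, the proposition follows by the implicit function theorem verbatim as in \cite[Proof of Theorem 3.1.2]{MS} and the proof of Proposition \ref{surjection-stripe}.
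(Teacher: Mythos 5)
Your proposal is correct and follows essentially the same route as the paper, which simply deduces the proposition from the surjectivity of $D_{\cC,\phi,K}$ (the lemma immediately preceding the statement) via the implicit function theorem, exactly as in Proposition \ref{surjection-stripe}. You supply more detail than the paper does (smoothness of the universal Witten section, complementedness of the kernel via Fredholmness of $D_\phi$, and uniformity over $\overline{\cR}$), but the key input and the logical structure are identical.
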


\begin{thm}\label{moduli-smooth-general}

For generic $K \in \mathcal{T}^{\delta_0}$, $\W^{d+1}(L,l; K)$ is an oriented smooth manifold. The dimension $m$ of $\W^{d+1}(L,l; K)$ satisfies
\begin{equation}\label{dimension}
f_1(m-(d-2))=-y_1-\cdots-y_d+y_0.
\end{equation}
\end{thm}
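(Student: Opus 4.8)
The plan is to combine the compactness results of Section \ref{sec-7}, the Fredholm property from Subsection \ref{subsection8.4}, and the index computation \eqref{fredholm-index} into the standard Sard--Smale transversality argument, while carefully accounting for orientations. First I would recall that $\W^{d+1}_{\delta_0}(L,l)$ is a Banach manifold (by the preceding Proposition) and that the projection $\pi:\W^{d+1}_{\delta_0}(L,l)\to \mathcal{T}^{\delta_0}$ is Fredholm with $\ind(d\pi_{(\phi,\cC,K)})=\ind(D_\phi)+\dim\overline{\cR}^{d+1}=\ind(D_\phi)+(d-2)$; this follows exactly as in \cite[Proof of Theorem 3.1.2]{MS} using that the linearization $D_{\cC,\phi,K}=D_{\cC}\gamma+D_{\phi}\xi+D_K\kappa$ differs from the fiberwise Fredholm operator $D_{\cC}+D_\phi$ by the addition of the extra finite-dimensional direction $D_\cC$ of the Deligne--Mumford--Stasheff variable together with a surjective term $D_K$. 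By the surjectivity lemma just proven, $0$ is a regular value of $WI$, so by the Sard--Smale theorem (\cite[Proposition 2.24]{Ck}, applied to $\pi$, using that $\mathcal{T}^{\delta_0}$ and $\W^{d+1}_{\delta_0}(L,l)$ are second countable $C^l$-Banach manifolds for every $l$ with $l$ chosen large relative to the Fredholm index) the set of regular values of $\pi$ is residual in $\mathcal{T}^{\delta_0}$. For such a regular $K$, the fiber $\W^{d+1}(L,l;K)=\pi^{-1}(K)$ is a smooth manifold of dimension $\ind(D_\phi)+(d-2)$.

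Next I would address the dimension formula. Granting the smoothness, at any $\phi\in\W^{d+1}(\cC,L,l;K)$ the tangent space is $\ker D_\phi\oplus (T_\cC\overline{\cR}^{d+1})$, so $m=\ind(D_\phi)+(d-2)$, i.e. $m-(d-2)=\ind(D_\phi)$. Then \eqref{fredholm-index} gives directly $f_1(m-(d-2))=f_1(\ind(D_\phi))=-y_1-\cdots-y_d+y_0$, which is \eqref{dimension}. Here one must note that $f_1$ is the map in the grading datum \eqref{homology-datum}, and that the identity \eqref{fredholm-index} was obtained by gluing the orientation operators $D_0,\dots,D_d$ onto $\widetilde{D_\phi}$ and using Lemma 3.3 of \cite{Sh1} relating the Maslov class of the glued bundle pair to the loop class $[\rho]\in H_1(\mathcal{G}M)$ determined by the boundary gradings $\mathsf{y}_j=(y_j,k_j)$.

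For orientability I would invoke the Pin-structure data built into the Landau--Ginzburg branes $L_j^{\#}=(L_j,i_j^{\#},P_j)$: by \eqref{orientation-bundle}, the determinant line $\det(D_\phi)$ is canonically isomorphic to $\det(\hat D)\otimes\det(D_0)\otimes\det(D_1)^{\vee}\otimes\cdots$, and each orientation operator $D_j$ has a determinant line trivialized by the choice of Pin structure on the corresponding thimble together with its lift $i_j^{\#}$ to $\widetilde{\mathcal{G}}M$ (anchored Lagrangian brane structure, \cite{fooo3}); the line $\det(\hat D)$ for the Cauchy--Riemann operator on the bundle pair over the disc is orientable by the standard theory since $D^2$ is contractible and a Pin structure on $F$ has been fixed. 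Assembling these isomorphisms consistently (compatibly with gluing, as in \cite{Sh1} and \cite{Si3}, Chapter 11--12) gives a coherent orientation of the determinant line bundle over the parametrized moduli space, hence an orientation of $\W^{d+1}(L,l;K)$.

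The main obstacle is not the index bookkeeping but verifying that the moduli space is genuinely a finite-dimensional smooth \emph{compact-up-to-breaking} manifold: concretely, that the elements of $\W^{d+1}(L,l;K)$ are all of class $C^\infty$ with uniform bounds (Theorem \ref{d control}), that the limits on strip-like ends exist and are Hamiltonian chords with exponential decay (Theorem \ref{limit}, Theorem \ref{sec5:exponential}), and that these facts hold \emph{uniformly as $\cC$ varies over the non-compact base} $\overline{\cR}^{d+1}$ near its boundary strata --- this is precisely what Notations 1--6 and Lemmata \ref{control-first-direction}--\ref{anti-control} and Theorem \ref{c_0_polytope} were set up to guarantee, so the remaining work is to quote them correctly and to check that the choice of consistent admissible perturbation $K$ (whose existence was shown) can be taken inside the residual set produced by Sard--Smale; a standard diagonal/Baire argument over an exhaustion handles the latter. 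A secondary technical point is that the universal curve over the uncompactified $\cR^{d+1}$ must be used for the Fredholm setup while the compactification is used only to describe the boundary of the moduli space, and one should make sure the perturbation extends consistently across strata --- but this is exactly the content of the consistency lemma for $K$ already established.
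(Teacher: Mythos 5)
Your proposal is correct and follows essentially the same route as the paper: Sard--Smale applied to the projection from the universal zero locus to $\mathcal{T}^{\delta_0}$, index additivity $\ind(DWI_K)=\ind(D_\phi)+(d-2)$ combined with \eqref{fredholm-index} for the dimension formula, and orientability from \eqref{orientation-bundle} together with the Pin/brane data (the paper streamlines this last point by observing $F$ is trivial since Lefschetz thimbles are simply connected, invoking \cite[8.1.4]{fooo2}). One small imprecision: for a generic $K$ the tangent space is $\ker DWI_K$, not necessarily $\ker D_\phi\oplus T_\cC\overline{\cR}^{d+1}$, since surjectivity of $DWI_K$ does not imply surjectivity of $D_\phi$ alone; but the dimension count $m=\ind(DWI_K)$ is what matters and is correct.
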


 \begin{proof}
 For smoothness, we can use the projection $\pi_{WI}$ from the zero locus of the universal Witten map $WI$ to $\B^{d+1}(L, l)$. By the Sard-Smale theorem (see  \cite[Proposition 2.24]{Ck} for a reference), the regular value of $\pi_{WI}$ is generic. If we choose a regular $K$, then $DWI_K$ is surjective by a routine argument as in ref. \cite[Proof of Theorem 3.1.2]{MS}.

Note that $DWI_K(\xi,\gamma)=D_{\phi}\xi+D_{\cC}\gamma$, so we have
$$
\text{i}(DWI_K)=\text{i}(D_{\phi})+\text{i}(D_{\cC}).
$$
As $D_{\cC}$ encodes the variation of disc configurations, so it has index $d+1-3=d-2$. Then (\ref{dimension})  follows from (\ref{fredholm-index}).

The orientation comes from the orientation bundle relation(\ref{orientation-bundle}). Here we can use a general theory as in 12b of \cite{Si3}. We can also use an easy oberservation on the bundle $F$ we defined on the boundary of the disc which is use in the definition of $\hat{D}$ in (\ref{orientation-bundle}). As all Lefschetz thimbles are simply connected, $F$ is trival. Thus we can get an natrual orientation for $\text{det}(\hat{D})$ by 8.1.4 of \cite{fooo2}.
 \end{proof}

 \begin{df} \label{rigid}
 We call such a $(\cC, \phi) \in \W^{d+1}(L,l; K)$ to be a \textit{rigid solution},  if the extended index $\text{i}(DWI_K)$ at $(\cC,\phi)$ is zero. For completeness, we also consider such $\tM(l^-,l^+; \varpi)$ of index 1and then the quotient space $\W (l^-,l^+; \varpi)=\tM(l^-,l^+; \varpi)/\R$. Any element $[\phi]\in \W (l^-,l^+; \varpi)$ is also called a \textit{rigid solution}.
 \end{df}

\section{Fukaya category of Landau-Ginzburg model}
\subsection{$A_{\infty}$ relation with $\pmb{G}$ grading}\label{subsec-9.1}\

In subsection \ref{index-orientation section}, we have introduced the notions of Grading datum, $\pmb{G}$-graded vector space and $\pmb{G}$-graded module.  Furthermore, we give a brief introduction of  $\pmb{G}$-graded $A_{\infty}$ structure here. These notions are almost the same as in  \cite{Sh1}, section 2.

Recall that the grading datum is made up from a unsigned datum, which is an abelian group $Y$ together
  with a morphism $f:\Z\mapsto Y$.
\begin{df}
For $\pmb{G}$-graded bimodules $A,B$, a homomorphism $\mu$ between $A^{\otimes_{\pmb{R}}s}$ and $B$ is called \textit{of degree $r$}, if the grading of $\mu(a_1, \cdots, a_s)$ is $r$ plus the sum of the gradings of $a_1,\dots,a_s$ for any $(a_1,\cdots, a_s)\in A^{\otimes_{\pmb{R}}s}$. We denote the subspace of such morphisms by $\hom_{\pmb{R}-\text{bi-mod}}(A^{\otimes_{\pmb{R}}s},B)_{r}$.
\end{df}

We can then define a $(\pmb{G},\Z)$ graded bimodule, whose degree $(\mathsf{y}, s)$ part is
$$
CC_c^{\mathsf{y}}(A,B)^s=\hom_{\pmb{R}-\text{bi-mod}}(A^{\otimes_{\pmb{R}}s},B)_{\mathsf{y}-f(s)}.
$$

We write
$$
CC_{\pmb{G}}^{\mathsf{y}}(A,B):=\prod_{s>0}CC_c^{\mathsf{y}}(A,B)^s, \;\; CC_{\pmb{G}}^{*}(A,B):=\bigoplus_{{\mathsf{y}}\in Y}CC_{\pmb{G}}^{\mathsf{y}}(A,B).
$$
We also denote
$$
CC_{\pmb{G}}^s(A,B):= CC_{\pmb{G}}^{f(s)}(A,B),
$$
for simplicity.

\begin{df}
The \textit{Gerstenhaber product}
\begin{align*}
 CC_{\pmb{G}}^{*}(A,B)\otimes_{\pmb{R}}&CC_{\pmb{G}}^{*}(A,A)\mapsto CC_{\pmb{G}}^{*}(A,B)\\
 \mu_1 \otimes \mu_2 &\to\mu_1 \circ \mu_2,
\end{align*}
is defined by
 \begin{align}\label{Gerstenhaber-relation}
 &(\mu_1 \circ \mu_2)^d(a_1,\cdots,a_d) \nonumber\\
 =&\sum_{i+j+k=d}(-1)^{\dag}\mu_1^{i+k+1}(a_{1}\cdots a_{i},\mu_2^j(a_{i+1},\cdots,a_{i+j}),a_{i+j+1},\cdots,a_{i+j+k}).
 \end{align}

 Here is an explanation of the notations.  We denote by $\mu_i^{s}$ the component of $\mu_i$ with $s$ inputs. We then put $\dag=(\sigma(\mu_2)+1)((\sigma(a_1)+\cdots+\sigma(a_i)-i)$\footnote{Here we assume $\mu_1$ and $\mu_2$ are of homogeneous degree, and extend by linearity.}, where the $\Z_2$ number $\sigma(\mu)$ is decided as follows: suppose the map $\mu$ have a $(\pmb{G},\Z)$ grading $(\mathsf{y},s)$, the number $\sigma(\mu)$ is defined to be $\sigma(\mathsf{y})$, where $\sigma$ is the morphism $Y\mapsto \Z_2$ in defining the sign morphism $\pmb{\sigma}$ in Definition \ref{signed-grading}. Finally, if the element $a$ has grading $\mathsf{y}$, we denote $\sigma(\mathsf{y})$ by $\sigma(a)$ for simpliticy.
\end{df}
\begin{df}
Let $A=\oplus_{y\in Y}A_{y}$ be a $\pmb{G}$-graded module
over a $\pmb{G}$-graded algebra $\pmb{R}$. If there is a $\mu$ in $CC_c^2(A,A)$, so that $\mu\circ\mu=0$, we call it a \textit{$\pmb{G}$-graded $A_{\infty}$ structure on $A$ over $\pmb{R}$}.
 \end{df}

In component form, a  $\pmb{G}$-graded $A_{\infty}$ structure is a sequence $\mu_1, \mu_2, \cdots $ of elements in $CC_c^2(A,A)$, so that $\mu^d$ has $d$ inputs, and
\begin{equation}\label{a-infty-relation}
\sum_{i+j+k=d}(-1)^{\dag}\mu^{i+k+1}(a_{1}\cdots a_{i},\mu^j(a_{i+1},\cdots,a_{i+j}),a_{i+j+1},\cdots,a_{i+j+k})=0.
\end{equation}

If $d=1$, (\ref{a-infty-relation}) gives
$$
\mu^1\circ \mu^1=0.
$$

Thus we get a Floer chain map $\mu^1:A\mapsto A$ of degree $f(1)$, that is, $A_y$ is mapped to $A_{y+f(1)}$, satisfying $\mu^1\circ \mu^1=0$. We can define the  Floer homology groups by putting
\begin{equation}\label{hf}
HF_y(A)=\text{Ker}( \mu^1|_{A_y})/\im ( \mu^1|_{A_{y-f(1)}}).
\end{equation}

\subsection{Floer homology and Fukaya category of Landau-Ginzburg model}\label{subsection9.2}
\subsubsection*{Solution counting}\

Recall that in subsection \ref{subsec-trans-general}, we can choose a consistent admissible perturbation$K$ with respect to $\varpi\in V^\epsilon_{\per}$, so that all moduli spaces $\W^{d+1}(L, l;K)$ involved are smooth. We fix a $K$ now.

For Landau-Ginzburg branes $L_0^{\#},\cdots,L_d^{\#}$ and Hamiltonian chords $l^0,l^1,\cdots,l^d$, so that $l^i\in S_W(L_{i-1},L_i), i=1,\cdots,d$ and $l^0\in S_W(L_0,L_d)$, we can define a number
$$
\langle l^0,l^1,\cdots,l^d\rangle,
$$
by counting the rigid solutions in $\W^{d+1}(L, l;K)$ associated to these $\{l^i\}$ and $L_i$. This counting is summing with sign, where the sign is determined by the orientation of moduli spaces.

\subsubsection*{The Fukaya category of LG model}\

The Fukaya category $\text{Fuk}(M,h,W)$ of a Landau-Ginzburg model $(M,h,W)$ consists of the following data:

\begin{enumerate}
\item A set $\text{Ob}(\text{Fuk}(M,h,W))$, consisting of all Landau-Ginzburg branes.
\item For each pair $(L_0^{\#},L_1^{\#})$ of Landau-Ginzburg branes, a morphism space defined by $\text{Hom}(L_0^{\#},L_1^{\#})=CF(L_0^{\#},L_1^{\#})$.
\item Composition maps
$$
\mu^d:\text{Hom}(L_0^{\#},L_1^{\#})\otimes\cdots\otimes \text{Hom}(L_{d-1}^{\#},L_d^{\#}) \mapsto \text{Hom}(L_0^{\#},L_d^{\#}),
$$
defined by
$$
\mu^d(l^1,\cdots,l^d)=\sum_{l\in S_W}(-1)^{\sigma(\mathsf{y}_0)-(\sigma(\mathsf{y}_1)+\cdots+\sigma(\mathsf{y}_d)}\langle l,l^1,\cdots,l^d\rangle l,
$$
where $\mathsf{y}^i$ is the grading of $l^i$ and $\sigma$ is the sign morphism $H^1(\mathcal{G}M)\oplus \Z\mapsto \Z_2$ defined in subsection \ref{index-orientation section}.
\end{enumerate}

Using all these $CF(L_0^{\#},L_1^{\#})$, we can define a new $\pmb{G}$-graded module $CF(W)$, by
$$
CF(W)=\bigoplus_{L_0^{\#},\;L_1^{\#}:\text{ LG branes}} CF(L_0^{\#},L_1^{\#}).
$$

Clearly, it is generated by those Hamiltonian chords between all LG branes.  We denote this set by $S_W^{\#}$. On the other hand, using all these $\mu^d$ we can get an element $\mu\in CC_{\pmb{G}}^2(CF(W),CF(W))$ whose $CC_c^2(CF(W),CF(W))^d$ components are $\mu^d$.

To see that $\mu^d\in CC_c^2(CF(W),CF(W))^d$, consider $l^0,\cdots,l^d$, where each $l^i$ has grading $\mathsf{y}_i=(y_i,\mathsf{k}_i)$.  As the moduli space involved is of dimension zero, we have
$$
f_1(2-s)=y_0-y_1-\cdots-y_d,
$$
by (\ref{dimension}). As $f(s)=(f_1(s),0)$, we have
$$
f(2-s)=(y_0-y_1-\cdots-y_d,0).
$$
However, the moduli space is empty unless
$\mathsf{k}_0-\mathsf{k}_1-\cdots-\mathsf{k}_d=0$, as pointed out at the end of subsection \ref{sec5-cano-sect}. This means
$$
f(2-s)=(\mathsf{y}_0-\mathsf{y}_1-\cdots-\mathsf{y}_d).
$$

By Theorem \ref{gromov:st}, $\mu^d$ satisfies the $A_{\infty}$ relation (\ref{a-infty-relation}). Thus, by definition, $\mu$ is a $\pmb{G}$-graded $A_{\infty}$ structure on $CF(W)$. The first term $\mu^1$ gives the Floer homology $(\ref{hf})$ of the Landau-Ginzburg  model.

\bibliographystyle{amsplain}

\end{document}